\documentclass[11pt,a4paper]{amsart}
\usepackage[margin=1in]{geometry}
\usepackage[T1]{fontenc}
\usepackage{lmodern}
\usepackage{amssymb,mathtools,booktabs,array,graphicx,microtype}
\usepackage{cases}
\usepackage[numbers,sort&compress]{natbib}
\let\cite\citep
\usepackage{hyperref}
\hypersetup{hidelinks,
  pdftitle={Intermediate Coverings, Random Images, and Projections},
  pdfauthor={Yunhao Zhao; Yingyu Zhao; Chen Zhou; Shengyang Zang},
  pdfsubject={Intermediate dimensions, scale-dependent families, and projections}}

\makeatletter
\let\originalsetaddresses\@setaddresses
\renewcommand{\@setaddresses}{
  \par\begingroup
  \leftskip=\normalparindent
  \parindent=0pt
  \originalsetaddresses
  \endgroup
}
\makeatother

\newtheorem{theorem}{Theorem}[section]
\newtheorem{proposition}[theorem]{Proposition}
\newtheorem{lemma}[theorem]{Lemma}
\newtheorem{corollary}[theorem]{Corollary}
\newtheorem{maintheorem}{Theorem}

\theoremstyle{definition}

\theoremstyle{remark}
\newtheorem{remark}[theorem]{Remark}
\newcommand{\R}{\mathbb R}
\newcommand{\E}{\mathbb E}
\newcommand{\Prob}{\mathbb P}
\newcommand{\dd}{\,\mathrm d}
\newcommand{\eps}{\varepsilon}
\newcommand{\ldim}{\underline{\dim}}
\newcommand{\udim}{\overline{\dim}}

\DeclareMathOperator{\KL}{KL}
\title[Intermediate coverings, random images, and projections]
{Intermediate Coverings, Random Images, and Projections}
\author{Yunhao Zhao}
\address{Yunhao Zhao, School of Advanced Interdisciplinary Sciences,
University of Chinese Academy of Sciences, Beijing, China}
\email{zhaoyunhao23@mails.ucas.ac.cn}
\author{Yingyu Zhao}
\address{Yingyu Zhao, Qiuzhen College, Tsinghua University, Beijing, China}
\email{zhao-yy25@mails.tsinghua.edu.cn}
\author{Chen Zhou}
\address{Chen Zhou, School of Mathematical Sciences,
Zhejiang University of Technology, Hangzhou, China}
\email{zcygo01@163.com}
\author{Shengyang Zang}
\address{Shengyang Zang, School of Mathematical Sciences,
Nankai University, Tianjin, China}
\email{1120230032@mail.nankai.edu.cn}
\date{}
\subjclass[2020]{Primary 28A80; Secondary 60G22, 47A55}
\keywords{Intermediate dimensions, fractional Brownian motion,
spectral clusters, iterated function systems, Assouad spectra, projections}

\begin{document}
\raggedbottom
\begin{abstract}
We study intermediate coverings of spectral families, Cantor sets, and
projections of measures. Two perturbation families of a fixed ReLU tangent
kernel have identical source thresholds. Their Brownian images have equal
box thresholds and distinct intermediate thresholds, almost surely along
prescribed geometric scales. For Cantor sets with delayed contractions,
we obtain entropy formulas for the intermediate dimensions of the sets
and their fractional Brownian images. These formulas yield endpoint
asymptotics and a sharp separation theorem for the first phase transition.
An oscillating variant has distinct lower and upper intermediate dimensions
while preserving the Assouad spectrum. Finally, almost every
orthogonal projection preserves both intermediate dimensions of a nonzero
finite compactly supported measure whose quasi-Assouad dimension does
not exceed the target dimension.
\end{abstract}
\maketitle

\section{Introduction}\label{sec:introduction}

We study intermediate coverings for spectral families, Cantor sets with
delayed contraction histories, and orthogonal projections of finite
measures.

\subsection{Covering costs and notation}

For a bounded nonempty set $F\subset\R$, $0<r<1$,
$0<\theta\le1$, and $s>0$, set
\begin{equation*}
 S^s_{r,\theta}(F)
 =\inf\left\{\sum_i|U_i|^s:
 F\subset\bigcup_iU_i,\quad r\le|U_i|\le r^\theta\right\},
\end{equation*}
where the covers consist of intervals and $|U_i|$ denotes diameter.
The lower and upper intermediate dimensions are, respectively,
\[
 \ldim_\theta F=\inf\{s>0:\liminf_{r\downarrow0}S^s_{r,\theta}(F)=0\},
 \qquad
 \udim_\theta F=\inf\{s>0:\lim_{r\downarrow0}S^s_{r,\theta}(F)=0\}.
\]
These are the dimensions introduced by \citet{FFK2020}; see also
\cite{FalconerSurvey2021}. At $\theta=1$ they are the corresponding
box dimensions. We set $\dim_0 F=\dim_H F$.

For a family $G_r$ and a prescribed $a>1$, the \emph{joint covering
threshold} $D(\theta)\in[0,1]$ along $r_n=a^{-n}$ is defined by
\[
 S^s_{r_n,\theta}(G_{r_n})\longrightarrow
 \begin{cases}
 \infty,&0<s<D(\theta),\\
 0,&D(\theta)<s<1.
 \end{cases}
\]
For a fixed set, the covering limits can be checked along geometric
scales. If $r\le R\le ar$, enlarging intervals shorter than $R$ in an
admissible $r$-cover gives an admissible $R$-cover and hence
\begin{equation}\label{eq:fixed-scale-comparison}
 S^s_{R,\theta}(F)\le a^s S^s_{r,\theta}(F).
\end{equation}

The random map is a common real-valued fractional Brownian motion
$B_H$, with $0<H<1$, $B_H(0)=0$, and
\[
 B_H(x)-B_H(y)\stackrel{\mathrm d}=|x-y|^H Z,
 \qquad Z\sim N(0,1).
\]
Intermediate dimensions of measures are defined in
Section~\ref{sec:quasi-measures}.

\subsection{Main results}

Let $T_\delta^{\mathrm{in}}$ be the integral operator of the fixed
ReLU tangent kernel \eqref{nat:eq:kernel} on $S^2$, with input law
\[
 \dd\sigma_\delta(x)=(1+\delta P_2(x_3))\,\dd\sigma(x),
 \qquad P_2(z)=(3z^2-1)/2,
\]
where $\sigma$ is uniform probability measure. Write $\mathcal E_\delta$
for zero together with all distinct positive eigenvalues.

\begin{maintheorem}[Spectral families distinguished by a Brownian image]
\label{main:spectral}
For $\beta=1,2$, the families $\mathcal E_{r^\beta}$ have the same
source joint threshold
\[
 D_{\mathrm{src}}^{(\beta)}(\theta)=\frac{\theta}{3+\theta},
 \qquad 0<\theta\le1,
\]
with both cost limits holding at all real scales.
Under one common Brownian motion $B=B_{1/2}$, for each prescribed
$a>1$, almost
surely simultaneously for all $0<\theta\le1$, their image thresholds
$D_B^{(1)}$ and $D_B^{(2)}$ along $a^{-n}$ satisfy
\[
 D_B^{(1)}(\theta)>D_B^{(2)}(\theta)\quad(0<\theta<3/4),
 \qquad
 D_B^{(1)}(\theta)=D_B^{(2)}(\theta)\quad(3/4\le\theta\le1).
\]
In particular,
\[
 D_B^{(1)}(1/2)=\frac{4-\sqrt{10}}3>\frac14=D_B^{(2)}(1/2),
 \qquad D_B^{(1)}(1)=D_B^{(2)}(1)=\frac25.
\]
Supercritical image costs tend to zero almost surely at all real scales.
For each fixed subcritical pair $(\theta,s)$, divergence also holds in
probability at arbitrary scales.
\end{maintheorem}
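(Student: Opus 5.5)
The plan is to reduce everything to an explicit description of $\mathcal E_\delta$ as a union of ``spectral clusters'', and then to compute covering costs of that deterministic geometry and of its Brownian image by a counting argument. For the ReLU tangent kernel on $S^2$, the unperturbed operator $T_0^{\mathrm{in}}$ is zonal. Its distinct eigenvalues are $\mu_k\asymp k^{-3}$ over the nonvanishing degrees $k$, and each has multiplicity $2k+1$. The perturbation $\delta P_2(x_3)$ is axially symmetric, so it preserves the azimuthal index $m$. By first-order perturbation theory (Kato–Rellich with an explicit remainder), each $\mu_k$ therefore splits into a cluster $C_k(\delta)$ of about $k$ distinct eigenvalues. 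The shifts are $\delta\mu_k\,c_{k,m}+O(\delta^2\mu_k)$, where the $c_{k,m}$ are Clebsch–Gordan-type coefficients. Hence the cluster has width $\asymp\delta k^{-3}$ and internal spacings $\asymp\delta k^{-4}$, while consecutive clusters are separated by $\asymp k^{-4}$. I would record this as a two-sided structural lemma, valid uniformly for $k\le c\delta^{-1/4}$; the finitely many non-separated high clusters are then absorbed near $0$.

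\emph{Source thresholds.} For $\delta=r^\beta$ with $\beta\ge1$, every cluster has width at most $r^\beta k^{-3}\le r$. An admissible $(r,\theta)$-cover therefore cannot resolve clusters, and $\mathcal E_{r^\beta}$ has the same costs, up to constants, as $\{k^{-3}\}\cup\{0\}$. I would then run the standard Falconer–Fraser–Kempton computation for $\{n^{-p}\}$ with $p=3$, at each real scale. It uses intervals of length $r^\theta$ near $0$ up to index $k\asymp r^{-\theta/4}$, and intervals of length $r$ on the isolated points beyond the cutoff where gaps drop below $r$. This gives $\theta/(3+\theta)$, with both divergence and decay holding at all real $r$, independently of $\beta$.

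\emph{Image thresholds.} With $H=1/2$, I would replace $B$ on each cluster by its heuristic scaling: the image of $C_k$ has diameter $\asymp(r^\beta k^{-3})^{1/2}$ and its points have spacings $\asymp(r^\beta k^{-4})^{1/2}$. Images of distinct clusters lie at mutual distance $\asymp k^{-2}$, and $B$ of the accumulation region near $0$ has size $\asymp(\text{length})^{1/2}$. For $\beta=2$ every image cluster has diameter at most $r k^{-3/2}\le r$, so it is again a point at scale $r$, and the image behaves like a point set with gaps $k^{-2}$. For $\beta=1$ the image clusters have diameter $r^{1/2}k^{-3/2}$, which exceeds $r$ for $k\lesssim r^{-1/3}$, and their internal spacings $r^{1/2}k^{-2}$ can exceed $r$ as well. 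The covering problem thus gains a new regime: for each $k$ one chooses between one interval of length $r^{1/2}k^{-3/2}$ (clipped to $[r,r^\theta]$) and about $k$ intervals of length $r$. Minimising the resulting sum $\sum_k\min(\cdots)$ over the cutoffs produces a piecewise-algebraic optimisation in $(\theta,s)$. At $\theta=1/2$ its critical exponent solves a quadratic, giving $(4-\sqrt{10})/3$, versus $1/4$ for $\beta=2$. I would then check that the extra regime is active exactly when $\theta<3/4$, and that at $\theta=1$ both cases reduce to $2/5$.

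\emph{Probabilistic step and main obstacle.} Upper bounds on costs (supercritical decay) only need an upper modulus of continuity. Lévy's modulus applied on dyadic pieces, with logarithmic losses absorbed by $s$ strictly above threshold, gives decay at all real scales almost surely. The hard step is the lower bound: we need that $B$ does not collapse many clusters or gaps simultaneously. For a fixed subcritical $(\theta,s)$ I would place on the image the pushforward of a mass distribution adapted to the optimal cover. Then I would bound the expected number of pairs at distance $<\rho$ using the Gaussian density $\Prob(|B(x)-B(y)|<\rho)\lesssim\rho|x-y|^{-1/2}$, and combine this with a second-moment (Paley–Zygmund) or Gaussian concentration estimate. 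This yields divergence in probability at each real scale, with tails $e^{-r^{-c}}$. Borel–Cantelli along $r_n=a^{-n}$ then gives almost sure divergence along the geometric scales. Finally, I would pass to all $\theta$ and $s$ simultaneously by taking rational pairs and using monotonicity of $S^s_{r,\theta}$ in $s$ and $\theta$, together with the continuity of the explicit thresholds. Decay along geometric scales transfers to all real scales by \eqref{eq:fixed-scale-comparison}.
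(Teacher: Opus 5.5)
Your architecture is the same as the paper's: cluster structure, pathwise H\"older covers for the supercritical side, a first-moment Gaussian energy bound with Markov and Borel--Cantelli along $a^{-n}$, and rational parameters. The gap is the perturbative input, which is too weak for $\beta=1$, exactly where the theorem separates the two families. Your structural lemma holds only for $k\le c\delta^{-1/4}$, which is what comparing $\|\delta V\|\lesssim\delta$ with the band gaps $\asymp k^{-4}$ gives. For $\delta=r$ it therefore reaches only exponents $u=\log k/\log(1/r)\le 1/4$.

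For $\beta=1$ the optimizer of the image variational problem lies outside this range. It lies in $(1/4,1/3)$ when $21/40<\theta<3/4$, and at $u=2\theta/(3+2\theta)\in[1/3,2/5)$ when $3/4\le\theta<1$. Restricted to $u\le1/4$, the maximum drops strictly below the claimed value on all of $(21/40,1)$. For example, it is $2/7$ at $\theta=7/10$, below even $D_B^{(2)}(7/10)=7/22$. So neither the strict inequality for $\theta$ near $3/4$ nor the equality on $[3/4,1)$ follows.

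The upper bound also breaks. Everything past your cutoff lies in $[0,Cr^{3/4}]$, whose Brownian image has diameter about $r^{3/8}$. Covering it by $r^\theta$-intervals costs about $r^{3/8-\theta(1-s)}$, which tends to zero only for $s>1-3/(8\theta)$, i.e.\ $s>5/8$ at $\theta=1$. So $D_B^{(1)}(1)\le 2/5$ is not obtained. Your range does suffice for $\beta=2$, where $\delta^{-1/4}=r^{-1/2}$, and at $\theta=1/2$, where $u_*\approx0.24$.

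The missing ingredient is a splitting estimate that is relative to the eigenvalue and uniform in the degree. The paper identifies the nonzero spectrum of $T^{\mathrm{in}}_\delta$ with that of $C_\delta=T^{1/2}M_{w_\delta}T^{1/2}=T+\delta V$. Min--max then gives $(1-\delta)\lambda_j\le\mu_j\le(1+\delta)\lambda_j$ for every $j$, hence band isolation for $\delta k\ll1$ and control of the entire tail. A sectorwise Schur complement uses that $V$ couples only degrees differing by two, with entries of the size of the eigenvalue. It gives the remainder $O(\delta^2\mu_k k)$ rather than your $O(\delta^2\mu_k)$, hence resolved first-order splitting for $\delta k^2\ll1$. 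With the head cut at $k\asymp r^{-2/5}$ and $\beta>4/5$, this reaches $u=2/5$. The tail then lies in $[0,Cr^{6/5}]$, which is just small enough because $(1-3/(5\theta))_+\le 2\theta/(3+2\theta)$.

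Three smaller corrections:
\begin{enumerate}
\item The internal spacings are not uniformly $\asymp\delta k^{-4}$. With $\ell$ the degree and $j$ the azimuthal index, $q_{\ell,j}-q_{\ell,j+1}=3(2j+1)/((2\ell-1)(2\ell+3))$, so near $j=0$ the spacing is $\asymp\delta k^{-5}$. You must use a positive-proportion separated subset (the paper takes $\ell/3\le j\le 2\ell/3$), which is all the energy bound needs.
\item No second-moment or concentration step is needed. Markov applied to the expected truncated energy gives failure probability $O(r^\eta)$, summable along $a^{-n}$. The $e^{-r^{-c}}$ tails you mention are unsupported.
\item \eqref{eq:fixed-scale-comparison} concerns a fixed set, so it cannot move estimates between scales for the $r$-dependent family. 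The all-scale decay must come directly from your pathwise H\"older covers, as in the paper.
\end{enumerate}
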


Theorem~\ref{thm:general-cluster-transfer} gives the cluster estimate
underlying this example. Its proof splits the energy into diagonal,
same-cluster, and different-cluster terms. Uniform spectral splitting
and a tail estimate yield Theorem~\ref{thm:natural-separation} for the
complete spectra.

The second group of results concerns the Cantor constructions of
\citet{BBFLX2025}. Fix $0<\beta<1$ and $M\ge100$. For a binary word
$\omega$ of length $n$, write $S_k(\omega)=\sum_{j=1}^k\omega_j$, with $S_0=0$.
The set $E_{\beta,M}$ is obtained by placing two child intervals at the
ends of each parent, with lengths
\[
 \ell_\omega=M^{-n}2^{S_{\lfloor\beta n\rfloor}(\omega)-\beta S_n(\omega)}.
\]
Put $\lambda=\log M$, $c=\log2$, $a=\lambda-\beta c$, and $T=2/a$.
For a measurable $f:[0,T]\to[0,1]$, let
\[
 A_f(t)=\int_0^t h(f(v))\,dv,
 \qquad
 L_f(t)=\lambda t+c\left\{\beta\int_0^t f(v)\,dv
                                  -\int_0^{\beta t}f(v)\,dv\right\},
\]
where $h(p)=-p\log p-(1-p)\log(1-p)$ with $h(0)=h(1)=0$.

\begin{maintheorem}[Cantor dimensions and analytic branches]
\label{main:cantor}
For every $0<\theta\le1$,
\[
 \ldim_\theta E_{\beta,M}=\udim_\theta E_{\beta,M}
 =d_\theta
 =\max_{0\le f\le1}\ \min_{\theta\le L_f(t)\le1}
                         \frac{A_f(t)}{L_f(t)}.
\]
For $0<\theta<1$,
\[
 \frac{c}{\lambda}<d_\theta<\dim_B E_{\beta,M},
 \qquad
 d_\theta-\frac c\lambda\sim\bigl(\log(1/\theta)\bigr)^{-2}
 \quad(\theta\downarrow0).
\]
For $\theta_{\mathrm{loc}}\leq\theta<1$, with
$\theta_{\mathrm{loc}}$ defined in \eqref{eq:ifs-entropy-threshold},
the maximizing profile, normalized to terminal word time one, is unique
up to null sets and has four constant frequencies. The curve $d_\theta$
is real analytic, strictly increasing, and strictly concave on
$(\theta_{\mathrm{loc}},1)$. On every $[\varepsilon,1]$,
$\varepsilon>0$, it is analytic on the open intervals of a finite
partition. It has at least one nonanalytic point in $(0,1)$.
\end{maintheorem}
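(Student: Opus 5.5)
The plan is to prove the variational formula first and then read off the analytic properties from the optimization problem. We encode a level-$n$ cylinder by its prefix frequency profile. For a word $\omega$ of length $N$, set $f(v)$ equal to the local frequency of ones near position $vN/\lambda$, so that word time is rescaled by $\lambda$. Then $-\log\ell_\omega\approx N\,L_f(n/N)$ and the number of words with a given profile is $\exp(N A_f(t)+o(N))$, by Stirling's formula applied block by block. Fix $r=e^{-N}$.

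\textbf{Upper bound.} An admissible cover at scale $r$ with range $[r,r^\theta]$ is built adaptively. For each infinite branch we follow its profile and stop at the first generation $t$ where the cost ratio is favourable, i.e.\ $A_f(t)/L_f(t)\le d_\theta+\eps$, with $\theta\le L_f(t)\le1$. The cylinder covered at that time has diameter $e^{-NL_f(t)}\in[r,r^\theta]$. We then discretize profiles into finitely many piecewise-constant classes, with mesh tending to $0$. The counting estimate bounds the $s$-cost of each class by $\exp(N(A_f-sL_f)+o(N))$, which tends to $0$ for $s>d_\theta+2\eps$. The max--min structure is exactly what guarantees that every branch meets such a stopping time, since otherwise its profile would witness a larger min. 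Since $e^{-N}$ ranges over all scales, this gives $\udim_\theta\le d_\theta$.

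\textbf{Lower bound.} We use the mass distribution principle for intermediate dimensions of \citet{FFK2020}. Given an optimal $f$, we put on $E_{\beta,M}$ the Bernoulli measure with time-varying parameters $f(\cdot)$ along the scale $N$. A cylinder of size $e^{-NL_f(t)}$ in the typical set then has mass about $e^{-NA_f(t)}$. Hence every set $U$ with $r\le|U|\le r^\theta$ has mass at most $|U|^{d_\theta-\eps}$, once we restrict to a typical set of measure at least $1/2$. We handle all $N$, not just a subsequence, so $\ldim_\theta\ge d_\theta$ follows, and the two dimensions coincide. The hardest point here is making this multiscale typicality uniform for non-cylinder sets. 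Separation of the children (which uses $M\ge100$) reduces each such set to $O(1)$ cylinders.

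\textbf{Analytic properties.} The bounds $c/\lambda<d_\theta<\dim_B E_{\beta,M}$ come from perturbing the constant profiles $f\equiv1/2$ (Hausdorff) and the box-optimal profile. The delay term $-\int_0^{\beta t}f$ makes a front-loaded profile strictly better on the window $[\theta,1]$. For the asymptotics as $\theta\downarrow0$, we use a two-level profile: frequency $1/2$ until time $\approx\log(1/\theta)$ scale, then a shift. Optimizing the second-order expansion of $h$ around $1/2$ gives a gain of $(\log(1/\theta))^{-2}$, and a matching upper bound follows from the same expansion.

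For $\theta\ge\theta_{\mathrm{loc}}$ we write down the Euler--Lagrange/KKT conditions for the min over $t$. At optimality, the ratio $A_f/L_f$ is equal at the two endpoints $L_f=\theta$ and $L_f=1$, and possibly at the interior point $\beta$-image. Pointwise maximization of $h(f)$ minus a multiplier times the $f$-coefficient in $L_f$ gives a logistic formula for $f$ that is constant on each of the intervals cut out by $t_\theta$, $\beta t_\theta$, $1$, and $\beta$. This yields four constant frequencies. Uniqueness follows from strict concavity of $h$. Real analyticity and strict monotonicity/concavity then follow from the implicit function theorem applied to the finite analytic system. The main obstacle is verifying nondegeneracy of the Jacobian and the concavity sign; I would compute these explicitly.

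On $[\eps,1]$, the combinatorial pattern of active constraints (the ordering of $\beta^k t_\theta$ relative to the breakpoints) changes only finitely often. Each regime is an analytic system, which gives a finite analytic partition. Finally, analyticity on all of $(0,1)$ is incompatible with the $(\log(1/\theta))^{-2}$ asymptotics together with the branch structure near $1$: an analytic continuation of the four-frequency branch is explicit and has the wrong behaviour at $0$. So some nonanalytic point must exist.
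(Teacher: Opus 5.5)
Your route to the variational formula is the paper's: frequency records with $e^{o(N)}$ classes for the covers, and a conditioned time-inhomogeneous Bernoulli measure with the mass distribution principle at every large $N$ for the lower bound. What you omit there is attainment of the maximum (weak-$*$ compactness, Lemma~\ref{lem:tree-profile-stability}) and the approximation by clipped step profiles that Hoeffding needs.

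The genuine gap is the endpoint rate. A two-level profile cannot give a gain of order $(\log(1/\theta))^{-2}$. Take $f\equiv x$ on $[0,t_0]$ and $f\equiv y$ afterwards. If some window time $t$ lies below $t_0$, then $L_f(t)=\lambda t$ and $A_f(t)\le ct$, so the window minimum is at most $c/\lambda$. If the whole window lies above $t_0$, then $L_f(t)=\lambda t-c(1-\beta)(x-y)t_0$, and the gain at the top of the window is only $O(t_0)=O(\theta)$.

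The mechanism is multiscale. Put $m_f(t)=F_f(t)/t$ and $d=c/\lambda+\varepsilon$. Jensen gives $h(m_f(t))\ge A_f(t)/t\ge d\{\lambda+c\beta(m_f(t)-m_f(\beta t))\}$, so the ratio at $t$ reaches $d$ only if $m_f(\beta t)-m_f(t)\ge\lambda\varepsilon/(dc\beta)$. This drift must occur at each of the $k_\theta\sim\log(1/\theta)$ delay generations $t_j=\beta^j/b$ in the window, while $f$ stays within about $1/L$ of $1/2$, where $L=\log(1/\theta)$. The paper achieves this with the clipped logarithmic profile $\frac12+\frac{q}{L^2}\min\{\log(T/t),2L\}$, whose drift is about $q\log(1/\beta)/L^2$ at every generation.

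The matching upper bound likewise does not follow from a single-time expansion. With $z=m_f-1/2$ and $h(1/2+z)\le c-2z^2$, one gets the recurrence $z(\beta t)-z(t)\ge(\lambda\varepsilon+2z(t)^2)/(dc\beta)$ along the $t_j$. The quadratic-drift count of Lemma~\ref{lem:tree-quadratic-drift} (at most $C/\sqrt\alpha$ steps inside $[-1/2,1/2]$) then gives $\varepsilon\lesssim k_\theta^{-2}$. Without the $z^2$ term you only get $\varepsilon\lesssim1/\log(1/\theta)$.

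For $\theta\ge\theta_{\mathrm{loc}}$, your four cells $0,\beta t_\theta,\beta,t_\theta,1$ presuppose that the normalized initial time satisfies $\tau>\beta$. That localization is exactly what $\theta_{\mathrm{loc}}$ encodes, and you give no argument for it. The paper proves it for $\theta\ge\theta_{\mathrm{safe}}$ by monotonicity of $L_f(\beta)/L_f(1)$ in three cell averages (Lemma~\ref{lem:ifs-initial-time}). For $\theta\ge\theta_{\mathrm{ent}}$ it uses $A_f(1)\ge d_0L_f(1)$ to confine $f-1/2$ to an explicit $L^2$ ball (Proposition~\ref{prop:ifs-entropy-localization}). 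Without this, profiles with $\tau<\beta$, which bring $\beta$ and $\tau/\beta$ into the window, compete; these are precisely the improving profiles behind the first departure in Theorem~\ref{thm:ifs-first-transition}.

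Even granted $\tau\ge\beta$, stationarity must be upgraded to a bound valid for every measurable $f$. The paper's weighted endpoint expression, with weights $(w_\delta,1-w_\delta)$ and multiplier $\eta$, is bounded cellwise by $h(x)+tx\le\log(1+e^t)$ and collapses to $(1-\tau)\mathcal C(k,w_\delta)=0$. The representation $d_\theta=\min_w(D-\delta k(w))/(1-\delta w)$, with $k$ strictly concave, then gives the analyticity, monotonicity and concavity you defer.

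Three further steps fail as stated.
\begin{itemize}
\item Perturbing the box-optimal profile only bounds $d_\theta$ from below. The strict inequality $d_\theta<\dim_B E_{\beta,M}$ needs attainment plus the Jensen equality case: equality at both window endpoints forces $f=p_*$ and $f=q_*$ on a common interval of positive length.
\item A finite analytic partition of $[\varepsilon,1]$ does not follow from each ordering regime being ``an analytic system''. The value of a parametric concave max--min problem can switch active constraints inside a regime, so the paper needs definability in $\mathbb R_{\mathrm{an},\exp}$ and analytic cell decomposition.
\item The nonanalytic point needs the fact that the continued branch tends to $s_*<d_0$ as $\theta\downarrow0$ (Proposition~\ref{prop:ifs-genuine-transition}), not the $(\log(1/\theta))^{-2}$ rate. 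With that, the identity theorem at the first departure gives the conclusion.
\end{itemize}
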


Here $c/\lambda=\dim_H E_{\beta,M}$. Theorem~\ref{thm:tree-dimensions}
and Theorem~\ref{thm:tree-random-images} give the source and fractional
Brownian image formulas. Averaging on delay-orbit partitions reduces
these formulas to finite convex problems. Theorem~\ref{thm:tree-endpoint-rate}
gives the endpoint rate, and Section~\ref{sec:ifs-regularity} treats the
analytic branches.

Theorem~\ref{thm:ifs-first-transition} determines the sharp separation
between the first departure from the analytic branch and the collision
of its frequency intervals, uniformly on compact delay intervals for
sufficiently small contraction ratios. A four-node relative-entropy
certificate gives the upper bound; an equalizing profile gives the
matching lower bound.

For the oscillating construction, Theorem~\ref{thm:oscillating-intermediate}
gives distinct lower and upper intermediate dimensions at every positive
parameter. Its Assouad spectrum agrees with that of the fixed
construction. The spectrum has three analytic branches and is $C^1$, with a jump in
the second derivative at each transition
(Corollary~\ref{cor:ifs-assouad-phases}).

Write $G(n,m)$ for the Grassmannian of $m$-dimensional linear subspaces
of $\R^n$, $P_V$ for orthogonal projection onto $V$, and
$P_V{}_\#\mu$ for the pushforward measure. The quasi-Assouad dimension
$\dim_{qA}\mu$ is defined in Section~\ref{sec:quasi-measures}.

\begin{maintheorem}[Quasi-Assouad criterion for measure projections]
\label{main:measure}
Let $\mu$ be a nonzero finite Borel measure with compact support in
$\R^n$, and let $m\in\{1,\ldots,n-1\}$. If
$\dim_{qA}\mu\le m$, then, for almost every $V\in G(n,m)$ with
respect to the rotation-invariant probability measure, simultaneously
for every $0<\theta\le1$,
\[
 \ldim_\theta(P_V{}_\#\mu)=\ldim_\theta\mu,
 \qquad
 \udim_\theta(P_V{}_\#\mu)=\udim_\theta\mu.
\]
\end{maintheorem}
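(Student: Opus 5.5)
Since $P_V$ is $1$-Lipschitz, $\ldim_\theta(P_V{}_\#\mu)\le\ldim_\theta\mu$ and $\udim_\theta(P_V{}_\#\mu)\le\udim_\theta\mu$ for every $V$ and every $\theta$. Only the reverse inequalities need work. I will use the capacity (energy) characterization of intermediate dimensions of measures from Section~\ref{sec:quasi-measures}. For scale $r$, exponent $\theta$ and parameter $s$ there is a kernel $\phi^{s}_{r,\theta}(t)$: it equals $1$ for $t<r$, equals $(r/t)^{s}$-type decay on $r\le t\le r^\theta$, and has the modified tail $r^{\theta s}t^{-m}$ (with $m$ the target dimension) for $t>r^\theta$. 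The dimensions are then read off from the growth rate of $\iint\phi^{s}_{r,\theta}(|x-y|)\,d\mu\,d\mu$ as $r\to0$, in analogy with the Burrell--Falconer--Fraser projection theorems.

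The key step is a transversality average over the Grassmannian. For fixed $x\neq y$,
\[
 \int_{G(n,m)}\phi^{s}_{r,\theta}(|P_V(x-y)|)\,dV\lesssim \phi^{s,m}_{r,\theta}(|x-y|),
\]
where the right side uses a kernel of the same form with dimension cutoff $m$. This follows from the standard estimate $\gamma\{V:|P_V z|\le\rho\}\lesssim\min\{1,(\rho/|z|)^m\}$ and a layer-cake integration.

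Integrating against $\mu\times\mu$ with Fubini, the expected projected energy at each scale $r$ is bounded by an $m$-truncated energy of $\mu$. The truncation only changes the kernel where pairs at distance $\ge r^\theta$ are weighted by $|x-y|^{-m}$ instead of $|x-y|^{-s}$ with $s$ possibly larger than $m$. The projected energy then controls projected dimension from below, giving the reverse inequality, provided the $m$-truncated energy of $\mu$ is comparable to the untruncated one up to $r^{-\eps}$ factors.

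This comparability is exactly where $\dim_{qA}\mu\le m$ enters, and it is the main obstacle. Quasi-Assouad gives, for every $\eps>0$ and every $\delta>0$, the bound $\mu(B(x,\rho))/\mu(B(x,R))\lesssim (\rho/R)^{m+\eps}$... Since we need an upper bound on masses, I will instead use the doubling-type bound
\[
 \mu(B(x,R))\le C_\eps (R/\rho)^{m+\eps}\mu(B(x,\rho))\quad\text{whenever } \rho\le R^{1+\delta}.
\]
This shows that the energy contribution from dyadic shells at distance $R\in[r^\theta,1]$ is dominated, up to a factor $r^{-O(\eps)}$, by the contribution at scale $\approx r^\theta$. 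So whether the kernel tail is $t^{-m}$ or $t^{-s}$ does not change the exponential growth rate.

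I expect the hard bookkeeping to be the window $\rho\in(R^{1+\delta},R)$, where quasi-Assouad gives no control. There I will simply absorb the crude loss of $R^{-O(\delta)}$ and then let $\delta,\eps\to0$.

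The last step is to pass from expectation to almost every $V$ simultaneously for all $\theta$:

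1. Apply Markov's inequality along $r_k=2^{-k}$ with a factor $k^2$, and use Borel--Cantelli to conclude that almost surely the projected energy is at most $k^2$ times the truncated energy for all large $k$.
2. Use the scale comparison \eqref{eq:fixed-scale-comparison}, in its measure analogue, to interpolate between dyadic scales.
3. Restrict to a countable dense set of $\theta$ and rational $s$, then extend to all $\theta$ using continuity of $\theta\mapsto\ldim_\theta,\udim_\theta$ on $(0,1]$.

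For the lower dimension, the liminf is handled by applying the same almost-sure bound along the subsequence of scales on which $\mu$'s energy is small.
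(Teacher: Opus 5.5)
There is a genuine gap in the quantity you average. The intermediate dimensions of a prescribed measure in Section~\ref{sec:quasi-measures} are worst-centre quantities. The statement $s<\udim_\theta\mu$ (resp.\ $s<\ldim_\theta\mu$) means that at arbitrarily small (resp.\ all sufficiently small) scales $r$ there is \emph{some} centre $x_r\in K$ with $\mu(B(x_r,u))<u^s$ for every $u\in[r,r^\theta]$. These dimensions are not governed by the double energy $\iint\phi^{s,m}_{r,\theta}(x-y)\,d\mu(x)\,d\mu(y)$. That energy averages the point potentials over $x$, so it is controlled by the most concentrated part of $\mu$. In the Burrell--Falconer--Fraser setting for sets one minimizes the energy over all probability measures on the set. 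For a fixed $\mu$ that freedom is gone, and the analogy breaks.

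Concretely, let $\mu=\delta_0+\mathcal H^1|_S$ with $S=[1,2]\times\{0\}^{n-1}$. Then $\dim_{qA}\mu=1\le m$, and $\ldim_\theta\mu=\udim_\theta\mu=1$ for every $\theta$, since centres in $S$ have $\mu(B(x,u))\le 2u$. Yet the double energy is at least $\mu(\{0\})^2=1$ at every scale. So the link your chain needs, that $s<D_\theta(\mu)$ forces the $m$-tailed energy of $\mu$ to be $\lesssim r^{s+\alpha}$, is false. Bounding the projected double energy by this energy therefore cannot yield $D_\theta(P_V{}_\#\mu)\ge D_\theta(\mu)$. The same happens without atoms: adding a low-dimensional piece to a high-dimensional one makes the double energy see the smaller exponent, while $\udim_\theta\mu$ sees the larger.

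The repair, which is the paper's route, is to use point potentials at deterministic witnesses.
\begin{enumerate}
\item For $s<t<D_\theta(\mu)$, take $x_r$ as above with exponent $t$ (Lemma~\ref{lem:measure-small-mass-witnesses}).
\item Prove $\int\phi^{s,m}_{r,\theta}(x_r-y)\,d\mu(y)\le r^{s+\alpha}$ from quasi-Assouad growth.
\item Average $\int\widetilde\phi^{s}_{r,\theta}(P_V(x_r-y))\,d\mu(y)$ over $V$ using the projection-kernel estimate.
\item Apply Markov and Borel--Cantelli. Since $x_r$ does not depend on $V$, this gives $P_V{}_\#\mu(B(P_Vx_r,u))\le u^s$ on the window for almost every $V$.
\end{enumerate}

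With this change your quasi-Assouad bookkeeping does work, and it differs mildly from the paper's. The paper assumes small mass on the enlarged window $[r,r^{\theta/(1+\eta)}]$ and pays via $D_{\theta/(1+\eta)}\ge D_\theta/(1+\eta)$. Your monotonicity bound instead gives $\mu(B(x_r,u))\le\mu(B(x_r,\rho^{1/(1+\delta)}))\le C\rho^{-\delta d/(1+\delta)}\rho^{t}$ for $u\in[\rho,\rho^{1/(1+\delta)}]$, with $\rho=r^\theta$. This loses only $r^{-O(\theta\delta)}$ and needs no window enlargement.

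Three smaller corrections:
\begin{enumerate}
\item The Grassmannian estimate requires $s<m$. This holds because $s<D_\theta(\mu)\le\dim_{qA}\mu\le m$, so there is no case $s>m$.
\item The kernel tail must be $r^{s+\theta(m-s)}t^{-m}$, continuous at $t=r^\theta$. With your $r^{\theta s}t^{-m}$ the tail contribution is too large to give the required $r^{s+\alpha}$ bound.
\item The scale requirements in your last sentence are reversed. The lower dimension needs witnesses at every sufficiently small dyadic scale, followed by interpolation. It is the upper dimension for which a sequence of scales suffices.
\end{enumerate}
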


Theorem~\ref{thm:quasi-measure-projection} proves this assertion,
answering \cite[Remark~5.16]{AMT2026}. The proof combines an enlarged
scale window, a point-potential estimate, and projection averaging.

\subsection{Relation to previous work}
\label{sec:related-literature}\label{sec:technical-overview}

Dimension profiles for packing dimensions and fractional Brownian images
were developed in \cite{FH1997,Xiao1997}. Intermediate profiles and their
applications to projections and random images appear in
\cite{BFF2021,Burrell2022,AngeliniMolter2025,Feng2024}.
For $E_\rho=\{0\}\cup\{m^{-\rho}:m\geq1\}$, $\rho>0$,
\citet{Falconer2022} proved
\[
 \dim_\theta E_\rho=\frac{\theta}{\rho+\theta},\qquad
 \dim_\theta B_H(E_\rho)=\frac{\theta}{\rho H+\theta}
 \quad\text{almost surely}.
\]
The cluster theorem extends the covering and energy estimates to
varying supports with separate block and cluster scales.

The spectral application uses arc-cosine kernels \cite{ChoSaul2009},
neural tangent kernels \cite{Jacot2018}, and spherical ReLU spectra
\cite{BM2019,BB2021}. Nonuniform sampling and density comparison are
studied in \cite{Basri2020,LYCL2024}. Harmonic identities
\cite{DaiXu2013,DLMF} and self-adjoint perturbation theory
\cite{Kato1995} give a Schur-complement estimate uniform in the harmonic
degree. The conditions $\delta m\ll1$ and $\delta m^2\ll1$ control
band isolation and separation of the selected eigenvalues, respectively.

The Cantor constructions and their endpoint dimensions are due to
\citet{BBFLX2025}, whose Section~1.5 asks for intermediate dimensions
and Assouad spectra. Related entropy and pressure methods occur for
Bedford--McMullen carpets \cite{BanajiKolossvary2024} and
Gatzouras--Lalley carpets \cite{BFKR2026}; attainable intermediate-dimension
curves are studied in \cite{BanajiRutar2022}. Here cylinder lengths depend
on delayed digit frequencies. Type counting and conditioned Bernoulli
measures yield the variational formulas. Analytic cell decomposition
\cite{vandenDriesMiller1994} gives locally finite analytic branches.
The first-transition estimate combines Bernoulli relative entropy and
Hoeffding's bound \cite{Hoeffding1963} with the physical-scale constraint.
Uniform local cylinder counts yield the Assouad spectrum, following
\cite{FraserYu2018Spectra,FraserYu2018Families}.

Quasi-Assouad projection criteria are known for box and packing dimensions
\cite{FFS2021} and for intermediate dimensions of sets
\cite[Corollary~3.7]{AngeliniMolter2025}.
For measures, \citet{AMT2026} proved invariance under
$\dim_A\mu<m$. Theorem~\ref{main:measure} uses quasi-Assouad mass growth
\cite{HHT2020} on an enlarged testing window to obtain the point-potential
bound. Projection averaging then follows the kernel method of \cite{BFF2021}.

\subsection{Conventions and organization}

All logarithms are natural. We write $f\lesssim g$ for $f\leq Cg$,
$f\sim g$ for $f\lesssim g\lesssim f$.
Subscripts indicate dependence of the implicit constants. Unless stated
otherwise, constants are independent of the covering scale and the band
or word index. The symbol $\beta$ denotes a perturbation exponent in
Part~\ref{part:spectral} and a delay parameter in Part~\ref{part:cantor}.

Parts~\ref{part:spectral}, \ref{part:cantor}, and~\ref{part:measures}
prove the spectral, Cantor, and measure results. The longer spectral
and pressure calculations are collected in the appendices.

\part{Scale-dependent spectral families}
\label{part:spectral}
\section{A cluster covering theorem}\label{sec:vanishing-splitting}

A block of spectral clusters is described by three quantities: its
number of distinct values, its total span, and the span of one cluster.
The theorem below relates these quantities to covering costs and
applies both to splitting proportional to the adjacent spectral gap
and to the wider clusters produced by an input-distribution perturbation.
Separation is required only for a fixed positive proportion of the
points in each cluster.

\subsection{Finite windows and a separated subset}

Fix $b\geq1$, $H\in(0,1)$, and $0<\omega<\infty$.
For $0<r<1$ put $N_r=\lfloor r^{-\omega}\rfloor$ and let
$C_{r,m}$, $1\leq m\leq N_r$, be nonempty finite subsets of one
fixed interval $[0,A_0]$. For integers $1\leq M\leq N_r$, write
\[
 F_r=\{0\}\cup\bigcup_{1\leq m\leq N_r}C_{r,m},\qquad
 \mathcal B_r(M)=\bigcup_{M\leq m<\min\{2M,N_r+1\}}C_{r,m}.
\]
All families are deterministic, and one common real fractional Brownian
path $B_H$ is used throughout. All comparison constants in the following
four assumptions are positive and independent of $r,m,M$.
\begin{enumerate}
\renewcommand{\labelenumi}{(A\arabic{enumi})}
\renewcommand{\theenumi}{A\arabic{enumi}}
\item\label{ass:cluster-count} \emph{Cardinality.}
Each cluster has at most $Cm^{b-1}$ points and contains an ordered
subset $X_{r,m}=\{x_{m,1}<\cdots<x_{m,J_{r,m}}\}$ with
$cm^{b-1}\leq J_{r,m}\leq Cm^{b-1}$.
\item\label{ass:cluster-spans} \emph{Block and cluster spans.}
There are positive scales $\mathcal L_r(M),\mathcal W_r(M)$ such that
\[
 \operatorname{diam}\mathcal B_r(M)\leq C\mathcal L_r(M),
 \quad
 \operatorname{diam}C_{r,m}\leq C\mathcal W_r(M),
 \quad \forall M\leq m<\min\{2M,N_r+1\}.
\]
\item\label{ass:cluster-separation} \emph{Separation.}
For distinct bands $i,k$ in a complete block
$M\leq i,k<2M\leq N_r+1$,
\begin{equation}\label{eq:general-cross-spacing}
 {\rm dist}(X_{r,i},X_{r,k})\geq c\mathcal L_r(M)\frac{|i-k|}{M}.
\end{equation}
and within each selected subset in that block,
\begin{equation}\label{eq:general-inner-spacing}
 |x_{m,j}-x_{m,i}|\geq c\mathcal W_r(M)\frac{|j-i|}{J_{r,m}},
 \quad 1\leq i,j\leq J_{r,m}.
\end{equation}
\item\label{ass:cluster-exponents} \emph{Scale exponents.} There are continuous functions
$\ell,w:[0,\omega]\to[0,\infty)$ such that, uniformly for integers
$1\leq M\leq N_r$, with $u=\log M/\log(1/r)$,
\begin{equation}\label{eq:general-scales}
 \mathcal L_r(M)\sim r^{\ell(u)},\qquad
 \mathcal W_r(M)\sim r^{w(u)}.
\end{equation}

\end{enumerate}
The lower estimates are needed only on complete blocks
$M\leq m<2M\leq N_r+1$. The upper estimates also apply to the
last, possibly incomplete, block. A fixed number of exceptional
initial bands of uniformly bounded cardinality may be omitted from
these assumptions: their images have cost $O(r^s)$.

Write $x_+=\max\{x,0\}$. For $u,v\geq0$ define
\begin{equation}\label{eq:vs-Q}
 Q_\theta(u,v)=
 \begin{cases}
 (u+\theta-v)/\theta,&0\leq v\leq\theta,\\
 u/v,&\theta\leq v\leq1,\\
 u,&v\geq1.
 \end{cases}
\end{equation}
The formulas agree at both endpoints, including when $\theta=1$.

\begin{theorem}[Transfer from cluster geometry]
\label{thm:general-cluster-transfer}
Under assumptions (\ref{ass:cluster-count})--(\ref{ass:cluster-exponents}),
the joint covering threshold of
$B_H(F_r)$ is
\begin{equation}\label{eq:general-threshold}
 D(\theta)=\max_{0\leq u\leq\omega}
 \min\left\{bu,\left(1-\frac{H\ell(u)}{\theta}\right)_+,
                       Q_\theta\bigl(u,Hw(u)\bigr)\right\}.
\end{equation}
More precisely, fix $a>1$ and put $r_n=a^{-n}$. Then, for all $0<\theta\leq1$,
\begin{align}
 S^s_{r_n,\theta}(B_H(F_{r_n}))&\xrightarrow[n\to\infty]{a.s.}\infty,  \quad 0<s<D(\theta), \label{eq:rn-a.s.-sub}\\
 S^s_{r,\theta}(B_H(F_{r}))&\xrightarrow[r\to0]{P}\infty, \quad 0<s<D(\theta), \label{eq:r-P-sub}\\
 S^s_{r,\theta}(B_H(F_r))&\xrightarrow[r\to0]{a.s.}0, \quad D(\theta)<s<1. \label{eq:r-a.s.-super}
\end{align}
The almost-sure limits in \eqref{eq:rn-a.s.-sub} and
\eqref{eq:r-a.s.-super} hold on one event simultaneously for all
$\theta$ and all indicated $s$; this event may depend on $a$.
The convergence in probability in \eqref{eq:r-P-sub} holds for each
fixed $\theta,s$ in the stated range.
There is no conclusion at $s=D(\theta)$.
\end{theorem}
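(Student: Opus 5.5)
The plan is to prove the upper bound \eqref{eq:r-a.s.-super} by explicit covers and the lower bounds \eqref{eq:rn-a.s.-sub}, \eqref{eq:r-P-sub} by an energy (mass distribution) argument, in the spirit of \citet{Falconer2022}. Both work block by block: the block $\mathcal B_r(M)$ with $M=r^{-u}$ is one ``level'' $u\in[0,\omega]$, and there are only $O(\log(1/r))$ dyadic blocks.

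\emph{Upper bound.} The key probabilistic input is the uniform H\"older modulus of $B_H$ on $[0,A_0]$: almost surely $|B_H(x)-B_H(y)|\le C_\eta|x-y|^{H-\eta}$ for every $\eta>0$. Hence the image of a block has diameter $\lesssim r^{H\ell(u)-\eta}$, and the image of a cluster has diameter $\lesssim r^{Hw(u)-\eta}$. For a block at level $u$ we take the cheapest of three covers.
\begin{enumerate}
\item Cover each of the $\lesssim M^b=r^{-bu}$ image points by an interval of length $r$. This gives cost $r^{s-bu}$, which is small when $s>bu$.
\item Cover the whole block image by intervals of length $r^\theta$ (or one interval if it is shorter). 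This gives cost $r^{H\ell-\theta+\theta s}$, which is small when $s>1-H\ell/\theta$.
\item Cover each of the $M$ cluster images by one interval of length $\max(r,\min(r^{Hw},r^{\theta}))$, subdividing into pieces of length $r^\theta$ when needed. This gives exactly the three regimes of $Q_\theta(u,Hw)$: for $v=Hw\le\theta$ the cost is $M\,r^{v-\theta}r^{\theta s}$, for $\theta\le v\le1$ it is $Mr^{vs}$, and for $v\ge1$ it is $Mr^{s}$.
\end{enumerate}
Summing over the $O(\log 1/r)$ blocks, and using continuity of $\ell,w$ together with a fine mesh in $u$ (plus $\eta\to0$), we get the estimate for $s>D(\theta)$ at all real $r$ on a single event. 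Simultaneity in $\theta$ follows because the estimates depend only on the one H\"older event. The initial exceptional bands contribute $O(r^s)$.

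\emph{Lower bound.} Fix a maximizing $u^\ast$ and put $M=r^{-u^\ast}$. We place the uniform probability measure $\mu$ on $X=\bigcup_m X_{r,m}$ in the block, and consider the image measure $\nu=B_H{}_\#\mu$. The standard lemma (as in \citet{Falconer2022} and \cite{BFF2021}) says that for a measure on the image, with kernel $\phi_{r,\theta}^s(x)=\min\{1,(r^\theta/|x|)^{?}\}$-type truncated at scales $r$ and $r^\theta$, we have $S^s_{r,\theta}\gtrsim(\iint\phi\,d\nu\,d\nu)^{-1}$. We then bound $\E\iint\phi(B_H(x)-B_H(y))\,d\mu\,d\mu$ using the Gaussian estimate $\E\phi(|x-y|^HZ)\lesssim\min\{1,\ r^{\theta(s-1)}\dots\}$, expressed as a function of $|x-y|^H$ relative to $r$ and $r^\theta$. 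The energy splits into three terms.
\begin{enumerate}
\item The diagonal term has weight $\#X^{-1}=M^{-b}$, which produces the constraint $bu$.
\item The same-cluster terms use the inner spacing \eqref{eq:general-inner-spacing}, so the $j$-th neighbour lies at distance $\gtrsim \mathcal W j/J$. They have weight $M^{-1}$ times a sum over $j$, which produces the constraint $Q_\theta(u,Hw)$.
\item The different-cluster terms use the cross spacing \eqref{eq:general-cross-spacing}, with distance $\gtrsim\mathcal L|i-k|/M$. This produces the constraint $1-H\ell/\theta$.
\end{enumerate}
Summing the resulting geometric and harmonic-type series gives $\E\,\mathcal I\lesssim r^{s'-\,\text{stuff}}$ with a strictly positive exponent margin whenever $s<D(\theta)$. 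Markov's inequality then gives \eqref{eq:r-P-sub} at each fixed $(\theta,s)$. Along $r_n=a^{-n}$ the probabilities are summable (they decay like $r_n^{\epsilon}$), so Borel--Cantelli gives almost sure divergence. To make this simultaneous in $\theta,s$, we apply it on a countable dense set of rational pairs and use monotonicity of $S^s_{r,\theta}$ in $s$ together with a comparison between nearby $\theta$ (the window $[r,r^\theta]$ changes only slightly, which costs an $r^{-\epsilon}$ factor, absorbed by the margin).

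\emph{Main obstacle.} The hard step is the same-cluster energy estimate. We must sum $\E\phi$ over pairs inside a cluster whose spacing is only linear in $|i-j|/J$, and the sum must produce exactly the piecewise formula $Q_\theta$, especially the transition regime $v\le\theta$, where distances fall below $r^{\theta/H}$ and the kernel is saturated. I would first compute the sum $\sum_{j\le J}\min\{1,(\cdots)\}$ by splitting at the index where $\mathcal W j/J$ crosses $r^{1/H}$ and $r^{\theta/H}$. After that, I would check that the resulting exponent matches the upper-bound covers, so that the constant-order terms align with the three regimes.
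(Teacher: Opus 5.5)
Your plan is essentially the paper's proof. For the supercritical bound it uses the point, block-span and cluster-span covers with the H\"older modulus. For the lower bounds it takes the uniform measure on the separated subsets of one complete block, splits the expected truncated energy into diagonal, same-cluster and different-cluster terms via $\E K^s_{r,\theta}(B_H(x)-B_H(y))\lesssim\min\{1,r^s|x-y|^{-Hs},r^{s+\theta(1-s)}|x-y|^{-H}\}$, and finishes with Markov, Borel--Cantelli along $a^{-n}$, and an intersection over rational parameters.

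Three minor corrections:
\begin{itemize}
\item Choose $u_0\in(0,\omega)$ with $s<d_\theta(u_0)$ rather than a maximizer. If the maximizer is $u^*=\omega$, the block $[M,2M)$ is incomplete and (A3) is not available.
\item The same-cluster sum you flag as the main obstacle needs no regime splitting. The function $\Psi_r(z)=\min\{1,r^sz^{-s},r^{s+\theta(1-s)}z^{-1}\}$ is nonincreasing with $\Psi_r(az)\le a^{-1}\Psi_r(z)$ for $0<a\le1$. Together with $H<1$, this bounds the term by $\lesssim M^{-1}\Psi_r(\mathcal W_r(M)^H)$, and $\Psi_r(r^\alpha)=r^{s-J_{s,\theta}(\alpha)}$ matches the cover exponents automatically.
\item The $\theta$-comparison costs no $r^{-\epsilon}$ factor if you take rational $\theta'<\theta$, because every admissible cover at $\theta$ is admissible at $\theta'$.
\end{itemize}
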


The proof is given in Section~\ref{sec:cluster-proof}. The lower bound is obtained from energy estimates for suitable measures supported on complete blocks, while the upper bound follows from explicit covers combined with the Hölder regularity of fractional Brownian motion.

The three constraints in \eqref{eq:general-threshold} correspond to
covering individual points, the whole block span, and the individual
cluster spans. The third constraint has three branches according to
whether a cluster image is longer than $r^\theta$, between $r$ and
$r^\theta$, or shorter than $r$. The theorem concerns a changing
family; every individual finite set $F_r$ has ordinary intermediate
dimension zero. No continuity or nesting in $r$ is required.

\subsection{Clusters scaled by the eigenvalue}

The first specialization gives the width law for the natural
input-distribution example.

\begin{corollary}[Eigenvalue-relative widths]
\label{cor:relative-width-transfer}
Let $\rho>0$, $b\geq1$, and $\beta>\omega>0$. Suppose that the positive sequence
$(a_m)_{m\geq1}$ satisfies
\begin{equation}\label{eq:relative-centers}
 a_m\sim m^{-\rho},\quad a_m-a_{m+1}\sim m^{-\rho-1},\quad m\geq1.
\end{equation}
For $1\leq m\leq N_r=\lfloor r^{-\omega}\rfloor$, suppose that
\begin{equation*}
 \#C_{r,m}\leq Cm^{b-1},\quad
 C_{r,m}\subset[a_m-Cr^\beta m^{-\rho},
                a_m+Cr^\beta m^{-\rho}],
\end{equation*}
and that $C_{r,m}$ contains $J_{r,m}\sim m^{b-1}$ ordered
points satisfying
\begin{equation}\label{eq:relative-subset}
 |x_{m,j}-x_{m,i}|\geq cr^\beta m^{-\rho}
                    \frac{|j-i|}{J_{r,m}},\quad 1\leq i,j\leq J_{r,m}.
\end{equation}
All implicit constants are independent of $r,m$. A fixed number of initial
bands of uniformly bounded cardinality may be omitted from the cluster
assumptions. Then the joint covering threshold of
$B_H(F_r)$
is
\begin{equation*}
 D_{\beta,\omega}^{\rho,b,H}(\theta)
 =\max_{0\leq u\leq\omega}
 \min\left\{bu,\left(1-\frac{\rho Hu}{\theta}\right)_+,
             Q_\theta\bigl(u,H(\beta+\rho u)\bigr)\right\},
\end{equation*}
with all probability and scale qualifications of
Theorem~\ref{thm:general-cluster-transfer}.
\end{corollary}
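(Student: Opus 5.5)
The plan is to verify assumptions (\ref{ass:cluster-count})--(\ref{ass:cluster-exponents}) of Theorem~\ref{thm:general-cluster-transfer} with
\[
 \mathcal L_r(M)=M^{-\rho},\qquad \mathcal W_r(M)=r^\beta M^{-\rho},
\]
so that, with $M=r^{-u}$, we have $\ell(u)=\rho u$ and $w(u)=\beta+\rho u$. Both are continuous on $[0,\omega]$. Substituting them into \eqref{eq:general-threshold} gives the stated formula, and the probability and scale qualifications are inherited verbatim. For (\ref{ass:cluster-exponents}), note that $M\le N_r\le r^{-\omega}$. Hence $M^{-\rho}=r^{\rho u}$ exactly, and the comparisons in \eqref{eq:general-scales} hold with constant one. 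All $C_{r,m}$ lie in $[0,A_0]$ for a fixed $A_0$, since $a_m\lesssim1$ and $r^\beta m^{-\rho}\le1$.

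Cardinality (\ref{ass:cluster-count}) is assumed directly. The cluster span bound in (\ref{ass:cluster-spans}) is $\operatorname{diam}C_{r,m}\le 2Cr^\beta m^{-\rho}\lesssim r^\beta M^{-\rho}$ for $M\le m<2M$. For the block span, every point of $\mathcal B_r(M)$ lies within $Cr^\beta M^{-\rho}$ of some $a_m$ with $M\le m<2M$. The centers satisfy $0<a_M-a_{2M-1}\le a_M\lesssim M^{-\rho}$. Therefore $\operatorname{diam}\mathcal B_r(M)\lesssim M^{-\rho}$, and this also covers an incomplete last block. The inner spacing \eqref{eq:general-inner-spacing} is exactly the hypothesis \eqref{eq:relative-subset} with $\mathcal W_r(M)\sim r^\beta m^{-\rho}$, since $m\sim M$.

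The main step is the cross-spacing \eqref{eq:general-cross-spacing}, and this is where $\beta>\omega$ enters. Take $M\le i<k<2M$. Summing the gap hypothesis in \eqref{eq:relative-centers} gives
\[
 a_i-a_k=\sum_{m=i}^{k-1}(a_m-a_{m+1})\ge c_1 (k-i)M^{-\rho-1}.
\]
Each selected subset deviates from its center by at most $Cr^\beta M^{-\rho}$. Hence
\[
 {\rm dist}(X_{r,i},X_{r,k})\ge c_1(k-i)M^{-\rho-1}-2C'r^\beta M^{-\rho}.
\]
The error term must be absorbed into half of the main term, and it suffices to do this when $k-i=1$. That requires $r^\beta M\le c_1/(4C')$. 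Since $M\le r^{-\omega}$, we have $r^\beta M\le r^{\beta-\omega}\to0$, so the requirement holds for all $r<r_0$. This yields \eqref{eq:general-cross-spacing} with $\mathcal L_r(M)=M^{-\rho}$.

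Two points remain. First, scales $r\ge r_0$ are irrelevant to the limits in $r$. Second, if uniform constants are needed at all $r$, the constants may simply be adjusted there, because the bands with $M$ below any fixed bound form a bounded exceptional set, which the theorem allows to be omitted. The upper endpoint $2M\le N_r+1$ of complete blocks causes no issue. Theorem~\ref{thm:general-cluster-transfer} then yields the claimed threshold $D_{\beta,\omega}^{\rho,b,H}(\theta)$.
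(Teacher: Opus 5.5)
Your proposal is correct and follows the paper's proof essentially step by step. It uses the same choices $\mathcal L_r(M)=M^{-\rho}$ and $\mathcal W_r(M)=r^\beta M^{-\rho}$, the same gap-summing lower bound for $a_i-a_k$ with the cluster deviations absorbed via $r^\beta M\le r^{\beta-\omega}\to0$, and the identity $M=r^{-u_M}$ for (\ref{ass:cluster-exponents}). One small slip: the clusters are nonnegative (needed for containment in $[0,A_0]$) because of the lower bound $a_m\gtrsim m^{-\rho}$, which gives $Cr^\beta m^{-\rho}\le a_m$ for small $r$, as the paper argues; $a_m\lesssim1$ and $r^\beta m^{-\rho}\le1$ only give the upper bound.
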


The condition $\beta>\omega$ keeps distinct spectral bands separated
throughout the retained window. Obtaining the subset
\eqref{eq:relative-subset} from a perturbation expansion can require
a stronger condition, as in the application below.

\subsection{Clusters scaled by the adjacent spectral gap}

Let $q>p>0$. For each $\eps\in[0,1]$, suppose that
$(a_m(\eps))_{m\geq1}$ is a positive, strictly decreasing sequence
converging to zero, and that
\begin{align}
 a_m(\eps)&\sim m^{-q}+\eps m^{-p},
 \label{eq:res-values}\\
 g_m(\eps):=a_m(\eps)-a_{m+1}(\eps)
 &\sim m^{-q-1}+\eps m^{-p-1},
 \label{eq:res-gaps}
\end{align}
with comparison constants independent of $m,\varepsilon$. Let $L_m$ be positive integers satisfying
$L_m\sim m^{b-1}$. Fix
\[
 H\in(0,1),\qquad \kappa,\chi\geq0,\qquad
 c_0\in(0,1),\qquad \omega\in(0,\infty].
\]
For every $m\geq1$ and $0<r<1$, choose deterministic points
\[
 0\leq\xi_{m,0}(r)<\cdots<\xi_{m,L_m-1}(r)\leq1
\]
satisfying the uniform separation condition
\begin{equation}\label{eq:vs-quasiuniform}
 \xi_{m,j}(r)-\xi_{m,i}(r)
 \geq c_{\rm sep}\frac{j-i}{L_m},\quad 0\leq i<j<L_m,
\end{equation}
for a fixed $c_{\rm sep}>0$, independent of $m,r,i,j$. Define
\begin{align*}
 \lambda_{m,j}(r)
 &=a_m(r^\kappa)-c_0r^\chi g_m(r^\kappa)\xi_{m,j}(r),\quad 0\leq j<L_m,\\
 F_r&=\{0\}\cup
 \{\lambda_{m,j}(r):1\leq m\leq N_r,\ 0\leq j<L_m\}.
\end{align*}
where $N_r=\lfloor r^{-\omega}\rfloor$ if $\omega<\infty$, and
$N_r=\infty$ otherwise. Thus there are on the order of $M^b$ distinct values in
the first $M$ bands. Each band lies within a fraction $c_0r^\chi$ of the
gap immediately below its original eigenvalue. In particular, the
values remain positive and different bands do not overlap.

Condition~\eqref{eq:vs-quasiuniform} includes the equal grids
$\xi_{m,j}=j/L_m$ and, more generally, quasiuniform configurations
whose consecutive spacings are comparable to $L_m^{-1}$. Only the
lower comparison is required. The configuration may change with $r$
without any continuity or nesting assumption. All constants below
are uniform over configurations with the stated separation constant.

To state the resulting covering threshold more conveniently, define
\begin{equation*}
 \ell_\kappa(u)=\min\{qu,\kappa+pu\},\qquad
 t(u)=H\ell_\kappa(u),\qquad
 v(u)=H\bigl(\chi+\ell_\kappa(u)+u\bigr).
\end{equation*}
For $u\geq0$ let
\begin{equation*}
 d_\theta(u)=\min\left\{
 bu,\ \left(1-\frac{t(u)}{\theta}\right)_+,\
 Q_\theta\bigl(u,v(u)\bigr)\right\},
 \qquad x_+=\max\{x,0\}.
\end{equation*}

\begin{corollary}[Image thresholds for gap-scaled clusters]
\label{cor:gap-width-transfer}\label{thm:vanishing-clusters}
For each fixed deterministic family satisfying
\eqref{eq:res-values}, \eqref{eq:res-gaps}, and
\eqref{eq:vs-quasiuniform}, and each fixed $a>1$, almost surely
simultaneously for every $0<\theta\leq1$, the covering threshold of
$B_H(F_r)$ along $r_n=a^{-n}$ is
\begin{equation}\label{eq:vs-threshold}
 D_{\kappa,\chi,\omega,b}(\theta)
 =\max_{0\leq u\leq\min\{\omega,\theta/(pH)\}}d_\theta(u).
\end{equation}
The probability, scale, and critical-exponent qualifications are as in
Theorem~\ref{thm:general-cluster-transfer}.
\end{corollary}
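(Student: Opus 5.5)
We deduce Corollary~\ref{cor:gap-width-transfer} from Theorem~\ref{thm:general-cluster-transfer} with clusters $C_{r,m}=\{\lambda_{m,j}(r):0\le j<L_m\}$ and $X_{r,m}=C_{r,m}$, so $J_{r,m}=L_m\sim m^{b-1}$ and (\ref{ass:cluster-count}) is immediate. Everything lies in $[0,A_0]$ with $A_0=\sup_{m,\eps}a_m(\eps)<\infty$.

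\textbf{Scales.} For $m\sim M$ and $\eps=r^\kappa$, \eqref{eq:res-values} gives $a_m\sim M^{-q}+r^\kappa M^{-p}\sim r^{\ell_\kappa(u)}$, since the sum of two powers is comparable to the larger. The block $\mathcal B_r(M)$ lies between $a_{2M}$ and $a_M$. By \eqref{eq:res-gaps}, $a_M-a_{2M}=\sum_{M\le m<2M}g_m\sim M(M^{-q-1}+r^\kappa M^{-p-1})\sim a_M$. We therefore take $\mathcal L_r(M)=r^{\ell_\kappa(u)}$, giving $\ell(u)=\ell_\kappa(u)$. Each cluster has diameter at most $c_0r^\chi g_m\sim r^{\chi}r^{\ell_\kappa(u)}M^{-1}$, so we take $\mathcal W_r(M)=r^{\chi+\ell_\kappa(u)+u}$ and $w(u)=\chi+\ell_\kappa(u)+u$, which is continuous. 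This gives (\ref{ass:cluster-spans}) and (\ref{ass:cluster-exponents}), with $H\ell=t$ and $Hw=v$.

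\textbf{Separation.} The inner spacing \eqref{eq:general-inner-spacing} follows from \eqref{eq:vs-quasiuniform} after multiplying by $c_0r^\chi g_m\gtrsim \mathcal W_r(M)$, using $g_m\sim g_M$ for $m\sim M$. For the cross spacing with $i<k$, band $i$ lies in $[a_i-c_0g_i,a_i]$ and band $k$ in $[a_k-c_0 g_k,a_k]$. Hence
\[
{\rm dist}\ge a_i-c_0g_i-a_k\ge (1-c_0)\sum_{i\le m<k}g_m,
\]
because $a_i-a_k\ge g_i+\sum_{i<m<k}g_m$ and $c_0<1$. This is $\gtrsim |i-k|\,g_M\sim \mathcal L_r(M)|i-k|/M$.

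\textbf{Window.} If $\omega=\infty$, $N_r$ is infinite while the theorem requires $N_r=\lfloor r^{-\omega}\rfloor$ finite. We split off the tail $m>M_*$ with $M_*=r^{-u_*}$ for $u_*$ slightly larger than $\theta/(pH)$. Once $\kappa+pu_*$ is the relevant exponent (note $\ell_\kappa\ge pu$ anyway), Hölder regularity gives that the tail lies in $[0,A r^{p u_*}]$ in the source and in an image interval of length $\lesssim r^{H p u_*-\eta}\le r^\theta$ almost surely. One admissible interval covers it at cost $\le r^{\theta s}\to0$, and it cannot increase the lower bound beyond what the head provides. The theorem therefore applies to the truncated family with $\omega'=\min\{\omega,u_*\}$.

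\textbf{Reducing the maximum.} It remains to check that the maximum over $[0,\omega']$ equals the maximum over $[0,\min\{\omega,\theta/(pH)\}]$. For $u>\theta/(pH)$ we have $t(u)\ge Hpu>\theta$ only when $\ell_\kappa=\kappa+pu$, so the second term vanishes there. When instead $\ell_\kappa=qu<\kappa+pu$, we get $t=Hqu>Hpu>\theta$ as well, since $q>p$. So $d_\theta(u)=0$ beyond $\theta/(pH)$ and these $u$ are harmless.

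\textbf{Main obstacle.} The tail truncation is the delicate step. It needs an almost sure bound uniform in $\theta$ and $r$, obtained from the modulus of continuity of $B_H$ on $[0,A_0]$. It also needs the subcritical divergence in \eqref{eq:rn-a.s.-sub}–\eqref{eq:r-P-sub} to be unaffected by adding the tail; this holds because adding points only increases the cost.
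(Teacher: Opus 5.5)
Your proof is correct and follows essentially the paper's route: you use the same scales $\mathcal L_r(M)\sim M^{-q}+r^\kappa M^{-p}$ and $\mathcal W_r(M)=r^\chi\mathcal L_r(M)/M$, the same cross-separation bound $a_i-a_k-c_0g_i\geq(1-c_0)(a_i-a_k)$, and the same tail truncation for $\omega=\infty$, justified by $\ell_\kappa(u)\geq pu$. The one adjustment needed is to fix the cutoff independently of $\theta$: a cutoff $u_*$ depending on $\theta$ does not by itself produce one probability-one event valid simultaneously for all $\theta$, whereas the paper takes $U>4/(pH)$, so the tail image is $o(r)$ and costs at most $r^s$ for every $\theta$ at once (any fixed $u_*>1/(pH)$ would also work).
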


Corollaries~\ref{cor:relative-width-transfer} and
\ref{cor:gap-width-transfer} are separate specializations of
Theorem~\ref{thm:general-cluster-transfer}. The input-distribution
perturbation in Section~\ref{sec:natural-input} realizes the
eigenvalue-relative widths of the first corollary. The gap-scaled
configurations of the second are abstract point families: the ReLU
calculation in Appendix~\ref{sec:kernels} supplies examples of their
centre and gap laws. It does not realize every prescribed splitting
configuration as a perturbed operator spectrum.

\section{Covering and energy estimates}\label{sec:cluster-proof}

\subsection{Preliminary lemmas}

The first three lemmas also apply to the Cantor constructions in
Section~\ref{sec:tree-entropy}. Fix $0<s<1$, $0<\theta\leq1$, and put
$A=s+\theta(1-s)$. Define the truncated covering kernel and its energy by
\begin{equation*}
 K^s_{r,\theta}(z)=
 \begin{cases}
 1,&|z|\leq r,\\
 (r/|z|)^s,&r<|z|\leq r^\theta,\\
 0,&|z|>r^\theta,
 \end{cases}
 \qquad
 I^s_{r,\theta}(\eta)=
 \iint K^s_{r,\theta}(x-y)\dd\eta(x)\dd\eta(y).
\end{equation*}
The inclusion of the outer endpoint makes the definition valid at
$\theta=1$.

\begin{lemma}\label{lem:cover-gaussian}
If $\eta$ is a Borel probability measure supported on a bounded set
$F\subset\R$, then
\begin{equation}\label{eq:cover-energy}
 S^s_{r,\theta}(F)\geq\frac{r^s}{I^s_{r,\theta}(\eta)}.
\end{equation}
\end{lemma}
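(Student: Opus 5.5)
The plan is a mass-distribution argument. Take an arbitrary admissible cover $\{U_i\}$ of $F$ by intervals with $r\le|U_i|\le r^\theta$, and show that $\sum_i|U_i|^s\ge r^s/I^s_{r,\theta}(\eta)$; taking the infimum over covers then gives \eqref{eq:cover-energy}. We may assume the sum is finite. We may also assume $I^s_{r,\theta}(\eta)>0$, which holds automatically because $K^s_{r,\theta}=1$ near the diagonal: $I\ge(\eta\times\eta)\{|x-y|\le r\}>0$. Since $F$ is bounded, a compactness or countability argument lets us work with a countable cover.

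The key pointwise estimate is the following. For each $x\in U_i$ and each $y\in U_i$ we have $|x-y|\le|U_i|\le r^\theta$. Hence
\[
 K^s_{r,\theta}(x-y)\ge\min\{1,(r/|U_i|)^s\}=(r/|U_i|)^s,
\]
using $|U_i|\ge r$ and the fact that $K$ is nonincreasing in $|z|$ on $[0,r^\theta]$. Consequently
\[
 \int K^s_{r,\theta}(x-y)\dd\eta(y)\ge(r/|U_i|)^s\,\eta(U_i),\qquad x\in U_i .
\]
Integrating over $x\in U_i$ with respect to $\eta$ gives
\[
 I^s_{r,\theta}(\eta)\ge\iint_{U_i\times U_i}K\ge r^s|U_i|^{-s}\eta(U_i)^2 .
\]
Therefore $\eta(U_i)^2\le r^{-s}|U_i|^s I$ for every $i$.

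Now combine this with Cauchy--Schwarz. Since $\eta$ is a probability measure supported in $F\subset\bigcup_iU_i$,
\[
 1\le\sum_i\eta(U_i)=\sum_i\eta(U_i)\,|U_i|^{-s/2}\,|U_i|^{s/2}.
\]
Cauchy--Schwarz gives
\[
 1\le\Bigl(\sum_i\eta(U_i)^2|U_i|^{-s}\Bigr)^{1/2}\Bigl(\sum_i|U_i|^s\Bigr)^{1/2}.
\]
The first factor is not directly controlled by the per-interval bound summed over $i$, because overlapping intervals would give $\sum_i\eta(U_i)^2|U_i|^{-s}\le r^{-s}\cdot(\#\text{intervals})\cdot I$, which is not useful.

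The better route is to use the pointwise form directly. Define
\[
 \phi(x)=\int K(x-y)\dd\eta(y),
\]
so that $I=\int\phi\dd\eta$. Write $\eta=\sum_i\eta_i$ with $\eta_i$ the restriction of $\eta$ to $U_i\setminus\bigcup_{j<i}U_j$, a disjointification. For $x\in U_i$ the pointwise estimate gives $\phi(x)\ge r^s|U_i|^{-s}\eta(U_i)\ge r^s|U_i|^{-s}\eta_i(\R)$. Hence
\[
 I\ge\sum_i\eta_i(\R)^2r^s|U_i|^{-s}.
\]
Cauchy--Schwarz now yields
\[
 1=\Bigl(\sum_i\eta_i(\R)\Bigr)^2\le\Bigl(\sum_i\eta_i(\R)^2|U_i|^{-s}\Bigr)\Bigl(\sum_i|U_i|^s\Bigr)\le r^{-s}I\sum_i|U_i|^s,
\]
which is the claim.

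The only delicate point is exactly this disjointification step: the bound must not be lost through overlapping intervals. Using the disjoint pieces $\eta_i$, together with the inequality $\eta(U_i)\ge\eta_i(\R)$ inside the lower bound for $\phi$, handles it. Measurability issues are trivial since the $U_i$ are intervals.
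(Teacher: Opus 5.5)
Your final argument is correct and is essentially the paper's proof. You disjointify the cover into $A_i=U_i\setminus\bigcup_{j<i}U_j$, use $K^s_{r,\theta}(x-y)\ge(r/|U_i|)^s$ for $x,y\in U_i$ to bound the energy below by $r^s\sum_i|U_i|^{-s}\eta(A_i)^2$, and conclude by Cauchy--Schwarz. The only cosmetic difference is that you integrate the potential $\int K^s_{r,\theta}(x-y)\,\mathrm d\eta(y)$ over $A_i$ with the larger inner mass $\eta(U_i)$, whereas the paper restricts directly to $A_i\times A_i$; the overlapping-cover detour can simply be deleted.
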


\begin{proof}
Let $(U_i)$ be an admissible interval cover of $F$ and write $d_i=|U_i|$.
Form a disjoint partition by setting
\begin{equation*}
    A_1=U_1,\quad A_i=U_i\setminus\bigcup_{j<i}U_j,\quad i\geq2.
\end{equation*}
Then
$\sum_i\eta(A_i)=1$ and $A_i\subset U_i$. If $(x,y)\in A_i\times A_i$, we have $|x-y|\leq |U_i|=d_i\in[r,r^\theta]$, and hence $K^s_{r,\theta}(x-y)\geq r^sd_i^{-s}$. Then
\begin{align*}
    I^s_{r,\theta}(\eta)&=\iint K^s_{r,\theta}(x-y)\dd\eta(x)\dd\eta(y)\\
    &\geq \sum_{i}\iint_{A_i\times A_i} K^s_{r,\theta}(x-y)\dd\eta(x)\dd\eta(y)\\
    &\geq r^s\sum_{i}d_i^{-s}\eta(A_i)^2.
\end{align*}
By Cauchy-Schwarz inequality,
\begin{equation*}
    \sum_{i}d_i^{-s}\eta(A_i)^2\cdot\sum_{i}d_i^{s}\geq\bigg(\sum_i\eta(A_i)\bigg)^2=1.
\end{equation*}
Taking the infimum over covers proves the result.
\end{proof}

\begin{lemma}[Gaussian increment estimates]\label{lem:gaussian-increments}
For distinct $x,y\geq0$,
\begin{equation}\label{eq:vs-gaussian-three}
 \E K^s_{r,\theta}(B_H(x)-B_H(y))
 \leq C_s\min\left\{
 1,\ r^s|x-y|^{-Hs},\ r^A|x-y|^{-H}\right\}.
\end{equation}
The constant is independent of $r,x,y$.
\end{lemma}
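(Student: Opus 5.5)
Write $\rho=|x-y|>0$. Then $B_H(x)-B_H(y)\stackrel{\mathrm d}=\rho^H Z$ with $Z\sim N(0,1)$, so the left side of \eqref{eq:vs-gaussian-three} equals $\E K^s_{r,\theta}(\rho^H Z)$, a one-dimensional Gaussian integral. Each of the three bounds comes from a different elementary property of the kernel $K=K^s_{r,\theta}$, and the minimum is then immediate.

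\emph{First two bounds.} Since $0\le K\le1$, the bound $1$ is trivial. For the second, note that $K(z)\le\min\{1,(r/|z|)^s\}\le r^s|z|^{-s}$ for every $z\ne0$. Hence
\[
 \E K(\rho^H Z)\le r^s\rho^{-Hs}\,\E|Z|^{-s},
\]
and $\E|Z|^{-s}<\infty$ because $s<1$. This gives $C_s r^s\rho^{-Hs}$.

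\emph{Third bound.} Here I use that the density of $\rho^HZ$ is bounded by $(2\pi)^{-1/2}\rho^{-H}$. Thus $\E K(\rho^HZ)\le(2\pi)^{-1/2}\rho^{-H}\int_{\R}K(z)\dd z$, and it remains to show $\int K\lesssim_s r^A$. Splitting the integral at $|z|=r$ gives
\[
 \int_{\R}K(z)\dd z\le 2r+2r^s\int_r^{r^\theta}z^{-s}\dd z\le 2r+\frac{2}{1-s}\,r^s r^{\theta(1-s)} .
\]
The second term is $\frac{2}{1-s}r^{A}$. The first term is at most $2r^A$, because $A=s+\theta(1-s)\le1$ and $r<1$. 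So $\int K\le C_s r^A$. The outer truncation at $r^\theta$ is exactly what makes this integral finite, since $\int^\infty z^{-s}\dd z$ diverges for $s<1$.

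Combining the three cases yields \eqref{eq:vs-gaussian-three} with, for instance, $C_s=\max\{1,\E|Z|^{-s},\,(2\pi)^{-1/2}\cdot 2(1+(1-s)^{-1})\}$. This constant is independent of $r,x,y$ and of $\theta$.

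There is no real obstacle. The only points needing care are that the singular moment $\E|Z|^{-s}$ requires $s<1$, and the verification $r\le r^A$, which uses $A\le1$.
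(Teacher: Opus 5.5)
Your proof is correct and follows essentially the same route as the paper: the first two bounds are identical, and for the third the paper splits $\E K^s_{r,\theta}(|x-y|^HZ)$ over $|x-y|^H|Z|\le r$ and $r<|x-y|^H|Z|\le r^\theta$ and bounds $e^{-z^2/2}\le1$, which is exactly your ``supremum of the density times $\int_{\R}K^s_{r,\theta}\le 2r+\frac{2}{1-s}r^A$'' estimate followed by $r\le r^A$. Your packaging via the $L^1$ norm of the kernel is slightly cleaner and makes the uniformity in $\theta$ explicit, but it is the same argument.
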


\begin{proof}
Let $Z\sim N(0,1)$. We establish the three estimates separately.
\begin{itemize}
    \item $\E K^s_{r,\theta}(B_H(x)-B_H(y))\lesssim_s 1$.
\end{itemize}

Since $0\leq K^s_{r,\theta}
\leq1$, the first estimate follows immediately.

\begin{itemize}
    \item $\E K^s_{r,\theta}(B_H(x)-B_H(y))\lesssim_s r^s|x-y|^{-Hs}$.
\end{itemize}

We have the pointwise inequality $K^s_{r,\theta}(z)\leq r^s|z|^{-s}$ for $z\ne0$ and the increment
law $B_H(x)-B_H(y)\stackrel{\mathrm d}=|x-y|^HZ$, which give
\[
 \E K^s_{r,\theta}(B_H(x)-B_H(y))
 \leq r^s|x-y|^{-Hs}\E|Z|^{-s}\lesssim_s r^s|x-y|^{-Hs}.
\]

\begin{itemize}
    \item $\E K^s_{r,\theta}(B_H(x)-B_H(y))\lesssim_s r^A|x-y|^{-H}$.
\end{itemize}

Using again the increment law and the definition of the kernel, we decompose
\begin{align*}
    \E K^s_{r,\theta}(B_H(x)-B_H(y))&=\mathbb{P}(|x-y|^H|Z|\leq r)\\
    &\quad+\E[K^s_{r,\theta}(|x-y|^HZ)\mathbf{1}_{\{r<|x-y|^H|Z|\leq r^{\theta}\}}]\\
    &=: I+II.
\end{align*}
For $I$,
\begin{equation*}
    I=2^{1/2}\pi^{-1/2}\int_{0}^{r|x-y|^{-H}}e^{-1/2z^2}\dd z\lesssim r|x-y|^{-H}.
\end{equation*}
For $II$,
\begin{align*}
    II&=\E\bigg[\bigg(\frac{r}{|x-y|^H|Z|}\bigg)^s\mathbf{1}_{\{r|x-y|^{-H}<|Z|\leq r^{\theta}|x-y|^{-H}\}}\bigg]\\
    &=2^{1/2}\pi^{-1/2}r^s|x-y|^{-Hs}\int_{r|x-y|^{-H}}^{r^{\theta}|x-y|^{-H}}z^{-s}e^{-1/2z^2}\dd z\\
    &\lesssim_s (r^{s+\theta(1-s)}-r)|x-y|^{-H}.
\end{align*}
Since $A=s+\theta(1-s)\leq 1$, we have $r\leq r^A$ for $0<r<1$. Hence, 
\begin{equation*}
    \E K^s_{r,\theta}(B_H(x)-B_H(y))=I+II\lesssim_s r^A|x-y|^{-H},
\end{equation*}
which proves the third estimate. Taking the minimum of the three bounds proves the lemma.
\end{proof}

The right-hand side of \eqref{eq:vs-gaussian-three} is the scalar-fBm
specialization of Burrell's capacity kernel
\cite[equation~(2.1) and Lemma~3.2]{Burrell2022}. With target dimension
one and $\gamma=1/H$, its normalized form is
\[
 \phi^{sH,H}_{r^{1/H},\theta}(z)
 =\min\{1,r^s|z|^{-Hs},r^A|z|^{-H}\},\qquad z\ne0,
\]
with value one at zero. The cluster estimates below concern its
uniform energy on a support that changes with $r$.

For interval covers, define the continuous functions
\begin{equation*}
 J_{s,\theta}(z)=
 \begin{cases}
 z-\theta(1-s),&0\leq z\leq\theta,\\
 sz,&\theta\leq z\leq1,\\
 s,&z\geq1,
 \end{cases}
 \qquad
 e_{s,\theta}(z)=s-J_{s,\theta}(z).
\end{equation*}
Equivalently,
\begin{equation*}
 e_{s,\theta}(z)=\max\{0,s(1-z),A-z\}.
\end{equation*}

\begin{lemma}
\label{lem:vs-span-cover}\label{lem:vs-span-gaussian}
An interval of diameter at most $Cr^{\alpha}$, where $\alpha\geq0$ and $C$ is independent of $r,\alpha$, has an admissible cover of $s$-cost at most
$C' r^{J_{s,\theta}(\alpha)}$. The constant $C'$ can be chosen uniformly
in $r$ and $\alpha$.
\end{lemma}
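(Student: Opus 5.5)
We construct the cover explicitly, splitting into three cases according to which branch of $J_{s,\theta}$ contains $\alpha$. Let $I$ be the interval, of diameter $d\le Cr^\alpha$. The aim is to cover $I$ by $N$ intervals of a common length $\rho\in[r,r^\theta]$, choosing $\rho$ as large as the admissible window permits, with $N\le d/\rho+1$. The cost is then at most $(d/\rho+1)\rho^s$. We may enlarge $I$ to length exactly $Cr^\alpha$ when convenient; this only increases the cost bound.

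\emph{Case $\alpha\ge1$.} Here $d\le Cr^\alpha\le Cr$. Cover $I$ by $\lceil C\rceil$ intervals of length $r$, for a cost of at most $(C+1)r^s=C'r^{J_{s,\theta}(\alpha)}$, since $J_{s,\theta}(\alpha)=s$.

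\emph{Case $\theta\le\alpha\le1$.} Now $r\le r^\alpha\le r^\theta$. If $Cr^\alpha\le r^\theta$, use a single interval of length $\max\{r,Cr^\alpha\}$. Its cost is at most $\max\{1,C\}^s r^{\alpha s}$ when $Cr^\alpha\ge r$; otherwise it is $r^s\le r^{\alpha s}$. If instead $Cr^\alpha>r^\theta$, then $r^\alpha>r^\theta/C$. Use $\lceil C\rceil$ intervals of length $r^\alpha$, which are admissible since $r\le r^\alpha\le r^\theta$. The cost is at most $(C+1)r^{s\alpha}$, and $J_{s,\theta}(\alpha)=s\alpha$ on this range.

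\emph{Case $0\le\alpha\le\theta$.} Take $\rho=r^\theta$ and use $N\le Cr^{\alpha-\theta}+1\le(C+1)r^{\alpha-\theta}$ intervals. The last inequality holds because $\alpha\le\theta$ gives $r^{\alpha-\theta}\ge1$. The cost is at most
\[
 (C+1)r^{\alpha-\theta}r^{\theta s}=(C+1)r^{\alpha-\theta(1-s)}=(C+1)r^{J_{s,\theta}(\alpha)}.
\]

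In every case the constant is $C'=(C+1)\max\{1,C\}$, which depends only on $C$ and not on $r$ or $\alpha$. That gives the claimed uniformity. There is no real obstacle here. The only points needing care are the boundary subcases where the natural length $Cr^\alpha$ falls outside the window $[r,r^\theta]$, which are handled by clipping the length to $r$ or $r^\theta$ as above. One should also note the continuity of $J_{s,\theta}$ at $\alpha=\theta$ and $\alpha=1$, so that the case boundaries are consistent.
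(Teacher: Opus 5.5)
Your proof is correct and follows the paper's argument: the same split at $\alpha=\theta$ and $\alpha=1$, covering by about $r^{\alpha-\theta}$ intervals of length $r^\theta$, by boundedly many intervals of length comparable to $r^\alpha$, and by boundedly many intervals of length $r$, respectively. You additionally make the constants explicit. Your subcase $Cr^\alpha\le r^\theta$ in the middle range is unnecessary, since $\lceil C\rceil$ intervals of length $r^\alpha$ are always admissible there.
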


\begin{proof}

We distinguish three cases according to the size of $\alpha$. 
If $0\le \alpha\le\theta$, we can cover the interval by $\lesssim r^{\alpha-\theta}$ intervals of
diameter $r^\theta$. Hence
\begin{equation*}
    S^s_{r,\theta}(I)
\le C'r^{\alpha-\theta}(r^\theta)^s
= C'r^{\alpha-\theta(1-s)}.
\end{equation*}
If $\theta\le\alpha\le1$, the interval has diameter at most
$Cr^\alpha$, with $r\le r^\alpha\le r^\theta$. It can therefore
be covered by a uniformly bounded number of admissible intervals
of diameter comparable to $r^\alpha$, giving
\begin{equation*}
    S^s_{r,\theta}(I)\le C'r^{\alpha s}.
\end{equation*}
If $\alpha\ge1$, the interval has diameter at most
$Cr^\alpha\le Cr$, and hence can be covered by a uniformly bounded
number of intervals of diameter $r$. Thus
\begin{equation*}
    S^s_{r,\theta}(I)\le C'r^s.
\end{equation*}
Therefore,
\begin{equation*}
    S^s_{r,\theta}(I)\le C'r^{J_{s,\theta}(\alpha)}.
\end{equation*}
\end{proof}

\subsection{One block estimate for both width laws}

Retain the scales $\mathcal L_r(M),\mathcal W_r(M)$ from
Section~\ref{sec:vanishing-splitting}. On a complete block, let
$\mu_{r,M}$ be uniform measure on
$\bigcup_{M\leq m<2M}X_{r,m}$, which contains on the order of $M^b$
points, and let $\nu_{r,M}$ be the push-forward of $\mu_{r,M}$ by $B_H$. 
More precisely, we denote $\bigcup_{M\leq m<2M}X_{r,m}:=\{x_1,\cdots,x_N\}$, $N\sim M^b$, then
\begin{equation*}
    \mu_{r,M}=\frac{1}{N}\sum_{i=1}^{N}\delta_{x_i},\quad \nu_{r,M}=\frac{1}{N}\sum_{i=1}^{N}\delta_{B_H(x_i)},
\end{equation*}
where $\delta_{x}$ is the Dirac mass at point $x$. We define
\[
 \Psi_r(z)=\min\{1,r^sz^{-s},r^Az^{-1}\},\qquad z>0.
\]
This function is nonincreasing and satisfies
\begin{equation}\label{eq:psi-contraction}
 \Psi_r(az)\leq a^{-1}\Psi_r(z),
 \ a\in(0,1],\quad \text{and}
 \quad
 \Psi_r(r^{\alpha})=r^{e_{s,\theta}(\alpha)},\ \alpha\geq0.
\end{equation}

\begin{lemma}\label{lem:vs-block-energy}
Under (\ref{ass:cluster-count}) and (\ref{ass:cluster-separation}), for
every complete retained block,
\begin{equation*}
 \E I^s_{r,\theta}(\nu_{r,M})
 \leq C\left[M^{-b}
      +\Psi_r\bigl(\mathcal L_r(M)^H\bigr)
      +M^{-1}\Psi_r\bigl(\mathcal W_r(M)^H\bigr)\right].
\end{equation*}
If (\ref{ass:cluster-exponents}) also holds, then at $u_M=\log M/\log(1/r)$,
\begin{equation}\label{eq:vs-energy-u}
 \E I^s_{r,\theta}(\nu_{r,M})
 \leq C\left[r^{bu_M}+r^{e_{s,\theta}(H\ell(u_M))}
                    +r^{u_M+e_{s,\theta}(Hw(u_M))}\right].
\end{equation}
\end{lemma}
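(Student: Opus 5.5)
We expand the energy of $\nu_{r,M}$ as a double sum over the $N\sim M^b$ support points and take expectations term by term using Lemma~\ref{lem:gaussian-increments}:
\[
 \E I^s_{r,\theta}(\nu_{r,M})=\frac1{N^2}\sum_{p,q}\E K^s_{r,\theta}\bigl(B_H(x_p)-B_H(x_q)\bigr)
 \le \frac{C}{N^2}\sum_{p,q}\Psi_r\bigl(|x_p-x_q|^H\bigr),
\]
with the convention $\Psi_r(0)=1$. The bound $\min\{1,r^s|x-y|^{-Hs},r^A|x-y|^{-H}\}$ is exactly $\Psi_r(|x-y|^H)$. We split the sum into three parts: diagonal pairs $p=q$, pairs in the same band $m$, and pairs in different bands $i\ne k$.

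\emph{Diagonal.} There are $N$ such pairs, each of size at most $1$. They contribute at most $N^{-1}\lesssim M^{-b}$.

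\emph{Different bands.} For $x\in X_{r,i}$ and $y\in X_{r,k}$, \eqref{eq:general-cross-spacing} gives $|x-y|\ge c\mathcal L_r(M)|i-k|/M$. Since $\Psi_r$ is nonincreasing, and by the contraction property \eqref{eq:psi-contraction} with the factor $a=c^H(|i-k|/M)^H\le1$ (we may take $c\le1$),
\[
 \Psi_r\bigl(|x-y|^H\bigr)\le c^{-H}\Bigl(\frac{M}{|i-k|}\Bigr)^H\Psi_r\bigl(\mathcal L_r(M)^H\bigr).
\]
Each band contributes $J\sim M^{b-1}$ points, so the weight of a band pair is $J_iJ_k/N^2\sim M^{-2}$. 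Summing over $M\le i\ne k<2M$ gives
\[
 M^{-2}\sum_{i\ne k}\Bigl(\frac{M}{|i-k|}\Bigr)^H\lesssim M^{-2}\cdot M\cdot M^H\sum_{d\le M}d^{-H}\lesssim 1,
\]
using $H<1$. The different-band part is therefore $\lesssim\Psi_r(\mathcal L_r(M)^H)$.

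\emph{Same band.} Within $X_{r,m}$, \eqref{eq:general-inner-spacing} gives $|x_{m,j}-x_{m,i}|\ge c\mathcal W_r(M)|j-i|/J$. The same contraction argument bounds the inner sum by
\[
 J^{-2}\sum_{i\ne j}\Bigl(\frac{J}{|i-j|}\Bigr)^H\Psi_r\bigl(\mathcal W_r(M)^H\bigr)\lesssim\Psi_r\bigl(\mathcal W_r(M)^H\bigr)
\]
per band, after normalization by $J^2$. The band's total weight is $J^2/N^2\sim M^{-2}$, and there are $M$ bands, so this part is $\lesssim M^{-1}\Psi_r(\mathcal W_r(M)^H)$. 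If $J$ is bounded (for example when $b=1$), the off-diagonal in-band terms are simply absent or bounded the same way. Note also that the contraction property is applied with $a\le1$, which needs $|j-i|/J\le1$; this holds.

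\emph{Second form.} For \eqref{eq:vs-energy-u}, substitute $M=r^{-u_M}$ and use \eqref{eq:general-scales}: $\mathcal L_r(M)^H\ge c\,r^{H\ell(u_M)}$. Then monotonicity and contraction of $\Psi_r$ give $\Psi_r(\mathcal L_r^H)\lesssim\Psi_r(r^{H\ell})=r^{e_{s,\theta}(H\ell)}$, and likewise for $\mathcal W_r$.

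The main obstacle is keeping the double sums uniform. The essential point is the harmonic-type sums $\sum d^{-H}$ with $H<1$, which lose only $M^{1-H}$ and cancel the $M^H$ gained from the contraction factor. This is precisely why the contraction property of $\Psi_r$, rather than a pointwise bound, is used.
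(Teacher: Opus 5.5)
Your proof is correct and follows essentially the same route as the paper's: you use the same diagonal, same-cluster, and different-cluster decomposition, apply monotonicity and the contraction property \eqref{eq:psi-contraction} with the two spacing hypotheses, and cancel the factor $M^H$ (respectively $J^H$) with the sums $\sum_{d\le M}d^{-H}\lesssim M^{1-H}$ (respectively $J^{1-H}$). Your passage to \eqref{eq:vs-energy-u} through $\Psi_r(r^\alpha)=r^{e_{s,\theta}(\alpha)}$ and the two-sided scale comparisons is also the paper's argument.
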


\begin{proof}
Let $J_m=J_{r,m}$ and $X_{r,m}=\{x_{m,1},\cdots,x_{m,J_m}\}$. By the definition of $\nu_{r,M}$, we have
\[
\begin{aligned}
\mathbb E I^s_{r,\theta}(\nu_{r,M})
&=
\frac1{N^2}
\sum_{i,j=1}^{N}
\mathbb E
K^s_{r,\theta}\bigl(B_H(x_i)-B_H(x_j)\bigr)\\
&=
\frac1{N^2}
\sum_{\substack{1\le i,j\le N\\i\ne j}}
\mathbb E
K^s_{r,\theta}\bigl(B_H(x_i)-B_H(x_j)\bigr)
+\frac1N.
\end{aligned}
\]

By Lemma~\ref{lem:gaussian-increments}, we have
\[
\mathbb E
K^s_{r,\theta}\bigl(B_H(x_i)-B_H(x_j)\bigr)
\lesssim_s
\Psi_r(|x_i-x_j|^H),
\]
and therefore
\[
\mathbb E I^s_{r,\theta}(\nu_{r,M})
\lesssim_s
\frac1{N^2}
\sum_{\substack{1\le i,j\le N\\i\ne j}}
\Psi_r(|x_i-x_j|^H)
+M^{-b}.
\]

We split the off-diagonal sum according to whether the two points
belong to the same cluster $x_i\sim x_j$ or to different clusters $x_i\nsim x_j$:
\begin{align*}
    \mathcal A:&=
\frac1{N^2}
\sum_{\substack{1\le i,j\le N\\i\ne j}}
\Psi_r(|x_i-x_j|^H)\\
&=\frac1{N^2}\sum_{\substack{1\le i\ne j\le N\\x_i\sim x_j}}\Psi_r(|x_i-x_j|^H)+\frac1{N^2}\sum_{\substack{1\le i, j\le N\\x_i\nsim x_j}}\Psi_r(|x_i-x_j|^H):=A_1+A_2.
\end{align*}

We first estimate $A_1$, which corresponds to pairs of points lying in the same cluster,
we have
\[
A_1
=
\frac1{N^2}
\sum_{m=M}^{2M-1}
\sum_{\substack{1\le i,j\le J_{r,m}\\i\ne j}}
\Psi_r\bigl(|x_{m,i}-x_{m,j}|^H\bigr).
\]

By the separation condition \eqref{eq:general-inner-spacing},
\[
|x_{m,i}-x_{m,j}|
\ge
c\mathcal W_r(M)\frac{|j-i|}{J_{m}}.
\]

Since $\Psi_r$ is decreasing and satisfies the scaling bound in \eqref{eq:psi-contraction}, 
\begin{equation*}
    \Psi_r\bigl(|x_{m,i}-x_{m,j}|^H\bigr)
\lesssim
\Psi_r\left(
\mathcal W_r(M)^H
\frac{|j-i|^H}{J_{r,m}^H}
\right)\lesssim
\left(\frac{|j-i|}{J_{r,m}}\right)^{-H}
\Psi_r\bigl(\mathcal W_r(M)^H\bigr).
\end{equation*}

Consequently,
\[
\begin{aligned}
A_1
&\lesssim
\frac1{N^2}
\sum_{m=M}^{2M-1}
\sum_{\substack{1\le i,j\le J_{r,m}\\i\ne j}}
\left(\frac{|j-i|}{J_{r,m}}\right)^{-H}
\Psi_r\bigl(\mathcal W_r(M)^H\bigr)\\
&=
\frac1{N^2}\sum_{m=M}^{2M-1}J_m^H\left(
\sum_{1\le i\ne j\le J_m}|j-i|^{-H}
\right)
\Psi_r\bigl(\mathcal W_r(M)^H\bigr).
\end{aligned}
\]

Since $0<H<1$, for every integer $J\geq2$,
\[
\sum_{1\le i\ne j\le J}
|j-i|^{-H}
 =2\sum_{h=1}^{J-1}(J-h)h^{-H}
\lesssim_H
J^{2-H}.
\]
The sum is zero for $J=1$; thus $J_m^H$ times this sum is at most
$C_HJ_m^2$, and $N^{-2}\sum_{m=M}^{2M-1}J_m^2\lesssim M^{-1}$.

Therefore,
\begin{equation*}
    A_1
\lesssim
M^{-1}\Psi_r\bigl(\mathcal W_r(M)^H\bigr).
\end{equation*}

We next estimate \(A_2\), corresponding to points belonging to distinct clusters, we have
\begin{equation*}
    A_2=\frac{1}{N^2}
\sum_{\substack{M \le m,k < 2M\\ m\ne k}}
\sum_{i=1}^{J_m}
\sum_{j=1}^{J_k}
\Psi_r\left(\lvert x_{m,i}-x_{k,j}\rvert^H\right).
\end{equation*}

By the separation condition \eqref{eq:general-cross-spacing}, 
\begin{equation*}
    |x_{m,i}-x_{k,j}|
\ge
c\mathcal L_r(M)\frac{|m-k|}{M},
\quad m\ne k.
\end{equation*}

As above, monotonicity and the scaling property of \(\Psi_r\) give,
\[
\Psi_r\bigl(|x_{m,i}-x_{k,j}|^H\bigr)
\lesssim
\left(\frac{|m-k|}{M}\right)^{-H}
\Psi_r\bigl(\mathcal L_r(M)^H\bigr).
\]

It follows that
\[
\begin{aligned}
A_2
&\lesssim
\frac1{N^2}
\sum_{\substack{M\le m,k<2M\\m\ne k}}
\sum_{i=1}^{J_{r,m}}
\sum_{j=1}^{J_{r,k}}
\left(\frac{|m-k|}{M}\right)^{-H}
\Psi_r\bigl(\mathcal L_r(M)^H\bigr)\\
&\sim M^{H-2}\left(
\sum_{\substack{M\le m,k<2M\\m\ne k}}
|m-k|^{-H}
\right)
\Psi_r\bigl(\mathcal L_r(M)^H\bigr)\lesssim
\Psi_r\bigl(\mathcal L_r(M)^H\bigr).
\end{aligned}
\]

Combining the diagonal contribution with the estimates for $A_1$
and $A_2$ yields
\[
\mathbb E I^s_{r,\theta}(\nu_{r,M})
\le
C\left[
M^{-b}
+\Psi_r\bigl(\mathcal L_r(M)^H\bigr)
+M^{-1}\Psi_r\bigl(\mathcal W_r(M)^H\bigr)
\right].
\]
as required. Substituting \eqref{eq:general-scales} and using
\eqref{eq:psi-contraction} gives \eqref{eq:vs-energy-u}.
\end{proof}

The measure uses only the separated subset, whereas the upper covers
below retain all points in each cluster. A positive-proportion subset
therefore suffices even when the smallest spacing among all cluster
points is much smaller.

\subsection{Proof of the transfer theorem}

For this proof, write
\[
 d_\theta(u)=\min\left\{bu,\left(1-\frac{H\ell(u)}{\theta}\right)_+,
                         Q_\theta(u,Hw(u))\right\}.
\]
The function $d_\theta(u)$ is continuous in $(u,\theta)$ for $\theta>0$;
so is $D(\theta)=\max_{0\leq u\leq\omega}d_\theta(u)$.

\begin{lemma}\label{lem:subcritical-lower-bound}
Let $0<s<D(\theta)$. Then there exist $\zeta>s$ and $r_0>0$
such that, for every $0<\eta<\zeta-s$,
\begin{equation*}
\mathbb P\left(
S^s_{r,\theta}(B_H(F_r))
\ge r^{s-\zeta+\eta}
\right)
\ge 1-Cr^\eta,
\qquad 0<r<r_0.
\end{equation*}
\end{lemma}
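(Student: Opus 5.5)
We prove Lemma~\ref{lem:subcritical-lower-bound} by choosing a single complete block on which the energy bound of Lemma~\ref{lem:vs-block-energy} is small, and then combining Lemma~\ref{lem:cover-gaussian} with Markov's inequality.

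\emph{Choice of the exponent and the block.} Since $s<D(\theta)=\max_{0\le u\le\omega}d_\theta(u)$, pick $u_*\in[0,\omega]$ with $d_\theta(u_*)>s$. We may take $u_*\in(0,\omega)$: $d_\theta$ is continuous, and $d_\theta(0)\le b\cdot0=0<s$. By continuity, fix $\zeta$ with $s<\zeta<d_\theta(u_*)$ and a small $\delta>0$ such that $d_\theta(u)>\zeta+\delta'$ for all $|u-u_*|\le\delta$, for some $\delta'>0$.

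For small $r$, let $M=\lfloor r^{-u_*}\rfloor$. Then $u_M=\log M/\log(1/r)\to u_*$, so $|u_M-u_*|\le\delta$. Because $u_*<\omega$, we also have $2M\le N_r+1$, so the block is complete and Lemma~\ref{lem:vs-block-energy} applies.

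\emph{Converting the threshold into energy exponents.} We must check that each of the three exponents in \eqref{eq:vs-energy-u} exceeds $\zeta-s$.
\begin{itemize}
\item First term: $bu_M\ge d_\theta(u_M)>\zeta>\zeta-s$.
\item Second term: we need $e_{s,\theta}(z)\ge\zeta-s$ whenever $(1-z/\theta)_+>\zeta$, with $z=H\ell(u_M)$. The condition gives $z<\theta(1-\zeta)\le\theta$. Hence $e_{s,\theta}(z)\ge A-z=s+\theta(1-s)-z>s+\theta(1-s)-\theta(1-\zeta)=s+\theta(\zeta-s)$. This is at least $\zeta-s$ when $s+\theta(\zeta-s)\ge\zeta-s$, which holds if $\theta\ge1$. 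Otherwise we need a direct comparison. Note that $e_{s,\theta}(z)=s-J_{s,\theta}(z)$, and the relevant identity is $J_{s,\theta}(z)\le s-\zeta$...

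In fact the correct normalization is to compare with $r^s/\E I$. From Lemma~\ref{lem:cover-gaussian} we get $S\ge r^s/I$, so we need $\E I\lesssim r^{s-\zeta+\epsilon}$, i.e.\ each exponent must be $\ge s-\zeta$ (a negative target), with margin. Check the cases:
\begin{itemize}
\item $bu\ge\zeta>\zeta-\ldots$ holds trivially.
\item For the block term, $e_{s,\theta}(z)\ge0$ always, and we need $e_{s,\theta}(z)\ge s-\zeta$. This is not automatic: when $z\le\theta$, $A-z\ge s-\zeta\iff z\le\theta(1-s)+\zeta$, which holds since $z<\theta(1-\zeta)$. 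The case $z\ge\theta$ is excluded. (The $\max$ defining $e_{s,\theta}$ only helps.)
\item For the cluster term $u+e_{s,\theta}(v)$ with $v=Hw$, we need $u+e_{s,\theta}(v)\ge s-\zeta+\delta'$, i.e.\ $J_{s,\theta}(v)\le u+\zeta-\delta'$. This is checked branchwise against $Q_\theta(u,v)>\zeta$:
\begin{itemize}
\item For $v\ge1$: $J=s<\zeta<u+\zeta$.
\item For $\theta\le v\le1$: $u>\zeta v$ gives $sv\le\zeta v<u$.
\item For $v\le\theta$: $u+\theta-v>\zeta\theta$ gives $v-\theta(1-s)<u+\theta s-\zeta\theta+\ldots$, which is bounded by $u$ using $s<\zeta$.
\end{itemize}
\end{itemize}
In each case strict inequalities give a uniform margin $\eta_0>0$.
\end{itemize}

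\emph{Conclusion.} We obtain $\E I^s_{r,\theta}(\nu_{r,M})\le Cr^{s-\zeta+\eta_0}$; after shrinking $\zeta$ toward $s$ this becomes $\le Cr^{s-\zeta'}$ with $\zeta'>s$ (replace $\zeta$ by $\zeta'$). Markov's inequality then gives
\[
\Prob\bigl(I\ge r^{\zeta'-\eta}\cdot r^{s-\zeta'}\bigr)\le Cr^{\eta}.
\]
On the complement, Lemma~\ref{lem:cover-gaussian} applied to $\nu_{r,M}$, which is supported in $B_H(F_r)$ since $X_{r,m}\subset C_{r,m}$, yields $S\ge r^{s}/I\ge r^{s-\zeta'+\eta}$.

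\emph{Main obstacle.} The main difficulty is the branchwise verification that $d_\theta(u)>\zeta$ forces all three energy exponents to exceed $s-\zeta$ by a positive margin, uniformly for $u$ near $u_*$. This is where the piecewise definitions of $Q_\theta$ and $e_{s,\theta}$ must be matched carefully. I would first try the direct identity $u-Q$-type comparison, $J_{s,\theta}(v)\le s\,\min\{1,\ldots\}$, case by case as above.
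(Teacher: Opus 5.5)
Your overall plan matches the paper's: one complete block with $M=\lfloor r^{-u_*}\rfloor$, Lemma~\ref{lem:vs-block-energy}, Markov, then Lemma~\ref{lem:cover-gaussian}. However, the target exponent for the energy is wrong, and as written the final step does not prove the lemma.

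\emph{The gap.} Write $I=I^s_{r,\theta}(\nu_{r,M})$. Since $S^s_{r,\theta}(B_H(F_r))\ge r^s/I$, getting $S\ge r^{s-\zeta+\eta}$ requires $I\le r^{\zeta-\eta}$, hence $\E I\lesssim r^{\zeta}$ with $\zeta>s$. So each of the three exponents in \eqref{eq:vs-energy-u} must exceed $s$ by a uniform margin, not $s-\zeta$ (nor $\zeta-s$). Your bound $\E I\le Cr^{s-\zeta'}$ with $\zeta'>s$ is vacuous, because $0\le K^s_{r,\theta}\le1$ already gives $I\le1$. The steps after it also fail:
\begin{itemize}
\item Markov gives $\Prob(I\ge r^{s-\eta})\le Cr^{\eta-\zeta'}$, not $Cr^{\eta}$.
\item On the complement, Lemma~\ref{lem:cover-gaussian} gives only $S\ge r^s/I>r^{\eta}\to0$.
\end{itemize}
A quick check shows the target cannot be right. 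The inequalities ``exponent $\ge s-\zeta$'' hold for every $s$ and every $u$, since $bu$, $e_{s,\theta}$ and $u+e_{s,\theta}$ are all nonnegative. A condition that never uses $d_\theta(u_*)>s$ cannot produce divergence, which must fail for $D(\theta)<s<1$ by \eqref{eq:r-a.s.-super}.

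\emph{The repair.} It is already in your branchwise computations once the target is corrected. Put $z=H\ell(u_M)$ and $v=Hw(u_M)$, and use $d_\theta(u_M)>\zeta>s$.
\begin{itemize}
\item First term: $bu_M>\zeta$.
\item Block term: $(1-z/\theta)_+>\zeta>0$ forces $z<\theta(1-\zeta)$, so $e_{s,\theta}(z)\ge A-z>s+\theta(\zeta-s)$. This is your own first-pass computation.
\item Cluster term: your three branches of $Q_\theta(u_M,v)>\zeta$ make $u_M-J_{s,\theta}(v)$ strictly larger than $\zeta-s$ when $v\ge1$, than $(\zeta-s)v\ge\theta(\zeta-s)$ when $\theta\le v\le1$, and than $\theta(\zeta-s)$ when $v\le\theta$. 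That is, $u_M+e_{s,\theta}(v)>s+\theta(\zeta-s)$.
\end{itemize}
Hence $\E I\le 3Cr^{\zeta_1}$ with $\zeta_1=s+\theta(\zeta-s)>s$. Take $\zeta_1$ as the lemma's $\zeta$. Markov then gives $\Prob(I>r^{\zeta_1-\eta})\le3Cr^{\eta}$, and on the complement $S\ge r^{s-\zeta_1+\eta}$. This is the paper's proof, which checks $bu_0>s$, $A-H\ell(u_0)>s$ and $u_0+e_{s,\theta}(Hw(u_0))>s$ at a fixed $u_0$ and passes to $u_M$ by continuity.
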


\begin{proof}
Since $0<s<D(\theta)$, $d_\theta(0)=0$, and $d_{\theta}(u)$ is continuous, there exists $u_0\in(0,\omega)$ such that $s<d_\theta(u_0).$ Then we obtain three inequalities
\begin{numcases}{}
s < bu_0, 
    & \label{eq:cond1}\\
s < \left(1-\dfrac{H\ell(u_0)}{\theta}\right)_+,
    & \label{eq:cond2}\\
s < Q_\theta\bigl(u_0,Hw(u_0)\bigr),
    & \label{eq:cond3}
\end{numcases}

Let $M=\lfloor r^{-u_0}\rfloor$ and $u_M(r)=\log M/\log(1/r)$, then we have
\begin{equation}\label{convergence}
    u_M\to u_0\text{ as }r\to0.
\end{equation}
Since $u_0<\omega$, we also have $2M\leq N_r+1$ for all sufficiently
small $r$, so the block is complete and avoids any exceptional initial bands.

We verify that these inequalities persist, with a strict margin, at $u_M$
for all sufficiently small $r$. 
First, by \eqref{eq:cond1} and \eqref{convergence}, there exist $\varepsilon_1>0$ and $r_1>0$
such that
\[
bu_M\ge s+\varepsilon_1,
\quad 0<r<r_1.
\]

Next, by \eqref{eq:cond2}, since $s>0$,
\[
s<
1-\frac{H\ell(u_0)}{\theta}.
\]
This is equivalent to
\[
A-H\ell(u_0)>s.
\]

By continuity of $\ell$ and \eqref{convergence}, there exist $\varepsilon_2>0$ and
$r_2>0$ such that
\[
e_{s,\theta}\bigl(H\ell(u_M)\bigr)
=
A-H\ell(u_M)
\ge s+\varepsilon_2,
\quad 0<r<r_2.
\]

Finally, by a straightforward calculation, \eqref{eq:cond3} is equivalent to
\[
u_0+e_{s,\theta}\bigl(Hw(u_0)\bigr)>s.
\]

By continuity and \eqref{convergence}, there exist $\varepsilon_3>0$ and $r_3>0$ such that
\[
u_M+e_{s,\theta}\bigl(Hw(u_M)\bigr)
\ge s+\varepsilon_3,
\qquad 0<r<r_3.
\]

Then we choose $0<\varepsilon_0
<
\min\{\varepsilon_1,\varepsilon_2,\varepsilon_3\}$ and let $\zeta=s+\varepsilon_0>s.$ For all sufficiently small $r<\min\{r_1,r_2,r_3\}$, Lemma~\ref{lem:vs-block-energy} gives
\[
\begin{aligned}
\mathbb E I^s_{r,\theta}(\nu_{r,M})
&\le
C\left[
r^{bu_M}
+r^{e_{s,\theta}(H\ell(u_M))}
+r^{u_M+e_{s,\theta}(Hw(u_M))}
\right]\le 3Cr^\zeta.
\end{aligned}
\]

Hence we obtain,
\begin{equation*}
    \mathbb E I^s_{r,\theta}(\nu_{r,M})
\le Cr^\zeta.
\end{equation*}

Now fix $0<\eta<\zeta-s.$ By Markov's inequality,
\[
\mathbb P\left(
I^s_{r,\theta}(\nu_{r,M})
>r^{\zeta-\eta}
\right)
\le
\frac{\mathbb E I^s_{r,\theta}(\nu_{r,M})}
{r^{\zeta-\eta}}
\le Cr^\eta.
\]

Hence
\[
\mathbb P\left(
I^s_{r,\theta}(\nu_{r,M})
\le r^{\zeta-\eta}
\right)
\ge1-Cr^\eta.
\]

Since $\nu_{r,M}$ is supported on
\[
B_H\left(
\bigcup_{M\le m<2M}X_{r,m}
\right)
\subset B_H(F_r),
\]

Lemma~\ref{lem:cover-gaussian} yields
\[
S^s_{r,\theta}(B_H(F_r))
\ge
\frac{r^s}{I^s_{r,\theta}(\nu_{r,M})}.
\]

Therefore,
\begin{align*}
    \mathbb P\bigl(S^s_{r,\theta}(B_H(F_r))\ge r^{s-\zeta+\eta}\bigr)
    \geq \mathbb P\bigl(I^s_{r,\theta}(\nu_{r,M})
\le r^{\zeta-\eta}\bigr)\geq 1-Cr^{\eta}.
\end{align*}
\end{proof}

\begin{proof}[Proof of Theorem~\ref{thm:general-cluster-transfer}]

We need to prove \eqref{eq:rn-a.s.-sub},\eqref{eq:r-P-sub} and \eqref{eq:r-a.s.-super}.

\smallskip\noindent\emph{Step 1: subcritical divergence in probability at all scales \eqref{eq:r-P-sub}.}
Fix $0<s<D(\theta)$. By Lemma~\ref{lem:subcritical-lower-bound}, choose $\zeta>s$ and $0<\eta<\zeta-s$. Since $s-\zeta+\eta<0,$ we have $r^{s-\zeta+\eta}\to+\infty
\text{ as }r\to 0^+.$ Hence, for every $R>0$, there exists $r_R>0$ such that
\[
r^{s-\zeta+\eta}>R,
\qquad 0<r<r_R.
\]
Therefore, 
\[
\begin{aligned}
\mathbb P\left(
S^s_{r,\theta}(B_H(F_r))>R
\right)
&\ge
\mathbb P\left(
S^s_{r,\theta}(B_H(F_r))
\ge r^{s-\zeta+\eta}
\right)\ge 1-Cr^\eta.
\end{aligned}
\]
Letting $r\to 0^+$ gives
\[
\mathbb P\left(
S^s_{r,\theta}(B_H(F_r))>R
\right)\longrightarrow1.
\]
Since $R>0$ is arbitrary, we have
\[
S^s_{r,\theta}(B_H(F_r))
\xrightarrow[r\to0]{P}+\infty.
\]

\smallskip\noindent\emph{Step 2: subcritical almost-sure divergence along geometric scales \eqref{eq:rn-a.s.-sub}.}
Fix $a>1$ and put $r_n=a^{-n}$. By Lemma~\ref{lem:subcritical-lower-bound},
there exist $\zeta>s$ and $n_0$ such that, for any fixed
$0<\eta<\zeta-s$,
\[
 \Prob\left\{S^s_{r_n,\theta}(B_H(F_{r_n}))
                  <r_n^{s-\zeta+\eta}\right\}
 \leq Cr_n^\eta=Ca^{-n\eta},\qquad n\geq n_0.
\]
Since
\[
 \sum_{n=n_0}^\infty
 \Prob\left\{S^s_{r_n,\theta}(B_H(F_{r_n}))
                  <r_n^{s-\zeta+\eta}\right\}
 \leq C\sum_{n=n_0}^\infty a^{-n\eta}<\infty,
\]
Borel--Cantelli gives, almost surely, a finite random $n_1$ such that
\[
 S^s_{r_n,\theta}(B_H(F_{r_n}))
 \geq r_n^{s-\zeta+\eta},\qquad n\geq n_1.
\]
As $s-\zeta+\eta<0$,
\[
 S^s_{r_n,\theta}(B_H(F_{r_n}))
 \xrightarrow[n\to\infty]{\mathrm{a.s.}}+\infty.
\]

\smallskip\noindent\emph{Step 3: supercritical almost-sure convergence at all scales \eqref{eq:r-a.s.-super}.}
On the probability-one event on which $B_H$ is $\gamma$-H\"older
on $[0,A_0]$ for every $0<\gamma<H$, fix $D(\theta)<s<1$.
For each such $\gamma$, there is a finite random constant $C_\gamma$ with
\[
 |B_H(x)-B_H(y)|\leq C_\gamma|x-y|^\gamma,
 \qquad x,y\in[0,A_0].
\]
For a retained block $\mathcal B_r(M)$, put
$u_M=\log M/\log(1/r)$. We construct three admissible covers.
First, (\ref{ass:cluster-count}) bounds its cardinality by $CM^b$,
so covering each image point by an interval of diameter $r$ gives
\begin{equation}\label{eq:point-cover}
 S^s_{r,\theta}\bigl(B_H(\mathcal B_r(M))\bigr)
 \lesssim r^sM^b=r^{s-bu_M}.
\end{equation}
Second, (\ref{ass:cluster-spans}) and (\ref{ass:cluster-exponents}) give
\[
 \operatorname{diam}B_H(\mathcal B_r(M))
 \lesssim_\gamma\mathcal L_r(M)^\gamma
 \sim r^{\gamma\ell(u_M)}.
\]
Lemma~\ref{lem:vs-span-cover} applied to an interval containing this image yields
\begin{equation}\label{eq:block-span-cover}
 S^s_{r,\theta}\bigl(B_H(\mathcal B_r(M))\bigr)
 \lesssim_\gamma r^{J_{s,\theta}(\gamma\ell(u_M))}.
\end{equation}
Third, the same assumptions give, for every cluster in the block,
\[
 \operatorname{diam}B_H(C_{r,m})
 \lesssim_\gamma\mathcal W_r(M)^\gamma
 \sim r^{\gamma w(u_M)}.
\]
Applying Lemma~\ref{lem:vs-span-cover} to each cluster image and
summing over at most $M$ clusters gives
\begin{equation}\label{eq:cluster-span-cover}
 S^s_{r,\theta}\bigl(B_H(\mathcal B_r(M))\bigr)
 \lesssim_\gamma M r^{J_{s,\theta}(\gamma w(u_M))}
 =r^{J_{s,\theta}(\gamma w(u_M))-u_M}.
\end{equation}
These covers include all cluster points and apply to the final partial block.
Taking the minimum of \eqref{eq:point-cover}, \eqref{eq:block-span-cover},
and \eqref{eq:cluster-span-cover},
\begin{gather*}
 S^s_{r,\theta}\bigl(B_H(\mathcal B_r(M))\bigr)
 \lesssim_\gamma r^{E_\gamma(u_M)},\\
 E_\gamma(u):=\max\left\{
 s-bu,\ J_{s,\theta}(\gamma\ell(u)),\
 J_{s,\theta}(\gamma w(u))-u\right\}.
\end{gather*}
For $0<s<1$ and $t,v,u\geq0$, the piecewise formulas give
\begin{align*}
 J_{s,\theta}(t)>0
 &\iff s>\left(1-\frac{t}{\theta}\right)_+,\\
 J_{s,\theta}(v)>u
 &\iff s>Q_\theta(u,v).
\end{align*}
Consequently,
\[
 E_H(u)>0\iff s>d_\theta(u),\qquad
 \delta_0:=\min_{0\leq u\leq\omega}E_H(u)>0.
\]
Continuity in $(\gamma,u)$ allows a choice of $\gamma\in(H/2,H)$ such that
\[
 E_\gamma(u)\geq\frac{\delta_0}{2},\qquad 0\leq u\leq\omega.
\]
Hence every retained block satisfies
\[
 S^s_{r,\theta}\bigl(B_H(\mathcal B_r(M))\bigr)
 \lesssim_\gamma r^{\delta_0/2}.
\]
Decompose the retained window into the blocks $M_k=2^k$,
$0\leq k\leq\lfloor\log_2N_r\rfloor$, including the final partial block.
There are $O(\log(1/r))$ blocks, and therefore
\begin{align*}
 S^s_{r,\theta}(B_H(F_r))
 &\leq\sum_{k=0}^{\lfloor\log_2N_r\rfloor}
 S^s_{r,\theta}\bigl(B_H(\mathcal B_r(M_k))\bigr)+Cr^s\\
 &\lesssim_\gamma \log(1/r)r^{\delta_0/2}+r^s\longrightarrow0.
\end{align*}
If initial bands are exceptional, omit the finitely many dyadic blocks
meeting them and cover their uniformly bounded number of points at cost
$O(r^s)$. The same bound follows. This pathwise estimate holds for every
sufficiently small real $r$.

\smallskip\noindent\emph{Step 4: simultaneous parameters.}
Intersect the lower-bound events for rational $\theta\in(0,1]$
and rational subcritical $s$. For arbitrary $0<s<D(\theta)$,
continuity permits a rational $\theta'<\theta$ and rational $s'$
with $s<s'<D(\theta')$. Every admissible cover at $\theta$ is
admissible at $\theta'$, and its diameters are less than one, so
\[
 S^s_{r,\theta}(B_H(F_r))
 \geq S^{s'}_{r,\theta'}(B_H(F_r)).
\]
This proves the simultaneous lower bound along the fixed geometric
sequence. The upper-bound argument is deterministic on the common
H\"older event and already holds for all admissible $\theta,s$.
\end{proof}

\subsection{Verification of the two width laws}

\begin{proof}[Proof of Corollary~\ref{cor:relative-width-transfer}]
By Theorem~\ref{thm:general-cluster-transfer}, it suffices to verify
(\ref{ass:cluster-count})--(\ref{ass:cluster-exponents}) with
\[
 \mathcal L_r(M)=M^{-\rho},\qquad
 \mathcal W_r(M)=r^\beta M^{-\rho},\qquad
 \ell(u)=\rho u,\qquad w(u)=\beta+\rho u.
\]
The cardinality bounds in (\ref{ass:cluster-count}) are hypotheses.
Summing adjacent gaps and using the cluster containment gives, also
for partial blocks,
\begin{align*}
 \operatorname{diam}\mathcal B_r(M)
 &\leq a_M-a_{2M-1}+Cr^\beta M^{-\rho}\leq CM^{-\rho},\\
 \operatorname{diam}C_{r,m}&\leq Cr^\beta M^{-\rho}.
\end{align*}
This proves (\ref{ass:cluster-spans}). The within-cluster part of
(\ref{ass:cluster-separation}) is \eqref{eq:relative-subset} with $m\sim M$.
For distinct bands $M\leq i<k<2M$ in the retained window,
\eqref{eq:relative-centers} gives
\[
 a_i-a_k\geq c(k-i)M^{-\rho-1},\qquad
 |x-a_i|+|y-a_k|\leq2Cr^\beta M^{-\rho}
 \quad(x\in C_{r,i},\ y\in C_{r,k}).
\]
Since $\beta>\omega$,
\[
 r^\beta M\leq r^{\beta-\omega}\longrightarrow0.
\]
Thus, for all sufficiently small $r$, uniformly in the retained window,
\begin{align*}
 |x-y|
 &\geq c(k-i)M^{-\rho-1}-2Cr^\beta M^{-\rho}\\
 &\geq\frac c2(k-i)M^{-\rho-1}
 =\frac c2\mathcal L_r(M)\frac{k-i}{M}.
\end{align*}
Finally, $M=r^{-u_M}$ gives (\ref{ass:cluster-exponents}). For small $r$,
all nonexceptional clusters lie in a fixed nonnegative interval, since
their deviations are $O(r^\beta)$ times the positive centres.
Exceptional initial bands are handled as in Theorem~\ref{thm:general-cluster-transfer}.
\end{proof}

\begin{proof}[Proof of Corollary~\ref{cor:gap-width-transfer}]
Set
\[
 \mathcal L_r(M)=M^{-q}+r^\kappa M^{-p},\qquad
 \mathcal W_r(M)=r^\chi\mathcal L_r(M)/M.
\]
The selected subset can be the full band, which has
$L_m\sim m^{b-1}$ distinct points, establishing (\ref{ass:cluster-count}). Equation
\eqref{eq:res-values} bounds the block span by $C\mathcal L_r(M)$,
and \eqref{eq:res-gaps} bounds each cluster span by
$C r^\chi\mathcal L_r(M)/M=C\mathcal W_r(M)$, proving
(\ref{ass:cluster-spans}). For (\ref{ass:cluster-separation}) and
$M\leq i<k<2M$,
\begin{align*}
 \lambda_{i,j}(r)-\lambda_{k,l}(r)
 &\geq a_i(r^\kappa)-a_k(r^\kappa)-c_0g_i(r^\kappa)\\
 &\geq(1-c_0)\bigl(a_i(r^\kappa)-a_k(r^\kappa)\bigr)
 \geq c\mathcal L_r(M)\frac{k-i}{M}.
\end{align*}
Within one band, \eqref{eq:vs-quasiuniform} gives
\begin{equation*}
 |\lambda_{m,j}(r)-\lambda_{m,l}(r)|
 \geq c_0c_{\rm sep}r^\chi g_m(r^\kappa)
                            \frac{|j-l|}{L_m}.
\end{equation*}
Reversing the index order within each cluster gives the increasing
enumeration required in (\ref{ass:cluster-separation}). The scale
relations in (\ref{ass:cluster-exponents}) are uniform because
\[
 M^{-q}+r^\kappa M^{-p}
 =r^{qu_M}+r^{\kappa+pu_M}
 \sim r^{\min\{qu_M,\kappa+pu_M\}},
\]
with
$\ell(u)=\ell_\kappa(u)$ and
$w(u)=\chi+\ell_\kappa(u)+u$. If $\omega<\infty$, the corollary
follows directly from Theorem~\ref{thm:general-cluster-transfer}.

For $\omega=\infty$, fix $U>4/(pH)$ and define
\begin{align*}
 F_r^{(U)}&=\{0\}\cup\{\lambda_{m,j}(r):
 1\leq m\leq\lfloor r^{-U}\rfloor,\ 0\leq j<L_m\},\\
 T_r^{(U)}&=F_r\setminus F_r^{(U)}.
\end{align*}
Theorem~\ref{thm:general-cluster-transfer} gives the threshold
\[
 D_U(\theta)=\max_{0\leq u\leq U}d_\theta(u)
\]
for the truncated family. Since $\ell_\kappa(u)\geq pu$,
\[
 u\geq\frac{\theta}{pH}
 \quad\Longrightarrow\quad
 \left(1-\frac{H\ell_\kappa(u)}{\theta}\right)_+=0
 \quad\Longrightarrow\quad d_\theta(u)=0.
\]
Our choice of $U$ therefore gives
\[
 D_U(\theta)
 =\max_{0\leq u\leq\theta/(pH)}d_\theta(u)
 =D_{\kappa,\chi,\infty,b}(\theta).
\]
For $m>r^{-U}$, \eqref{eq:res-values} implies
\[
 0<\lambda_{m,j}(r)\leq a_m(r^\kappa)
 \lesssim m^{-q}+r^\kappa m^{-p}
 \lesssim m^{-p}\leq r^{pU}.
\]
Hence $T_r^{(U)}\subset[0,Cr^{pU}]$. Fix $\gamma\in(H/2,H)$.
On the common H\"older event,
\[
 \operatorname{diam}B_H(T_r^{(U)})
 \lesssim_\gamma r^{\gamma pU}=o(r),
 \qquad \gamma pU>\frac H2\,p\,\frac4{pH}=2.
\]
For all sufficiently small $r$, one interval of diameter $r$ covers
the tail image. Therefore,
\[
 S^s_{r,\theta}(B_H(F_r^{(U)}))
 \leq S^s_{r,\theta}(B_H(F_r))
 \leq S^s_{r,\theta}(B_H(F_r^{(U)}))+r^s.
\]
The finite-window conclusions and this pathwise estimate give the
same threshold and all stated probability qualifications for the full family.
The vanishing of $d_\theta(u)$ for $u\geq\theta/(pH)$ also justifies
the restricted maximum in \eqref{eq:vs-threshold} for finite $\omega$.
\end{proof}

\subsection{Consequences for splitting and multiplicity}

The parameters $\chi$ and $b$ govern the splitting scale and the growth
of the band multiplicity. We consider a fixed splitting fraction
($\chi=0$), splitting below the smallest image covering scale
($\chi\geq1/H$), and bounded multiplicities ($b=1$).

\begin{lemma}[Two-constraint variational formula]
\label{lem:two-constraint-formula}
Let $q>p>0$, $b>0$, $H\in(0,1)$, $\kappa\geq0$,
$0<\theta\leq1$, and $\omega\in(0,\infty]$. Set
$\ell_\kappa(u)=\min\{qu,\kappa+pu\}$. Then
\[
 \max_{0\leq u\leq\omega}
 \min\left\{bu,\left(1-\frac{H\ell_\kappa(u)}{\theta}\right)_+\right\}
 =\min\left\{b\omega,
 \max\left\{\frac{\theta}{\theta+qH/b},
             \frac{\theta-\kappa H}{\theta+pH/b}\right\}\right\}.
\]
If $\omega=\infty$, the maximum is over $u\geq0$ and the term
$b\omega$ is omitted.
\end{lemma}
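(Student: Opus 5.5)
The plan is to reduce the statement to an elementary max–min computation for one increasing and one nonincreasing function of $u$. Write $f(u)=bu$ and $g(u)=(1-H\ell_\kappa(u)/\theta)_+$. Then $f$ is continuous and strictly increasing from $0$. Since $\ell_\kappa$ is the minimum of two increasing affine functions, it is continuous, increasing and concave, so $g$ is continuous and nonincreasing.

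\emph{Step 1: the unconstrained case.} For $\omega=\infty$, the maximum of $\min\{f,g\}$ over $u\geq0$ is attained where the graphs cross, or equals $g(0)\wedge f(0)=0$ if $g\equiv0$. Since $\ell_\kappa=\min\{qu,\kappa+pu\}$, we have $g=\max\{g_1,g_2\}$ with
\[
 g_1(u)=1-\frac{Hqu}{\theta},\qquad g_2(u)=1-\frac{H(\kappa+pu)}{\theta},
\]
and then take positive parts. Hence $\min\{f,g\}=\max\{\min\{f,(g_1)_+\},\min\{f,(g_2)_+\}\}$, and the max over $u$ commutes with this outer max. For a single affine decreasing $g_i$, solve $bu=g_i(u)$. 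For $g_1$ this gives $u_1=\theta/(b\theta+Hq)$ and the value $bu_1=\theta/(\theta+qH/b)$. For $g_2$ it gives $u_2=(\theta-\kappa H)/(b\theta+Hp)$ and the value $(\theta-\kappa H)/(\theta+pH/b)$ when $\theta>\kappa H$. When $\theta\leq\kappa H$, $(g_2)_+\equiv0$, and the formula's second term is $\leq0$, so it does not affect the max because the first term is positive.

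\emph{Step 2: adding the constraint $u\leq\omega$.} Let $u^*$ be the crossing point of $f$ and $g$, which is where the overall max is attained. If $u^*\leq\omega$, the restricted max equals the unrestricted one. If $u^*>\omega$, then on $[0,\omega]$ we have $f<g$, so the max is $f(\omega)=b\omega$, and it is smaller than the unrestricted value. In both cases the value is $\min\{b\omega,\text{unrestricted value}\}$. To justify this, note that $\min\{f,g\}$ is nondecreasing on $[0,u^*]$ and equals $g$ afterwards. Therefore the max over $[0,\omega]$ is $\min\{f(\omega),g(\omega)\}$ when $\omega\leq u^*$, and this equals $f(\omega)=b\omega\leq f(u^*)$. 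Otherwise it equals $f(u^*)$.

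\emph{Main obstacle.} The delicate point is Step 1's commuting of max and min when $g$ is a max of two pieces. I would verify that
\[
 \max_u\min\{f,\max(g_1,g_2)\}=\max_i\max_u\min\{f,g_i\}.
\]
This holds pointwise because $\min\{f,\max(a,b)\}=\max(\min\{f,a\},\min\{f,b\})$, so the identity is only bookkeeping. The positive-part edge cases also need a check: $\theta\leq\kappa H$, and $\theta=1$ with $q$ large. I would confirm the crossing values lie in $[0,1]$ so that the positive part is inactive there. Finally, I would check that the crossing point of $f$ with $\max(g_1,g_2)$ is the larger of $u_1,u_2$, matching the larger value.
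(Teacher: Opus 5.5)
Your proposal is correct and matches the paper's proof essentially step for step. Both write $(1-H\ell_\kappa/\theta)_+$ as the maximum of two affine positive parts and use $\min\{x,\max\{y,z\}\}=\max\{\min\{x,y\},\min\{x,z\}\}$ to reduce to single affine crossings; both then handle $u\le\omega$ through the unique crossing point $u_*=z_*/b$, which exists because $g(0)=1$ and $g$ eventually vanishes, so your case $g\equiv0$ never actually arises.
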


\begin{proof}
Write
\[
 h(u)=\left(1-\frac{H\ell_\kappa(u)}{\theta}\right)_+
 =\max\left\{\left(1-\frac{Hqu}{\theta}\right)_+,
              \left(1-\frac{H(\kappa+pu)}{\theta}\right)_+\right\}.
\]
For $a>0$ and $c\geq0$, intersection of the increasing and decreasing
branches gives
\[
 \max_{u\geq0}\min\left\{bu,
       \left(1-\frac{H(c+au)}{\theta}\right)_+\right\}
 =\frac{(\theta-Hc)_+}{\theta+Ha/b}.
\]
Since $\min\{x,\max\{y,z\}\}=\max\{\min\{x,y\},\min\{x,z\}\}$,
\begin{align*}
 z_*:=\max_{u\geq0}\min\{bu,h(u)\}
 &=\max\left\{\frac{\theta}{\theta+qH/b},
              \frac{(\theta-\kappa H)_+}{\theta+pH/b}\right\}\\
 &=\max\left\{\frac{\theta}{\theta+qH/b},
              \frac{\theta-\kappa H}{\theta+pH/b}\right\}.
\end{align*}
The continuous nonincreasing function $h$ satisfies $h(0)=1$ and
vanishes for large $u$. Thus $bu=h(u)$ has a unique solution
$u_*=z_*/b$, and
\[
 \min\{bu,h(u)\}
 =\begin{cases}
 bu,&0\leq u\leq u_*,\\
 h(u),&u\geq u_*.
 \end{cases}
\]
Restriction to $[0,\omega]$ gives $\min\{b\omega,z_*\}$.
\end{proof}

\begin{corollary}[A fixed splitting fraction]\label{cor:vs-fixed-fraction}
When $\chi=0$, the threshold simplifies to
\begin{equation*}
 D_{\kappa,0,\omega,b}(\theta)
 =\min\left\{b\omega,
 \max\left\{\frac{\theta}{qH/b+\theta},
             \frac{\theta-\kappa H}{pH/b+\theta}\right\}\right\}.
\end{equation*}
The term $b\omega$ is omitted if $\omega=\infty$.
\end{corollary}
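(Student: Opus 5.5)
The plan is to show that for $\chi=0$ the cluster-span constraint $Q_\theta(u,v(u))$ in $d_\theta(u)$ is never binding. Once that is established, the threshold \eqref{eq:vs-threshold} reduces to the two-constraint maximum, and Lemma~\ref{lem:two-constraint-formula} evaluates it.

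Set $h(u)=\bigl(1-t(u)/\theta\bigr)_+$ with $t(u)=H\ell_\kappa(u)$. When $\chi=0$ we have $v(u)=t(u)+Hu$. The key claim is
\[
 Q_\theta\bigl(u,v(u)\bigr)\geq h(u)\qquad(u\geq0),
\]
which gives $d_\theta(u)=\min\{bu,h(u)\}$. If $t(u)\geq\theta$, then $h(u)=0\leq Q_\theta$ because $Q_\theta\geq0$. Otherwise, check the three branches of \eqref{eq:vs-Q} in turn.
\begin{enumerate}
\item If $v\leq\theta$, then $Q_\theta=(u+\theta-v)/\theta=1-t/\theta+(1-H)u/\theta\geq h$.
\item If $\theta\leq v\leq1$, then $h=1-t/\theta\leq1-t/v=Hu/v\leq u/v=Q_\theta$.
\item If $v\geq1$, then $1-t\leq Hu\leq u=Q_\theta$, and $h\leq1-t$ since $\theta\leq1$.
\end{enumerate}
These elementary comparisons are the only real content of the proof. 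The one point that needs care is the middle branch, where the inequality $v\geq\theta$ must be used to replace $\theta$ by $v$ in the denominator.

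Next I remove the extra restriction $u\leq\theta/(pH)$ in \eqref{eq:vs-threshold}. As noted in the proof of Corollary~\ref{cor:gap-width-transfer}, $\ell_\kappa(u)\geq pu$ forces $h(u)=0$ for $u\geq\theta/(pH)$. Hence
\[
 \max_{0\leq u\leq\min\{\omega,\theta/(pH)\}}\min\{bu,h(u)\}
 =\max_{0\leq u\leq\omega}\min\{bu,h(u)\},
\]
with the maximum taken over $u\geq0$ when $\omega=\infty$.

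Finally, Lemma~\ref{lem:two-constraint-formula} evaluates the right-hand side as
\[
 \min\left\{b\omega,\max\left\{\frac{\theta}{\theta+qH/b},\frac{\theta-\kappa H}{\theta+pH/b}\right\}\right\},
\]
and the term $b\omega$ is dropped when $\omega=\infty$. This is the stated formula. The probability and scale qualifications are inherited unchanged from Corollary~\ref{cor:gap-width-transfer}.
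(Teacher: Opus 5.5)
Your proposal is correct and follows the paper's proof essentially verbatim: the same three-branch comparison shows that $Q_\theta(u,v(u))\ge(1-t/\theta)_+$ when $\chi=0$, so $d_\theta(u)=\min\{bu,h(u)\}$, and Lemma~\ref{lem:two-constraint-formula} then evaluates the maximum. Your explicit check that the cap $u\le\theta/(pH)$ in \eqref{eq:vs-threshold} is harmless is a point the paper leaves implicit here; it notes it only at the end of the proof of Corollary~\ref{cor:gap-width-transfer}.
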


\begin{proof}
When $\chi=0$, put $t=H\ell_\kappa(u)$ and $v=t+Hu$.
We show that
\[
 Q_\theta(u,v)\geq\left(1-\frac t\theta\right)_+.
\]
This holds immediately for $t\geq\theta$, since $Q_\theta\geq0$.
For $t\leq\theta$, consider the three branches. If $v\leq\theta$,
\[
 Q_\theta(u,v)
 =1-\frac t\theta+\frac{(1-H)u}{\theta}
 \geq1-\frac t\theta.
\]
If $\theta\leq v\leq1$, then
\[
 \theta u-(\theta-t)v
 =\theta(1-H)u+t(v-\theta)\geq0,
 \qquad Q_\theta(u,v)=\frac uv\geq1-\frac t\theta.
\]
If $v\geq1$, then
\[
 Q_\theta(u,v)=u\geq Hu=v-t\geq1-t\geq1-\frac t\theta.
\]
Consequently,
\[
 d_\theta(u)=\min\left\{bu,
             \left(1-\frac{H\ell_\kappa(u)}\theta\right)_+\right\},
\]
and Lemma~\ref{lem:two-constraint-formula} gives the result.
\end{proof}

\begin{corollary}[Splitting below the smallest cover scale]
\label{cor:vs-small-splitting}
If $\chi\geq1/H$, then
\begin{equation*}
 D_{\kappa,\chi,\omega,b}(\theta)
 =\min\left\{\omega,\max\left\{
 \frac{\theta}{qH+\theta},
 \frac{\theta-\kappa H}{pH+\theta}\right\}\right\}.
\end{equation*}
The term $\omega$ is omitted if $\omega=\infty$. Thus this regime
has the same threshold as the truncated unsplit sequence.
\end{corollary}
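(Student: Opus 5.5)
Under $\chi\ge 1/H$ the third constraint $Q_\theta(u,v(u))$ should reduce to $u$. Once it does, the threshold is $\max_u\min\{bu,(1-t(u)/\theta)_+,u\}$, and Lemma~\ref{lem:two-constraint-formula} should evaluate it. We start from Corollary~\ref{cor:gap-width-transfer}, which gives
\[
D_{\kappa,\chi,\omega,b}(\theta)=\max_{0\le u\le\min\{\omega,\theta/(pH)\}}d_\theta(u),
\qquad v(u)=H(\chi+\ell_\kappa(u)+u).
\]
Since $\chi\ge1/H$, we get $v(u)\ge H\chi\ge1$ for every $u\ge0$. The third branch of \eqref{eq:vs-Q} therefore applies, so $Q_\theta(u,v(u))=u$. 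Hence
\[
d_\theta(u)=\min\Bigl\{bu,\Bigl(1-\tfrac{H\ell_\kappa(u)}{\theta}\Bigr)_+,u\Bigr\}
=\min\Bigl\{\min\{b,1\}u,\Bigl(1-\tfrac{H\ell_\kappa(u)}{\theta}\Bigr)_+\Bigr\}.
\]
This step is the whole content of the splitting hypothesis: the cluster images have diameter below $r$, so covering clusters costs the same as covering band centres.

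The displayed formula has no $b$, which matches the case $b\ge1$ of the standing assumption (A\ref{ass:cluster-count}) in this part, since $L_m\sim m^{b-1}$ requires $b\ge 1$. So $\min\{b,1\}=1$ and $d_\theta(u)=\min\{u,(1-H\ell_\kappa(u)/\theta)_+\}$. The restriction $u\le\theta/(pH)$ is harmless: $\ell_\kappa(u)\ge pu$, so the second term vanishes beyond $\theta/(pH)$, and the maximum over $[0,\omega]$ is the same, as already noted in the proof of Corollary~\ref{cor:gap-width-transfer}. Lemma~\ref{lem:two-constraint-formula}, applied with $b$ replaced by $1$, then gives
\[
\min\Bigl\{\omega,\max\Bigl\{\frac{\theta}{\theta+qH},\frac{\theta-\kappa H}{\theta+pH}\Bigr\}\Bigr\},
\]
with $\omega$ omitted when $\omega=\infty$. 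This is the claimed formula.

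For the final remark, we take the truncated unsplit sequence to be the case $b=1$, with no splitting. Corollary~\ref{cor:vs-fixed-fraction} with $b=1$ gives the same expression; equivalently one can apply Lemma~\ref{lem:two-constraint-formula} directly with $b=1$.

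The only delicate point is the bookkeeping: checking that the third branch of $Q_\theta$ applies uniformly in $u$, and justifying that $b$ can be replaced by $\min\{b,1\}=1$. If $b<1$ were allowed, the formula would instead carry $\min\{b,1\}$ in place of $1$. I expect no analytic obstacle beyond this.
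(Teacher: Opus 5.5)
Your proposal is correct and matches the paper's proof: $v(u)\ge H\chi\ge1$ forces $Q_\theta(u,v(u))=u$, the standing assumption $b\ge1$ makes the $bu$ constraint redundant, and Lemma~\ref{lem:two-constraint-formula} with $b=1$ gives the formula. Your extra checks, that the cutoff $u\le\theta/(pH)$ is harmless and that the unsplit sequence is the $b=1$ case, are also sound.
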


\begin{proof}
Here $v(u)\geq H\chi\geq1$, so $Q_\theta(u,v(u))=u$.
Since $b\geq1$,
\[
 d_\theta(u)=\min\left\{u,
             \left(1-\frac{H\ell_\kappa(u)}\theta\right)_+\right\}.
\]
Apply Lemma~\ref{lem:two-constraint-formula} with $b=1$.
\end{proof}

\begin{corollary}[Bounded multiplicities]
\label{cor:vs-bounded}\label{cor:vs-bounded-multiplicities}
If $b=1$, then for every $\chi\geq0$,
\[
 D_{\kappa,\chi,\omega,1}(\theta)
 =\min\left\{\omega,\max\left\{
 \frac{\theta}{qH+\theta},
 \frac{\theta-\kappa H}{pH+\theta}\right\}\right\}.
\]
The term $\omega$ is omitted if $\omega=\infty$.
In particular, the $d=2$ ReLU case has no multiplicity-driven change of exponent.
\end{corollary}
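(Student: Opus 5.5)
The plan is to follow the proofs of Corollaries~\ref{cor:vs-fixed-fraction} and~\ref{cor:vs-small-splitting}. We show that when $b=1$ the third constraint $Q_\theta(u,v(u))$ never binds strictly below the first, so that
\[
 d_\theta(u)=\min\left\{u,\left(1-\frac{H\ell_\kappa(u)}{\theta}\right)_+\right\}.
\]
Then we apply Lemma~\ref{lem:two-constraint-formula} with $b=1$ and combine it with the restricted maximum in \eqref{eq:vs-threshold}. Since $d_\theta(u)=0$ for $u\geq\theta/(pH)$, restricting to $u\le\min\{\omega,\theta/(pH)\}$ does not change the maximum. Hence the formula of Lemma~\ref{lem:two-constraint-formula}, with $b\omega$ replaced by $\omega$ (or omitted when $\omega=\infty$), is exactly the claimed expression.

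The key inequality is $Q_\theta(u,v)\geq u$ for all $v\geq0$ and $u\ge 0$ in the relevant range. I check it branch by branch.
\begin{itemize}
\item If $v\geq1$, then $Q_\theta=u$.
\item If $\theta\le v\le1$, then $Q_\theta=u/v\geq u$.
\item If $0\le v\le\theta$, then $Q_\theta=(u+\theta-v)/\theta\geq u/\theta\geq u$, since $\theta\le1$ and $\theta-v\ge0$.
\end{itemize}
So $Q_\theta(u,v(u))\ge u=bu$ for every $\chi\geq0$. The third entry of the minimum defining $d_\theta(u)$ is therefore never smaller than the first, and it drops out. This is independent of $\chi$ and $\kappa$, which gives the claimed uniformity in $\chi$.

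The remaining step is bookkeeping. Lemma~\ref{lem:two-constraint-formula} with $b=1$ gives $\min\{\omega,\max\{\theta/(\theta+qH),(\theta-\kappa H)/(\theta+pH)\}\}$, which matches the statement. For the final remark about the $d=2$ ReLU case, I would note that there the harmonic multiplicities of $S^1$ are bounded, so $L_m\sim1$, i.e.\ $b=1$. The threshold then coincides with that of the unsplit truncated sequence (compare Corollary~\ref{cor:vs-small-splitting}): it is the same for every splitting exponent $\chi$, and the cluster structure has no effect.

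I expect no real obstacle. The only care needed is the branch $v\le\theta$, where the bound uses $\theta\le1$ and $v\le\theta$ together. One should also confirm that Lemma~\ref{lem:two-constraint-formula} applies in the $\omega=\infty$ case via the vanishing of $d_\theta$ beyond $\theta/(pH)$.
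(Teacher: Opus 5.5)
Your proposal is correct and follows the paper's proof: the paper also checks $Q_\theta(u,v)\geq u$ on each of the three branches of \eqref{eq:vs-Q}, concludes that the third constraint is redundant when $b=1$, and applies Lemma~\ref{lem:two-constraint-formula} with $b=1$. Your additional bookkeeping is accurate and is left implicit in the paper: the restricted maximum in \eqref{eq:vs-threshold} loses nothing because $d_\theta(u)=0$ for $u\geq\theta/(pH)$, and $N(2,\ell)=2$ gives $b=1$ in the $d=2$ ReLU case.
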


\begin{proof}
In all three branches of \eqref{eq:vs-Q},
\[
 Q_\theta(u,v)-u
 =\begin{cases}
 \bigl(u(1-\theta)+\theta-v\bigr)/\theta\geq0,&0\leq v\leq\theta,\\
 u(1/v-1)\geq0,&\theta\leq v\leq1,\\
 0,&v\geq1.
 \end{cases}
\]
Thus the third constraint is redundant when $b=1$, and
Lemma~\ref{lem:two-constraint-formula} applies with $b=1$.
\end{proof}

\section{Spectral clusters from an anisotropic input distribution}
\label{sec:natural-input}

Perturbing the uniform input measure on $S^2$ produces clusters of
width proportional to the eigenvalues of a fixed ReLU tangent kernel.

\subsection{The population operator}

Let $\sigma$ be uniform probability measure on $S^2$, and set
\begin{equation}\label{nat:eq:kernel}
 K(x,y)=\frac{\sqrt{1-t^2}+2t(\pi-\arccos t)}{\pi},
 \qquad t=x\cdot y.
\end{equation}
This is the population two-layer ReLU neural tangent kernel in the
normalization of \eqref{eq:weighted-kernel} with $\eps=1$
\cite{Jacot2018,BM2019}. Write $T$ for its integral operator on
$L^2(\sigma)$. Its eigenvalue on the degree-$2m$ harmonic space is
\begin{equation}\label{nat:eq:even-spectrum}
 a_m=\frac{1}{32\pi}
 \left(\frac{\Gamma(m-1/2)}{\Gamma(m+2)}\right)^2
 (4m^2+2m+4),\qquad m\geq1.
\end{equation}
These spaces have dimension $4m+1$. In particular,
\begin{equation}\label{nat:eq:even-estimates}
 \lim_{m\to\infty}m^3a_m=\frac{1}{8\pi},\qquad
 a_m-a_{m+1}\sim m^{-4}.
\end{equation}
The degree-zero and degree-one eigenvalues are $1/2$ and $1/3$,
and odd degrees $\ell\geq3$ lie in the nullspace. See
\cite{BM2019,BB2021} for the harmonic decay and
Proposition~\ref{prop:eigenvalues} for the exact values and gaps.

For $0<\delta<1/2$, let
\begin{equation*}
 h(x)=P_2(x_3)=\frac{3x_3^2-1}{2},\qquad
 w_\delta(x)=1+\delta h(x),\qquad
 \dd\sigma_\delta=w_\delta\,\dd\sigma.
\end{equation*}
Then $1-\delta/2\leq w_\delta\leq1+\delta$ and
$\int w_\delta\,\dd\sigma=1$. Define
\[
 (T^{\mathrm{in}}_\delta f)(x)
   =\int_{S^2}K(x,y)f(y)\,\dd\sigma_\delta(y),
 \qquad f\in L^2(\sigma_\delta).
\]
Write $M_f$ for multiplication by $f$. For density-change comparisons,
see \cite[Section~2.2, Proposition~1 and Lemma~2]{LYCL2024}.

\begin{proposition}[An exact operator on a fixed Hilbert space]
\label{nat:prop:representation}
The nonzero eigenvalues of $T^{\mathrm{in}}_\delta$, counted with
multiplicity, are exactly those of
\begin{equation*}
 C_\delta=T^{1/2}M_{w_\delta}T^{1/2}
          =T+\delta V,\qquad V=T^{1/2}M_hT^{1/2}.
\end{equation*}
Moreover,
\begin{equation}\label{nat:eq:relative}
 (1-\delta)T\leq C_\delta\leq(1+\delta)T,
 \qquad \ker C_\delta=\ker T.
\end{equation}
If $(\lambda_j)$ and $(\mu_j(\delta))$ are the decreasing lists
of positive eigenvalues of $T$ and $C_\delta$, with multiplicities,
then
\begin{equation}\label{nat:eq:ordered}
 (1-\delta)\lambda_j\leq\mu_j(\delta)
                 \leq(1+\delta)\lambda_j.
\end{equation}
Consequently, if $\mathcal E_\delta$ consists of zero and all distinct
positive eigenvalues of $T^{\mathrm{in}}_\delta$, then
\begin{equation}\label{nat:eq:hausdorff}
 d_{\mathrm H}(\mathcal E_\delta,\mathcal E_0)
                  \leq\delta\|T\|.
\end{equation}
\end{proposition}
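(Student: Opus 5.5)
The plan is to conjugate $T^{\mathrm{in}}_\delta$ to an operator on the fixed space $L^2(\sigma)$ and then compare quadratic forms.

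\emph{Factorization.} First I will note that $w_\delta$ is bounded above and below. Hence $L^2(\sigma_\delta)=L^2(\sigma)$ as sets, and $U:f\mapsto w_\delta^{1/2}f$ is unitary from $L^2(\sigma_\delta)$ onto $L^2(\sigma)$. Under $U$, the operator $T^{\mathrm{in}}_\delta=TM_{w_\delta}$ (acting on $L^2(\sigma_\delta)$) becomes $M_{w_\delta}^{1/2}TM_{w_\delta}^{1/2}$ on $L^2(\sigma)$. The kernel $K$ is continuous and positive semidefinite, being an NTK. So $T\ge0$ is compact, and $T^{1/2}$ exists. Write $S=T^{1/2}M_{w_\delta}^{1/2}$. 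Then the conjugated operator is $S^*S$, while $C_\delta=T^{1/2}M_{w_\delta}T^{1/2}=SS^*$. The standard fact that $S^*S$ and $SS^*$ have the same nonzero eigenvalues with multiplicities (via the partial isometry in the polar decomposition, or via $x\mapsto Sx$ on eigenspaces) gives the first claim. The identity $C_\delta=T+\delta V$ is immediate from $M_{w_\delta}=I+\delta M_h$.

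\emph{Form bounds.} Because $-1/2\le h\le1$, we have $1-\delta/2\le w_\delta\le1+\delta$. For any $f$, set $g=T^{1/2}f$; then $\langle C_\delta f,f\rangle=\int w_\delta|g|^2\,\dd\sigma$. This lies between $(1-\delta)\|g\|^2$ and $(1+\delta)\|g\|^2$, and $\|g\|^2=\langle Tf,f\rangle$. This proves the two-sided bound in \eqref{nat:eq:relative}. For the kernels: $C_\delta\ge0$, so $C_\delta f=0$ iff $\langle C_\delta f,f\rangle=0$, and by the lower bound (with $\delta<1/2$) this holds iff $\langle Tf,f\rangle=0$, i.e.\ iff $f\in\ker T$.

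\emph{Ordered eigenvalues.} The Courant--Fischer min–max principle for compact nonnegative operators, $\mu_j=\max_{\dim E=j}\min_{f\in E,\|f\|=1}\langle\cdot f,f\rangle$, is monotone under the form inequalities and gives \eqref{nat:eq:ordered}. The equality of kernels guarantees that the two lists of positive eigenvalues have the same length, possibly infinite.

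\emph{Hausdorff distance.} Each positive eigenvalue $\mu$ of $C_\delta$ equals $\mu_j(\delta)$ for some $j$, and $|\mu_j(\delta)-\lambda_j|\le\delta\lambda_j\le\delta\|T\|$. Conversely, each $\lambda_j$ lies within $\delta\|T\|$ of $\mu_j(\delta)$. Zero matches zero. Passing to distinct values does not change the sets, so both one-sided distances are at most $\delta\|T\|$.

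The only delicate point is the unitary identification and the $S^*S$ versus $SS^*$ correspondence with multiplicities, including verifying that $T^{\mathrm{in}}_\delta$ is self-adjoint on $L^2(\sigma_\delta)$. Everything else is routine form comparison.
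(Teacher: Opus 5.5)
Your proposal is correct and follows essentially the same route as the paper: conjugation by $f\mapsto\sqrt{w_\delta}f$ to $M_{\sqrt{w_\delta}}TM_{\sqrt{w_\delta}}=S^*S$ with $C_\delta=SS^*$ (the paper writes $A=S^*$), then the form identity $\langle C_\delta f,f\rangle=\int w_\delta|T^{1/2}f|^2\,\dd\sigma$, min--max for the ordered bounds, and pairing the ordered lists for the Hausdorff estimate. Your explicit derivation of $\ker C_\delta=\ker T$ from positivity and the lower form bound fills in a step the paper leaves implicit.
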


\begin{proof}
The unitary map $U_\delta f=\sqrt{w_\delta}f$ from
$L^2(\sigma_\delta)$ to $L^2(\sigma)$ gives
\[
 U_\delta T^{\mathrm{in}}_\delta U_\delta^{-1}
  =M_{\sqrt{w_\delta}}TM_{\sqrt{w_\delta}}=AA^*,
 \qquad A=M_{\sqrt{w_\delta}}T^{1/2}.
\]
For $\lambda>0$, the maps $A^*$ and $\lambda^{-1}A$ identify the
$\lambda$-eigenspaces of $AA^*$ and $A^*A=C_\delta$. Moreover,
\[
 \langle C_\delta f,f\rangle
 =\int w_\delta|T^{1/2}f|^2\dd\sigma
 \in[(1-\delta)\langle Tf,f\rangle,
      (1+\delta)\langle Tf,f\rangle].
\]
This gives \eqref{nat:eq:relative}; min--max gives
\eqref{nat:eq:ordered}. Pairing the ordered lists gives
$|\mu_j(\delta)-\lambda_j|\leq\delta\lambda_j\leq\delta\|T\|$
in both directions, proving \eqref{nat:eq:hausdorff}.
\end{proof}

\subsection{Uniform splitting and a separated subset}

Let $(Y_\ell^k)_{|k|\leq\ell}$ be the $L^2(\sigma)$-normalized complex
harmonics and $P_\ell$ their degree-$\ell$ projection \cite{DaiXu2013}.
Multiplication by $h$ preserves $k$, couples degrees $\ell,\ell\pm2$,
and satisfies
\begin{equation}\label{nat:eq:compression}
 P_\ell M_hP_\ell Y_\ell^k=q_{\ell,k}Y_\ell^k,
 \qquad
 q_{\ell,k}=\frac{\ell(\ell+1)-3k^2}{(2\ell-1)(2\ell+3)}.
\end{equation}
See Appendix~\ref{app:natural-splitting} for the recurrence calculation.

\begin{theorem}[Uniform splitting of harmonic bands]
\label{nat:thm:uniform-splitting}
There are constants $c,C>0$ such that, for $m\geq1$ and
$\delta(m+1)\leq c$, the band originating from $a_m$ consists of
$4m+1$ eigenvalues with multiplicities, indexed by $|k|\leq2m$,
and
\begin{equation}\label{nat:eq:uniform-remainder}
 \left|\mu_{m,k}(\delta)
       -a_m(1+\delta q_{2m,k})\right|
       \leq C\delta^2a_m(m+1).
\end{equation}
They satisfy $\mu_{m,-k}(\delta)=\mu_{m,k}(\delta)$ and lie in
$[(1-\delta)a_m,(1+\delta)a_m]$. The band is separated from the
remaining even spectrum by at least $c'a_m/(m+1)$, for a fixed
$c'>0$.
\end{theorem}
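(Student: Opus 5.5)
The plan is to work with $C_\delta=T+\delta V$ from Proposition~\ref{nat:prop:representation} on $L^2(\sigma)$. Then reduce to a fixed azimuthal index $k$ and run a Schur-complement (Feshbach) argument on one harmonic degree at a time.

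\emph{Reduction to fixed $k$.} Since $h=P_2(x_3)$ is rotation invariant about the $x_3$-axis, and $T$ is diagonal in the $Y_\ell^k$, the operator $C_\delta$ commutes with the azimuthal rotations. It therefore decomposes into invariant subspaces $\mathcal H_k=\overline{\operatorname{span}}\{Y_\ell^k:\ell\ge|k|\}$. Multiplication by $h$ couples only $\ell,\ell\pm2$, so only even degrees interact with even degrees. The odd degrees $\ge3$ lie in $\ker T=\ker C_\delta$ and play no role. Within $\mathcal H_k$, restricted to even $\ell$, $C_\delta$ is a Jacobi-type (tridiagonal) matrix
\[
 \bigl(\lambda_\ell\delta_{\ell\ell'}+\delta\sqrt{\lambda_\ell\lambda_{\ell'}}\langle hY_{\ell'}^k,Y_\ell^k\rangle\bigr),
\]
with $\lambda_{2m}=a_m$. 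Its diagonal entries are $a_m(1+\delta q_{2m,k})$ and its off-diagonal entries satisfy $\lesssim\delta\sqrt{a_ma_{m\pm1}}\lesssim\delta a_m$. The symmetry $k\mapsto-k$ (complex conjugation, or the reflection $x_2\mapsto-x_2$) gives $\mu_{m,-k}=\mu_{m,k}$.

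\emph{Schur complement in one degree.} Fix $k$ and $m$ with $2m\ge|k|$. Let $P$ be the projection onto $Y_{2m}^k$ and $Q=I-P$ within $\mathcal H_k$. For $z$ in the window $W=\{|z-a_m|\le \tfrac14 g\}$, where $g\sim a_m/(m+1)$ is the distance from $a_m$ to $a_{m\pm1}$ (by \eqref{nat:eq:even-estimates}, $a_m-a_{m+1}\sim m^{-4}\sim a_m/m$), the operator $QC_\delta Q-z$ is invertible with $\|(QC_\delta Q-z)^{-1}\|\lesssim (m+1)/a_m$. This uses a Neumann series: $\delta\|QVQ\|$ relative to $a_{m\pm1}$ is $O(\delta a_m)$ near degree $2m$, which is much smaller than $g$ because $\delta(m+1)\le c$. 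Here I expect to use the relative bound $|\langle Vf,f\rangle|\le\langle Tf,f\rangle$ (since $|h|\le1$) to control the far degrees uniformly, instead of summing matrix entries. The eigenvalue $\mu$ in $W$ then solves the scalar equation
\[
 \mu=a_m(1+\delta q_{2m,k})-\delta^2PVQ(QC_\delta Q-\mu)^{-1}QVP .
\]
The correction is $\lesssim\delta^2 a_m^2\cdot(m+1)/a_m=\delta^2a_m(m+1)$, and this is \eqref{nat:eq:uniform-remainder}. The right-hand side is a contraction in $\mu$ on $W$, with Lipschitz constant $\lesssim\delta^2a_m(m+1)^2/a_m\cdot$ small. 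Hence there is exactly one eigenvalue in $W$ per $k$. Counting $|k|\le2m$ gives the $4m+1$ eigenvalues of the band.

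\emph{Separation and counting.} The same resolvent bound on the spectral gap region between windows shows that $C_\delta$ has no other spectrum within distance $c'a_m/(m+1)$ of the band. Equivalently, I will apply the Schur argument to the spectral projection onto all of degree $2m$ simultaneously (a $(4m+1)$-dimensional $P$), and use the uniform resolvent estimate on $Q$ to exclude eigenvalues in the annulus between $a_m\pm\delta a_m$ and $a_m\pm g/2$. The containment in $[(1-\delta)a_m,(1+\delta)a_m]$ then follows by combining \eqref{nat:eq:ordered} with this counting: the ordered lists match band by band, since the bands are isolated and $(1\pm\delta)$ scaling keeps them in disjoint windows when $\delta(m+1)\le c$.

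\emph{Main obstacle.} The hard step is the uniformity in $m$ and $k$ of the resolvent bound on $Q$, in particular the low degrees $\ell\ll 2m$ where $a_{\ell/2}\gg a_m$. There I will use $C_\delta\ge(1-\delta)T$ on $Q$-range components of degree $\le2m-2$ to keep their spectrum above $a_{m-1}(1-\delta)$, which exceeds $a_m+g/2$ since $\delta a_{m-1}\ll g$. For the high degrees I will use $C_\delta\le(1+\delta)T$ symmetrically. The off-diagonal coupling between these two pieces then needs a short Schur-within-Schur estimate.
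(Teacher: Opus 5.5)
Your proposal is correct and follows essentially the paper's route: you decompose into azimuthal sectors, take a one-dimensional Schur complement at $Y_{2m}^k$ with $\|QVP\|\lesssim a_m$ and $\|(QC_\delta Q-\mu)^{-1}\|\lesssim(m+1)/a_m$, and use the ordered $(1\pm\delta)$ comparison (which the paper applies sectorwise) for isolation, counting and the gap. The uniformity you flag as the main obstacle is settled in the paper in one line, by applying min--max to $(1-\delta)QTQ\le QC_\delta Q\le(1+\delta)QTQ$ on $\operatorname{ran}Q$. In your low/high split, no Schur-within-Schur step is needed: $M_h$ couples only degrees $j$ and $j\pm2$, so the block of $V$ between degrees $\le 2m-2$ and $\ge 2m+2$ vanishes identically, and $QC_\delta Q$ is block diagonal.
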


Appendix~\ref{app:natural-splitting} proves the uniform remainder by a
sectorwise Schur complement. For covering estimates we select distinct
values as follows.

\begin{corollary}[Separated values in growing spectral windows]
\label{nat:cor:growing-clusters}
Fix $\omega>0$ and $\beta>2\omega$, and put
$\delta_r=r^\beta$ and $N_r=\lfloor r^{-\omega}\rfloor$.
For all sufficiently small $r$, each $1\leq m\leq N_r$ has an
isolated cluster $C_{r,m}$ of distinct spectral values satisfying
\begin{equation}\label{nat:eq:cluster-upper}
 \#C_{r,m}\leq4m+1,\qquad
 C_{r,m}\subset[a_m-r^\beta a_m,a_m+r^\beta a_m].
\end{equation}
For all sufficiently large $m$, the cluster contains
$J_m\sim m$ points $x_{m,1}<\cdots<x_{m,J_m}$ such that
\begin{equation}\label{nat:eq:cluster-lower}
 c r^\beta m^{-3}\frac{j-i}{m}
 \leq x_{m,j}-x_{m,i}
 \leq C r^\beta m^{-3}\frac{j-i}{m},\qquad 1\leq i<j\leq J_m.
\end{equation}
All comparison constants are independent of $r,m,i,j$. In particular,
the cluster width is comparable to $r^\beta m^{-3}$ and the number
of distinct values is comparable to $m$.
\end{corollary}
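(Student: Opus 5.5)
Apply Theorem~\ref{nat:thm:uniform-splitting} with $\delta=\delta_r=r^\beta$. Since $\beta>2\omega>\omega$, for $m\le N_r$ we have $\delta_r(m+1)\le 2r^{\beta-\omega}\to0$, so the hypothesis $\delta(m+1)\le c$ holds uniformly on the retained window once $r$ is small.

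\emph{Step 1: the clusters.} Theorem~\ref{nat:thm:uniform-splitting} gives, for each $m\le N_r$, a band of $4m+1$ eigenvalues with multiplicity. It lies in $[(1-\delta_r)a_m,(1+\delta_r)a_m]$ and is separated from the rest of the even spectrum by at least $c'a_m/(m+1)$. The odd degrees contribute only $1/3$ and zero. Since $a_m\lesssim m^{-3}$, the eigenvalue $1/3$ lies outside every band once $m\ge2$ and $r$ is small; the bands with $m=1$ (and any bounded initial range) are exceptional bands of bounded cardinality. Define $C_{r,m}$ as the set of distinct values in the band. Then \eqref{nat:eq:cluster-upper} is immediate, and isolation follows from the separation statement.

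\emph{Step 2: the separated subset.} Use the expansion \eqref{nat:eq:uniform-remainder} together with the explicit compression \eqref{nat:eq:compression}. For $\ell=2m$ and $0\le k\le 2m$,
\[
q_{2m,k}=\frac{2m(2m+1)-3k^2}{(4m-1)(4m+3)}
\]
is strictly decreasing in $k$. Its consecutive differences are
\[
q_{2m,k}-q_{2m,k+1}=\frac{3(2k+1)}{(4m-1)(4m+3)}\sim \frac{k}{m^2}.
\]
Hence the main terms $a_m\delta q_{2m,k}$ have spacing $\sim \delta a_m k/m^2$. The remainder is $O(\delta^2 a_m m)$. Restrict to $k\in[m,2m]$, where the spacing is $\sim\delta a_m/m$. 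The ratio of remainder to spacing is then $O(\delta m^2)=O(r^{\beta-2\omega})\to0$; this is exactly where $\beta>2\omega$ is used.

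Now take every $k$ in $[m,2m]$, or every $K$-th index for a fixed large $K$ if extra room is wanted. The true eigenvalues $\mu_{m,k}$ then inherit the ordering and the spacing of the main terms up to a factor $1\pm o(1)$, uniformly for $m\le N_r$. Using $a_m\sim m^{-3}$, listing them increasingly (that is, by decreasing $k$) gives $J_m\sim m$ points with consecutive gaps comparable to $r^\beta m^{-3}/m$. Summing consecutive gaps yields the two-sided bound \eqref{nat:eq:cluster-lower}.

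The symmetry $\mu_{m,-k}=\mu_{m,k}$ shows that the selected values are distinct but also that the negative indices add nothing new. This is why only $0\le k\le2m$, and in fact $k\ge m$, are used. The restriction to sufficiently large $m$ covers the small-$m$ regime, where $q_{2m,k}-q_{2m,k+1}\sim k/m^2$ cannot be compared cleanly.

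\emph{Main obstacle.} The crux is the uniformity of Step 2. The remainder bound $C\delta^2a_m(m+1)$ must be dominated by the spacing $\delta a_m/m$ simultaneously for all $m\le N_r$, with constants independent of $r$ and $m$. Near $k=0$ the spacing degenerates to order $\delta a_m/m^2$, which is why we restrict to $k\ge m$ instead of trying to resolve the whole band. Everything else is bookkeeping from Theorem~\ref{nat:thm:uniform-splitting} and the asymptotics \eqref{nat:eq:even-estimates}.
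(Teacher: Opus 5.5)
Your proposal is correct and is essentially the paper's proof. You apply Theorem~\ref{nat:thm:uniform-splitting}, which is valid since $\delta_r(m+1)\to0$, and select a bulk range of azimuthal indices $k\sim m$ where $q_{2m,i}-q_{2m,j}\sim(j-i)/m$. You then absorb the $O(\delta_r^2a_m m)$ remainder using $\delta_r m^2\le r^{\beta-2\omega}\to0$, reverse the order, and read the containment \eqref{nat:eq:cluster-upper} off the band statement.

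The differences are cosmetic. The paper takes $\lceil\ell/3\rceil\le k\le\lfloor2\ell/3\rfloor$ and bounds each pair directly, with relative error $O(\delta_r m^2/(j-i))$, instead of summing consecutive gaps. One small correction: the degree-one part of $C_\delta$ gives up to three values near $1/3$, not exactly $1/3$. This is harmless, because $a_1=5/64$ keeps them away from every band, including $m=1$.
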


\begin{proof}
Set $\ell=2m$ and select
$\lceil\ell/3\rceil\leq k\leq\lfloor2\ell/3\rfloor$.
For $i<j$ in this range,
\[
 q_{\ell,i}-q_{\ell,j}
 =\frac{3(j-i)(j+i)}{(2\ell-1)(2\ell+3)}
 \sim\frac{j-i}{\ell}.
\]
Since $\delta_r m^2\leq r^{\beta-2\omega}\to0$ uniformly for $m\leq N_r$,
\eqref{nat:eq:uniform-remainder} gives
\begin{align*}
 \mu_{m,i}(\delta_r)-\mu_{m,j}(\delta_r)
 &=\delta_r a_m(q_{\ell,i}-q_{\ell,j})+O(\delta_r^2a_m m)\\
 &=\delta_r a_m(q_{\ell,i}-q_{\ell,j})
       \left(1+O\left(\frac{\delta_r m^2}{j-i}\right)\right)
 \sim\delta_r m^{-3}\frac{j-i}{m}.
\end{align*}
Reverse the selected indices to obtain \eqref{nat:eq:cluster-lower}.
Their span gives the lower width bound; the band count and
\eqref{nat:eq:ordered} give \eqref{nat:eq:cluster-upper}.
Finitely many initial bands have bounded cardinality and at least
two distinct first-order values; treat them separately.
\end{proof}

Here $q_{\ell,0}-q_{\ell,1}\sim\ell^{-2}$ and selected bulk gaps are
$\sim\ell^{-1}$, as allowed by the separated-subset hypothesis of
Corollary~\ref{cor:relative-width-transfer}. The width is
$\delta_r a_m\sim\delta_r m^{-3}$. The two conditions are
\[
 \delta_r N_r\to0\quad\text{for band isolation},\qquad
 \delta_r N_r^2\to0\quad\text{for selected spacings}.
\]

\subsection{Control of the complete spectral tail}

For sufficiently small $\delta$, parity and $T^{1/2}Y_\ell^k=0$ for odd
$\ell\geq3$ leave at most three odd positive eigenvalues and one isolated
value originating from degree zero.
Their covering cost is $O(r^s)$. Ordered comparison controls the tail.

\begin{proposition}[Tail bound with multiplicities]
\label{nat:prop:tail}
For an integer $N\geq1$, let
\[
 D_N=4+\sum_{m=1}^N(4m+1)=2N^2+3N+4.
\]
Every eigenvalue beyond the first $D_N$ ordered positive eigenvalues
of $C_\delta$ lies in $[0,CN^{-3}]$, uniformly for
$0<\delta<1/2$. If $\delta N$ is sufficiently small, these first
$D_N$ eigenvalues are precisely the first $N$ isolated even bands and
the four low-degree values, counted with multiplicity.
\end{proposition}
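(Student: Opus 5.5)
The tail bound will come from the ordered comparison \eqref{nat:eq:ordered} of Proposition~\ref{nat:prop:representation}, applied to the unperturbed spectrum. The isolation statement will then combine this comparison with Theorem~\ref{nat:thm:uniform-splitting}.

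\emph{Step 1 (counting the unperturbed spectrum).} The positive eigenvalues of $T$, with multiplicity, are the degree-zero value $1/2$ (multiplicity one), the degree-one value $1/3$ (multiplicity three), and $a_m$ (multiplicity $4m+1$) for $m\ge1$. Odd degrees $\ge3$ contribute nothing. From \eqref{nat:eq:even-spectrum} and \eqref{nat:eq:even-estimates}, the sequence $a_m$ is strictly decreasing with $a_1<1/3$; I will check the latter from the exact formula. Hence the decreasing list $(\lambda_j)$ begins with the four low-degree values, followed by the bands $a_1,\dots,a_N$ in order. Thus $\lambda_j$ with $j>D_N$ is some $a_m$ with $m\ge N+1$, so $\lambda_j\le a_{N+1}\le CN^{-3}$ by $m^3a_m\to1/(8\pi)$.

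\emph{Step 2 (perturbed tail).} For $j>D_N$, \eqref{nat:eq:ordered} gives $\mu_j(\delta)\le(1+\delta)\lambda_j\le\frac32 CN^{-3}$, uniformly in $0<\delta<1/2$. Since $C_\delta\ge0$, these eigenvalues lie in $[0,CN^{-3}]$ after enlarging $C$.

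\emph{Step 3 (identification for small $\delta N$).} Suppose $\delta(N+1)\le c$, with $c$ from Theorem~\ref{nat:thm:uniform-splitting}, possibly decreased. For each $m\le N$, that theorem gives $4m+1$ eigenvalues of $C_\delta$ in $[(1-\delta)a_m,(1+\delta)a_m]$. These are separated from the rest of the even spectrum by $c'a_m/(m+1)$.

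The low-degree values are handled by parity. Because $h$ is even, $C_\delta$ preserves even and odd sectors. The odd sector contains only the three degree-one directions (as $T^{1/2}$ vanishes on odd degree $\ge3$). Its eigenvalues are therefore within the factor $1\pm\delta$ of $1/3$, by the same relative bound restricted to that sector. The degree-zero value stays near $1/2$, isolated from $a_1$, for small $\delta$.

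It remains to show that nothing else enters the top $D_N$ positions. The intervals $[(1\pm\delta)a_m]$ for consecutive $m\le N+1$ are disjoint once $\delta a_m\ll a_m-a_{m+1}\sim a_m/m$, that is, once $\delta m$ is small; this holds because $\delta(N+1)\le c$. So the $D_N$ eigenvalues already located all exceed $(1-\delta)a_N>(1+\delta)a_{N+1}$. By Step 2, every $\mu_j$ with $j>D_N$ is at most $(1+\delta)a_{N+1}$. Counting then forces the first $D_N$ ordered eigenvalues to be exactly the four low-degree values and the first $N$ bands.

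The main obstacle is this last identification: one must ensure that no eigenvalue born from a band $m>N$ can exceed band $N$. The ordered comparison handles this cleanly, using the gap-to-width ratio $(a_N-a_{N+1})/a_N\sim1/N$ against the relative width $\delta$. I will verify the numerical inequality $a_1<1/3$ (and its margin) directly from \eqref{nat:eq:even-spectrum}.
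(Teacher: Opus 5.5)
Your proposal is correct and follows the paper's route. The tail bound comes from the ordered comparison \eqref{nat:eq:ordered} at index $D_N+1$, using $1/2,1/3>a_1$. For small $\delta N$, the disjoint relative intervals $[(1-\delta)a_m,(1+\delta)a_m]$, Theorem~\ref{nat:thm:uniform-splitting}, and the parity count of the low-degree values identify the first $D_N$ eigenvalues; your explicit count of $D_N$ located eigenvalues above $(1-\delta)a_N>(1+\delta)a_{N+1}\ge\mu_{D_N+1}$ just spells out what the paper leaves implicit.
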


\begin{proof}
For $T$, $\lambda_{D_N+1}=a_{N+1}$ since $1/2,1/3>a_1$. By
\eqref{nat:eq:ordered},
\[
 \mu_{D_N+1}(\delta)\leq(1+\delta)a_{N+1}\lesssim N^{-3}.
\]
For $\delta N$ small, \eqref{nat:eq:even-estimates} separates the first
$N$ relative band intervals and the low-degree intervals.
Theorem~\ref{nat:thm:uniform-splitting} counts their $D_N$ eigenvalues.
\end{proof}

\section{Random images of the complete population spectra}
\label{sec:natural-images}

For $\beta>0$, let $\mathcal E_r^{(\beta)}$ consist of zero and
all distinct positive eigenvalues of $T^{\mathrm{in}}_{r^\beta}$
from Section~\ref{sec:natural-input}, with the fixed kernel \eqref{nat:eq:kernel}.

\begin{theorem}[Complete threshold curves for the natural families]
\label{thm:natural-separation}
For $\beta=1$ and $\beta=2$, the source joint covering thresholds
agree at every $0<\theta\leq1$ and equal
\begin{equation*}
 D_{\mathrm{src}}^{(\beta)}(\theta)=\frac{\theta}{3+\theta}.
\end{equation*}
The source covering costs diverge for $0<s<D_{\mathrm{src}}^{(\beta)}$
and tend to zero for $D_{\mathrm{src}}^{(\beta)}<s<1$ as
$r\downarrow0$ through all real scales.

Let $B=B_{1/2}$ be a common Brownian path. For each prescribed $a>1$,
almost surely simultaneously for every $0<\theta\leq1$, the image
joint thresholds along $r_n=a^{-n}$ are
\begin{align}
 D_B^{(2)}(\theta)&=\frac{2\theta}{3+2\theta},
 \notag\\
 D_B^{(1)}(\theta)&=
 \begin{cases}
 \displaystyle\frac{3+10\theta-\sqrt{9+60\theta+4\theta^2}}
                         {12\theta},&0<\theta\leq3/4,\\[7pt]
 \displaystyle\frac{2\theta}{3+2\theta},&3/4\leq\theta\leq1.
 \end{cases}
 \label{nat:eq:beta-one}
\end{align}
In particular, $D_B^{(1)}(\theta)>D_B^{(2)}(\theta)$ for
$0<\theta<3/4$, and the thresholds agree for
$3/4\leq\theta\leq1$. Supercritical convergence holds almost
surely at all real scales. For each fixed $\theta$ and subcritical
$s$, divergence holds in probability at arbitrary scales and almost
surely along the prescribed geometric sequence. These statements concern
strict exponents; the probability-one event may depend on $a$.
\end{theorem}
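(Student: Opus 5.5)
The plan is to derive Theorem~\ref{thm:natural-separation} from Corollary~\ref{cor:relative-width-transfer}, applied with $\rho=3$ and $b=2$. The centre law \eqref{nat:eq:even-estimates} gives $a_m\sim m^{-3}$ and $a_m-a_{m+1}\sim m^{-4}$. Corollary~\ref{nat:cor:growing-clusters} supplies $\#C_{r,m}\le 4m+1$, the containment in $[a_m-r^\beta a_m,a_m+r^\beta a_m]$, and a separated subset of size $J_m\sim m$ with spacing $\gtrsim r^\beta m^{-3}(j-i)/m$. This is exactly \eqref{eq:relative-subset} with $b-1=1$, valid on a retained window $N_r=\lfloor r^{-\omega}\rfloor$ provided $\beta>2\omega$.

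The finitely many initial bands, the three odd values and the degree-zero value are exceptional bands of bounded cardinality, and each contributes cost $O(r^s)$. For $\beta=1$ I would take $\omega$ slightly below $1/2$; for $\beta=2$, $\omega$ slightly below $1$ (or also just below $1/2$, which suffices).

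\textbf{Tail control.} The remaining spectrum beyond the first $N_r$ bands must be handled separately. By Proposition~\ref{nat:prop:tail} it lies in $[0,CN_r^{-3}]\subset[0,Cr^{3\omega}]$.

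\emph{Image.} On the common Hölder event its image has diameter $\lesssim r^{3\omega\gamma}$ for every $\gamma<H$. By Lemma~\ref{lem:vs-span-cover} it can then be covered at cost $r^{J_{s,\theta}(3\omega\gamma)}$, which tends to $0$ whenever $s>(1-3\omega H/\theta)_+$. I will check that $(1-3\omega H/\theta)_+<D(\theta)$ for $\omega$ close to $\beta/2$ and $H=1/2$. For $\beta=1$ this compares $1-3/(4\theta)$ with the computed curve, e.g.\ $1/4<2/5$ at $\theta=1$. Given this, the tail never raises the threshold. The lower bounds only use complete blocks inside the window, so they are unaffected by the tail.

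\emph{Source.} Here I need the deterministic analogue of Theorem~\ref{thm:general-cluster-transfer} with $H=1$, i.e.\ with the identity map in place of $B_H$. The covers in Step~3 work verbatim with $\gamma=1$. For the lower bound, the energy estimate of Lemma~\ref{lem:vs-block-energy} holds directly with the kernel $K^s_{r,\theta}\le\Psi_r$ and no expectation. Since nothing is random there, both limits hold at all real $r$.

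\textbf{Optimizations.} It then remains to solve the variational problems.

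\emph{Source.} With $H=1$ we have $v=\beta+3u\ge1$, so $Q_\theta=u$. The maximum is $\max_u\min\{2u,u,1-3u/\theta\}=\theta/(3+\theta)$ for both $\beta$, attained at $u=\theta/(3+\theta)<1/2$, i.e.\ inside the window.

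\emph{Image, $\beta=2$.} With $H=1/2$, $v=1+3u/2\ge1$, so Corollary~\ref{cor:vs-small-splitting}-style reasoning gives $\min\{u,1-3u/(2\theta)\}$, maximized at $2\theta/(3+2\theta)$.

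\emph{Image, $\beta=1$.} Here $v=(1+3u)/2$ crosses $1$ at $u=1/3$. On the branch $v\le1$ (and $v\ge\theta$) the third constraint is $2u/(1+3u)$. Balancing it against $1-3u/(2\theta)$ gives the quadratic $9u^2+(3+2\theta-4\theta)u\ldots$, whose admissible root yields $(3+10\theta-\sqrt{9+60\theta+4\theta^2})/(12\theta)$. I would verify that the balancing point satisfies $u<1/3$ exactly when $\theta<3/4$. For $\theta\ge3/4$ the optimum sits at $u\ge1/3$, where $Q_\theta=u$ and the $\beta=2$ value $2\theta/(3+2\theta)$ is recovered. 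I must also rule out the $v\le\theta$ branch and the constraint $2u$ being active, and check the branch bookkeeping carefully. The strict inequality for $\theta<3/4$ follows because on $v<1$ one has $Q_\theta=u/v>u$, so the min curve lies strictly above the $\beta=2$ one near the optimizer.

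\textbf{Main obstacle.} I expect the main obstacles to be the tail/window compatibility for $\beta=1$, namely that $\omega<1/2$ must still contain the optimizer while the tail stays subcritical, and a clean piecewise verification of the $\beta=1$ maximum with its $3/4$ transition. The probabilistic qualifications are inherited directly from Theorem~\ref{thm:general-cluster-transfer}.
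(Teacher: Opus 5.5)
Your plan is correct. For the Brownian images it follows the paper's proof: Corollaries~\ref{nat:cor:growing-clusters} and~\ref{cor:relative-width-transfer} with $(\rho,b,H)=(3,2,1/2)$, the low-degree values as exceptional bands, the tail from Proposition~\ref{nat:prop:tail} plus H\"older continuity, and the same piecewise optimization. The paper closes the bookkeeping you left open in two steps. It bounds the $v\le\theta$ branch by $(4\theta-2)/(4\theta+1)\le 2\theta/(3+2\theta)$. It shows $v(u_*)\ge\theta$ via $u_*\ge(2\theta-1)/3$.

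The paper takes $\omega=2/5$. This is tight at $\theta=1$, both for the optimizer $u=2/5$ and for the tail inequality $(1-3/(5\theta))_+\le 2\theta/(3+2\theta)$. Your $\omega$ just below $1/2$ leaves slack in both places.

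The genuine difference is the source threshold. The paper never runs the cluster machinery at $H=1$. Proposition~\ref{nat:prop:representation} gives $d_{\mathrm H}(\mathcal E^{(\beta)}_r,\mathcal E_0)\le\|T\|r^\beta\le Cr$, which is comparable to the smallest admissible diameter $r$. Enlarging intervals by $Cr$ and resplitting at $r^\theta$ therefore gives $S^s_{r,\theta}(\mathcal E^{(\beta)}_r)\sim S^s_{r,\theta}(\mathcal E_0)$ at every real scale. The threshold is then the classical one for the fixed sequence $a_m\sim m^{-3}$ with gaps $\sim m^{-4}$. This makes it transparent why the source is blind to $\beta$, and it needs no window, separated subset, or tail bound.

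Your route reaches the same value because $w(u)=\beta+3u\ge1$ makes every cluster shorter than $r$, so $Q_\theta=u$. It has the appeal of a single mechanism for source and image. Two cautions apply, though:
\begin{itemize}
\item Lemma~\ref{lem:vs-block-energy} does not carry over verbatim at $H=1$. The sums $\sum_{h<J}(J-h)h^{-1}$ and $\sum_{m\ne k}|m-k|^{-1}$ produce a $\log(1/r)$ loss, which is harmless only because the exponents are strict.
\item The within-cluster spacing is unnecessary there. Bounding the kernel by one on same-cluster pairs already yields the required $M^{-1}$ term.
\end{itemize}
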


\begin{proof}
\noindent\emph{Source comparison.}
Proposition~\ref{nat:prop:representation} gives
\[
 d_{\mathrm H}(\mathcal E_r^{(\beta)},\mathcal E_0)
 \leq\|T\|r^\beta\leq Cr,
 \qquad \beta\in\{1,2\}.
\]
Enlarge each covering interval by $Cr$ at each end, then split at
length $r^\theta$. Since $|U|+2Cr\leq(1+2C)|U|$, the cost comparison
in both directions is
\[
 S^s_{r,\theta}(\mathcal E_r^{(\beta)})
 \sim S^s_{r,\theta}(\mathcal E_0),\qquad 0<\theta\leq1.
\]

Put $d=\theta/(3+\theta)$ and $M=\lfloor r^{-d}\rfloor$.
The head and tail covers of~\cite{FFK2020}, at scales $r$ and $r^\theta$,
include the finitely many low-degree values; using $a_m\sim m^{-3}$ gives
\[
 S^s_{r,\theta}(\mathcal E_0)
 \leq C\{r^{s-d}+r^{\theta(s-d)}+r^{\theta s}\}.
\]
Conversely, $a_m-a_{m+1}\sim m^{-4}$ gives
$\mu_M(U)\lesssim M^{-1}+|U|M^3$ for uniform measure on
$\{a_m:M\leq m<2M\}$. Thus, for $r\leq|U|\leq r^\theta$,
\[
 \mu_M(U)\leq C\{r^{d-s}+r^{\theta(d-s)}\}|U|^s,
 \qquad
 S^s_{r,\theta}(\mathcal E_0)
 \geq\frac{c}{r^{d-s}+r^{\theta(d-s)}}.
\]
This proves the source limits at all real scales.

\smallskip\noindent\emph{A growing head and its baseline threshold.}
Set $\omega=2/5$ and $N_r=\lfloor r^{-2/5}\rfloor$.
For $\beta\in\{1,2\}$, $\beta>2\omega$, so
Corollaries~\ref{nat:cor:growing-clusters}
and~\ref{cor:relative-width-transfer}, with $(\rho,b,H)=(3,2,1/2)$,
give the head threshold
\begin{equation}\label{nat:eq:head-threshold}
 \max_{0\leq u\leq2/5}
 \min\left\{2u,\left(1-\frac{3u}{2\theta}\right)_+,
     Q_\theta\left(u,\frac\beta2+\frac{3u}{2}\right)\right\}.
\end{equation}
By \eqref{eq:vs-Q}, $Q_\theta(u,v)\geq u$ for $u,v\geq0$, so
\eqref{nat:eq:head-threshold} is at least
\begin{equation*}
 d_\theta:=\max_{0\leq u\leq2/5}
          \min\{u,(1-3u/(2\theta))_+\}
          =\frac{2\theta}{3+2\theta}.
\end{equation*}
The maximum occurs at $u=d_\theta\leq2/5$.
For $\beta=2$, $1+3u/2\geq1$ gives $Q_\theta=u$, hence equality.

\smallskip\noindent\emph{Evaluation of the head when $\beta=1$.}
Put $v(u)=1/2+3u/2$ and $G_\theta(u)=(1-3u/(2\theta))_+$.

For $v(u)\leq\theta$ (hence $\theta\geq1/2$),
$Q_\theta(u,v(u))=1-(1+u)/(2\theta)$; its contribution is at most
\begin{equation*}
 \max_{u\geq0}\min\{2u,1-(1+u)/(2\theta)\}
 =\frac{4\theta-2}{4\theta+1}\leq d_\theta.
\end{equation*}
The last inequality is equivalent to $6(\theta-1)\leq0$.

If $\theta\leq v(u)\leq1$, then $u\leq1/3$ and
\[
 Q_\theta(u,v(u))=R(u):=\frac{u}{1/2+3u/2}\leq2u.
\]
On $u\geq0$, $R$ increases and $G_\theta$ decreases before vanishing;
their unique intersection $u_*>0$ maximizes $\min\{G_\theta,R\}$. Moreover,
\[
 u_*\leq\frac13
 \quad\Longleftrightarrow\quad
 1-\frac1{2\theta}\leq R(1/3)=\frac13
 \quad\Longleftrightarrow\quad\theta\leq\frac34.
\]
If $\theta\geq1/2$, then $u_0=(2\theta-1)/3$ satisfies
\[
 G_\theta(u_0)=\frac1{2\theta}\geq\frac{u_0}{\theta}=R(u_0)
 \quad\Longrightarrow\quad u_*\geq u_0.
\]
Thus $v(u_*)\geq\theta$ for all $0<\theta\leq1$.

For $\theta\leq3/4$, write
$P_\theta=R(u_*)=G_\theta(u_*)$, so $u_*=(2\theta/3)(1-P_\theta)$ and
\[
 6\theta P_\theta^2-(3+10\theta)P_\theta+4\theta=0.
\]
The smaller root is
\begin{equation*}
 P_\theta=
 \frac{3+10\theta-\sqrt{9+60\theta+4\theta^2}}{12\theta}.
\end{equation*}
Here $u_*\leq1/3<2/5$. For $\theta<3/4$, $d_\theta$ lies in this branch,
and
\[
 R(d_\theta)>d_\theta=G_\theta(d_\theta)
 \quad\Longrightarrow\quad u_*<d_\theta,
 \quad P_\theta>d_\theta.
\]
At $\theta=3/4$, $P_\theta=d_\theta=1/3$.
For $\theta\geq3/4$, this branch contributes at most
$R(1/3)=1/3\leq d_\theta$.

For $v(u)\geq1$, $Q_\theta=u$ gives a contribution at most $d_\theta$,
attained at $u=d_\theta\geq1/3$ for $\theta\geq3/4$.
This proves \eqref{nat:eq:beta-one} for the head.

\smallskip\noindent\emph{Passage to the complete spectrum.}
The low degrees cost $O(r^s)$. By Proposition~\ref{nat:prop:tail},
\[
 \text{tail}\subset[0,CN_r^{-3}]\subset[0,Cr^{6/5}],\qquad
 \operatorname{diam}B(\text{tail}\cup\{0\})
 \leq C_\gamma r^{6\gamma/5}
\]
on the common H\"older event, for every $0<\gamma<1/2$.
The $r^\theta$-cover therefore costs
\begin{equation}\label{nat:eq:tail-cost}
 S^s_{r,\theta}(B(\text{tail}))
 \leq C_\gamma\left[
 r^{6\gamma/5+\theta(s-1)}+r^{\theta s}\right].
\end{equation}
For all $0<\theta\leq1$,
\begin{equation*}
 \left(1-\frac{3}{5\theta}\right)_+\leq d_\theta.
\end{equation*}
For a positive left side this is equivalent to $\theta\leq1$.
Above the head threshold,
\[
 s>d_\theta\geq\left(1-\frac{3}{5\theta}\right)_+
 \quad\Longrightarrow\quad
 \frac35+\theta(s-1)>0.
\]
Choose $\gamma<1/2$ so that $6\gamma/5+\theta(s-1)>0$.
Equation~\eqref{nat:eq:tail-cost} gives the all-scale upper bound;
inclusion of the head gives both lower bounds.
For each prescribed $a>1$, intersect the two simultaneous-$\theta$
events of Corollary~\ref{cor:relative-width-transfer} with the common
H\"older event.
\end{proof}

\begin{figure}[tbp]
\centering
\includegraphics[width=.76\linewidth]{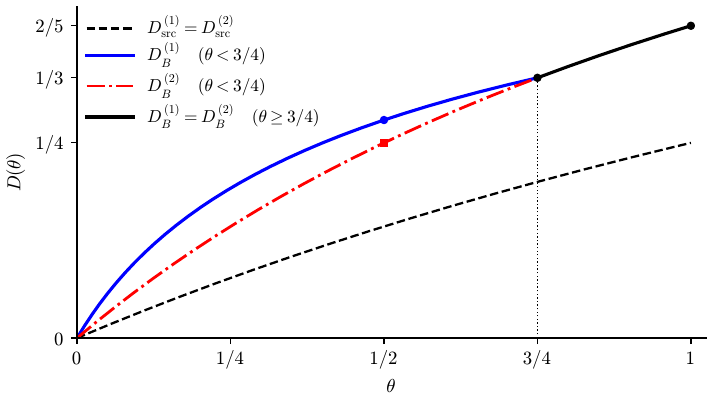}
\caption{Joint covering thresholds for two input-distribution families
of the fixed ReLU tangent kernel, plotted from the exact formulas in
Theorem~\ref{thm:natural-separation}. The source curves coincide.
The Brownian-image curves are distinct on $0<\theta<3/4$ and coincide
on $3/4\leq\theta\leq1$; the marked parameters are
$\theta=1/2,3/4,1$, with values in
\eqref{nat:eq:representative-values}.}
\label{nat:fig:phase}
\end{figure}

At three representative parameters, the image thresholds are
\begin{equation}\label{nat:eq:representative-values}
 \begin{array}{c|ccc}
 &\theta=1&\theta=3/4&\theta=1/2\\\hline
 D_B^{(1)}(\theta)&2/5&1/3&(4-\sqrt{10})/3\\[2pt]
 D_B^{(2)}(\theta)&2/5&1/3&1/4.
 \end{array}
\end{equation}

\part{Cantor sets with delayed contraction histories}
\label{part:cantor}
\section{Entropy formulas for Cantor sets and their random images}
\label{sec:tree-entropy}

We study the Cantor constructions of
Baker, Banaji, Feng, Lai and Xiong~\cite{BBFLX2025} through
cumulative digit frequencies and entropy.

\subsection{The construction, frequency profiles, and physical scales}

Fix a delay parameter $0<\beta<1$ and $M\ge100$, and put
\[
 \lambda=\log M,\qquad c=\log2,\qquad
 a=\lambda-\beta c>0,\qquad b=\lambda+\beta c.
\]
For a binary word $\omega=\omega_1\cdots\omega_n$, put
$S_k(\omega)=\sum_{j=1}^k\omega_j$ and
\begin{equation}\label{eq:tree-length}
 \ell_\omega=M^{-n}2^{S_{\lfloor\beta n\rfloor}(\omega)-\beta S_n(\omega)}.
\end{equation}
Starting from $[0,1]$, place intervals of these lengths at the two ends
of their parent intervals, and let $E_{\beta,M}$ be the resulting Cantor
set. Directly from \eqref{eq:tree-length},
\begin{equation}\label{eq:tree-ratios}
 (2M)^{-1}\le\ell_{\omega i}/\ell_\omega\le2/M,
 \qquad
 \operatorname{dist}(I_{\omega0},I_{\omega1})
 \ge(1-4/M)\ell_\omega.
\end{equation}

Let $h(z)=-z\log z-(1-z)\log(1-z)$, with $h(0)=h(1)=0$.
A digit-frequency profile is a measurable function $f:[0,T]\to[0,1]$,
identified up to equality almost everywhere. Define
\begin{align}
 F_f(t)&=\int_0^t f(v)\,dv,\qquad
 A_f(t)=\int_0^t h(f(v))\,dv,\label{eq:tree-entropy}\\
 L_f(t)&=\lambda t+c\{\beta F_f(t)-F_f(\beta t)\}.
 \label{eq:tree-scale}
\end{align}
For $N=-\log r$ and depth $n=\lfloor Nt\rfloor$, the logarithmic
source scale is approximated by $NL_f(t)$; write $u=HL_f(t)$ for
the normalized image scale. Lemma~\ref{lem:tree-frequency-types}
quantifies the approximation.
Moreover,
\[
 L_f'(t)=\lambda+c\beta\{f(t)-f(\beta t)\},\qquad
 a\le L_f'(t)\le b
\]
almost everywhere. For $H>0$,
take $T=2/(Ha)$ and let $t_f^H(u)$ solve $HL_f(t)=u$,
$0\le u\le1$. All these times lie in $[0,1/(Ha)]$, so any larger
horizon gives the same variational values. We use $H=1$ for the source
and $0<H<1$ for fractional Brownian images. Write
$A_f^H(u)=A_f(t_f^H(u))$ and set
\begin{equation}\label{eq:tree-payoff}
 G_{\theta,H}(f,u)=
 \begin{cases}
  1+\bigl(A_f^H(u)-u\bigr)/\theta,&0\le u\le\theta,\\[2pt]
  A_f^H(u)/u,&\theta\le u\le1.
 \end{cases}
\end{equation}
The two expressions agree at $u=\theta$, and $G_{\theta,H}(f,0)=1$.
Define
\begin{equation}\label{eq:tree-random-value}
 \mathcal D_{\theta,H}
 =\sup_{0\le f\le1}\ \min_{0\le u\le1}G_{\theta,H}(f,u).
\end{equation}

\subsection{Source and random-image variational formulas}

\begin{theorem}[Source entropy formula]\label{thm:tree-dimensions}
For every $0<\theta\le1$, the two intermediate dimensions agree and
\begin{equation}\label{eq:tree-source-value}
 \underline{\dim}_\theta E_{\beta,M}
 =\overline{\dim}_\theta E_{\beta,M}
 =d_\theta
 :=\max_{0\le f\le1}\ \min_{\theta\le u\le1}
       \frac{A_f^1(u)}{u}
 =\mathcal D_{\theta,1}.
\end{equation}
The maximum is attained by a digit-frequency profile.
\end{theorem}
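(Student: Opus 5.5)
The proof has three parts: a combinatorial reduction to type classes, a lower bound by Frostman measures, and an upper bound by covers cut at a chosen scale. First I record the identity $\min_{\theta\le u\le1}A_f^1(u)/u=\min_{0\le u\le1}G_{\theta,1}(f,u)$, which gives $d_\theta=\mathcal D_{\theta,1}$. On $[0,\theta]$ we have $G_{\theta,1}(f,u)=1+(A^1_f(u)-u)/\theta$. Since $A_f\le c\,t$ and $L_f'\ge a$, the map $u\mapsto A^1_f(u)-u$ has derivative $h(f)/L'_f-1<0$ (because $c<a$ for $M\ge100$). Hence $G_{\theta,1}(f,\cdot)$ is decreasing on $[0,\theta]$, and its minimum there is attained at $u=\theta$, where it equals $A^1_f(\theta)/\theta$.

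The main combinatorial tool is a type-counting lemma (Lemma~\ref{lem:tree-frequency-types}). Fix $N=-\log r$ and a coarse mesh of $K$ time blocks in $[0,T]$. Assign to each word of length $n\le TN$ its vector of digit frequencies on the blocks. By \eqref{eq:tree-length}, $-\log\ell_\omega=N L_f(n/N)+O(N/K)+O(1)$, where $f$ is the piecewise-constant profile of that vector. The number of words of a given type is $e^{N A_f(n/N)+o(N)}$, and there are only $e^{o(N)}$ types. Since $f\mapsto(A_f,L_f)$ is continuous under weak-$*$ convergence of $f$ (using concavity of $h$ and upper semicontinuity of $A_f$), the supremum over step profiles equals the supremum over measurable profiles. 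Compactness then shows that the maximum is attained.

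For the lower bound $\ldim_\theta\ge d_\theta$, I fix $s<d_\theta$ and a near-optimal step profile $f$. Let $\mu$ be the measure that chooses digits independently, with digit $j$ equal to $1$ with probability $f(j/N)$, conditioned on lying in a typical type class up to depth $t_f^1(1)N$. The separation in \eqref{eq:tree-ratios} means that an interval $U$ with $r\le|U|\le r^\theta$ meets boundedly many cylinders whose length is comparable to $|U|$. Each such cylinder of typical type at log-scale $uN$ has mass $e^{-NA_f^1(u)+o(N)}\le |U|^{s+\epsilon}$, because $A^1_f(u)/u>s$ for $u\in[\theta,1]$. Atypical cylinders have negligible mass by Hoeffding's bound~\cite{Hoeffding1963}. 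The mass distribution principle for intermediate dimensions then gives $S^s_{r,\theta}\gtrsim r^{-\epsilon'}$ at every small $r$. This handles the lower dimension, and hence both dimensions from below.

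For the upper bound $\udim_\theta\le d_\theta$, I fix $s>d_\theta$. For each type $\tau$, the value $\min_{u\in[\theta,1]}A^1_{f_\tau}(u)/u$ is less than $s$, attained at some $u_\tau$. I cover all words of type $\tau$ by their cylinders at scale $e^{-Nu_\tau}$, which lies in $[r,r^\theta]$. The cost is $e^{N(A^1_{f_\tau}(u_\tau)-su_\tau)+o(N)}\le e^{-\epsilon N}$. Summing over the $e^{o(N)}$ types gives $S^s_{r,\theta}\to0$ at all scales, with uniformity supplied by the continuity and compactness above.

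The main obstacle is making the type-counting error terms uniform. Words of the same type must have comparable $L_f$ at all intermediate depths, including the delayed index $\lfloor\beta n\rfloor$. I plan to handle this by refining the mesh so that it contains the $\beta$-dilates of the block endpoints, and then letting $K\to\infty$ only after $N\to\infty$.
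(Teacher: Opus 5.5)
Your proposal is correct and follows the paper's proof essentially step for step. It uses the slope bound $c<a$ to get $d_\theta=\mathcal D_{\theta,1}$, record counting with prefix covers at a worst scale for the upper bound, Hoeffding-conditioned Bernoulli measures with the mass distribution principle for the lower bound, and weak-$*$ compactness with upper semicontinuity for attainment. Two small corrections are needed, both routine to fix:
\begin{itemize}
\item $f\mapsto A_f$ is only upper semicontinuous, not continuous, under weak-$*$ convergence. Approximating the supremum by step profiles should therefore use strong $L^1$ density, as the paper does, or cell averaging with Jensen.
\item Cylinder lengths match $e^{-Nu_\tau}$ only up to a factor $e^{O(N/K)}$. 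So covers at $u_\tau\in\{\theta,1\}$ need enlargement to $r$ or subdivision at $r^\theta$. The paper avoids this by shrinking the window via time dilation for $\theta<1$ and citing the box formula at $\theta=1$.
\end{itemize}
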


\begin{theorem}[Scalar random-image entropy formula]
\label{thm:tree-random-dimensions}\label{thm:tree-random-images}
For a scalar fractional Brownian motion $B_H$, $0<H<1$, almost surely,
simultaneously for every $0<\theta\le1$,
\begin{equation}\label{eq:tree-random-dimensions}
 \underline{\dim}_\theta B_H(E_{\beta,M})
 =\overline{\dim}_\theta B_H(E_{\beta,M})
 =\mathcal D_{\theta,H}.
\end{equation}
The common probability-one event is for each fixed $E_{\beta,M}$ and $H$.
\end{theorem}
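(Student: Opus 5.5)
We prove Theorem~\ref{thm:tree-random-images} by the same two-sided scheme used for Theorem~\ref{thm:general-cluster-transfer}, with cylinder frequency types playing the role of spectral blocks. We write $N=-\log r$. The level-$n$ cylinders are sorted by their digit-frequency type along the delay orbit. Lemma~\ref{lem:tree-frequency-types} shows that a type approximating a profile $f$ on a grid of mesh $\varepsilon$ contains $e^{N(A_f(t)+o(1))}$ cylinders of depth $n=\lfloor Nt\rfloor$, each of length $e^{-N(L_f(t)+o(1))}$. The image of such a cylinder under $B_H$ then has diameter about $e^{-NHL_f(t)}$. This is an upper bound by H\"older continuity of exponent $\gamma<H$, and it holds in the averaged sense used in Lemma~\ref{lem:gaussian-increments}. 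Consequently the normalized image scale is $u=HL_f(t)$, and the number of image pieces at image scale $r^{u}$ is $r^{-A_f^H(u)}$.

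\emph{Upper bound.} Fix $s>\mathcal D_{\theta,H}$ and work on the H\"older event with a fixed $\gamma<H$. For every profile $f$ there is some $u\in[0,1]$ with $G_{\theta,H}(f,u)<s$. The optimization is over a compact family after discretizing on delay-orbit grids, since $A_f,L_f$ depend continuously on $f$ in the weak-$*$ topology. Hence finitely many types suffice, each with a uniform margin $\delta_0>0$. For each type we pick the stopping scale $u$ and cover every image cylinder of that depth by an interval. If $u\ge\theta$, we use an interval of length $r^{u}$, at cost $r^{us-A}$. If $u<\theta$, the image has length $r^u>r^\theta$, so we use Lemma~\ref{lem:vs-span-cover}, at cost $r^{u-\theta(1-s)-A}$. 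Here $A=A_f^H(u)$. In both cases the exponent is positive exactly when $G<s$. A cylinder belongs to one type at its stopping depth, and descendants of different types are handled by a stopping-time tree argument. The number of types is subexponential in $N$. Together these give $S^s_{r,\theta}\to0$ at all real scales, and the upper bound is deterministic on the H\"older event for all $\theta,s$ simultaneously. Replacing $\gamma$ by $H$ costs an arbitrarily small loss, which is absorbed using continuity of $\mathcal D_{\theta,H}$ in $H$.

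\emph{Lower bound.} Fix $s<\mathcal D_{\theta,H}$ and take a near-optimal profile $f$ with $\min_uG_{\theta,H}(f,u)>s+\varepsilon$; this uses Theorem~\ref{thm:tree-dimensions} for attainment at $H=1$, and compactness otherwise. Let $\mu_f$ be the conditioned Bernoulli measure on $E_{\beta,M}$ whose digits follow the frequencies $f$. Under $\mu_f$, two points whose common ancestor has depth corresponding to source scale $L_f(t)$ are separated by comparable distance, by \eqref{eq:tree-ratios}; the measure of such pairs is $e^{-NA_f(t)}$. Summing Lemma~\ref{lem:gaussian-increments} over the depth of the common ancestor gives
\[
\E I^s_{r,\theta}(B_H{}_\#\mu_f)\lesssim \sum_{t}e^{-NA_f(t)}\Psi_r\bigl(e^{-NHL_f(t)}\bigr)=\sum_u r^{A_f^H(u)+e_{s,\theta}(u)}.
\]
Here $\Psi_r$ is as in \eqref{eq:psi-contraction}. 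The bound is at most $r^{s+\varepsilon/2}$ because $A+e_{s,\theta}(u)>s$ is equivalent to $G_{\theta,H}(f,u)>s$, exactly as in the proof of Lemma~\ref{lem:subcritical-lower-bound}. The region $u>1$ is handled by the truncation $\Psi_r\le1$, and it contributes $r^{A_f^H(1)}$ with $A_f^H(1)\ge G(f,1)>s$.

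From here the argument follows the general transfer proof. Markov's inequality and Lemma~\ref{lem:cover-gaussian} give $S^s_{r,\theta}\ge r^{-\varepsilon/4}$ with probability at least $1-r^{\varepsilon/4}$. Borel--Cantelli along $r_n=2^{-n}$ upgrades this to an almost sure statement. Because the set is fixed, \eqref{eq:fixed-scale-comparison} passes to all $r$, so the $\liminf$ diverges. This yields both $\ldim_\theta\ge s$ and, combined with the upper bound, equality of the lower and upper intermediate dimensions. Simultaneity in $\theta$ follows from rational approximation together with the monotonicity step of Theorem~\ref{thm:general-cluster-transfer}.

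\emph{Main obstacle.} The hardest step is the type-counting. We must show uniformly that the number of cylinders and their lengths are governed by $A_f$ and $L_f$, and this is complicated by the delay $\lfloor\beta n\rfloor$, which couples the frequencies at times $t$ and $\beta t$. We would handle it by partitioning $[0,T]$ into delay-orbit blocks $[\beta^{k+1}T,\beta^kT]$ subdivided by a fine mesh, and by conditioning on empirical frequencies in each cell. On each cell, Stirling's formula gives $e^{N|\text{cell}|h(f)}$ words up to errors of order $N\cdot\mathrm{mesh}$, and these errors are sent to zero after $N\to\infty$.
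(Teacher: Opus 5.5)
Your proposal is essentially the paper's proof: record-wise prefix covers on the H\"older event, letting $\gamma\uparrow H$ and using continuity of $\mathcal D_{\theta,H}$, give the upper bound. A conditioned typical Bernoulli measure, whose energy is bounded level by level through the common-prefix depth and \eqref{eq:vs-gaussian-three} and then passed through Markov, Borel--Cantelli and \eqref{eq:fixed-scale-comparison}, gives the lower bound. Two small corrections: $A_f$ is only weak-$*$ upper semicontinuous, not continuous, but no compactness is needed because $\min_u G_{\theta,\gamma}(f,u)\le\mathcal D_{\theta,\gamma}<s$ already gives a uniform margin for every record; and the near-optimal profile in the lower bound must be a clipped step profile (Lemma~\ref{lem:tree-profile-stability}) so that Lemma~\ref{lem:tree-typical-cylinders} applies.
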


The branches in \eqref{eq:tree-payoff} cover prefix images respectively
by intervals of diameter $r^\theta$ and by a single admissible interval.

\begin{lemma}[Compactness and stability of frequency profiles]
\label{lem:tree-profile-stability}
Both variational maxima above are attained. Their values are continuous
in $(\theta,H)\in(0,1]\times(0,\infty)$, with the source value using
$H=1$. For fixed parameters the functionals are continuous for strong
$L^1$ convergence of profiles and upper semicontinuous for weak-*
convergence on a common bounded horizon. Their suprema can be
approximated by step profiles with values in
$[\varepsilon,1-\varepsilon]$ for some $\varepsilon>0$.
\end{lemma}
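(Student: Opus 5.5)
The plan is to treat a profile $f$ as an element of the closed unit ball $\{0\le f\le1\}\subset L^\infty[0,T]$ and to use its weak-* topology, which is compact and metrizable on a bounded horizon. We fix a horizon $T$ large enough for all $H$ in a compact subinterval of $(0,\infty)$. This is legitimate because all relevant times lie in $[0,1/(Ha)]$, so enlarging the horizon changes no variational value. Attainment of both maxima will then follow from upper semicontinuity of $f\mapsto\min_uG_{\theta,H}(f,u)$ under weak-* convergence. The source value is the case $H=1$, where $\min_{\theta\le u\le1}A^1_f(u)/u$ is one branch of the same functional.

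\emph{Step 1: the scale maps converge uniformly.} If $f_k\to f$ weak-*, then $F_{f_k}(t)\to F_f(t)$ for each $t$, because $F_f(t)$ is the pairing of $f$ with the indicator of $[0,t]$. The functions $F_{f_k}$ are $1$-Lipschitz, so this convergence is uniform on $[0,T]$. By \eqref{eq:tree-scale}, $L_{f_k}\to L_f$ uniformly. Since $a\le L_f'\le b$, each $L_f$ is bi-Lipschitz, and therefore $t^H_{f_k}(u)\to t^H_f(u)$ uniformly in $u\in[0,1]$. Moreover $t^H_f(u)=t^1_f(u/H)$ is $1/(Ha)$-Lipschitz in $u$, uniformly in $f$.

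\emph{Step 2: the entropy term is upper semicontinuous.} The function $h$ is concave and continuous on $[0,1]$. Hence $g\mapsto\int_0^t h(g)$ is a concave, strongly continuous functional, and such functionals are weakly, hence weak-*, upper semicontinuous; this gives $\limsup_k A_{f_k}(t)\le A_f(t)$ for each $t$. All $A_{f_k}$ are $\log2$-Lipschitz, so checking on a finite grid upgrades this to
\[
 A_{f_k}(t)\le A_f(t)+o(1)\quad\text{uniformly in }t\in[0,T].
\]
Combining this with Step~1 and the uniform Lipschitz bound on $A_f$ yields $A^H_{f_k}(u)\le A^H_f(u)+o(1)$ uniformly in $u$. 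Both branches of \eqref{eq:tree-payoff} are increasing affine functions of $A^H_f(u)$ with coefficients $1/\theta$ and $1/u\le1/\theta$. Therefore $G_{\theta,H}(f_k,\cdot)\le G_{\theta,H}(f,\cdot)+o(1)$ uniformly, and the minimum over $u$ inherits upper semicontinuity. Under strong $L^1$ convergence, uniform continuity of $h$ gives $A_{f_k}\to A_f$ uniformly, so the same argument yields full continuity. Compactness of the unit ball then gives attainment of both maxima.

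\emph{Step 3: continuity of the value in $(\theta,H)$.} For $\theta$ in a compact subset of $(0,1]$ and $H$ in a compact subset of $(0,\infty)$, I will show that $(\theta,H,u)\mapsto G_{\theta,H}(f,u)$ is equicontinuous, uniformly over all profiles $f$. This rests on three facts: $A_f$ is $\log2$-Lipschitz; $t^H_f(u)=t^1_f(u/H)$ is Lipschitz in $(u,H)$ with constants independent of $f$; and the two branches are Lipschitz in $\theta\ge\theta_0$ and agree at $u=\theta$. A supremum over $f$ of a minimum over $u$ of a uniformly equicontinuous family is continuous in the parameters. This gives continuity of $\mathcal D_{\theta,H}$, and of $d_\theta$ as the case $H=1$.

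\emph{Step 4: step-profile approximation.} Take a maximizer $f$ and set $f_\varepsilon=\varepsilon+(1-2\varepsilon)f$, so that $\|f_\varepsilon-f\|_{L^1}\le2\varepsilon T$. Then approximate $f_\varepsilon$ in $L^1$ by step functions, clipped to $[\varepsilon,1-\varepsilon]$; clipping does not increase the $L^1$ distance to $f_\varepsilon$. Strong $L^1$ continuity from Step~2 shows that the functional values converge to the maximum.

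I expect the main obstacle to be Step~2: upgrading pointwise weak-* upper semicontinuity of $A$ to a bound that holds uniformly after composing with the moving time change $t^H_{f_k}$. The equi-Lipschitz bounds on $A_{f_k}$ and on the inverse scale maps are what make this uniform, so these estimates will be written out carefully.
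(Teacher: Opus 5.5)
Your proposal is correct and follows essentially the same route as the paper. Both arguments use uniform convergence of $F_f$, $L_f$ and the inverse times under weak-* limits, upper semicontinuity of the entropy term, metrizable weak-* compactness for attainment, uniform Lipschitz bounds for continuity in $(\theta,H)$, and strong $L^1$ continuity for the clipped step-profile approximation. The only variation is in how you get $\limsup_k A_{f_k}(t)\le A_f(t)$ and use it: you obtain it from concavity of $h$ and Mazur's lemma, then upgrade it to a uniform bound via a finite grid. The paper instead applies Jensen's inequality on finite partitions into cells, then handles the moving inverse times pointwise and interchanges $\limsup$ and $\inf$. Both versions work.
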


\begin{proof}
Work on a common horizon and compact ranges of positive $\theta,H$.
The bounds $a\le L_f'\le b$ and $0\le A_f'\le c$ imply
\[
 \operatorname{Lip}(A_f^H)\le c/(Ha),\qquad
 |t_f^{H'}(u)-t_f^H(u)|\le C|H'-H|.
\]
Thus $\min_uG_{\theta,H}(f,u)$ and
$J_\theta(f):=\min_{\theta\le u\le1}A_f^1(u)/u$ are equicontinuous
in their parameters, uniformly in $f$. Since $h$ is uniformly
continuous, strong $L^1$ convergence gives uniform convergence of
$F_f,A_f,L_f$ and hence convergence of both functionals, including
approximation by clipped step profiles.

If $f_j\overset{*}{\rightharpoonup}f$, testing against interval
indicators gives $F_{f_j}\to F_f$ uniformly by equi-Lipschitz
continuity, and hence $L_{f_j}\to L_f$ uniformly. Thus
\[
 \sup_{0\le u\le1}|t_{f_j}^H(u)-t_f^H(u)|
 \le a^{-1}\|L_{f_j}-L_f\|_\infty\longrightarrow0.
\]
Let $\mathcal P_m$ divide $[0,t]$ into $m$ equal intervals.
Jensen's inequality and $L^1$ convergence of cell averages give
\[
 \limsup_j A_{f_j}(t)
 \le\sum_{I\in\mathcal P_m}|I|
        h\left(|I|^{-1}\int_I f\right)
 \xrightarrow[m\to\infty]{} A_f(t).
\]
For $t_j\to t$, therefore,
\[
 \limsup_j A_{f_j}(t_j)
 \le\limsup_j\{A_{f_j}(t)+c|t_j-t|\}\le A_f(t).
\]
Apply this at the inverse times and use
$\limsup_j\inf_u g_j(u)\le\inf_u\limsup_jg_j(u)$.
Since $L^1[0,T]$ is separable, the weak-* compact set
$\{0\le f\le1\}\subset L^\infty[0,T]$ is metrizable.
Upper semicontinuity therefore gives attainment.
\end{proof}

\subsection{Type counting and covering upper bounds}

\begin{lemma}[Uniform estimates for frequency types]
\label{lem:tree-frequency-types}
Fix a horizon $T>0$ and an integer $K\ge1$, and put $\eta=T/K$.
For each sufficiently large $N$ (depending on $T,K$), divide the first
$\lfloor NT\rfloor$ digits into
$K$ consecutive blocks corresponding to the equal partition of $[0,T]$.
All blocks are then nonempty. A frequency record specifies the number
of ones in every block and
defines a step profile $f$ by its block frequencies. There are at most
$(NT+2)^K$ records. Uniformly over records, words in a record, and
$0\le t\le T$, with $n=\lfloor Nt\rfloor$,
\begin{align}
 -\log\ell_{\omega|n}
     &=NL_f(t)+O(N\eta+K),\label{eq:tree-type-length}\\
 \log\#\{\text{length-$n$ prefixes in the record}\}
     &\le NA_f(t)+O(N\eta+K\log(N+2)).
 \label{eq:tree-type-count}
\end{align}
The constants depend only on $\beta,M,T$, and are independent of the
record, prefix and observation scale $N$.
\end{lemma}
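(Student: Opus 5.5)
The plan is to prove the two estimates separately, both by direct bookkeeping on the block decomposition. Write $n_i=\lfloor N i\eta\rfloor$ for the block endpoints, $0\le i\le K$, so that block $i$ consists of digits $n_{i-1}<j\le n_i$ and has length $N\eta+O(1)$. Nonemptiness then holds once $N\eta>1$. A record is a vector $(k_1,\dots,k_K)$ with $0\le k_i\le n_i-n_{i-1}\le NT+1$, which gives the count $(NT+2)^K$. The step profile is $f=k_i/(n_i-n_{i-1})$ on the $i$th cell of $[0,T]$.

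\emph{Length estimate.} The key comparison is
\[
 S_m(\omega)=NF_f(m/N)+O(N\eta+K),
\]
uniformly in $0\le m\le\lfloor NT\rfloor$ and in words of the record. To see this, note that $S_{n_i}(\omega)=\sum_{l\le i}k_l$ exactly. The quantity $NF_f(n_i/N)$ differs from this only through the mismatch between $n_i/N$ and $i\eta$. That mismatch contributes at most $O(1)$ per block, hence $O(K)$ in total. For $m$ inside a block, both $S_m$ and $NF_f(m/N)$ change by at most the block length $N\eta+O(1)$, since $0\le f\le1$. Apply this at $m=n$ and at $m=\lfloor\beta n\rfloor$. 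Then use that $F_f$ is $1$-Lipschitz to replace $\lfloor\beta n\rfloor/N$ by $\beta t$ and $n/N$ by $t$, at cost $O(1)$. Substituting into $-\log\ell_{\omega|n}=n\lambda+c\{\beta S_n-S_{\lfloor\beta n\rfloor}\}$ from \eqref{eq:tree-length}, together with $n\lambda=N\lambda t+O(1)$, gives \eqref{eq:tree-type-length}. The constants depend only on $\beta,\lambda,c$.

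\emph{Count estimate.} Let $n$ lie in block $i$. A length-$n$ prefix of a word in the record is determined by two pieces of data: the full blocks $l<i$, each having exactly $k_l$ ones, and the partial block of length $n-n_{i-1}\le N\eta+1$. The partial block contributes at most $2^{N\eta+1}=e^{O(N\eta)}$ choices. For each full block, the standard type bound $\binom{L}{k}\le e^{Lh(k/L)}$ applies, and here $Lh(k_l/L)=N\int_{\text{cell }l}h(f)+O(1)$. The $O(1)$ comes from $L=N\eta+O(1)$ and $h\le\log2$. Summing over $l<i$ gives
\[
 NA_f((i-1)\eta)+O(K)\le NA_f(t)+O(K).
\]
Since $A_f$ is nondecreasing and $(i-1)\eta\le t+O(1/N)$, this yields \eqref{eq:tree-type-count}. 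The allowance $K\log(N+2)$ is more generous than needed here. It is there to absorb later the factor $(NT+2)^K$ from summing over records, so the bound is stated in that form.

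\emph{Main obstacle.} The one delicate step is the uniformity at scales $t$ that do not lie on block boundaries, and in particular at the delayed time $\beta t$, which falls in a different block from $t$. I would handle this as above, by comparing with the nearest block endpoint. That incurs the $O(N\eta)$ error, which is unavoidable and is exactly what is allowed. Everything else is exact arithmetic at the endpoints plus $O(1)$ rounding per block.
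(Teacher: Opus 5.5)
Your proposal is correct and follows the paper's proof. As there, complete blocks fix both digit sums up to $O(1)$ rounding per block, the incomplete blocks at $n$ and $\lfloor\beta n\rfloor$ cost $O(N\eta)$, and the prefix count uses $\binom{m}{k}\le e^{mh(k/m)}$ on complete blocks with a trivial $2^{N\eta+O(1)}$ bound on the final partial block; you spell out the rounding that the paper leaves implicit, and you correctly note that the count bound already holds with error $O(N\eta+K)$.
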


\begin{proof}
There are at most $NT+2$ counts per block. Complete blocks determine
both digit sums in \eqref{eq:tree-length}; the two incomplete blocks
at $n,\lfloor\beta n\rfloor$ and rounding contribute $O(N\eta+K)$.
This proves \eqref{eq:tree-type-length}. For prefix counts, use
$\binom{m}{k}\le e^{mh(k/m)}$ on complete blocks and the bound
$2^{N\eta+O(K)}$ on the final incomplete block, giving
\eqref{eq:tree-type-count}.
\end{proof}

Fix the exponent margin, then $\eta$, and finally let $N\to\infty$.
Thus $(NT+2)^K=e^{o(N)}$; covering depths may depend on the record.

\begin{proof}[Upper bounds in Theorems~\ref{thm:tree-dimensions}
and~\ref{thm:tree-random-dimensions}]
For the source upper bound in
\eqref{eq:tree-source-value}, suppose $\theta<1$ and fix $s>d_\theta$.
By time dilation and continuity, choose $0<\delta<(1-\theta)/2$ with
$d_{(\theta+\delta)/(1-\delta)}<s$. Every record then has a time $t$
with $L_f(t)\in[\theta+\delta,1-\delta]$ and
\[
 A_f(t)-sL_f(t)\le-\kappa,
\]
for a uniform $\kappa>0$. For small $\eta$, the corresponding
prefix cover is admissible by \eqref{eq:tree-type-length}, with
total cost bounded using \eqref{eq:tree-type-count} by
\[
 (NT+2)^K\exp(-\kappa N/2)\longrightarrow0.
\]
At $\theta=1$, use the box formula \eqref{eq:tree-box}
from \cite[Theorem~4.2]{BBFLX2025}.

For the image upper bound, suppose $g:[0,1]\to\mathbb R$ is
$\gamma$-H\"older and fix
$\mathcal D_{\theta,\gamma}<s<1$. For each record choose $u\in[0,1]$
where $G_{\theta,\gamma}(f,u)\le\mathcal D_{\theta,\gamma}$ and put
$t=t_f^\gamma(u)$. If $u\le\theta$, cover $g(E\cap I_{\omega|n})$
by intervals of diameter $r^\theta$; their total $s$-cost, after
summing over prefixes, is bounded by
\begin{equation}\label{eq:tree-coarse-cover}
 C\exp\{N[A_f(t)-u+\theta(1-s)+O(\eta)]
             +O(K\log(N+2))\}.
\end{equation}
For $u\ge\theta$, enlarge a single image interval to $r$ if needed
and subdivide at $r^\theta$. These corrections have cost
$\exp\{O(N\eta)+O(K\log(N+2))\}$, giving
\begin{equation}\label{eq:tree-middle-cover}
 C\exp\{N[A_f(t)-su+O(\eta)]
             +O(K\log(N+2))\}.
\end{equation}
Both leading exponents are at most
$-\theta(s-\mathcal D_{\theta,\gamma})<0$; all errors are
$O(\eta)+o(1)$ uniformly in $u$. Also $G(f,0)=1>s$ and
equi-Lipschitz continuity keep the chosen times uniformly positive.
Choose $\eta$ small and sum over records:
\[
 \overline{\dim}_\theta g(E_{\beta,M})\le\mathcal D_{\theta,\gamma}.
\]
The ambient bound covers $\mathcal D_{\theta,\gamma}=1$.
On the common H\"older event, let $\gamma\uparrow H$ and use
continuity to obtain \eqref{eq:tree-random-dimensions}, including $\theta=1$.

\end{proof}

\subsection{Uniformly typical measures and energy lower bounds}

\begin{lemma}[Uniformly typical cylinders]
\label{lem:tree-typical-cylinders}
Fix $T>0$, $\varepsilon>0$, and a step profile
$f:[0,T]\to[\varepsilon,1-\varepsilon]$ with finitely many steps.
For every real $N\ge1$, define on binary sequences the product measure
$\nu_N$ whose $j$th digit has success probability $p_j=f(j/N)$ up to
depth $\lfloor NT\rfloor$, and continue it arbitrarily thereafter.
For each fixed $\delta>0$ there is a set $\mathcal G_N$ such that
\begin{equation}\label{eq:tree-typical-probability}
 \nu_N(\mathcal G_N^c)\le C_f(NT+1)e^{-c_f\delta^2N}.
\end{equation}
For every $\omega\in\mathcal G_N$, simultaneously at all
$0\le n\le NT$,
\begin{align}
 -\log\ell_{\omega|n}&=NL_f(n/N)+O(\delta N+1),
 \label{eq:tree-typical-length}\\
 -\log\nu_N([\omega|n])&=NA_f(n/N)+O(\delta N+1).
 \label{eq:tree-typical-mass}
\end{align}
The constants may depend on the fixed profile and $\varepsilon,T,\beta,M$,
and are independent of $N,n$ and $\omega$. For sufficiently large $N$,
conditioning
on $\mathcal G_N$ and applying the coding map gives a probability
measure $\mu_N$ on $E_{\beta,M}$ such that every cylinder meeting its
support satisfies
\begin{equation}\label{eq:tree-conditioned-mass}
 \mu_N(I_{\omega|n})\le
 2\exp\{-NA_f(n/N)+O(\delta N+1)\}.
\end{equation}
\end{lemma}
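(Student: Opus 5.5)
The plan is to obtain both estimates from Hoeffding's inequality applied to block partial sums, and then to derive the conditioned bound from a plain probability comparison.

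\emph{Setting up the good set.} Let $f$ have finitely many steps. Refine the equal partition of $[0,T]$ so that every step discontinuity of $f$ is also a partition point. The quantities controlled along a word are the partial sums $S_k(\omega)=\sum_{j\le k}\omega_j$ and the partial log-likelihoods $\Lambda_k(\omega)=\sum_{j\le k}\bigl(-\omega_j\log p_j-(1-\omega_j)\log(1-p_j)\bigr)$, for $0\le k\le NT$. Under $\nu_N$ each is a sum of independent bounded variables. The summands of $\Lambda_k$ are bounded by $\log(1/\varepsilon)$ because $p_j\in[\varepsilon,1-\varepsilon]$. The means are
\[
 \mathbb E S_k=\sum_{j\le k}f(j/N)=NF_f(k/N)+O(1),\qquad
 \mathbb E\Lambda_k=\sum_{j\le k}h(f(j/N))=NA_f(k/N)+O(1).
\]
Here the $O(1)$ error comes from the Riemann sum of a step function: only the boundedly many steps contribute an error, each of size $O(1)$.

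Define $\mathcal G_N$ as the set of $\omega$ with $|S_k-\mathbb E S_k|\le\delta N$ and $|\Lambda_k-\mathbb E\Lambda_k|\le\delta N$ for every $k\le NT$. Hoeffding's inequality gives a bound $2e^{-2\delta^2N^2/(k\log^2(1/\varepsilon))}\le 2e^{-c_f\delta^2N}$ for each $k$, since $k\le NT$. A union bound over the $NT+1$ values of $k$ then gives \eqref{eq:tree-typical-probability}.

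\emph{Deriving the two estimates.} For $\omega\in\mathcal G_N$, formula \eqref{eq:tree-length} gives
\[
 -\log\ell_{\omega|n}=n\lambda+c\bigl(\beta S_n-S_{\lfloor\beta n\rfloor}\bigr).
\]
Substituting $S_k=NF_f(k/N)+O(\delta N+1)$ at $k=n$ and at $k=\lfloor\beta n\rfloor$ yields \eqref{eq:tree-typical-length}. Note that $|F_f(\lfloor\beta n\rfloor/N)-F_f(\beta n/N)|\le1/N$, so the rounding costs only $O(1)$. Similarly, $-\log\nu_N([\omega|n])=\Lambda_n$ exactly, which gives \eqref{eq:tree-typical-mass}.

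\emph{Passing to the conditioned measure.} Let $\tilde\nu=\nu_N(\cdot\mid\mathcal G_N)$. Once $N$ is large enough that $\nu_N(\mathcal G_N)\ge1/2$, we have
\[
 \tilde\nu([\omega|n])\le2\nu_N([\omega|n]\cap\mathcal G_N).
\]
If the cylinder meets $\mathcal G_N$, pick any $\omega'$ in the intersection; it agrees with $\omega$ up to depth $n$, so $\nu_N([\omega|n])=\exp(-\Lambda_n(\omega'))$, and the mass estimate \eqref{eq:tree-typical-mass} applies to $\omega'$. Let $\pi$ be the coding map and set $\mu_N=\pi_\#\tilde\nu$. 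Gap separation \eqref{eq:tree-ratios} makes $\pi$ injective, and cylinder sets map onto $E_{\beta,M}\cap I_{\omega|n}$. A cylinder interval meeting $\operatorname{supp}\mu_N$ therefore corresponds to a word cylinder meeting $\overline{\mathcal G_N}$. Since $\mathcal G_N$ depends only on the first $\lfloor NT\rfloor$ digits, it is clopen, so this is the same as meeting $\mathcal G_N$. This gives \eqref{eq:tree-conditioned-mass}.

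The step I expect to be the main obstacle is keeping all constants uniform in $n$ and $N$. The delayed index $\lfloor\beta n\rfloor$ and the Riemann-sum errors must stay $O(1)$ rather than $O(N/K)$; this is exactly why the profile is taken to be a finite step function and why the bound $p_j\in[\varepsilon,1-\varepsilon]$ is needed to make the Hoeffding ranges bounded. For depths beyond $NT$ nothing is claimed, so the arbitrary continuation of $\nu_N$ is harmless.
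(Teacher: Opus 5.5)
Your proposal is correct and follows essentially the same route as the paper. Both arguments apply Hoeffding's inequality to the partial digit sums and partial log-likelihoods with a union bound over depths, use an $O_f(1)$ Riemann-sum error for the step profile at $n$ and $\lfloor\beta n\rfloor$, and take the factor $2$ from conditioning once $\nu_N(\mathcal G_N)\ge1/2$; your choice of a good word $\omega'$ inside a cylinder meeting the support, using that $\mathcal G_N$ is clopen, makes explicit a step the paper leaves implicit.
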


\begin{proof}
Let $X_j$ be the independent Bernoulli digits and set
\[
 S_n=\sum_{j\le n}X_j,\qquad
 Y_n=-\sum_{j\le n}\{X_j\log p_j+(1-X_j)\log(1-p_j)\}.
\]
Their summands are uniformly bounded, so Hoeffding's
inequality~\cite{Hoeffding1963} gives, for $1\le n\le NT$,
\[
 \nu_N\{|S_n-\mathbb ES_n|>\delta N\}
 +\nu_N\{|Y_n-\mathbb EY_n|>\delta N\}
 \le4\exp(-c_\varepsilon\delta^2N^2/n)
 \le4\exp(-c_\varepsilon\delta^2N/T).
\]
A union bound over $1\le n\le NT$ proves
\eqref{eq:tree-typical-probability}; at $n=0$ both deviations vanish.
Since $f$ is a step profile,
\[
 \mathbb ES_n=NF_f(n/N)+O_f(1),\qquad
 \mathbb EY_n=NA_f(n/N)+O_f(1).
\]
Apply these at $n,\lfloor\beta n\rfloor$ for
\eqref{eq:tree-typical-length}--\eqref{eq:tree-typical-mass}.
Finally $\nu_N(\mathcal G_N)\ge1/2$ for large $N$, and
\[
 \mu_N(I_\omega)
 =\frac{\nu_N([\omega]\cap\mathcal G_N)}{\nu_N(\mathcal G_N)}
 \le2\nu_N([\omega])
\]
gives \eqref{eq:tree-conditioned-mass}.
\end{proof}

\begin{proof}[Lower bounds in Theorems~\ref{thm:tree-dimensions}
and~\ref{thm:tree-random-dimensions}]
Fix a clipped step profile as in Lemma~\ref{lem:tree-typical-cylinders}.
For the source lower bound, take $s<J_\theta(f)$. If
$r\le\rho\le r^\theta$, stop cylinders when their lengths first
become at most $\rho$. Their lengths lie in $[(2M)^{-1}\rho,\rho]$.
An interval of diameter $\rho$ meets at most $2+2M$ of them.
For a stopping cylinder meeting $\operatorname{supp}\mu_N$,
\eqref{eq:tree-typical-length} gives
$n/N=t_f^1(-\log\rho/N)+O(\delta)+o(1)$.
Thus $s<J_\theta(f)$ and \eqref{eq:tree-conditioned-mass} give,
after reducing $\delta$,
\[
 \mu_N(U)\le C e^{-\kappa N}|U|^s
 \quad(r\le|U|\le r^\theta)
\]
for some $\kappa>0$. Hence
$S^s_{r,\theta}(E_{\beta,M})\ge C^{-1}e^{\kappa N}$ for all large
real $N$; approximation by clipped profiles proves the source bound.

For random images, take $0<s<\min_uG_{\theta,H}(f,u)$ and set
$\bar\mu_N=(B_H)_*\mu_N$. Pairs with longest common prefix $\omega$
have distance comparable to $\ell_\omega$ by \eqref{eq:tree-ratios};
their total mass at depth $n$ is at most
\[
 \sum_{|\omega|=n}\mu_N(I_\omega)^2
 \le\max_{|\omega|=n}\mu_N(I_\omega).
\]
By \eqref{eq:vs-gaussian-three}, \eqref{eq:tree-typical-length} and
\eqref{eq:tree-conditioned-mass}, the expected level energy is at most
$\exp(O(\delta N))$ times, with $u=HL_f(n/N)$,
\begin{equation}\label{eq:tree-level-energy}
 \begin{cases}
 \exp\{-N[s+A_f(n/N)+\theta(1-s)-u]\},&u\le\theta,\\
 \exp\{-N[s+A_f(n/N)-su]\},&\theta\le u\le1.
 \end{cases}
\end{equation}
Sum up to $n_*=\lfloor Nt_f^H(1)\rfloor$; the remaining pairs
contribute at most $\exp\{-NA_f^H(1)+O(\delta N+1)\}$ since $K\le1$.

Put $\xi=\min_uG_{\theta,H}(f,u)-s>0$ and abbreviate
$G=G_{\theta,H}(f,u)$. The excess exponents in
\eqref{eq:tree-level-energy} are respectively
$\theta(G-s)\ge\theta\xi$ and $u(G-s)\ge\theta\xi$; the terminal
excess is $A_f^H(1)-s\ge\xi$. Choose $\delta$ small and then $N$ large
to absorb $O(\delta N)$ and the $O(N)$ levels. For some $\kappa>0$,
\begin{equation*}
 \mathbb E\iint K_{r,\theta}^s(x-y)
      \,d\bar\mu_N(x)d\bar\mu_N(y)
 \le C e^{-N(s+\kappa)}
\end{equation*}
For integer $N$, Markov's inequality gives
\[
 \mathbb P\{I^s_{r,\theta}(\bar\mu_N)>e^{-N(s+\kappa/2)}\}
 \le Ce^{-\kappa N/2}.
\]
Borel--Cantelli and \eqref{eq:cover-energy} yield divergent covering
cost along $r=e^{-N}$ almost surely. The fixed-set comparison
\eqref{eq:fixed-scale-comparison} extends this to all real scales,
with factor at most $e^s$. Take a countable approximating sequence
of clipped profiles and rational strict exponents, then intersect over
rational $\theta\in(0,1]$. Monotonicity and continuity of
$\mathcal D_{\theta,H}$ give equality simultaneously for all
$\theta\in(0,1]$.

\end{proof}

\subsection{Interpolation, basic regimes, and finite-depth stability}

\begin{proposition}[Strict interpolation and basic endpoint regimes]
\label{prop:tree-basic-regimes}
The source value satisfies
\begin{equation*}
 \frac{\log2}{\log M}<d_\theta<D(\log M)
 \quad(0<\theta<1),\qquad
 \lim_{\theta\downarrow0}d_\theta=\frac{\log2}{\log M},
\end{equation*}
and $d_1=D(\log M)$, where
\begin{equation}\label{eq:tree-box}
 D(\lambda)=
 \max_{p,q\in[0,1]}
 \frac{\beta h(p)+(1-\beta)h(q)}
      {\lambda+c\beta(1-\beta)(q-p)}.
\end{equation}
Moreover, the scalar random-image variational value obeys
\begin{equation*}
 \mathcal D_{\theta,H}=
 \begin{cases}
  1,&0<H\le c/\lambda,\\
  d_\theta/H,&H\ge c/(\lambda-\beta c).
 \end{cases}
\end{equation*}
In the intervening range the complete formula remains
\eqref{eq:tree-random-value}.
\end{proposition}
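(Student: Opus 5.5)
The plan is to treat the four assertions separately. Throughout I use two structural facts that follow directly from \eqref{eq:tree-entropy}--\eqref{eq:tree-scale}. The first is \emph{time-dilation invariance}: for $f_\tau(v)=f(v/\tau)$ one has $A_{f_\tau}(\tau t)=\tau A_f(t)$ and $L_{f_\tau}(\tau t)=\tau L_f(t)$. This holds because the delay $\beta t$ dilates consistently, and it gives $A^1_{f_\tau}(\tau w)=\tau A^1_f(w)$. The second is the \emph{Jensen bound} $A_f(t)\le t\,h(F_f(t)/t)$, which holds by concavity of $h$. Working on a large enough horizon costs nothing, by the remark after \eqref{eq:tree-scale}.

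\emph{The endpoint $d_1$ and the lower bound.} At $\theta=1$, Theorem~\ref{thm:tree-dimensions} gives $d_1=\dim_BE_{\beta,M}$, and \cite[Theorem~4.2]{BBFLX2025} gives $\dim_BE_{\beta,M}=D(\lambda)$. A direct check is also available. Averaging $f$ over $[0,\beta t]$ and $[\beta t,t]$ leaves $L_f(t)$ unchanged and, by Jensen, does not decrease $A_f(t)$. With $t$ normalized so that $L_f(t)=1$, this reduces the problem to two constant frequencies $p,q$ and yields exactly the quotient in \eqref{eq:tree-box}.

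For $d_\theta\ge c/\lambda$, take $f\equiv1/2$, which gives $A=ct$ and $L=\lambda t$. For strictness, perturb to $f_\varepsilon=1/2+\varepsilon\mathbf 1_{[0,\tau]}$ with $0<\tau\le\beta t^1_f(\theta)$, where $t^1_f(\theta)\ge\theta/b>0$. For every time in the constrained range $[t_\theta,t_1]$ we then have $\beta t\ge\tau$. Hence $L$ decreases by exactly $c\varepsilon(1-\beta)\tau$, while $A$ decreases only by $O(\varepsilon^2\tau)$ because $h'(1/2)=0$. For small $\varepsilon$ the ratio $A/L$ therefore exceeds $c/\lambda$ uniformly on $u\in[\theta,1]$.

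\emph{Strict upper bound.} The bound $d_\theta\le d_1$ holds because the constraint set shrinks as $\theta$ decreases. Suppose $d_\theta=D(\lambda)$ for some $\theta<1$, and let $f$ be a maximizer, which exists by Lemma~\ref{lem:tree-profile-stability}. At $u=1$ equality must hold in the reduction above. Since $h$ is strictly concave and the Dinkelbach-type problem \eqref{eq:tree-box} has a unique maximizer $(p_*,q_*)$, this forces $f=p_*$ on $[0,\beta t_1]$ and $f=q_*$ on $[\beta t_1,t_1]$.

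The first-order conditions show $p_*\ne q_*$. If $p_*=q_*$, then $h'(p)$ would have to balance the nonzero terms $\pm c\beta(1-\beta)$. Evaluating the ratio at the earlier time $t_\theta<t_1$, the restricted profile is not of the two-value form with break point $\beta t_\theta$, so its ratio is strictly below $D(\lambda)$. This contradicts the assumption. I expect this uniqueness and rigidity step to be the main technical point of this part.

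\emph{Limit $\theta\downarrow0$.} Put $G(t)=F_f(t)/t\in[0,1]$. Then $L_f(t)=t\{\lambda+c\beta(G(t)-G(\beta t))\}$, and together with the Jensen bound
\[
 \frac{A_f(t)}{L_f(t)}\le\frac{h(G(t))}{\lambda+c\beta\bigl(G(t)-G(\beta t)\bigr)}.
\]
Suppose this ratio is at least $c/\lambda+\varepsilon$ at every time in $[t_\theta,t_1]$. Since $h\le c$, we need $G(t)-G(\beta t)\le-\varepsilon'$ at each of the geometric times $t_1\beta^k\ge t_\theta$. There are roughly $\log(1/\theta)/\log(1/\beta)$ such times, and telescoping then forces $G$ to decrease by more than $1$, which is impossible. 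Hence $\limsup_{\theta\downarrow 0} d_\theta\le c/\lambda$, and the lower bound already proved gives the limit.

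\emph{Random-image regimes.} For $H\le c/\lambda$, we have $\mathcal D_{\theta,H}\le1$ from $u=0$. Taking $f\equiv1/2$ gives $A^H(u)=cu/(H\lambda)\ge u$, so $G_{\theta,H}\ge1$ on both branches, and $\mathcal D_{\theta,H}=1$.

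For $H\ge c/a$, the bounds $A'\le c$ and $L'\ge a$ give $\frac{d}{du}A^H(u)\le c/(Ha)\le1$. So $1+(A^H(u)-u)/\theta$ is nonincreasing on $[0,\theta]$, and the minimum is taken over $[\theta,1]$. Using $A^H(u)=A^1(u/H)$ gives
\[
 \mathcal D_{\theta,H}=\sup_f\min_{w\in[\theta/H,1/H]}\frac{A^1_f(w)}{Hw}.
\]
Dilating by $\tau=1/H$ maps this range onto $[\theta,1]$ without changing the ratio, so $\mathcal D_{\theta,H}=d_\theta/H$.
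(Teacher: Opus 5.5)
Your proposal is correct and follows the paper's proof essentially step for step. The shared steps are: the Jensen reduction to two frequencies, with uniqueness of $(p_*,q_*)$ forcing $p_*=q_*$ on the overlap $(\beta t_\theta,\min\{t_\theta,\beta t_1\})$; the telescoping drift of cumulative frequencies along $t_1\beta^j$ as $\theta\downarrow0$; and the profile $f\equiv1/2$ plus time dilation for the two random-image regimes. The only variation is in the strict lower bound, where you use a localized bump $1/2+\varepsilon\mathbf 1_{[0,\tau]}$ with $\tau\le\beta\theta/b$ instead of the paper's linear profile $1/2-\varepsilon t$; both rest on the same first-order length gain against second-order entropy loss.

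For $d_1=D(\lambda)$, rely on your direct variational check rather than on Theorem~\ref{thm:tree-dimensions}, because the paper proves the $\theta=1$ case of that theorem by invoking the box formula.
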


The endpoint dimensions were computed in
\cite[Theorem~4.2]{BBFLX2025}.

\begin{proof}

Since $(A_f-L_f)'\le c-a<0$, the coarse minimum in
\eqref{eq:tree-payoff} is at $u=\theta$ when $H=1$.
Thus $\mathcal D_{\theta,1}=d_\theta$.

Let $d_0=c/\lambda$. On the fixed horizon, choose a profile
$f(t)=1/2-\varepsilon t$, with $\varepsilon>0$ sufficiently small.
Then
\[
 L_f(t)=\lambda t-
       \frac{c\beta(1-\beta)}2\varepsilon t^2,
 \qquad
 A_f(t)\ge ct-\frac43\varepsilon^2t^3.
\]
Here $c-h(1/2-z)\le4z^2$ for $|z|\le1/4$.
For $\varepsilon T<\min\{1/4,3d_0c\beta(1-\beta)/8\}$,
\[
 A_f(t)-d_0L_f(t)
 \ge\frac{d_0c\beta(1-\beta)}2\varepsilon t^2
       -\frac43\varepsilon^2t^3>0\qquad(0<t\le T).
\]
Since $t_f^1(u)\ge\theta/b$ for $u\ge\theta$, we obtain $J_\theta(f)>d_0$.

For every profile and $t>0$, Jensen's inequality on $[0,\beta t]$
and $[\beta t,t]$ gives
\[
 \frac{A_f(t)}{L_f(t)}\le D(\lambda).
\]
The entropy identity $\max_x\{h(x)+tx\}=\log(1+e^t)$ makes
$D=D(\lambda)$ the positive zero of
\[
 \beta\log(1+2^{(1-\beta)D})
 +(1-\beta)\log(1+2^{-\beta D})-\lambda D.
\]
This function equals $c$ at zero and has derivative at most
$c\beta(1-\beta)-\lambda<0$. Its unique maximizing frequencies are
\[
 p_*=(1+2^{-(1-\beta)D})^{-1},\qquad
 q_*=(1+2^{\beta D})^{-1}<p_*.
\]
Equality at time $t$ requires $f=p_*$ almost everywhere on
$[0,\beta t]$ and $f=q_*$ almost everywhere on $[\beta t,t]$.
If $d_\theta=D$ with $\theta<1$, an attaining profile has equality
at $t_1=t_f^1(\theta)<t_2=t_f^1(1)$, forcing $f=p_*=q_*$ on the
positive-length interval $(\beta t_1,\min\{t_1,\beta t_2\})$.
This contradiction proves the strict upper bound.

To prove continuity at the Hausdorff endpoint, suppose
$J_\theta(f)\ge s>d_0$ and let a time $t$
satisfy $\theta\le L_f(t)\le1$. Then $A_f(t)\le ct$ implies, with
$m_f(t)=F_f(t)/t$,
\begin{equation}\label{eq:tree-average-increment}
 m_f(\beta t)-m_f(t)
 \ge\frac{\lambda-c/s}{\beta c}>0.
\end{equation}
Take $t_j=\beta^j/b$. For every $j$ with
$a\beta^j/b\ge\theta$, its physical scale lies in $[\theta,1]$.
Summing \eqref{eq:tree-average-increment} over these consecutive times
gives
\[
 k\frac{\lambda-c/s}{\beta c}\le1.
\]
As $\theta\downarrow0$, $k\to\infty$, excluding every fixed $s>d_0$.

For $H\le c/\lambda$, the profile $f=1/2$ gives
$A_f^H(u)=cu/(H\lambda)\ge u$ and $\min_uG=G(f,0)=1$.
For $Ha\ge c$, $A_f-HL_f$ is decreasing, so the coarse minimum is
at $u=\theta$. Time dilation of the middle branch gives
$\mathcal D_{\theta,H}=d_\theta/H$.
\end{proof}

For $\lambda\ge\log100$, write $d_\theta(\lambda)$,
$\mathcal D_{\theta,H}(\lambda)$, and $L_{\lambda,f}$ for the preceding
values and length functional with $\log M$ replaced by $\lambda$.

\begin{lemma}[Finite-depth stability of the covering estimates]
\label{lem:oscillating-prefix-comparison}
Let $F$ be a binary Cantor set whose child-to-parent length ratios lie
in a fixed interval $[a_0,b_0]\subset(0,1/2)$ and whose sibling gap
is at least a fixed positive multiple of the parent length.
Fix $\lambda\ge\log100$, $\beta\in(0,1)$,
$0<\theta\le1$, and $0<H<1$.
Let $r=e^{-N}$ and choose a sufficiently large fixed horizon $T$.
Write
\[
 \ell^\lambda_\omega=e^{-\lambda n}
       2^{S_{\lfloor\beta n\rfloor}(\omega)-\beta S_n(\omega)}.
\]
The following assertions hold as $N\to\infty$ along any sequence for which
the indicated comparisons are valid for every $n\le TN$ and every
word of length $n$, with $\varepsilon_N\to0$.
\begin{enumerate}
\item If $\ell^F_\omega\le e^{\varepsilon_N N}\ell^\lambda_\omega$,
then, for every $s>\mathcal D_{\theta,H}(\lambda)$, the
$s$-cost of a cover of $B_H(F)$ with diameters in $[r,r^\theta]$
tends to zero almost surely along these scales.
\item If $\ell^F_\omega\ge e^{-\varepsilon_N N}\ell^\lambda_\omega$,
then, for every $0<s<\mathcal D_{\theta,H}(\lambda)$, there are
deterministic probability measures $\mu_N$ on $F$ and constants
$C,\kappa>0$ such that
\begin{equation}\label{eq:oscillating-energy-margin}
 \mathbb E\iint K_{r,\theta}^s(B_H(x)-B_H(y))
             \,d\mu_N(x)d\mu_N(y)
 \le C e^{-N(s+\kappa)}.
\end{equation}
Here $K_{r,\theta}^s$ is the truncated kernel used in the proof of
Theorem~\ref{thm:tree-random-images}.
\end{enumerate}
The corresponding deterministic assertions hold for $F$ itself with
$H=1$ and $\mathcal D_{\theta,1}=d_\theta$; in the second assertion
the expectation is omitted.
\end{lemma}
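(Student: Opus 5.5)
The plan is to rerun the upper- and lower-bound arguments of Theorems~\ref{thm:tree-dimensions} and~\ref{thm:tree-random-images}, now for the general set $F$. The only geometric inputs those proofs used were three facts: the length estimate \eqref{eq:tree-type-length}/\eqref{eq:tree-typical-length}, the ratio bounds \eqref{eq:tree-ratios} (bounded branching and separation), and H\"older continuity of $B_H$. The type-counting bound \eqref{eq:tree-type-count} and the typical-mass bound \eqref{eq:tree-conditioned-mass} concern only binary words and measures on words. They therefore transfer to $F$ verbatim through its coding map. The hypotheses on $F$ replace \eqref{eq:tree-ratios}: ratios in $[a_0,b_0]$ and gaps at least a fixed multiple of the parent length. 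Each one-sided comparison $\ell^F_\omega\lessgtr e^{\pm\varepsilon_NN}\ell^\lambda_\omega$ then perturbs the logarithmic lengths by $o(N)$, uniformly for $n\le TN$. This error sits at the same order as the $O(\eta N)$ and $O(\delta N)$ errors already absorbed, so it can be handled by the same margins.

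For assertion (1), I fix $s>\mathcal D_{\theta,H}(\lambda)$, then $\gamma<H$ close to $H$ with $s>\mathcal D_{\theta,\gamma}(\lambda)$ by Lemma~\ref{lem:tree-profile-stability}, and then $\eta$. For each frequency record with profile $f$, I choose $u$ where $G_{\theta,\gamma}(f,u)\le\mathcal D_{\theta,\gamma}$, and the depth $n=\lfloor Nt^\gamma_f(u)\rfloor$. Only the upper comparison is used here: it gives $-\log\ell^F_{\omega|n}\ge NL_{\lambda,f}(n/N)-O(N\eta+K)-\varepsilon_NN$. On the H\"older event, each image piece then has diameter at most $C_\gamma e^{-N(u-O(\eta)-o(1))}$. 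The coarse and middle covers \eqref{eq:tree-coarse-cover}, \eqref{eq:tree-middle-cover} follow with an extra $e^{o(N)}$ factor. Summing over the $e^{o(N)}$ records gives cost $\le e^{-N\theta(s-\mathcal D_{\theta,\gamma})/2}\to0$, pathwise along the given scales. The horizon $T$ is fixed large enough that $t_f^\gamma(1)\le T$, using $a\le L'_f\le b$.

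For assertion (2), I fix $s<\mathcal D_{\theta,H}(\lambda)$ and use Lemma~\ref{lem:tree-profile-stability} to pick a clipped step profile $f$ with $\min_uG_{\theta,H}(f,u)>s+\xi$. Let $\mu_N$ be the conditioned measure from Lemma~\ref{lem:tree-typical-cylinders}, pushed through the coding map of $F$; it is deterministic. Pairs with longest common prefix $\omega$ are at distance $\ge c\,\ell^F_\omega$ by the gap hypothesis. The lower comparison gives $\ell^F_\omega\ge e^{-NL_{\lambda,f}(n/N)-O(\delta N)-\varepsilon_NN}$ on typical words. Lemma~\ref{lem:gaussian-increments}, with the monotone bound $\min\{1,r^s|z|^{-Hs},r^A|z|^{-H}\}$, then yields the level energies \eqref{eq:tree-level-energy} up to $e^{O(\delta N)+o(N)}$. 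Summing over levels up to $n_*$ and bounding the terminal pairs by $K\le1$ and $\max\mu_N(I_\omega)$ gives \eqref{eq:oscillating-energy-margin} with $\kappa=\theta\xi/2$. The deterministic $H=1$ versions are the same computation, with $K^s_{r,\theta}$ applied directly to distances and \eqref{eq:cover-energy} in place of expectations.

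The main obstacle is the uniformity in the first assertion. The chosen $u$ and time must stay uniformly positive, and the errors must be uniform over all records. This matters because the comparison only bounds $\ell^F$ up to $e^{\varepsilon_NN}$ and not exactly. I will handle it as in the source proof: the fact $G(f,0)=1>s$ together with equi-Lipschitz continuity of $A^\gamma_f$ keeps the chosen $u$ uniformly positive. I fix the order of limits as exponent margin, then $\eta$ (or $\delta$), then $N$, so the $\varepsilon_N$ terms vanish last.
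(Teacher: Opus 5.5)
Your proposal is correct and follows essentially the same route as the paper. For (1) you rerun the record-wise coarse and middle covers on the $\gamma$-H\"older event, with $\gamma<H$ chosen by continuity. For (2) you push the conditioned typical measure through the coding map of $F$ and use the gap bound $|x-y|\gtrsim\ell^F_\omega$. In both cases the $O(\varepsilon_N N)$ comparison error is absorbed into the strict exponent margin after the mesh is fixed, and the paper's remaining details (taking $\gamma=1$ with the identity map and the bound $K^s_{r,\theta}(z)\le\min\{1,r^s|z|^{-s},r^{s+\theta(1-s)}|z|^{-1}\}$ when $H=1$, and monotonicity or ambient interval covers for $s\ge1$) are already implicit in what you wrote.
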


\begin{proof}
Inverse times satisfy $t\le1/[H(\lambda-\beta c)]$.
For $\mathcal D_{\theta,H}(\lambda)<s<1$, choose $\gamma<H$ with
$\mathcal D_{\theta,\gamma}(\lambda)<s$, enlarge $T$ to contain
the $\gamma$-inverse times, and work on the probability-one
$\gamma$-H\"older event.
Larger $s$ follow by monotonicity; if $\mathcal D_{\theta,H}=1$,
interval covers suffice for $s>1$.

Lemma~\ref{lem:tree-frequency-types} has unchanged prefix counts,
while the first comparison gives image diameters at most
\[
 C\exp\{-N\gamma L_{\lambda,f}(t)
               +O(N\eta)+\gamma\varepsilon_NN+O(K)\}.
\]
Thus \eqref{eq:tree-coarse-cover}--\eqref{eq:tree-middle-cover}
gain only $O(\varepsilon_NN)$ in their exponents, including enlargement
to $r$ and subdivision at $r^\theta$. Fix a fine mesh, then take $N$
large to absorb this error and the polynomial record count into the
strict margin. For the source use $\gamma=1$ and the identity map.

For the lower bound, push the conditioned measure of
Lemma~\ref{lem:tree-typical-cylinders} onto $F$, using a clipped
profile with minimum payoff above $s$. Cylinder masses are unchanged;
for a longest common prefix $\omega$ of length at most $NT$,
\[
 |x-y|\ge C^{-1}e^{-\varepsilon_NN}\ell^\lambda_\omega.
\]
The Gaussian bound \eqref{eq:vs-gaussian-three} gains at most
$Ce^{H\varepsilon_NN}$ since $s<1$. Sum
\eqref{eq:tree-level-energy} up to $\lfloor Nt_f^H(1)\rfloor$,
then use the tail prefix mass and $K\le1$. Absorbing
$O(\varepsilon_NN)$ into the strict margin proves
\eqref{eq:oscillating-energy-margin}.

For the deterministic assertion observe that
\[
 K_{r,\theta}^s(x-y)
 \le\min\{1,r^s|x-y|^{-s},
                  r^{s+\theta(1-s)}|x-y|^{-1}\}.
\]
The same prefix calculation therefore applies with $H=1$.
\end{proof}

\section{Exact finite reductions and pressure certificates}
\label{sec:ifs-finite-pressure}

\subsection{Normalized windows and delay-orbit partitions}
\label{sec:tree-finite-exact}

\begin{lemma}[Normalization of the terminal time]
\label{lem:tree-time-normalization}
For $H>0$, let $t_*=t_f^H(1)$ and set $g(v)=f(t_*v)$ on $[0,1]$.
Then
\[
 A_g(v)=t_*^{-1}A_f(t_*v),\qquad
 L_g(v)=t_*^{-1}L_f(t_*v),\qquad HL_g(1)=t_*^{-1}.
\]
In particular, the original physical scale and entropy satisfy
\begin{equation}\label{eq:tree-normalized-physical-scale}
 u=\frac{L_g(v)}{L_g(1)},\qquad
 A_f^H(u)=\frac{A_g(v)}{HL_g(1)}.
\end{equation}
For the source window the normalized initial time obeys
$L_g(\tau)=\theta L_g(1)$ and $\tau\ge\theta a/b$.
\end{lemma}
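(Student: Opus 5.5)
The plan is a direct change of variables in the definitions \eqref{eq:tree-entropy}--\eqref{eq:tree-scale}, followed by reading off the physical scale. Substituting $w=t_*v'$ in $F_f(t_*v)=\int_0^{t_*v}f(w)\,dw$ gives $F_f(t_*v)=t_*F_g(v)$. Applying the same substitution to the entropy integral gives $A_f(t_*v)=t_*A_g(v)$. For the length functional, the delayed term is handled the same way: $F_f(\beta t_*v)=t_*F_g(\beta v)$. Hence
\[
 L_f(t_*v)=\lambda t_*v+c\{\beta t_*F_g(v)-t_*F_g(\beta v)\}=t_*L_g(v).
\]
This relies only on the linearity of $L_f$ in the pair $(t,F_f)$ and on the fact that the delay map $t\mapsto\beta t$ commutes with dilation. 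Evaluating at $v=1$ and using the definition $HL_f(t_*)=1$ of $t_*=t_f^H(1)$ gives $HL_g(1)=t_*^{-1}$.

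For \eqref{eq:tree-normalized-physical-scale}, fix $u\in[0,1]$ and write $t=t_f^H(u)=t_*v$. Such a $v\in[0,1]$ exists because $L_f$ is strictly increasing, since $L_f'\ge a>0$. Then
\[
 u=HL_f(t_*v)=Ht_*L_g(v)=\frac{L_g(v)}{L_g(1)}.
\]
The entropy identity follows in the same way:
\[
 A_f^H(u)=A_f(t_*v)=t_*A_g(v)=\frac{A_g(v)}{HL_g(1)}.
\]

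For the source window, take $H=1$. The normalized initial time $\tau$ is defined by $t_f^1(\theta)=t_*\tau$. The displayed relation gives $L_g(\tau)/L_g(1)=\theta$, that is, $L_g(\tau)=\theta L_g(1)$. For the lower bound on $\tau$, we use $L_g(0)=0$ together with the derivative bounds $a\le L_g'\le b$ almost everywhere. These bounds carry over from $f$ to $g$ because $g$ is again a profile with values in $[0,1]$, so $L_g'=\lambda+c\beta\{g(v)-g(\beta v)\}$. Consequently $\theta L_g(1)=L_g(\tau)\le b\tau$ and $L_g(1)\ge a$, which give $\tau\ge\theta a/b$.

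I expect no real obstacle. The only point needing care is that the delayed term $F(\beta t)$ rescales consistently, which is exactly why the construction uses the linear delay $\beta t$. One should also confirm that the normalized profile lives on $[0,1]$, which is inside the horizon, so no values beyond $T$ are needed.
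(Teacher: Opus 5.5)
Your proof is correct and follows the paper's argument. The substitution $t=t_*v$ gives the three scaling identities, and the chain $\theta a\le\theta L_g(1)=L_g(\tau)\le b\tau$ yields $\tau\ge\theta a/b$. The paper also records the converse: any $g$ on $[0,1]$ arises from some $f$ by dilation with $t_*=(HL_g(1))^{-1}\le 1/(Ha)<T$. That converse is not part of the stated lemma. It is what later allows the variational problems to be posed directly over normalized profiles.
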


\begin{proof}
Substitute $t=t_*v$; conversely take $t_*=(HL_g(1))^{-1}$.
The time bound follows from
$\theta a\le\theta L_g(1)=L_g(\tau)\le b\tau$.
\end{proof}

For $0<\tau\le1$, let
\begin{equation*}
 \mathcal B_\tau=\{0\}\cup
 \{\beta^j v:v\in\{\tau,1\},\ j\in\mathbb Z,
                  \ \beta\tau\le\beta^jv\le1\}.
\end{equation*}
Write the distinct points as $0=b_0<\cdots<b_N=1$, and let
$f=p_i\in[0,1]$ on $(b_{i-1},b_i)$.

For example, if $\beta=1/2$ and the normalized starting time is
$\tau=3/4$, then
\[
 \mathcal B_\tau=\{0,3/8,1/2,3/4,1\}.
\]
\begin{lemma}[Averaging on a delay-orbit partition]
\label{lem:tree-orbit-averaging}
Let a finite partition $\mathcal B$ of $[0,1]$ contain the endpoints
of a constraint window $[\alpha,1]$. Suppose that it contains
$\beta v$ for every $v\in\mathcal B\cap[\alpha,1]$, and contains
$v/\beta$ whenever $v\in\mathcal B$ and $v/\beta\in[\alpha,1]$.
Average a profile $f$ on each partition cell, obtaining $\bar f$.
At every partition point, $F_{\bar f}=F_f$ and $A_{\bar f}\ge A_f$.
At each constraint node, also $L_{\bar f}=L_f$.
For every $q\ge0$, $A_{\bar f}-qL_{\bar f}$ is affine between
successive constraint nodes. Consequently its nonnegativity, including
any fixed additive constant, is equivalent to nonnegativity at those nodes.
\end{lemma}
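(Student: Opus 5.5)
The plan is to verify each assertion separately, using only the definitions $F_f(t)=\int_0^tf$, $A_f(t)=\int_0^th(f)$ and $L_f(t)=\lambda t+c\{\beta F_f(t)-F_f(\beta t)\}$, together with concavity of $h$. Write $\mathcal B=\{0=b_0<\cdots<b_N=1\}$ and $\bar f=|I|^{-1}\int_If$ on each cell $I=(b_{i-1},b_i)$.

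\emph{Step 1 (integrals at partition points).} Since $\bar f$ has the same integral as $f$ over every cell, summing over the cells preceding a partition point $b_k$ gives $F_{\bar f}(b_k)=F_f(b_k)$. On each cell, Jensen's inequality for the concave $h$ gives $|I|h(\bar f|_I)\ge\int_Ih(f)$, and summing yields $A_{\bar f}(b_k)\ge A_f(b_k)$.

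\emph{Step 2 (length functional at constraint nodes).} Call $v\in\mathcal B\cap[\alpha,1]$ a constraint node. By hypothesis $\beta v\in\mathcal B$, so Step 1 applies at both $v$ and $\beta v$. Hence $F_{\bar f}(v)=F_f(v)$ and $F_{\bar f}(\beta v)=F_f(\beta v)$, and therefore $L_{\bar f}(v)=L_f(v)$.

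\emph{Step 3 (affinity between successive constraint nodes).} Take successive constraint nodes $v<v'$ in $[\alpha,1]$. The map $t\mapsto A_{\bar f}(t)-qL_{\bar f}(t)$ has a.e.\ derivative $h(\bar f(t))-q\lambda-qc\beta\bar f(t)+qc\beta\bar f(\beta t)$. This derivative is constant on $(v,v')$ precisely when no partition point lies in $(v,v')$ and no partition point lies in $(\beta v,\beta v')$.
\begin{itemize}
\item The first condition holds by the choice of successive nodes, since every partition point in $[\alpha,1]$ is a node.
\item For the second, suppose $w\in\mathcal B$ with $\beta v<w<\beta v'$. Then $w/\beta\in(v,v')\subset[\alpha,1]$. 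The closure hypothesis puts $w/\beta\in\mathcal B$, so it would be a node strictly between $v$ and $v'$, which is a contradiction.
\end{itemize}
So $\bar f$ and $\bar f(\beta\,\cdot)$ are both constant on $(v,v')$, and $A_{\bar f}-qL_{\bar f}$ is affine there.

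\emph{Step 4 (conclusion).} A function that is affine on each interval $[v,v']$ between successive nodes is nonnegative on $[\alpha,1]$, after adding any fixed constant, if and only if it is nonnegative at the nodes. This holds because an affine function attains its minimum at an endpoint.

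The main obstacle is the closure bookkeeping in Step 3. One must ensure that the breakpoints of the delayed term $\bar f(\beta t)$, namely the points $w/\beta$ for $w\in\mathcal B$, never fall strictly inside a node interval. This is exactly what the hypothesis "$v/\beta\in\mathcal B$ whenever $v/\beta\in[\alpha,1]$" supplies.
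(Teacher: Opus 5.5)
Your proof is correct and follows the paper's route: Jensen on each cell gives the statements for $F$ and $A$, closure under $v\mapsto\beta v$ gives $L_{\bar f}=L_f$ at constraint nodes, and closure under $v\mapsto v/\beta$ makes $\bar f(t)$ and $\bar f(\beta t)$ constant between successive nodes, so $A_{\bar f}-qL_{\bar f}$ is affine there. One small wording fix: in Step 3, ``precisely when'' should read ``whenever'', since you only use (and only have) sufficiency.
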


\begin{proof}
Averaging preserves $\int_I f$ on each cell, and Jensen gives
$|I|h(|I|^{-1}\int_I f)\ge\int_I h(f)$.
Closure under $v\mapsto\beta v$ preserves $L_f$ at constraint nodes;
closure under $v\mapsto v/\beta$ makes both $\bar f(t)$ and
$\bar f(\beta t)$ constant between them. Hence so is
\[
 (A_{\bar f}-qL_{\bar f})'(t)
 =h(\bar f(t))-q\lambda-qc\beta\{\bar f(t)-\bar f(\beta t)\}.
\]
\end{proof}

\subsection{Finite convex feasibility for source dimensions}

\begin{proposition}[Exact finite-frequency reduction]
\label{prop:tree-finite-exact}
For $0<\theta\le1$,
\begin{equation}\label{eq:tree-finite-source}
 d_\theta=
 \max_{\substack{0<\tau\le1,\ p_i\in[0,1]\\
                  L_f(\tau)=\theta L_f(1)}}
 \ \min_{b\in\mathcal B_\tau\cap[\tau,1]}
                    \frac{A_f(b)}{L_f(b)}.
\end{equation}
One may restrict $\tau$ to $\theta a/b\le\tau\le1$.
The number of frequencies is bounded by a constant
multiple of $1+\log(1/\theta)/|\log\beta|$, with constants depending
only on $\beta,M$.

For any fixed test exponent $s\ge0$, the condition $d_\theta\ge s$
is equivalent to the feasibility of at least one member of a finite
collection of convex problems. The variables are the time endpoint
$\tau$ and the digit masses in the partition cells. The constraints
consist of affine equalities and inequalities, together with concave
entropy inequalities of the form $A_f(b)-sL_f(b)\ge0$.
\end{proposition}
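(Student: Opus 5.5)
The plan is to start from Theorem~\ref{thm:tree-dimensions}, which gives $d_\theta=\max_f\min_{\theta\le u\le1}A_f^1(u)/u$ with the maximum attained. Lemma~\ref{lem:tree-time-normalization} then moves everything to the normalized horizon $[0,1]$. Given an attaining profile $f$, set $t_*=t_f^1(1)$ and $g(v)=f(t_*v)$. By \eqref{eq:tree-normalized-physical-scale},
\[
 \frac{A_f^1(u)}{u}=\frac{A_g(v)}{L_g(v)},\qquad u=\frac{L_g(v)}{L_g(1)} .
\]
The window $\theta\le u\le1$ therefore becomes $v\in[\tau,1]$, where $\tau$ is defined by $L_g(\tau)=\theta L_g(1)$. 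The same lemma gives $\tau\ge\theta a/b$. Conversely, any normalized profile $g$ with a time $\tau$ satisfying $L_g(\tau)=\theta L_g(1)$ comes from a profile on $[0,T]$ by dilating with $t_*=1/L_g(1)\le1/a$, which lies within the horizon. So $d_\theta$ equals the supremum over pairs $(g,\tau)$ of $\min_{v\in[\tau,1]}A_g(v)/L_g(v)$.

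The next step is to pass from an arbitrary $g$ to its cell averages $\bar g$ on $\mathcal B_\tau$. This requires checking the hypotheses of Lemma~\ref{lem:tree-orbit-averaging} with constraint window $[\alpha,1]=[\tau,1]$. The set $\mathcal B_\tau$ contains $0,\tau,1$. If $v=\beta^jv_0\in\mathcal B_\tau\cap[\tau,1]$, then $\beta v\ge\beta\tau$, so $\beta v\in\mathcal B_\tau$. If $v/\beta\in[\tau,1]$, then $v/\beta$ is again an orbit point lying above $\beta\tau$. The lemma then gives three facts: $L_{\bar g}=L_g$ at the constraint nodes, which include $\tau$ and $1$, so the normalization $L(\tau)=\theta L(1)$ is preserved; $A_{\bar g}\ge A_g$ at those nodes; and $A_{\bar g}-qL_{\bar g}$ is affine between successive nodes of $\mathcal B_\tau\cap[\tau,1]$.

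Now take $q=\min_{b}A_{\bar g}(b)/L_{\bar g}(b)$ over the nodes. Then $A_{\bar g}-qL_{\bar g}\ge0$ at every node, and affinity extends this to all of $[\tau,1]$. Since $L>0$ there, the continuous minimum equals the nodal minimum, and that minimum is at least the value for $g$ itself. Hence restricting to step profiles on $\mathcal B_\tau$ and to the nodal minimum loses nothing, and the inequality in the other direction is trivial. This proves \eqref{eq:tree-finite-source}.

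For the cell count, note that $\mathcal B_\tau$ consists of $0$ together with the two geometric orbits $\beta^j\tau$ and $\beta^j$ restricted to $[\beta\tau,1]$. Each orbit has at most $1+\log(1/(\beta\tau))/|\log\beta|$ points. Using $\tau\ge\theta a/b$, this is at most a constant times $1+\log(1/\theta)/|\log\beta|$.

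For the convex reformulation, fix $s$. By the above, $d_\theta\ge s$ holds exactly when some admissible $(\tau,p)$ satisfies $A(b)-sL(b)\ge0$ at every node. The point I expect to be the main obstacle is that the cell endpoints depend on $\tau$ nonlinearly, so a single convex program in $\tau$ is not directly available. To handle this, I would split $[\theta a/b,1]$ into finitely many intervals on which the combinatorial order of the points $\beta^j\tau$ and $\beta^k$ is fixed; there are boundedly many such order types. On each piece I would use as variables $\tau$ and the cell digit masses $m_i=\int_{\text{cell}}g$. These satisfy the constraints $0\le m_i\le|\text{cell}_i|$, where each cell length is affine in $\tau$.

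With these variables, $F$ at a node is a sum of $m_i$, so $L(b)$, which involves $F(b)$ and $F(\beta b)$ at nodes, is affine, and the normalization constraint $L(\tau)=\theta L(1)$ is an affine equality. Each cell contributes $|I|h(m/|I|)$ to $A(b)$. This is a perspective of the concave function $h$, hence jointly concave in $(m,|I|)$ and so concave in $(\tau,m)$. Each constraint $A(b)-sL(b)\ge0$ is therefore a concave inequality. Taking the union over the finitely many order types gives the stated finite collection of convex problems.
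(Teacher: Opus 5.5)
Your proposal is correct and follows the paper's proof essentially step by step. It normalizes the terminal time via Lemma~\ref{lem:tree-time-normalization}, averages on the cells of $\mathcal B_\tau$ via Lemma~\ref{lem:tree-orbit-averaging} and extends the nodal constraints affinely, counts the orbit points using $\tau\ge\theta a/b$, and obtains convexity on fixed ordering regions from the concave perspectives $l_ih(z_i/l_i)$. The one detail the paper makes explicit and you leave implicit is the handling of coincident grid points at the ends of ordering regions. The paper closes each region and extends the perspective by zero on collapsed cells, using $0\le l_ih(z_i/l_i)\le cl_i$; your alternative of treating those coincidence values as separate singleton pieces would work equally well.
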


\begin{proof}
Normalize by Lemma~\ref{lem:tree-time-normalization}, so
$L_f(\tau)=\theta L_f(1)$ and $\tau\ge\theta a/b$.
Lemma~\ref{lem:tree-orbit-averaging} averages an attaining profile
without decreasing its value and extends nodal constraints across the
window, proving \eqref{eq:tree-finite-source}. Each orbit has
$O(1+|\log\tau|/|\log\beta|)$ points; $\tau\ge\theta a/b$
gives the frequency bound.

The finitely many candidates $\beta^j,\beta^j\tau$ give finitely
many affine ordering regions. On each, put $l_i=b_i-b_{i-1}$,
$z_i=l_ip_i$. Nodal lengths are affine in $(\tau,z)$; nodal entropies
are sums of concave perspectives $l_i h(z_i/l_i)$, extended by zero
at collapsed cells since
\[
 0\le l_i h(z_i/l_i)\le c l_i\longrightarrow0
 \qquad(0\le z_i\le l_i,\ l_i\downarrow0).
\]
Thus $0\le z_i\le l_i$, the physical equality and the entropy
inequalities define a convex set, including coincident boundary nodes.
\end{proof}

\subsection{Finite convex feasibility for random images}

Fix $0<s<1$ and put
\[
 \sigma_0=\theta(1-s),\qquad \sigma_1=\theta.
\]
Choose normalized times $0<\tau_0\le\tau_1\le1$, and define
\begin{equation*}
 \mathcal B_{\tau_0,\tau_1}
 =\{0\}\cup\{\beta^jv:v\in\{\tau_0,\tau_1,1\},\ j\in\mathbb Z,
                       \ \beta\tau_0\le\beta^jv\le1\}.
\end{equation*}
As before, frequencies are constant on its successive cells.

\begin{proposition}[Finite feasibility for random images]
\label{prop:tree-random-finite}
The inequality $\mathcal D_{\theta,H}\ge s$ holds if and only if
there exist times and frequencies as above satisfying
\begin{align}
 L_f(\tau_i)&=\sigma_iL_f(1),\qquad i=0,1,
                    \notag\\
 A_f(v)-HL_f(v)+H\theta(1-s)L_f(1)&\ge0,
       \qquad v\in\mathcal B_{\tau_0,\tau_1}\cap[\tau_0,\tau_1],
                    \label{eq:tree-finite-coarse}\\
 A_f(v)-HsL_f(v)&\ge0,
       \qquad v\in\mathcal B_{\tau_0,\tau_1}\cap[\tau_1,1].
                    \label{eq:tree-finite-middle}
\end{align}
One may impose $\tau_0\ge\theta(1-s)a/b$. For fixed
$(\theta,H,s)$ this gives a finite collection of convex feasibility
problems with
$O(1+\log(1/[\theta(1-s)])/|\log\beta|)$ frequencies per problem.
\end{proposition}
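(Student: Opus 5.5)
The plan is to follow the proof of Proposition~\ref{prop:tree-finite-exact}. First we rewrite the condition $\min_u G_{\theta,H}(f,u)\ge s$ as nodal inequalities on the normalized time axis. Then Lemma~\ref{lem:tree-orbit-averaging} reduces an arbitrary profile to one that is constant on the cells of $\mathcal B_{\tau_0,\tau_1}$.

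\emph{Step 1: rewriting the payoff.} For $u\le\theta$, the inequality $G_{\theta,H}(f,u)\ge s$ reads $A_f^H(u)-u+\theta(1-s)\ge0$. For $u\le\sigma_0=\theta(1-s)$ it holds automatically, since $A_f^H\ge0$. So the coarse constraint is only active on $[\sigma_0,\sigma_1]$. For $u\ge\theta$, the inequality reads $A_f^H(u)-su\ge0$.

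We normalize by Lemma~\ref{lem:tree-time-normalization} with $t_*=t_f^H(1)$ and $g(v)=f(t_*v)$. By \eqref{eq:tree-normalized-physical-scale}, $u=L_g(v)/L_g(1)$ and $A_f^H(u)=A_g(v)/(HL_g(1))$. Multiplying both inequalities by $HL_g(1)>0$ gives exactly \eqref{eq:tree-finite-coarse} and \eqref{eq:tree-finite-middle}, holding for all $v$ in $[\tau_0,\tau_1]$ and $[\tau_1,1]$ respectively. Here $\tau_i$ is defined by $L_g(\tau_i)=\sigma_iL_g(1)$; it is unique because $L_g$ is strictly increasing. Since $L_g(\tau_0)\le b\tau_0$ and $L_g(1)\ge a$, we get $\tau_0\ge\theta(1-s)a/b$. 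Conversely, any profile on $[0,1]$ with these pointwise inequalities can be rescaled back to a profile $f$ on $[0,T]$, with $t_*=(HL_g(1))^{-1}\le 1/(Ha)$, satisfying $\min_uG\ge s$. Thus $\mathcal D_{\theta,H}\ge s$ holds if and only if some profile $g$ on $[0,1]$ satisfies the pointwise versions of the constraints. For the forward implication we use that the supremum is attained (Lemma~\ref{lem:tree-profile-stability}).

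\emph{Step 2: averaging.} Apply Lemma~\ref{lem:tree-orbit-averaging} with window $[\alpha,1]=[\tau_0,1]$ and $\mathcal B=\mathcal B_{\tau_0,\tau_1}$. The closure hypotheses need checking.
\begin{itemize}
\item If $v=\beta^jw\in[\tau_0,1]$ with $w\in\{\tau_0,\tau_1,1\}$, then $\beta v\ge\beta\tau_0$, so $\beta v\in\mathcal B$.
\item If $v/\beta\in[\tau_0,1]$ for some $v\in\mathcal B$, then $v/\beta=\beta^{j-1}w$ lies in the admissible range, so $v/\beta\in\mathcal B$.
\item The point $0$ is fixed under both maps.
\end{itemize}
Averaging preserves $L$ at every constraint node. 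In particular it preserves the two physical equalities at $\tau_0$ and $\tau_1$, and also $L(1)$, which enters the additive constant $H\theta(1-s)L(1)$. Averaging does not decrease $A$. Between successive constraint nodes, both $A-HL$ (with $q=H$) and $A-HsL$ (with $q=Hs$) are affine. Since $\tau_1$ is itself a node, the two constraint families are separated at a node. Hence the pointwise constraints are equivalent to the nodal constraints \eqref{eq:tree-finite-coarse}--\eqref{eq:tree-finite-middle}. This gives both directions of the equivalence: a feasible finite datum is itself a step profile achieving $\min_uG\ge s$.

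\emph{Step 3: convexity and counting.} As in Proposition~\ref{prop:tree-finite-exact}, fix one of the finitely many orderings of the candidate points $\beta^j\tau_0,\beta^j\tau_1,\beta^j$. This determines an affine region in $(\tau_0,\tau_1)$. On it, use the variables $l_i=b_i-b_{i-1}$ and $z_i=l_ip_i$. Nodal values of $F$, and hence of $L$, are affine in $(\tau_0,\tau_1,z)$. So the two equalities, the box constraints $0\le z_i\le l_i$, and the affine parts of the inequalities are all linear. Nodal entropies are sums of the concave perspectives $l_ih(z_i/l_i)$, extended by zero on collapsed cells. Therefore each inequality says that a concave function is nonnegative, and the feasible set is convex. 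The number of cells is $O(1+\log(1/\tau_0)/|\log\beta|)$, and the bound on $\tau_0$ gives the stated frequency count.

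The main obstacle I expect is Step 1. We must make sure that discarding $u<\sigma_0$ loses nothing, and that the additive constant involving $L(1)$ remains an exact invariant under averaging. The latter holds only because $1$, $\tau_0$ and $\tau_1$ are all constraint nodes of $\mathcal B_{\tau_0,\tau_1}$.
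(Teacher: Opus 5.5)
Your proposal is correct and follows essentially the same route as the paper's proof. You use attainment and the normalization \eqref{eq:tree-normalized-physical-scale}, observe that the coarse constraint is automatic for $u\le\sigma_0$, and invoke Lemma~\ref{lem:tree-orbit-averaging} on $\mathcal B_{\tau_0,\tau_1}$, which preserves the physical levels and the additive constant $H\theta(1-s)L_f(1)$. The finite convex problems then come from the bound $\tau_0\ge\sigma_0a/b$ together with concave perspective constraints on each affine ordering region; you simply supply the closure checks and the rescaling details that the paper leaves implicit.
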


\begin{proof}
By attainment and \eqref{eq:tree-normalized-physical-scale},
$\mathcal D_{\theta,H}\ge s$ is equivalent to
\eqref{eq:tree-finite-coarse}--\eqref{eq:tree-finite-middle} on the
full intervals: for $u\le\sigma_0$,
$A_f^H(u)-u+\theta(1-s)\ge\sigma_0-u\ge0$.
Lemma~\ref{lem:tree-orbit-averaging} preserves the physical levels
and the coarse additive constant, so the nodal constraints are equivalent.
Now $\tau_0\ge\sigma_0a/b$ bounds the geometric powers; each affine
ordering region in $(\tau_0,\tau_1)$ has the same concave perspective
constraints as above.
\end{proof}

\subsection{Regional pressures and the role of boundary faces}
\label{sec:ifs-exact-pressure-duality}

Finite-dimensional separation gives the pressure dual below;
compare the minimax theorem of~\cite{Sion1958}.

Fix $0<\theta\le1$. Partition $[\theta a/b,1]$ into finitely many
closed intervals $I=[\alpha,\gamma]$ on which the membership and
ordering of the orbit grid are fixed. Boundary coincidences are
represented by cells of length zero. Denote the cell endpoints by
$b_i(\tau)$, their affine lengths by $l_i(\tau)$, and the window
times by $t_0(\tau)=\tau,\ldots,t_J(\tau)=1$. The times and their
delays are grid points. Put
\[
 \mathcal K_I=\{(\tau,z):\tau\in I,\quad
                              0\le z_i\le l_i(\tau)\}.
\]
This compact convex polytope parametrizes profiles with frequencies
$z_i/l_i(\tau)$ on nonempty cells. For $s\ge0$, put
\[
 \mathcal R(\tau,z)=L_f(\tau)-\theta L_f(1),\qquad
 g_j^s(\tau,z)=A_f(t_j)-sL_f(t_j),
\]
and define the regional margin
\begin{equation*}
 \mathcal V_I(s)=
 \max_{\substack{x\in\mathcal K_I\\\mathcal R(x)=0}}
                  \min_{0\le j\le J}g_j^s(x).
\end{equation*}
An empty physical slice has value $-\infty$.
The function $\mathcal R$ is affine, and every $g_j^s$ is continuous and
concave in the digit-mass variables and $\tau$.

For $w\in\Delta_J$ and $\eta\in\mathbb R$, put
\[
 \rho_j=sw_j-\eta\mathbf1_{j=0}
                    +\eta\theta\mathbf1_{j=J}.
\]
On the $i$th open cell let
\[
 W_i=\sum_{j:y\le t_j}w_j,\qquad
 K_i=c\left(\sum_{j:y\le\beta t_j}\rho_j
                       -\beta\sum_{j:y\le t_j}\rho_j\right),
\]
where $y$ lies in that cell. The coefficients are constant on each
ordering region. At a collapsing cell use their interior limits;
empty cells contribute zero. For singleton regions use the nonempty
cells directly. Define
\[
 \mathcal L(W,K)=
 \begin{cases}
 W\log(1+e^{K/W}),&W>0,\\
 \max\{0,K\},&W=0.
 \end{cases}
\]
Thus $\mathcal L(W,K)=\max_{0\le p\le1}\{Wh(p)+Kp\}$, including $W=0$.
Set
\begin{equation*}
 \mathcal P_{I,s}(\tau;w,\eta)
 =-\lambda\sum_{j=0}^J\rho_jt_j(\tau)
        +\sum_i l_i(\tau)\mathcal L(W_i,K_i).
\end{equation*}
For fixed $(s,w,\eta)$ this function is affine in $\tau$.

\begin{proposition}[Exact regional pressure duality]
\label{prop:ifs-pressure-duality}
For every ordering interval $I$ and $s\ge0$,
\begin{equation}\label{eq:ifs-exact-pressure-dual}
 \mathcal V_I(s)=
 \inf_{w\in\Delta_J,\ \eta\in\mathbb R}
 \max\{\mathcal P_{I,s}(\alpha;w,\eta),
                 \mathcal P_{I,s}(\gamma;w,\eta)\}.
\end{equation}
The identity includes the value $-\infty$ when the physical slice is
empty. If $\mathcal R$ takes both positive and negative values on
$\mathcal K_I$, the infimum is attained. More precisely, let
\[
 \delta=\min\{\max_{\mathcal K_I}\mathcal R,
                          -\min_{\mathcal K_I}\mathcal R\}>0.
\]
Then it suffices to minimize over
$|\eta|\le2(c+sb)/\delta$. The infimum is also attained when
$\mathcal R$ vanishes identically.
\end{proposition}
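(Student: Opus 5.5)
The plan is Lagrangian duality on the compact polytope $\mathcal K_I$, followed by an explicit cellwise maximization that produces the pressure $\mathcal P_{I,s}$. First write the inner minimum as a minimum over the simplex:
\[
 \min_{0\le j\le J}g_j^s(x)=\min_{w\in\Delta_J}\sum_j w_jg_j^s(x).
\]
Then remove the physical equality with a multiplier $\eta\in\R$. Since $\inf_\eta \eta\mathcal R(x)$ equals $0$ if $\mathcal R(x)=0$ and $-\infty$ otherwise,
\[
 \mathcal V_I(s)=\sup_{x\in\mathcal K_I}\ \inf_{w\in\Delta_J,\ \eta\in\R}
 \Phi(x;w,\eta),\qquad
 \Phi=\sum_jw_jg_j^s(x)+\eta\,\mathcal R(x).
\]
This identity holds with the value $-\infty$ for an empty slice. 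Since $t_0=\tau$ and $t_J=1$, the combination collapses to
\[
 \Phi=\sum_jw_jA_f(t_j)-\sum_j\rho_jL_f(t_j),
\]
with $\rho_j$ exactly as defined before the proposition. The function $\Phi$ is concave and continuous in $x=(\tau,z)$, because the $g_j^s$ are concave and $\mathcal R$ is affine. It is affine in $(w,\eta)$. The set $\mathcal K_I$ is compact and convex, so Sion's minimax theorem \cite{Sion1958} allows us to exchange sup and inf. Only one side needs to be compact, so the noncompact $\eta$-line causes no difficulty.

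Next evaluate $\sup_x\Phi$ for fixed $(w,\eta)$, first in $z$ at fixed $\tau$. The entropy part is $\sum_jw_jA_f(t_j)=\sum_il_ih(p_i)W_i$, since cell $i$ lies below $t_j$ exactly for the $j$ counted in $W_i$. For the length part, expand
\[
 L_f(t)=\lambda t+c\{\beta F_f(t)-F_f(\beta t)\},
\]
and write $F_f(t)=\sum_{\text{cells}\le t}z_i$. The coefficient of $z_i=l_ip_i$ in $-\sum_j\rho_jL_f(t_j)$ is then exactly $K_i$, and the remaining term is $-\lambda\sum_j\rho_jt_j$. These identities use that the $t_j$ and their delays $\beta t_j$ are grid points, so each cell lies entirely on one side of them. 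Maximizing each cell independently over $p_i\in[0,1]$ gives $l_i\mathcal L(W_i,K_i)$. The identity $\max_p\{Wh(p)+Kp\}=W\log(1+e^{K/W})$, together with its limit $\max\{0,K\}$ at $W=0$, gives $\sup_z\Phi=\mathcal P_{I,s}(\tau;w,\eta)$. Collapsed cells contribute zero, consistent with the perspective extension in Proposition~\ref{prop:tree-finite-exact}. Since this function is affine in $\tau$ on the ordering interval, its supremum over $I$ is attained at $\alpha$ or $\gamma$. This yields \eqref{eq:ifs-exact-pressure-dual}.

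For attainment, use the bounds $-sb\le g_j^s\le c$; these follow from $0\le A_f(t_j)\le ct_j$ and $L_f(t_j)\le bt_j$ with $t_j\le1$. Suppose $\mathcal R$ takes both signs. For $\eta>0$ choose $x$ with $\mathcal R(x)=\max\mathcal R\ge\delta$, and for $\eta<0$ choose $x$ with $\mathcal R(x)=\min\mathcal R\le-\delta$. Then $\sup_x\Phi\ge|\eta|\delta-sb$. This exceeds the value at $\eta=0$, which is at most $c$, once $|\eta|>2(c+sb)/\delta$. Hence the infimum may be restricted to a compact set of $(w,\eta)$. The dual objective is a supremum of continuous affine functions, hence lower semicontinuous, so the infimum is attained there. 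If $\mathcal R\equiv0$, the dual objective does not depend on $\eta$, and the simplex is compact. In the empty-slice case $\mathcal R$ has constant strict sign on $\mathcal K_I$, and letting $\eta\to\mp\infty$ drives the dual to $-\infty$, matching the primal.

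I expect the main obstacle to be the bookkeeping of the coefficients $W_i,K_i$ on collapsing cells and on singleton ordering regions. It must be checked that the interior-limit convention agrees with the actual maximization when some $l_i=0$, and that $\mathcal P_{I,s}$ remains affine in $\tau$ across the whole closed interval $I$, including its endpoints where coincidences occur. I would handle this by noting that the coefficients are constant on the open region and that $l_i(\tau)\mathcal L(W_i,K_i)$ is affine in $\tau$ with a vanishing contribution at collapse.
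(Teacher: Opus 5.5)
Your proposal is correct and follows the paper's proof. It uses the same Lagrangian $\sum_j w_j g_j^s+\eta\mathcal R$, the same cellwise maximization producing $\mathcal P_{I,s}$ (affine in $\tau$, so maximized at $\alpha$ or $\gamma$), and the same coercivity bound $|\eta|\delta-C_s$ for attainment, including the $\eta$-independence when $\mathcal R\equiv0$. The only difference is the strong-duality step: you invoke Sion's minimax theorem (compactness on the $\mathcal K_I$ side, valid with an infinite inner infimum, so the empty slice is covered), whereas the paper proves it directly by strictly separating $(0,q,\ldots,q)$ from the closed convex set $\mathcal C$, using downward rays and feasibility to normalize $w$ into $\Delta_J$, and it cites Sion only for comparison.
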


The proof is given in Appendix~\ref{app:ifs-phases}.

At a one-sided physical constraint, eliminate the forced digit masses
before taking the dual. This gives an attained formula on the boundary face.

\begin{proposition}[Pressure on a boundary face]
\label{prop:ifs-boundary-pressure}
Suppose the physical slice is nonempty and choose
$\sigma\in\{-1,1\}$ such that $\sigma \mathcal R\ge0$ on
$\mathcal K_I$. This includes $\mathcal R\equiv0$. Write
\[
 \sigma \mathcal R(\tau,z)=h_0(\tau)+\sum_i e_i z_i,
 \qquad
 m(\tau)=h_0(\tau)+\sum_i\min\{0,e_i\}l_i(\tau).
\]
Then $m$ is affine and nonnegative on $I$, and
$I_0:=\{\tau\in I:m(\tau)=0\}$ is either $I$ or a single
endpoint of $I$. The physical slice is exactly
\[
 \tau\in I_0,\qquad
 z_i=0\ (e_i>0),\qquad z_i=l_i(\tau)\ (e_i<0),\qquad
 0\le z_i\le l_i(\tau)\ (e_i=0).
\]
Let $K_i^0$ be $K_i$ at $\eta=0$, and define
\[
 Q_i(w)=
 \begin{cases}
 0,&e_i>0,\\
 K_i^0,&e_i<0,\\
 \mathcal L(W_i,K_i^0),&e_i=0.
 \end{cases}
\]
The reduced pressure
\[
 \mathcal P^0_{I,s}(\tau;w)
 =-\lambda s\sum_jw_jt_j(\tau)+\sum_i l_i(\tau)Q_i(w)
\]
satisfies the attained compact dual formula
\begin{equation}\label{eq:ifs-boundary-pressure-dual}
 \mathcal V_I(s)=\min_{w\in\Delta_J}
       \max_{\tau\in\partial I_0}\mathcal P^0_{I,s}(\tau;w).
\end{equation}
For a singleton interval its boundary means that singleton.
\end{proposition}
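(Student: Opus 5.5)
The plan is to work directly with the primal problem defining $\mathcal V_I(s)$ on the explicit physical slice described in the statement, and then apply compact minimax.

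\emph{Step 1: the structure of $m$ and the slice.} Since each $l_i$ and $h_0$ is affine in $\tau$, and on an ordering interval $I$ the sign pattern of each $e_i$ is fixed, $m$ is affine on $I$. It is the minimum of the affine function $\sigma\mathcal R$ over the box $0\le z_i\le l_i(\tau)$. This minimum is obtained by setting $z_i=l_i$ when $e_i<0$ and $z_i=0$ otherwise.

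Because $\sigma\mathcal R\ge0$ on $\mathcal K_I$, we get $m\ge0$ on $I$. The slice is nonempty, so $m$ vanishes at some $\tau$. A nonnegative affine function on an interval that vanishes somewhere vanishes either identically or at exactly one endpoint; this gives the description of $I_0$.

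For $\tau\in I_0$, the condition $\sigma\mathcal R=0$ forces $\sum_i e_i z_i=\sum_i\min\{0,e_i\}l_i$. Each term $e_iz_i$ is at least $\min\{0,e_i\}l_i$, so equality holds term by term. This forces $z_i=0$ when $e_i>0$, $z_i=l_i$ when $e_i<0$, and leaves $z_i$ free when $e_i=0$. Conversely, for $\tau\notin I_0$ we have $\sigma\mathcal R\ge m>0$, so such $\tau$ lie outside the slice. This identifies the slice exactly.

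\emph{Step 2: rewrite the min over $j$.} On the slice, write
\[
 \min_j g_j^s=\min_{w\in\Delta_J}\sum_j w_jg_j^s .
\]
The slice $\mathcal S$ is a compact convex polytope. The function $\sum_jw_jg_j^s$ is concave in $x\in\mathcal S$ and linear in $w$. Sion's theorem (both sets are compact convex) therefore gives
\[
 \mathcal V_I(s)=\min_{w}\max_{x\in\mathcal S}\sum_jw_jg_j^s(x),
\]
with the outer minimum attained by lower semicontinuity in $w$.

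\emph{Step 3: evaluate the inner maximum.} For fixed $\tau$, I expand $\sum_j w_j\bigl(A_f(t_j)-sL_f(t_j)\bigr)$ cellwise. For the entropy part, cell $i$ contributes $W_i\,l_ih(p_i)$. For the length part, $L_f(t_j)=\lambda t_j+c\{\beta F_f(t_j)-F_f(\beta t_j)\}$, and the $F$ terms regroup as $\sum_i z_iK_i^0$. This is the same bookkeeping that defines $K_i$ with $\rho_j=sw_j$, i.e.\ $\eta=0$, and it is why $\eta$ disappears.

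The total is therefore
\[
 -\lambda s\sum_jw_jt_j+\sum_i\bigl(l_iW_ih(z_i/l_i)+K_i^0z_i\bigr).
\]
Now maximize cell by cell:
\begin{itemize}
\item free cells ($e_i=0$) give $l_i\mathcal L(W_i,K_i^0)$;
\item cells with $z_i=0$ give $0$;
\item cells with $z_i=l_i$ give $l_iK_i^0$, since $h(1)=0$.
\end{itemize}
The result is $\mathcal P^0_{I,s}(\tau;w)$. The identity uses only $\tau$ in the slice, i.e.\ $\tau\in I_0$, and at most $\tau\in\partial I_0$ when $I_0$ is an endpoint.

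When $I_0=I$, the function $\mathcal P^0$ is affine in $\tau$: the $t_j$ and $l_i$ are affine, and the coefficients $W_i,K_i^0,Q_i$ are constant on the ordering region. Its maximum over $I_0$ is therefore attained on $\partial I_0$, which yields \eqref{eq:ifs-boundary-pressure-dual}. Singleton intervals are the trivial case.

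\emph{Expected obstacle.} The delicate point is the cellwise regrouping of the delayed terms $F_f(\beta t_j)$ into $K_i^0$. Collapsed cells also need care: there one uses interior limits and the perspective extension by zero, as in Proposition~\ref{prop:tree-finite-exact}, so that $\mathcal L$ and the forced values remain continuous and the minimax hypotheses hold on the closed polytope. I would check the regrouping against the same computation used for Proposition~\ref{prop:ifs-pressure-duality}, setting $\eta=0$ there.
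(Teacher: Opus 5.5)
Your proposal is correct and follows the paper's argument. You characterize the slice through the box minimum $m(\tau)$ of $\sigma\mathcal R$ with term-by-term equality, then maximize cellwise on the face (forced cells give $0$ or $l_iK_i^0$, free cells give $l_i\mathcal L(W_i,K_i^0)$) and use affinity in $\tau$ to reduce to $\partial I_0$; where you invoke Sion's minimax theorem on $\Delta_J\times\mathcal S$ to obtain the $\eta$-free dual, the paper uses finite-dimensional separation on the same compact convex face, and the two are interchangeable here.
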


The proof is given in Appendix~\ref{app:ifs-phases}.

The coefficient of $z_i$ in $\mathcal R$ is
\[
 c\left(\beta\mathbf1_{y\le\tau}
       -\mathbf1_{y\le\beta\tau}-\theta\beta
       +\theta\mathbf1_{y\le\beta}\right),
\]
with constant term $\lambda(\tau-\theta)$. Its extrema on $\mathcal K_I$
are found by choosing each $z_i\in\{0,l_i\}$ according to this sign,
then evaluating the resulting affine function at $\alpha,\gamma$.
These tests distinguish empty, two-sided, and boundary slices.

\begin{corollary}[A complete finite pressure characterization]
\label{cor:ifs-global-pressure-root}
Let $\mathcal I_\theta$ be the finite family of ordering intervals
and put
\[
 \mathcal V_\theta(s)=\max_{I\in\mathcal I_\theta}\mathcal V_I(s).
\]
The number $d_\theta$ is the unique zero of
$\mathcal V_\theta$ on $[0,\infty)$. This function is continuous
and strictly decreasing. For $s_2>s_1\ge0$,
\begin{equation}\label{eq:ifs-pressure-monotonicity}
 -b(s_2-s_1)
 \le\mathcal V_\theta(s_2)-\mathcal V_\theta(s_1)
 \le-\frac{\theta a^2}{b}(s_2-s_1).
\end{equation}
Every upper bound $s\ge d_\theta$ therefore admits a finite
collection of nonpositive endpoint-pressure certificates: use
\eqref{eq:ifs-exact-pressure-dual} in each two-sided region and
\eqref{eq:ifs-boundary-pressure-dual} on every boundary face.
Empty physical regions are discarded.
\end{corollary}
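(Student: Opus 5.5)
The plan has three parts. First I show that $d_\theta$ is a zero of $\mathcal V_\theta$. Then I prove the two-sided monotonicity bound \eqref{eq:ifs-pressure-monotonicity}, which gives continuity, strict decrease and uniqueness of the zero. The certificate statement then follows from the duality propositions.

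\emph{$d_\theta$ is a zero.} I use Proposition~\ref{prop:tree-finite-exact} together with the normalization of Lemma~\ref{lem:tree-time-normalization}. By that proposition, $d_\theta\ge s$ holds exactly when some profile that is constant on the cells of some $\mathcal B_\tau$, with $\tau\in[\theta a/b,1]$ and $L_f(\tau)=\theta L_f(1)$, satisfies $A_f(t_j)-sL_f(t_j)\ge0$ at every window node. Each such $\tau$ lies in some ordering interval $I\in\mathcal I_\theta$, so this condition is equivalent to $\mathcal V_\theta(s)\ge0$. Since $\mathcal K_I$ is compact and all functions involved are continuous, each $\mathcal V_I$ is attained or equals $-\infty$. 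Hence $\{s:\mathcal V_\theta(s)\ge0\}=[0,d_\theta]$, and this set is nonempty because $s=0$ is always feasible. Once continuity is established, this gives $\mathcal V_\theta(d_\theta)=0$.

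\emph{Monotonicity bound.} Fix $I$ and a feasible point $x$. For each node, $g_j^{s_2}(x)-g_j^{s_1}(x)=-(s_2-s_1)L_f(t_j)$. I bound the window lengths uniformly: since $a\le L_f'\le b$, every $t_j\le1$ gives $L_f(t_j)\le b$. For the lower bound I use $t_j\ge\tau\ge\theta a/b$, which gives $L_f(t_j)\ge a\tau\ge\theta a^2/b$. So each $g_j$ drops by an amount between $\theta a^2(s_2-s_1)/b$ and $b(s_2-s_1)$, uniformly in $x$ and $j$. Taking the minimum over $j$, then the maximum over the feasible slice, and then the maximum over the finitely many $I$ preserves both bounds. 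Regions with empty slice contribute $-\infty$ for every $s$ and can be discarded; at least one region has a nonempty slice, namely the one containing the normalized time of an attaining profile. This yields \eqref{eq:ifs-pressure-monotonicity}, so $\mathcal V_\theta$ is Lipschitz, hence continuous, and strictly decreasing. Its zero is therefore unique and equals $d_\theta$.

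\emph{Certificates.} If $s\ge d_\theta$ then $\mathcal V_I(s)\le\mathcal V_\theta(s)\le0$ for every $I$. On two-sided regions, \eqref{eq:ifs-exact-pressure-dual} is attained with $\eta$ bounded, so there is a pair $(w,\eta)$ whose two endpoint pressures are $\le0$. Strictly, a nonpositive attained infimum gives $\le0$ directly. On boundary faces, \eqref{eq:ifs-boundary-pressure-dual} is an attained minimum, giving a weight $w$ with $\max_{\partial I_0}\mathcal P^0\le0$. Conversely, such certificates in every nonempty region force $\mathcal V_\theta(s)\le0$, hence $s\ge d_\theta$.

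\emph{Main obstacle.} The subtle step is making sure that the nodal window lengths are bounded below uniformly by $\theta a^2/b$. In particular, this must hold at collapsed cells and coincident nodes. Because $\tau\ge\theta a/b$ is imposed on all ordering intervals, this bound holds throughout.
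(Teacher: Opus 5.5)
Your proposal is correct and follows essentially the paper's route: the window-length bounds $\theta a^2/b\le a\tau\le L_f(t_j)\le b$ give the two-sided Lipschitz estimate, the exact finite reduction \eqref{eq:tree-finite-source} identifies the zero as $d_\theta$, and the attained duals supply the certificates. The only difference is cosmetic. The paper also checks $\mathcal V_\theta(0)\ge c\theta>0$ using $f=1/2$, and $\mathcal V_\theta(s)<0$ for $s>c/a$. Your direct identification $\{s\ge0:\mathcal V_\theta(s)\ge0\}=[0,d_\theta]$ makes that check unnecessary.
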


The proof is given in Appendix~\ref{app:ifs-phases}.

For a primal maximizer $x_*$ and dual minimizer $(w,\eta)$ on a
two-sided region, equality in the dual bound gives
\[
 w_j>0\quad\Longrightarrow\quad
                  g_j^s(x_*)=\mathcal V_I(s).
\]
On every nonempty cell with $W_i>0$, the maximizing frequency is
\[
 p_i=\frac1{1+e^{-K_i/W_i}}.
\]
If $W_i=0$, it is one when $K_i>0$, zero when $K_i<0$, and
unrestricted when $K_i=0$. If $\tau_*$ lies in the interior of
$I$, the affine pressure is constant on $I$; if $\tau_*$ is an
endpoint, it maximizes the pressure.
Conversely, suppose $x_*$ is physically feasible, $g_j^s(x_*)\ge v$
for every $j$, and $w_j(g_j^s(x_*)-v)=0$ for every $j$.
If the entropy maximization conditions and the endpoint maximum
condition also hold, then $\mathcal V_I(s)=v$. The analogous
conditions apply on a boundary face with its forced frequencies.

\section{Endpoint asymptotics and analytic branches}
\label{sec:ifs-regularity}

\subsection{An analytic optimizer near the box endpoint}
\label{sec:ifs-analytic-optimizer}

Put $D=d_1$. The pressure equation is
\begin{equation}\label{eq:ifs-box-pressure}
 \lambda D=\beta P+(1-\beta)Q,\qquad
 P=\log(1+e^{c(1-\beta)D}),\quad
 Q=\log(1+e^{-c\beta D}).
\end{equation}
Put $\sigma(x)=(1+e^{-x})^{-1}$ and
\[
 p=\sigma(c(1-\beta)D),\qquad q=\sigma(-c\beta D).
\]
Define
\begin{equation*}
 \theta_{\mathrm{safe}}
 =\frac{\beta\lambda+c\beta^2(1-\beta)}
        {\lambda-c\beta(1-\beta)^2}\in(\beta,1).
\end{equation*}

\begin{lemma}[Localization of the initial time]
\label{lem:ifs-initial-time}
If $\theta\ge\theta_{\mathrm{safe}}$, every normalized profile with
$L_f(\tau)=\theta L_f(1)$ has $\tau\ge\beta$. Averaging on the four
intervals with endpoints $0,\beta\tau,\beta,\tau,1$ preserves the
window and does not decrease its minimum entropy-to-length ratio.
\end{lemma}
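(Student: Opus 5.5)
The plan has two parts. First, a monotonicity argument reduces the claim $\tau\ge\beta$ to an explicit linear-fractional bound on $L_f(\beta)/L_f(1)$. Second, the window statement follows from Lemma~\ref{lem:tree-orbit-averaging}.

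\emph{Step 1: reduction to $t=\beta$.} Suppose, for contradiction, that $\tau<\beta$. Since $L_f'\ge a>0$ almost everywhere, $L_f$ is strictly increasing, so
\[
\theta L_f(1)=L_f(\tau)<L_f(\beta).
\]
It therefore suffices to show
\[
\frac{L_f(\beta)}{L_f(1)}\le\theta_{\mathrm{safe}}
\]
for every profile on $[0,1]$. This gives $\theta<\theta_{\mathrm{safe}}$, contradicting the hypothesis.

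\emph{Step 2: the linear-fractional bound.} I introduce the three cell masses
\[
x=F_f(\beta^2)\in[0,\beta^2],\quad
y=F_f(\beta)-F_f(\beta^2)\in[0,\beta(1-\beta)],\quad
z=F_f(1)-F_f(\beta)\in[0,1-\beta].
\]
These vary independently over a box. By \eqref{eq:tree-scale},
\[
N:=L_f(\beta)=\lambda\beta+c\bigl(\beta y-(1-\beta)x\bigr),\qquad
\Delta:=L_f(1)=\lambda+c\bigl(\beta z-(1-\beta)(x+y)\bigr),
\]
and $0<N<\Delta$. I then maximize $N/\Delta$ one coordinate at a time.

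\begin{itemize}
\item Increasing $z$ raises only $\Delta$, so the maximum has $z=0$.
\item Increasing $x$ lowers both $N$ and $\Delta$ by the same amount $c(1-\beta)$. Since $N<\Delta$, this decreases the ratio, so the maximum has $x=0$.
\item Increasing $y$ raises $N$ and lowers $\Delta$, so the maximum has $y=\beta(1-\beta)$.
\end{itemize}

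Substituting these values gives
\[
\frac{N}{\Delta}\le\frac{\lambda\beta+c\beta^2(1-\beta)}{\lambda-c\beta(1-\beta)^2}=\theta_{\mathrm{safe}}.
\]
The inclusion $\theta_{\mathrm{safe}}\in(\beta,1)$ follows by direct comparison, using $\lambda\ge\log100>c$. This step is the only real computation; the coordinatewise monotonicity argument is the point I expect to need the most care.

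\emph{Step 3: averaging.} Given $\tau\ge\beta$, take $\mathcal B=\{0,\beta\tau,\beta,\tau,1\}$ with constraint window $[\tau,1]$. I check the closure hypotheses of Lemma~\ref{lem:tree-orbit-averaging}.
\begin{itemize}
\item The only points of $\mathcal B\cap[\tau,1]$ are $\tau$ and $1$, together with $\beta$ in the degenerate case $\beta=\tau$. Their images $\beta\tau$ and $\beta$ lie in $\mathcal B$.
\item For the inverse images, $\beta\tau/\beta=\tau$ and $\beta/\beta=1$. The point $\tau/\beta$ exceeds $1$ unless $\tau=\beta$, in which case it equals $1$.
\end{itemize}

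The lemma then gives $F_{\bar f}=F_f$ at the nodes. Hence $L_{\bar f}(\tau)=L_f(\tau)$ and $L_{\bar f}(1)=L_f(1)$, so the normalized window $L(\tau)=\theta L(1)$ is preserved.

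Let $q$ be the original minimum of $A_f/L_f$ over $[\tau,1]$. Then
\[
A_{\bar f}-qL_{\bar f}\ge A_f-qL_f\ge0\quad\text{at the nodes }\tau,1,
\]
and by the lemma this difference is affine on $[\tau,1]$, since no partition point lies strictly between $\tau$ and $1$. So it is nonnegative on the whole window, and the minimum entropy-to-length ratio does not decrease.
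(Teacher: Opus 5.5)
Your proposal is correct and takes essentially the same route as the paper. Both arguments use coordinatewise monotonicity of the linear-fractional ratio $L_f(\beta)/L_f(1)$ over the cells $[0,\beta^2],[\beta^2,\beta],[\beta,1]$ (you use cell masses where the paper uses cell averages) and then average on the four cells; the only cosmetic difference is that you invoke Lemma~\ref{lem:tree-orbit-averaging} for the second part, whereas the paper argues directly with Jensen and the fact that a ratio of affine functions with positive denominator attains its minimum on $[\tau,1]$ at an endpoint.
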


\begin{proof}
Normalize the terminal time to one and let $x,y,z_0$ be the averages
of $f$ on $[0,\beta^2]$, $[\beta^2,\beta]$, $[\beta,1]$. Then
\[
 \frac{L_f(\beta)}{L_f(1)}
 =\frac{\beta\lambda+c\beta^2(1-\beta)(y-x)}
 {\lambda+c\beta(1-\beta)(-\beta x-(1-\beta)y+z_0)}.
\]
The $x$ coefficients coincide and $L_f(\beta)<L_f(1)$, so the
fraction decreases in $x,z_0$ and increases in $y$. Hence
$L_f(\beta)/L_f(1)\le\theta_{\mathrm{safe}}\le\theta$ gives $\tau\ge\beta$.

For $\tau\ge\beta$, averaging on the four cells preserves
$L_f(\tau),L_f(1)$ and gives $A_{\bar f}(\tau)\ge A_f(\tau)$,
$A_{\bar f}(1)\ge A_f(1)$ by Jensen. Since $A_{\bar f},L_{\bar f}$
are affine on $[\tau,1]$ and $L_{\bar f}>0$,
\[
 \min_{[\tau,1]}\frac{A_{\bar f}}{L_{\bar f}}
 =\min_{t\in\{\tau,1\}}\frac{A_{\bar f}(t)}{L_f(t)}
 \ge\min_{[\tau,1]}\frac{A_f}{L_f}.
\]
\end{proof}

For $0\le w<1$ and real $k$, define
\begin{align}
 \mathcal C(k,w)={}&\lambda k-\beta P
 +\beta\log\bigl(1+e^{c((1-\beta)D-k)}\bigr)-Q\notag\\
 &+(1-w)\log\bigl(1+e^{-c\beta(D-k)/(1-w)}\bigr).
 \label{eq:ifs-scalar-pressure}
\end{align}
At $w=1$, the last term is interpreted by its continuous perspective
extension, namely $\max\{0,-c\beta(D-k)\}$.

\begin{lemma}[The scalar pressure curve]
\label{lem:ifs-scalar-pressure}
For every $w\in[0,1]$, the equation $\mathcal C(k,w)=0$ has exactly one
solution $k=k(w)$. It satisfies $k(0)=0$, $k(1)=D$, and $k$ is
strictly increasing and strictly concave. On $[0,1)$ it is real
analytic.
\end{lemma}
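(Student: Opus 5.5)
The plan is to treat $k(w)$ as an implicit function of the single scalar equation $\mathcal C(k,w)=0$. Everything will come from three facts: strict monotonicity of $\mathcal C$ in $k$, monotonicity in $w$, and joint convexity of $\mathcal C$ in $(k,w)$. Write $\varphi(y)=\log(1+e^y)$, so that $\varphi'=\sigma$ and $\varphi''=\sigma(1-\sigma)>0$.

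\emph{Existence and uniqueness.} First differentiate in $k$:
\[
 \partial_k\mathcal C
 =\lambda-c\beta\,\sigma\bigl(c((1-\beta)D-k)\bigr)
 +c\beta\,\sigma\bigl(-c\beta(D-k)/(1-w)\bigr)
 \ge\lambda-c\beta=a>0 .
\]
At $w=1$ the same bound holds, since the perspective term $\max\{0,-c\beta(D-k)\}$ is nondecreasing in $k$. Hence $\mathcal C(\cdot,w)$ is strictly increasing with slope at least $a$. It therefore tends to $\pm\infty$ as $k\to\pm\infty$ and has exactly one zero $k(w)$.

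\emph{Endpoint values.} The value $k(0)=0$ is immediate: at $(k,w)=(0,0)$ the two $\beta$-terms cancel and $-Q+\log(1+e^{-c\beta D})=0$. For $k(1)=D$, I will substitute $k=D$, $w=1$ and use the pressure identity \eqref{eq:ifs-box-pressure}, $\lambda D=\beta P+(1-\beta)Q$, together with the perspective value $0$ at $k=D$. I expect this bookkeeping to be the delicate point. In particular I must check carefully which normalisation of the perspective at $w=1$ makes the constant terms cancel, and whether the $\beta\log 2$ contribution is absorbed correctly. If it does not cancel directly, I will re-derive the endpoint from the pressure characterisation of $D$ instead.

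\emph{Continuity at $w=1$.} The perspective $(1-w)\varphi(x/(1-w))$ converges locally uniformly in $x$ to $\max\{0,x\}$ as $w\uparrow1$. Combined with the uniform slope bound $\partial_k\mathcal C\ge a$, this gives $|k(w)-k(1)|\le a^{-1}|\mathcal C(k(1),w)|\to0$.

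\emph{Monotonicity in $w$.} Put $x=-c\beta(D-k)$ and $y=x/(1-w)$. A direct computation gives
\[
 \partial_w\bigl[(1-w)\varphi(y)\bigr]=-\varphi(y)+y\sigma(y)=-h(\sigma(y))<0 .
\]
By the implicit function theorem, $k'(w)=h(\sigma(y))/\partial_k\mathcal C>0$ on $[0,1)$, so $k$ is strictly increasing. Analyticity on $[0,1)$ also follows from the analytic implicit function theorem: $\mathcal C$ is real analytic on $\mathbb R\times(-\infty,1)$ and $\partial_k\mathcal C>0$.

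\emph{Concavity.} Every term of $\mathcal C$ is convex in $(k,w)$. The term $\lambda k$ is linear, $\beta\varphi(\text{affine in }k)$ is convex, and $(1-w)\varphi(x/(1-w))$ is the perspective of the convex function $\varphi$ evaluated at an affine argument, hence jointly convex. Since $\mathcal C$ is convex and increasing in $k$, the set $\{\mathcal C\le0\}=\{k\le k(w)\}$ is convex, so $k$ is concave on $[0,1]$.

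For strictness on $[0,1)$, differentiate the identity twice:
\[
 k''=-\bigl(\mathcal C_{kk}k'^2+2\mathcal C_{kw}k'+\mathcal C_{ww}\bigr)/\mathcal C_k .
\]
The perspective term contributes a positive semidefinite form, whose kernel is the homogeneity direction. The term $\beta\varphi(c((1-\beta)D-k))$ contributes $c^2\beta\,\varphi''\,k'^2>0$, because $k'>0$. Hence $k''<0$ on $[0,1)$. Strict concavity then extends to the closed interval by continuity at $w=1$ together with strict concavity on every $[0,w_0]$.
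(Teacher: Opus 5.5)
Your proposal is correct and follows essentially the paper's route. The shared steps are the slope bound $\mathcal C_k\ge a$, the identity $\mathcal C_w=-h(v)$, the implicit function theorem, and the sign of $k''$. Your Hessian decomposition (a strictly positive contribution from $\beta\log(1+e^{c((1-\beta)D-k)})$ plus the semidefinite perspective term) is exactly the paper's two-square formula for $-\mathcal C_k k''$; the joint-convexity argument is a pleasant extra that gives concavity on the closed interval directly. The endpoint check you hedged on is immediate and involves no $\log 2$ term: at $k=D$, $w=1$ the middle logarithm equals $\beta\log(1+e^{-c\beta D})=\beta Q$ and the perspective term is $\max\{0,0\}=0$, so $\mathcal C(D,1)=\lambda D-\beta P-(1-\beta)Q=0$ by \eqref{eq:ifs-box-pressure}.
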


\begin{proof}
For $w<1$, write
\[
 z=\sigma(c((1-\beta)D-k)),\qquad
 v=\sigma(-c\beta(D-k)/(1-w)).
\]
Differentiating \eqref{eq:ifs-scalar-pressure} gives
\begin{equation*}
 \mathcal C_k=\lambda+c\beta(v-z)\ge a>0,
 \qquad \mathcal C_w=-h(v)<0.
\end{equation*}
Also $\mathcal C(0,0)=\mathcal C(D,1)=0$ and
$\mathcal C(0,w)<0<\mathcal C(D,w)=(1-w)c$ for $0<w<1$.
Thus $\mathcal C_k\ge a$ gives a unique root in $[0,D]$, analytic
for $w<1$, with
\begin{equation}\label{eq:ifs-k-prime}
 k'(w)=\frac{h(v)}{\lambda+c\beta(v-z)}>0.
\end{equation}

Differentiating once more gives
\begin{align*}
 -\mathcal C_k k''(w)
 ={}&\beta c^2z(1-z)(k'(w))^2\\
 &+\frac{(c\beta)^2v(1-v)}{1-w}
       \left(k'(w)-\frac{D-k(w)}{1-w}\right)^2>0.
\end{align*}
Hence $k''<0$. The bound $0<k'\le c/a$ extends $k$ continuously
to $[0,1]$, with strict concavity and
\[
 k(w)>Dw\quad(0<w<1),\qquad k'(0)>D>k'(1-).
\]
\end{proof}

\begin{theorem}[Scalar formula and optimizer near $\theta=1$]
\label{thm:ifs-analytic-optimizer}
For $\theta_{\mathrm{safe}}\le\theta<1$, set $\delta=1-\theta$.
There is a unique $w=w_\delta\in(0,1)$ solving
\begin{equation}\label{eq:ifs-scalar-stationarity}
 D-\delta k(w)-(1-\delta w)k'(w)=0.
\end{equation}
The intermediate dimension is then
\begin{equation*}
 d_\theta=k'(w_\delta)
 =\frac{D-\delta k(w_\delta)}{1-\delta w_\delta}
 =\min_{0\le w\le1}\frac{D-\delta k(w)}{1-\delta w}.
\end{equation*}
In particular, $d_\theta$ is real analytic, strictly increasing, and
strictly concave on $(\theta_{\mathrm{safe}},1)$.

On the normalized terminal interval $[0,1]$, the maximizing profile
is unique up to null sets and equals
\begin{equation}\label{eq:ifs-four-frequencies}
 \begin{gathered}
 f(t)=
 \begin{cases}
  p,&0<t<\beta\tau,\\
  z,&\beta\tau<t<\beta,\\
  q,&\beta<t<\tau,\\
  v,&\tau<t<1,
 \end{cases}\\[3pt]
 \begin{aligned}
 z&=\sigma(c((1-\beta)D-k(w_\delta))),\\
 v&=\sigma\!\left(-\frac{c\beta(D-k(w_\delta))}{1-w_\delta}\right).
 \end{aligned}
 \end{gathered}
\end{equation}
The constraint $L_f(\tau)=\theta L_f(1)$ uniquely determines
$\tau\in(\beta,1)$. Put
\[
 L_B=\lambda+c\beta(1-\beta)(q-p),\qquad
 R=c\beta\{(1-\beta)(p-z)+v-q\},\qquad
 \Lambda=\lambda+c\beta(v-z).
\]
Then
\begin{equation}\label{eq:ifs-optimal-start}
 \tau=1-\frac{\delta L_B}{\Lambda-\delta R}.
\end{equation}
The optimizer satisfies equality at every scale in its window:
\begin{equation}\label{eq:ifs-equalization}
 A_f(t)=d_\theta L_f(t)\qquad(\tau\le t\le1).
\end{equation}

Let $w_0\in(0,1)$ be the unique solution of $k'(w_0)=D$.
\begin{equation}\label{eq:ifs-endpoint-expansion}
 d_\theta
 =D-\bigl(k(w_0)-Dw_0\bigr)(1-\theta)
      +O((1-\theta)^2),\qquad k(w_0)-Dw_0>0.
\end{equation}
\end{theorem}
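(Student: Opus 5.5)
We reduce the variational problem to a four-cell finite problem and dualize it to the scalar pressure curve $k$. By Lemma~\ref{lem:ifs-initial-time}, for $\theta\ge\theta_{\mathrm{safe}}$ every admissible normalized profile has $\tau\ge\beta$. Averaging on the cells with endpoints $0,\beta\tau,\beta,\tau,1$ does not decrease the value. So $d_\theta$ is the maximum over $\tau\in[\beta,1)$ and four frequencies $(p_1,z,q_1,v)$ of $\min\{A_f(\tau)/L_f(\tau),A_f(1)/L_f(1)\}$, subject to $L_f(\tau)=\theta L_f(1)$. Since $A_f,L_f$ are affine on $[\tau,1]$, only the two endpoints matter.

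The key step is to write $A_f(1)-sL_f(1)$ and $A_f(\tau)-sL_f(\tau)$ in terms of the cell data. We then eliminate the first two cells by the box-pressure structure, with frequencies on $[0,\beta\tau]$ and $[\beta\tau,\beta]$. We parametrize by $w=(1-\tau)/(1-\beta\tau)$, or whatever fraction makes the last cell's perspective term read $(1-w)\log(1+e^{-c\beta(D-k)/(1-w)})$, with $k$ a Lagrange-type shift. The aim is to show that the optimal values of the two endpoint constraints are governed by the single function $\mathcal C(k,w)$ of \eqref{eq:ifs-scalar-pressure}. Concretely, I expect
\[
 \max\{\text{ratio at }\tau\text{ and }1\}
 \;\Longleftrightarrow\;
 d=\min_{0\le w\le1}\frac{D-\delta k(w)}{1-\delta w}.
\]
This should come from a Legendre identity $\max_p\{h(p)+xp\}=\log(1+e^x)$ applied cell by cell, with the physical constraint used to eliminate $\tau$. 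As a check, equalization \eqref{eq:ifs-equalization} must hold at the optimum. Otherwise one could shift mass between the two endpoint constraints, in the spirit of the complementary-slackness remarks after Corollary~\ref{cor:ifs-global-pressure-root}. This gives both the primal identity and the dual certificate. Using Proposition~\ref{prop:ifs-pressure-duality} on the single ordering region $[\beta,1]$ gives the matching upper bound.

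Given the formula, the rest is one-variable analysis using Lemma~\ref{lem:ifs-scalar-pressure}. Write $\Phi(w)=(D-\delta k(w))/(1-\delta w)$. Then $\Phi'(w)$ has the sign of $-k'(w)(1-\delta w)+D-\delta k(w)$, and the derivative of this numerator is $-k''(w)(1-\delta w)>0$. At $w=0$ the numerator is $D-k'(0)<0$, using $k(0)=0$ and $k'(0)>D$. At $w=1^-$ it is $(1-\delta)(D-k'(1-))>0$, using $k(1)=D$. Hence there is a unique zero $w_\delta\in(0,1)$, and there $\Phi=k'(w_\delta)$. Analyticity of $w_\delta$ in $\delta$ follows from the implicit function theorem, since $\partial_w=-k''(1-\delta w)\neq0$. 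By the envelope theorem, $\partial_\delta d=(w_\delta d-k(w_\delta))/(1-\delta w_\delta)$, and this is negative because $k(w)>Dw\ge d\,w$. So $d_\theta$ is increasing in $\theta$. Concavity follows from differentiating $d=k'(w_\delta)$ and using $k''<0$ together with the sign of $dw_\delta/d\delta$. At $\delta=0$ we get $w_0$ with $k'(w_0)=D$, and the first-order expansion gives \eqref{eq:ifs-endpoint-expansion}.

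The optimal frequencies are read off from the maximizing $p$ in each Legendre step, namely $p,z,q,v$ as in \eqref{eq:ifs-four-frequencies}. Uniqueness follows from strict concavity of $h$ together with uniqueness of $w_\delta$. Formula \eqref{eq:ifs-optimal-start} for $\tau$ comes from solving the affine equation $L_f(\tau)=\theta L_f(1)$ with these frequencies; the $\tau$-coefficient is $\Lambda-\delta R>0$. The main obstacle is the exact reduction in the second paragraph. One must find the right change of variables $(\tau,\text{endpoint weights})\mapsto(w,k)$ under which the two-constraint max–min collapses to the scalar minimization, and verify that no boundary face (frequencies $0$ or $1$, or $\tau=\beta$) competes. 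I would first do this by guessing the dual weights from equalization and checking the certificate directly.
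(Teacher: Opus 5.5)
\textbf{Gap: the identity $d_\theta=\min_w\Phi(w)$ is never established.} Your one-variable analysis of $\Phi(w)=(D-\delta k(w))/(1-\delta w)$ matches the paper: the sign of $\Phi'$, uniqueness of $w_\delta$, the envelope derivative, monotonicity and the expansion at $\delta=0$ are all correct. But that analysis presupposes $d_\theta=\min_w\Phi(w)$, and your guess for how to prove it points the wrong way.

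The variable $w$ is not a reparametrization of $\tau$ such as $(1-\tau)/(1-\beta\tau)$. It is the dual weight on the node $\tau$, and $\tau$ never has to be eliminated. The missing certificate is as follows. For $w\in(0,1)$ put $k=k(w)$, $s=\Phi(w)$ and $\eta=(s-D)/\delta$, so that $k=sw-\eta$ and $D-k=s(1-w)+\eta\theta$. Then, for every profile with $\tau\ge\beta$,
\[
 w\{A_f(\tau)-sL_f(\tau)\}+(1-w)\{A_f(1)-sL_f(1)\}+\eta\{L_f(\tau)-\theta L_f(1)\}
 =wA_f(\tau)+(1-w)A_f(1)-kL_f(\tau)-(D-k)L_f(1).
\]
On the cells $(0,\beta\tau)$, $(\beta\tau,\beta)$, $(\beta,\tau)$, $(\tau,1)$ the frequency coefficients are $c(1-\beta)D$, $c((1-\beta)D-k)$, $-c\beta D$, $-c\beta(D-k)$, with entropy weights $1,1,1,1-w$. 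The factor $1-w$ in the last perspective term is the weight on $A_f(1)$; it does not come from a change of variables in $\tau$.

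Maximizing cellwise with $h(x)+tx\le\log(1+e^t)$ and using $\lambda D=\beta P+(1-\beta)Q$ bounds the right side by $(1-\tau)\mathcal C(k,w)=0$, for every $\tau$ at once. Hence every feasible profile has $\min\{A_f(\tau)/L_f(\tau),A_f(1)/L_f(1)\}\le\Phi(w)$ for every $w$. That is the upper half of the min formula. Invoking Proposition~\ref{prop:ifs-pressure-duality} does not replace this step: it says the dual infimum equals the primal value, but evaluating that infimum is exactly the task of finding this $(w,\eta)$.

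\textbf{The lower half also needs more than reading off frequencies.} You must show that the four-frequency profile at $w_\delta$ is feasible with $\tau\in(\beta,1)$. The residual $L_f(\tau)-\theta L_f(1)=\delta L_B-(1-\tau)(\Lambda-\delta R)$ equals $\delta L_B>0$ at $\tau=1$. It is negative at $\tau=\beta$, because the strict monotonicity in Lemma~\ref{lem:ifs-initial-time} gives $L_f(\beta)/L_f(1)<\theta_{\mathrm{safe}}\le\theta$ when $p,z,v\in(0,1)$. Only then does \eqref{eq:ifs-optimal-start} describe an admissible window.

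Equalization follows from $(A_f-sL_f)'=h(v)-s\Lambda=0$ on $(\tau,1)$, which holds by \eqref{eq:ifs-k-prime}, together with equality in the certificate. Uniqueness also comes from the certificate rather than from uniqueness of $w_\delta$ alone. At a maximizer both endpoint deficits are nonnegative, while their weighted sum is at most minus the Legendre gap. This forces $f=\sigma(\text{coefficient}/\text{weight})$ almost everywhere on each cell.

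\textbf{Concavity needs a different differentiation.} It does not follow from differentiating $d=k'(w_\delta)$, because the second derivative then involves $k'''$ and $w_\delta''$, whose signs are unknown. Differentiate your envelope formula $d'=(w_\delta d-k(w_\delta))/(1-\delta w_\delta)$ instead. Using $k'(w_\delta)=d$ and $w_\delta'>0$, this gives $d''=d'(2w_\delta+\delta w_\delta')/(1-\delta w_\delta)<0$.
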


\begin{proof}
By Lemma~\ref{lem:ifs-initial-time}, every feasible profile has
$\tau\ge\beta$. For $0\le\delta<1$, define
$F_\delta(w)=D-\delta k(w)-(1-\delta w)k'(w)$.
Since
\[
 F_\delta'(w)=-(1-\delta w)k''(w)>0,
\]
and $F_\delta(0)<0<F_\delta(1-)$, the zero $w_\delta$ is unique.
For $\delta>0$,
\[
 \frac{d}{dw}\frac{D-\delta k(w)}{1-\delta w}
 =\frac{\delta F_\delta(w)}{(1-\delta w)^2},
\]
so it is the unique minimizer. Put $s=k'(w_\delta)$, $k=k(w_\delta)$.

For $\delta>0$, set $\eta=(s-D)/\delta$. Stationarity gives
\[
 k=sw_\delta-\eta,\qquad
 D-k=s(1-w_\delta)+\eta\theta.
\]
For an arbitrary profile whose initial time satisfies $\tau\ge\beta$,
consider the weighted endpoint expression
\begin{align*}
 &w_\delta\{A_f(\tau)-sL_f(\tau)\}
 +(1-w_\delta)\{A_f(1)-sL_f(1)\}
 +\eta\{L_f(\tau)-\theta L_f(1)\}\\
 &\hspace{1cm}=
 w_\delta A_f(\tau)+(1-w_\delta)A_f(1)
       -kL_f(\tau)-(D-k)L_f(1).
\end{align*}
Its coefficients on the four cells are:
\begin{center}
\small
\begin{tabular}{@{}cccc@{}}
\toprule
Cell & Length & Entropy weight & Frequency coefficient\\
\midrule
$(0,\beta\tau)$ & $\beta\tau$ & $1$ & $c(1-\beta)D$\\
$(\beta\tau,\beta)$ & $\beta(1-\tau)$ & $1$
 & $c((1-\beta)D-k)$\\
$(\beta,\tau)$ & $\tau-\beta$ & $1$ & $-c\beta D$\\
$(\tau,1)$ & $1-\tau$ & $1-w_\delta$ & $-c\beta(D-k)$\\
\bottomrule
\end{tabular}
\end{center}
Maximize cellwise using
\[
 h(x)+tx\le\log(1+e^t),
 \qquad\text{with equality precisely at }x=\sigma(t).
\]
The resulting upper bound is
\begin{align*}
 &-\lambda\{k\tau+D-k\}+\beta\tau P
 +\beta(1-\tau)\log(1+e^{c((1-\beta)D-k)})
 +(\tau-\beta)Q\\
 &\hspace{2cm}
 +(1-\tau)(1-w_\delta)
       \log(1+e^{-c\beta(D-k)/(1-w_\delta)})\\
 &\hspace{1cm}=(1-\tau)\mathcal C(k,w_\delta)=0,
\end{align*}
by \eqref{eq:ifs-box-pressure} and $\mathcal C(k,w_\delta)=0$.
For a feasible profile the residual vanishes; the positive endpoint
weights therefore give $d_\theta\le s$.

For attainment, use \eqref{eq:ifs-four-frequencies} with free
$\tau\in[\beta,1]$. Equality holds in the certificate, and on the window
\[
 A_f'(t)=h(v),\qquad L_f'(t)=\lambda+c\beta(v-z)=\Lambda.
\]
Thus $(A_f-sL_f)'=0$ by \eqref{eq:ifs-k-prime}.

For this profile, $L_f(1)=L_B+(1-\tau)R$ and
$L_f(1)-L_f(\tau)=(1-\tau)\Lambda$.
Consequently
\[
 L_f(\tau)-\theta L_f(1)
 =\delta L_B-(1-\tau)(\Lambda-\delta R),
\]
and \eqref{eq:ifs-optimal-start} follows from
\[
 \Lambda-\delta R\ge\Lambda-R
 =\lambda+c\beta\{q-(1-\beta)p-\beta z\}\ge a.
\]
Here $R>0$, since $k(w)>Dw$ gives $z<p$ and $v>q$.
The residual is positive at $1$ and negative at $\beta$: the interior
frequencies cannot attain the localization maximum $(0,1,0)$, even
when $\theta=\theta_{\mathrm{safe}}$. Thus $\beta<\tau<1$.
The constant deficit has weighted sum zero, proving
\eqref{eq:ifs-equalization} and $d_\theta\ge s$.

For any maximizer, nonnegative endpoint deficits force equality in
the certificate. Strict entropy concavity fixes all four frequencies;
the physical constraint then fixes $\tau$.

The implicit function theorem gives analytic $w_\delta$, with
\[
 \frac{dw_\delta}{d\delta}
 =\frac{k(w_\delta)-w_\delta k'(w_\delta)}
       {-(1-\delta w_\delta)k''(w_\delta)}>0.
\]
Set $s(\delta)=d_{1-\delta}$ and
$T_\delta=k(w_\delta)-w_\delta s(\delta)>0$.
Since $D=s+\delta T_\delta$ and $T_\delta'=-w_\delta s'$, we obtain
\[
 s'(\delta)=-\frac{T_\delta}{1-\delta w_\delta}<0,\qquad
 s''(\delta)=
 -\frac{T_\delta\{2w_\delta+\delta w_\delta'\}}
        {(1-\delta w_\delta)^2}<0.
\]
These signs give the asserted monotonicity and concavity.
At $\delta=0$, $s(0)=D$ and $w_\delta=w_0$, giving
\eqref{eq:ifs-endpoint-expansion}.
\end{proof}

\subsection{Entropy localization and a larger validity interval}
\label{sec:ifs-entropy-localization}

Put $d_0=c/\lambda$ and $D=d_1$. For the pressure curve in
Lemma~\ref{lem:ifs-scalar-pressure}, let $w_\theta\in(0,1)$ solve
\[
 D-(1-\theta)k(w_\theta)
   -(1-(1-\theta)w_\theta)k'(w_\theta)=0,
\]
and define
\begin{equation*}
 \widehat d_\theta=k'(w_\theta)
 =\frac{D-(1-\theta)k(w_\theta)}
        {1-(1-\theta)w_\theta}.
\end{equation*}
This continuation is real analytic and strictly increasing on $(0,1)$.

Put $B=\beta(1-\beta)$ and
\begin{equation}\label{eq:ifs-entropy-threshold}
 \epsilon=\frac{c^3B}{4\lambda^2},\qquad
 \theta_{\mathrm{ent}}
 =\frac{\beta(1-\epsilon)
          +\epsilon\sqrt{\beta(1+\beta-2\epsilon)}}{1-2\epsilon},
 \qquad
 \theta_{\mathrm{loc}}
 =\min\{\theta_{\mathrm{safe}},\theta_{\mathrm{ent}}\}.
\end{equation}
Both sufficient thresholds belong to $(\beta,1)$.

\begin{proposition}[A larger interval for the analytic formula]
\label{prop:ifs-entropy-localization}
The scalar dimension formula, unique four-frequency optimizer on the
normalized terminal interval $[0,1]$, and
strict increase and strict concavity in
Theorem~\ref{thm:ifs-analytic-optimizer} remain valid for
$\theta_{\mathrm{loc}}\le\theta<1$.
\end{proposition}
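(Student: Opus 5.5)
The argument of Theorem~\ref{thm:ifs-analytic-optimizer} used the hypothesis $\theta\ge\theta_{\mathrm{safe}}$ at exactly one place: Lemma~\ref{lem:ifs-initial-time}, to guarantee that every feasible profile has initial time $\tau\ge\beta$. The rest of the argument does not depend on where $\tau$ lies. Given $\tau\ge\beta$, the four-cell certificate with weights $(w_\delta,1-w_\delta,\eta)$ yields $d_\theta\le k'(w_\delta)$. The equalizing profile \eqref{eq:ifs-four-frequencies} attains this bound and has its start \eqref{eq:ifs-optimal-start} in $(\beta,1)$. Strict concavity of $h$ then gives uniqueness. Analyticity, monotonicity and concavity follow from the implicit function theorem applied to \eqref{eq:ifs-scalar-stationarity}.

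So my plan is to keep the certificate and attainment steps verbatim and to replace the localization lemma on $[\theta_{\mathrm{ent}},\theta_{\mathrm{safe}})$ by an entropy argument. It suffices to show that for $\theta_{\mathrm{ent}}\le\theta<1$, no profile with $\tau<\beta$ can achieve window ratio $\min_{[\tau,1]}A_f/L_f\ge\widehat d_\theta$. Then every maximizer has $\tau\ge\beta$. For such profiles the previous certificate bounds the value by $\widehat d_\theta$. The equalizing profile is feasible and attains $\widehat d_\theta$, since the proof that $\beta<\tau<1$ for it only used the positivity of $R$ and $\Lambda-\delta R$. Hence $d_\theta=\widehat d_\theta$ there, and uniqueness and the regularity statements carry over unchanged.

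For the exclusion step I would argue by contradiction. Suppose $\tau<\beta$ and the ratio is at least $s:=\widehat d_\theta$ at the nodes $\tau,\beta,1$. Then the window contains $\beta$, so the profile must satisfy $A_f(\beta)\ge sL_f(\beta)$ as well as $A_f(1)\ge sL_f(1)$. I would average on the cells $[0,\beta^2],[\beta^2,\beta],[\beta,1]$, with frequencies $x,y,z_0$ as in the proof of Lemma~\ref{lem:ifs-initial-time}. The condition $\tau<\beta$ forces $L_f(\beta)/L_f(1)>\theta\ge\theta_{\mathrm{ent}}$. By the explicit fraction for $L_f(\beta)/L_f(1)$, this requires $y-x$ large or $z_0$ small relative to $x$. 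In other words, the frequencies must deviate from the box optimizers $(p_*,q_*)$ by an amount quantified in terms of $\theta-\beta$.

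The box-dimension Jensen bound $A_f(t)/L_f(t)\le D$ is strict with a quadratic deficit. Using $h(1/2+z)\le c-2z^2$ and the form $\epsilon=c^3B/(4\lambda^2)$, I expect this to give a deficit such as
\[
 \frac{A_f(1)}{L_f(1)}\le D-\epsilon\,\Phi,
\]
where $\Phi$ is a quadratic quantity that must be of order at least $(\theta-\beta)^2/(\cdots)$ once $\tau<\beta$. To close the argument I would compare this with a lower bound for $\widehat d_\theta$. Concavity of $\widehat d_\theta$ and the endpoint values give $\widehat d_\theta\ge \theta D+(1-\theta)d_0$, or alternatively the estimate $\widehat d_\theta\ge D-(1-\theta)(D-d_0)$. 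The quadratic equation defining $\theta_{\mathrm{ent}}$, with root $\beta(1-\epsilon)+\epsilon\sqrt{\beta(1+\beta-2\epsilon)}$, should be exactly the condition under which the forced entropy deficit exceeds $D-\widehat d_\theta$.

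The main obstacle is this exclusion step: matching the Hoeffding/Pinsker-type quadratic entropy deficit to the linear comparison and recovering precisely the constant in $\theta_{\mathrm{ent}}$. If the bookkeeping does not produce that exact root, I would fall back on a two-cell relative-entropy bound at the single node $\beta$, optimized over $y-x$.
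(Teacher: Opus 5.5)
Your overall plan matches the paper's: replace Lemma~\ref{lem:ifs-initial-time} by an entropy localization, then rerun the certificate. However, the exclusion step that carries the new content is only conjectured, and the mechanism you sketch does not lead to $\theta_{\mathrm{ent}}$. The threshold \eqref{eq:ifs-entropy-threshold} involves neither $D$ nor $\widehat d_\theta$. Your comparison bound $\widehat d_\theta\ge\theta D+(1-\theta)d_0$ does not follow from concavity, because by Proposition~\ref{prop:ifs-genuine-transition} the continuation tends to $s_*<d_0$ as $\theta\downarrow0$; in fact the bound fails at $\theta_{\mathrm{cross}}$.

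The missing idea is to use only the weak constraint $A_f(1)\ge d_0L_f(1)$, which every maximizer satisfies since $d_\theta>d_0$, and to expand around $f=1/2$ rather than around the box optimizers. Put $z=f-1/2$, $u=\beta-\mathbf 1_{[0,\beta]}$ and $v=2\beta\mathbf 1_{[0,\beta]}-\mathbf 1_{[0,\beta^2]}-\beta^2$. Then $L_f(1)=\lambda+c\langle u,z\rangle$ and $L_f(\beta)-\beta L_f(1)=c\langle v,z\rangle$. The bound $h(1/2+x)\le c-2x^2$ confines $z$ to the ball $\|z+q_0u\|_2^2\le q_0^2B$, where $q_0=c^2/(4\lambda)$. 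Cauchy--Schwarz on this ball gives
\[
 L_f(\beta)-\vartheta L_f(1)\le-\lambda(\vartheta-\beta)+cq_0B\bigl\{\vartheta+\sqrt{\vartheta^2+\beta}\bigr\},
\]
whose right side vanishes exactly at $\vartheta=\theta_{\mathrm{ent}}$, because $cq_0B/\lambda=\epsilon$. Strictness of $h(1/2+x)<c-2x^2$ for $x\ne0$, together with the trivial case $z=0$ (ratio $\beta$), yields $L_f(\beta)/L_f(1)<\theta_{\mathrm{ent}}$. Hence every maximizer has $\tau>\beta$ when $\theta\ge\theta_{\mathrm{ent}}$. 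No cell averaging and no lower bound on $\widehat d_\theta$ are needed.

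Second, your claim that $\beta<\tau_*$ for the equalizing profile used only $R>0$ and $\Lambda-\delta R>0$ is false. So $\theta\ge\theta_{\mathrm{safe}}$ enters Theorem~\ref{thm:ifs-analytic-optimizer} twice, not once. Formula \eqref{eq:ifs-optimal-start} gives $\tau_*<1$, but $\tau_*>\beta$ is the inequality $\delta L_B<(1-\beta)(\Lambda-\delta R)$. The theorem obtained this from the localization lemma at $\tau=\beta$: the residual is negative there because $L_f(\beta)/L_f(1)<\theta_{\mathrm{safe}}\le\theta$. Both positivity facts hold for every $\theta\in(0,1)$, yet the continued profile does collide with $\beta$ (Theorem~\ref{thm:ifs-first-transition}) and has $\tau_*<\beta$ for small $\theta$. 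Without feasibility you only get $d_\theta\le\widehat d_\theta$.

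The paper closes this by following the candidate down from $\theta=1$. At a first collision in $[\theta_{\mathrm{ent}},1)$, the candidate would satisfy $A_f(1)=\widehat d_\theta L_f(1)>d_0L_f(1)$, since $\widehat d_\theta\ge d_\theta>d_0$. It would also satisfy $L_f(\beta)/L_f(1)=\theta\ge\theta_{\mathrm{ent}}$, contradicting the strict bound. Your exclusion, stated only for $\tau<\beta$, would not cover this boundary case $\tau=\beta$; the strict inequality for $L_f(\beta)/L_f(1)$ does.
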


The proof is given in Appendix~\ref{app:ifs-phases}.

For $\beta=1/2$ and $M=100$, the thresholds are approximately
$\theta_{\mathrm{safe}}=0.52876$ and
$\theta_{\mathrm{ent}}=0.50134$, giving $\theta_{\mathrm{loc}}=0.50134$.

\subsection{Quantitative convergence at the Hausdorff endpoint}
\label{sec:tree-endpoint-rate}

Put $d_0=c/\lambda$. The endpoint rate below has the same order as
for Bedford--McMullen carpets~\cite[Corollary~2.3(iv)]{BanajiKolossvary2024}.

\begin{theorem}[Endpoint rate and saturation]
\label{thm:tree-endpoint-rate}
There exist $C_-,C_+,\theta_0>0$, depending only on $\beta$ and $M$,
such that
\begin{equation}\label{eq:tree-endpoint-two-sided}
 \frac{C_-}{(\log(1/\theta))^2}
 \leq d_\theta-d_0
 \leq\frac{C_+}{(\log(1/\theta))^2},
 \qquad 0<\theta<\theta_0.
\end{equation}
One may take
\[
 C_-=
 \frac{c^4\beta^2(\log(1/\beta))^2}{256\lambda^3}
\]
after decreasing $\theta_0$.

For every $0<\theta\leq1$ and $0<H<1$,
\begin{equation}\label{eq:tree-exact-saturation}
 \mathcal D_{\theta,H}=1
 \quad\Longleftrightarrow\quad H\leq d_0.
\end{equation}
For every fixed $H>d_0$, there is $\theta_H>0$ such that
\begin{equation*}
 \frac{C_-}{H(\log(1/\theta))^2}
 \leq\mathcal D_{\theta,H}-\frac{d_0}{H}
 \leq\frac{C_+}{H(\log(1/\theta))^2},
 \qquad 0<\theta<\theta_H.
\end{equation*}
\end{theorem}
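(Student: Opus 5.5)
The proof has three parts. We first show the two-sided bound \eqref{eq:tree-endpoint-two-sided} for the source value $d_\theta$, then the saturation criterion \eqref{eq:tree-exact-saturation}, and finally we transfer the rate to $\mathcal D_{\theta,H}$. Throughout, the window $\theta\le L_f(t)\le1$ of Theorem~\ref{thm:tree-dimensions} is read logarithmically: the relevant times range over roughly $k\approx\log(1/\theta)/\log(1/\beta)$ consecutive delay generations $t_j=\beta^j t_0$.

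\emph{Upper bound.} We sharpen the argument for $\lim_{\theta\downarrow0}d_\theta=d_0$ in Proposition~\ref{prop:tree-basic-regimes}. There, Jensen's bound $A_f(t)\le ct$ only gives the linear increment \eqref{eq:tree-average-increment}, which yields $k(s-d_0)\lesssim1$, a rate $1/\log(1/\theta)$. To gain the square we keep the entropy deficit. Put $m_f(t)=F_f(t)/t$. Then
\[
 A_f(t)\le t\,h(m_f(t))\le ct-2t\,(m_f(t)-\tfrac12)^2 .
\]
Suppose $A_f(t_j)\ge sL_f(t_j)$ with $s=d_0+\epsilon$. Writing $L_f(t)=\lambda t+ct\beta\{m_f(t)-m_f(\beta t)\}$, we obtain
\[
 \beta c\,s\,\{m_f(\beta t_j)-m_f(t_j)\}\ge \lambda\epsilon t_j/t_j\cdot(\text{const})+2(m_f(t_j)-\tfrac12)^2 ,
\]
that is, $\Delta_j:=m_f(t_{j+1})-m_f(t_j)\ge \kappa_1\epsilon+\kappa_2(m_j-\tfrac12)^2$. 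The partial sums of $\Delta_j$ telescope to $m_{j}-m_0$, which stays in $[-1,1]$. The worst case has $m_j$ staying near $1/2$, so the quadratic term does not help directly; the gain must instead come from the variance of $f$ across cells. In that case I would apply Jensen cellwise on the orbit partition of Lemma~\ref{lem:tree-orbit-averaging} rather than globally. The entropy deficit then becomes $\sum_i l_i(p_i-\tfrac12)^2$, and the increments $\Delta_j$ are averages of the cell frequencies $p_i$. By Cauchy--Schwarz, a drift of total size $k\epsilon$ over $k$ generations costs entropy at least of order $(k\epsilon)^2/k=k\epsilon^2$ per unit length. This must be paid out of the linear gain $\epsilon$, which gives $k\epsilon\lesssim1/k\cdot$const. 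I expect this to force $\epsilon\lesssim k^{-2}\sim(\log1/\theta)^{-2}$. Making this bookkeeping precise, by choosing the right weighted sum of the nodal constraints (a pressure certificate in the sense of Corollary~\ref{cor:ifs-global-pressure-root} with roughly uniform weights $w_j\approx1/k$), is the main obstacle.

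\emph{Lower bound.} We use a slowly drifting profile, generalizing the choice $f=1/2-\varepsilon t$ in Proposition~\ref{prop:tree-basic-regimes} but on a logarithmic time scale. Take $f(t)=\tfrac12-\epsilon\,\phi(\log(1/t)/\log(1/\beta))$ with $\phi$ linear, so that each generation shifts the frequency by about $\epsilon'\sim 1/k^2$ and the total drift is $\sim1/k$. Then each generation contributes $L_f(t)\approx\lambda t-c\beta(1-\beta)\cdot(\text{drift})\,t$, so the length gains a relative $\epsilon'$ per generation, while the entropy loses only $O((k\epsilon')^2)=O(k^{-2})$. Optimizing $\epsilon'=\gamma/k^2$ over $\gamma$ gives $A_f/L_f\ge d_0+C_-k^{-2}$ uniformly on the window. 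The explicit constant $C_-=c^4\beta^2(\log1/\beta)^2/(256\lambda^3)$ should come out of this quadratic optimization, using $c-h(1/2-z)\le4z^2$ and $k\ge\log(1/\theta)/\log(1/\beta)$.

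\emph{Saturation and transfer.} For \eqref{eq:tree-exact-saturation}, the direction $H\le d_0$ is already in Proposition~\ref{prop:tree-basic-regimes}. For the converse, suppose $\mathcal D_{\theta,H}=1$. Then the middle branch requires $A_f^H(u)\ge u$ at $u=1$, that is, $A_f(t)\ge HL_f(t)$ at $t=t_f^H(1)$. Since $A_f(t)\le ct$ and $L_f(t)\ge$ average length, an averaging argument as in the proof of \eqref{eq:tree-average-increment} should give $H\le d_0$; at equality one uses strictness. For the image rate, fix $H>d_0$. As $\theta\downarrow0$, the near-optimal profiles have $A_f\approx d_0L_f$, so $A_f-HL_f$ is negative with slope bounded away from zero near $t=0$. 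Hence the coarse minimum of $G_{\theta,H}$ sits at $u=\theta$, exactly as in the case $Ha\ge c$, and time dilation gives $\mathcal D_{\theta,H}=d_{\theta'}/H$ with $\theta'$ comparable to $\theta$. The bound \eqref{eq:tree-endpoint-two-sided} then transfers with the factor $1/H$, because $\log(1/\theta')\sim\log(1/\theta)$.
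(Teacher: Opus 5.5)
The lower-bound construction you sketch is the paper's (a clipped profile whose frequency drifts logarithmically in $t$). However, the upper bound, which carries the $C_+$ half of \eqref{eq:tree-endpoint-two-sided} and hence the upper image estimate, has a genuine gap. It arises because you discarded the argument that works. You correctly reach the recurrence $z_{j+1}-z_j\ge\kappa(\lambda\epsilon+2z_j^2)$ for $z_j=m_f(t_j)-\tfrac12\in[-\tfrac12,\tfrac12]$, $t_j=\beta^j/b$, $0\le j<k_\theta\sim\log(1/\theta)$. You then set it aside because ``the worst case has $m_j$ near $1/2$.'' That case is exactly what the linear term controls. The sequence is increasing with steps at least $\kappa\lambda\epsilon$, so it spends at most $1+2/(\kappa\sqrt{\lambda\epsilon})$ steps in $|z|\le\sqrt{\lambda\epsilon}$. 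In each dyadic band $2^i\sqrt{\lambda\epsilon}\le|z|\le2^{i+1}\sqrt{\lambda\epsilon}$ the quadratic term gives steps at least $2\kappa4^i\lambda\epsilon$, hence at most $1+(2\kappa2^i\sqrt{\lambda\epsilon})^{-1}$ steps there. Summing over bands gives $k_\theta\le C_\kappa(\lambda\epsilon)^{-1/2}$, that is, $\epsilon\lesssim k_\theta^{-2}$. This is a discrete Riccati comparison; the continuous analogue $z'=\kappa(\alpha+z^2)$ blows up in time $\pi/(2\kappa\sqrt\alpha)$. It is Lemma~\ref{lem:tree-quadratic-drift}, and with the $\theta$-independent choice $\kappa=(D(\lambda)c\beta)^{-1}$ it closes the upper bound directly from the global Jensen inequality. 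No cellwise Jensen or pressure certificate is needed. The substitute you outline is not carried out, and its heuristic is inconsistent: an entropy cost of order $k\epsilon^2$ paid from a gain $\epsilon$ yields only $\epsilon\lesssim1/k$, not $k\epsilon\lesssim1/k$.

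Three smaller points also need repair.

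\emph{Saturation converse.} The constraint at $u=1$ alone cannot force $H\le d_0$. The two-frequency box optimizer, $p_*$ on $[0,\beta t_*]$ and $q_*$ on $[\beta t_*,t_*]$, satisfies $A_f(t_*)\ge HL_f(t_*)$ for every $H\le D(\lambda)=d_1>d_0$. What you need is that $\mathcal D_{\theta,H}=1$, for an attaining profile, also forces $G_{\theta,H}(f,u)\ge1$ on the coarse branch. Hence $A_f(t)\ge HL_f(t)$ for every $0<t\le t_f^H(1)$. Then $A_f\le ct$ gives $m_f(\beta^{n+1}t)-m_f(\beta^nt)\ge(\lambda-c/H)/(c\beta)>0$ for all $n$, which contradicts $0\le m_f\le1$.

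\emph{Image rate.} Your claim that near-optimal profiles make $A_f-HL_f$ decrease is unjustified, and it is also unnecessary. Dropping the coarse branch and substituting $g(t)=f(t/H)$ gives $\mathcal D_{\theta,H}\le d_\theta/H$ for every profile, at the same $\theta$. For the lower bound you need monotonicity only for the explicit rescaled profile $g_\theta(t)=f_\theta(Ht)$. There $(A_{g_\theta}-HL_{g_\theta})'\le c-H\lambda+Hc\beta q\ell/L^2<0$, because $H\lambda>c$, where $L=\log(1/\theta)$, $\ell=\log(1/\beta)$ and $q=c^2\beta\ell/(64\lambda)$.

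\emph{Sign and clipping in the lower-bound profile.} The profile must be larger at small times; the paper uses $f_\theta=\tfrac12+\frac{q}{L^2}\min\{\log(T/t),2L\}$. Your form $\tfrac12-\epsilon\phi(\log(1/t)/\log(1/\beta))$ with increasing $\phi$ increases $L_f$ instead. The clipping at $t_c=T\theta^2$ is also needed to keep $|f-\tfrac12|\le2q/L$. It changes the drift on the window only by a relative factor $O(t_c/t)=O(\theta)$, and the constant $C_-$ is then the exact maximizer of $\bigl(c^2\beta q\ell/2-16\lambda q^2\bigr)/(\lambda^2L^2)$ over $q$.
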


The saturation criterion also follows from
\cite[Corollary~3.5]{AngeliniMolter2025} and continuity of $d_\theta$ at zero.

\begin{lemma}[Length of a quadratic drift trajectory]
\label{lem:tree-quadratic-drift}
Fix $\kappa>0$. There is $C_\kappa>0$ such that, if
$0<\alpha\leq1/16$ and $z_0,\ldots,z_k\in[-1/2,1/2]$ satisfy
\[
 z_{j+1}-z_j\geq\kappa(\alpha+2z_j^2),
 \qquad 0\leq j<k,
\]
then $k\leq C_\kappa/\sqrt\alpha$.
\end{lemma}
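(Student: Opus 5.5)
The plan is to compare the recursion with a continuous differential inequality and bound the time it takes a solution of $\dot z=\kappa(\alpha+2z^2)$ to travel from $-1/2$ to $1/2$. That travel time is
\[
 \int_{-1/2}^{1/2}\frac{dz}{\kappa(\alpha+2z^2)}
 =\frac{2}{\kappa\sqrt{2\alpha}}\arctan\frac{1}{\sqrt{2\alpha}}
 \le\frac{\pi}{\kappa\sqrt{2\alpha}}.
\]
The discrete statement should follow by splitting the trajectory into regimes according to the size of $|z_j|$ compared with $\sqrt\alpha$. Each regime gets a simple count.

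First I record the basic monotonicity facts. Each increment $z_{j+1}-z_j\ge\kappa\alpha>0$, so the sequence is strictly increasing and stays in $[-1/2,1/2]$. I then split the indices into three sets: the central set where $|z_j|\le\sqrt\alpha$, the positive outer set where $z_j>\sqrt\alpha$, and the negative outer set where $z_j<-\sqrt\alpha$. By monotonicity each set is a consecutive block of indices.

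\emph{Central block.} Every step there has length at least $\kappa\alpha$, and the block lies in an interval of length $2\sqrt\alpha$ plus one overshooting step. So the block has at most $2\sqrt\alpha/(\kappa\alpha)+1=2/(\kappa\sqrt\alpha)+1$ indices.

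\emph{Positive block.} Here $z_j>0$ and $z_{j+1}-z_j\ge2\kappa z_j^2$. I would use the reciprocal trick:
\[
 \frac1{z_j}-\frac1{z_{j+1}}=\frac{z_{j+1}-z_j}{z_jz_{j+1}}.
\]
The obstacle is that $z_{j+1}$ may be much larger than $z_j$, which weakens this bound. I handle it by cases. If $z_{j+1}\le2z_j$, the difference is at least $2\kappa z_j^2/(2z_j^2)=\kappa$. If $z_{j+1}>2z_j$, then $z_j$ at least doubles in that step. Starting above $\sqrt\alpha$ and staying below $1/2$, at most $\log_2(1/(2\sqrt\alpha))+1$ doubling steps can occur. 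The non-doubling steps each decrease $1/z_j$ by at least $\kappa$, and the total available decrease is at most $1/\sqrt\alpha$. Together these give at most $1/(\kappa\sqrt\alpha)+\log_2(1/\sqrt\alpha)+1$ indices, which is $O_\kappa(\alpha^{-1/2})$ since $\log(1/\alpha)\lesssim\alpha^{-1/2}$.

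\emph{Negative block.} Here $z_j<-\sqrt\alpha$. I put $y_j=-z_j>0$, which is decreasing with $y_j-y_{j+1}\ge2\kappa y_j^2$. Because $y_{j+1}<y_j$, I get
\[
 \frac1{y_{j+1}}-\frac1{y_j}=\frac{y_j-y_{j+1}}{y_jy_{j+1}}\ge\frac{2\kappa y_j^2}{y_j^2}=2\kappa.
\]
No case split is needed. The reciprocal rises from at least $2$ to at most $1/\sqrt\alpha$ across the block, so the block has at most $1/(2\kappa\sqrt\alpha)+1$ indices.

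Summing the three counts gives $k\le C_\kappa/\sqrt\alpha$, with the additive constants absorbed because $\alpha\le1/16$. The hypothesis $\alpha\le1/16$ guarantees $\sqrt\alpha\le1/4<1/2$, so the regimes make sense. The only delicate point is the overshoot on the positive side, and the doubling count above settles it. The bound $\log(1/\alpha)\le C\alpha^{-1/2}$ finishes the proof.
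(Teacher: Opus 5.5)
Your proof is correct. It treats the central range $|z_j|\le\sqrt\alpha$ exactly as the paper does, but it counts the outer ranges differently.

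\emph{The paper's route.} The paper sets $v=\sqrt\alpha$ and splits $|z|>v$ into signed dyadic bands $2^iv\le|z|\le2^{i+1}v$. In each band it counts starting indices crudely as $1+(\text{band length})/(\text{minimal increment})\le1+(2\kappa2^iv)^{-1}$. Summing over the $O(\log(1/v))$ bands gives $O(\log(1/\alpha))+O(1/(\kappa\sqrt\alpha))$ through a geometric series. This treats both signs of $z$ the same way and needs no case analysis.

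\emph{Your route.} Telescoping $1/z_j$ is the discrete form of the ODE comparison you start from. On the negative side it loses nothing: $1/|z_j|$ grows by at least $2\kappa$ per step because $|z_j|$ decreases. On the positive side, overshoot forces the doubling split. The number of doubling steps, at most $\log_2(1/(2\sqrt\alpha))$, plays exactly the role of the ``$+1$ per band'' terms in the paper.

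\emph{Comparison.} Both arguments absorb the logarithmic term using $\alpha\le1/16$. Yours gives an explicit bound of the form $k\le\frac72(\kappa\sqrt\alpha)^{-1}+\log_2(1/\sqrt\alpha)+O(1)$ and stays close to the continuous heuristic. The paper's version is shorter and does not depend on the sign of $z$.
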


\begin{proof}
Put $v=\sqrt\alpha$. Monotonicity bounds the number of starting
indices in $[-v,v]$ by $1+2/(\kappa v)$. Each signed band
$2^iv\le|z|\le2^{i+1}v$, truncated at $1/2$, has length at most
$2^iv$ and increments at least $2\kappa4^iv^2$, hence at most
$1+(2\kappa2^iv)^{-1}$ starting indices. Thus
\[
 k\leq C\left(1+\log(1/v)+\frac1{\kappa v}
                         \sum_{i\geq0}2^{-i}\right)
 \leq\frac{C_\kappa}{v}.
\]
\end{proof}

\begin{proof}[Proof of Theorem~\ref{thm:tree-endpoint-rate}]
\emph{Upper endpoint bound.}
Let $f$ attain the source variational maximum, and set
$d=d_\theta>d_0$, $\varepsilon=d-d_0$ and
$m_f(t)=F_f(t)/t$ for $t>0$.
For each $t$ with $\theta\leq L_f(t)\leq1$,
Jensen's inequality and the variational constraint give
\[
 h(m_f(t))\geq A_f(t)/t
 \geq d\{\lambda+c\beta(m_f(t)-m_f(\beta t))\}.
\]
Since $h(1/2+z)\le c-2z^2$, putting $z(t)=m_f(t)-1/2$ gives
\begin{equation*}
 z(\beta t)-z(t)
 \geq\frac{\lambda\varepsilon+2z(t)^2}{d c\beta}.
\end{equation*}

Put $t_j=\beta^j/b$. Since $at_j\leq L_f(t_j)\leq bt_j$,
the recurrence holds for all $0\leq j<k_\theta$, where
\[
 k_\theta=
 1+\left\lfloor
      \frac{\log(a/(b\theta))}{\log(1/\beta)}
     \right\rfloor
 \sim\log(1/\theta)
\]
for sufficiently small $\theta$.
For small $\theta$, Proposition~\ref{prop:tree-basic-regimes} gives
$\alpha=\lambda\varepsilon\le1/16$. Apply
Lemma~\ref{lem:tree-quadratic-drift} with
$\kappa=(D(\lambda)c\beta)^{-1}\le(dc\beta)^{-1}$:
\[
 k_\theta\le\frac{C_\kappa}{\sqrt{\lambda\varepsilon}}
 \quad\Longrightarrow\quad
 \varepsilon\le\frac{C_\kappa^2}{\lambda k_\theta^2}.
\]

\emph{Lower endpoint bound.}
Set $L=\log(1/\theta)$, $\ell=\log(1/\beta)$, and $T=2/a$.
Choose the fixed amplitude
\[
 q=\frac{c^2\beta\ell}{64\lambda}>0,
 \qquad t_c=T\theta^2.
\]
On $0<t\leq T$, define the clipped logarithmic profile
\begin{equation*}
 f_\theta(t)=\frac12+
 \frac{q}{L^2}\min\{\log(T/t),2L\}.
\end{equation*}
Extend continuously at zero. For large $L$, $1/2\le f_\theta\le3/4$;
$c-h(1/2+z)\le4z^2$ then gives
\begin{equation}\label{eq:tree-log-entropy}
 A_{f_\theta}(t)\geq
 \left(c-\frac{16q^2}{L^2}\right)t
 \qquad(0\leq t\leq T).
\end{equation}
For $t\geq t_c$, direct integration yields
\[
 m_{f_\theta}(t)=\frac12+
 \frac{q}{L^2}\left(\log(T/t)+1-\frac{t_c}{t}\right).
\]
Consequently, if $\beta t\geq t_c$,
\begin{equation}\label{eq:tree-log-drift}
 m_{f_\theta}(\beta t)-m_{f_\theta}(t)
 =\frac{q}{L^2}
   \left(\ell-\frac{(1-\beta)t_c}{\beta t}\right).
\end{equation}
On the physical window, $t\ge\theta/b$ gives
$t_c/t\le Tb\theta=o(1)$. Thus \eqref{eq:tree-log-drift} applies
and is at least $q\ell/(2L^2)$ for small $\theta$, so
\[
 L_{f_\theta}(t)
 \leq\left(\lambda-\frac{c\beta q\ell}{2L^2}\right)t.
\]
By \eqref{eq:tree-log-entropy},
\begin{align*}
 J_\theta(f_\theta)-\frac c\lambda
 &\geq
 \frac{c-16q^2/L^2}
      {\lambda-c\beta q\ell/(2L^2)}-\frac c\lambda\\
 &\geq\frac{c^2\beta q\ell/2-16\lambda q^2}
                 {\lambda^2L^2}
 =\frac{c^4\beta^2\ell^2}{256\lambda^3L^2}.
\end{align*}
Here the denominator is positive for small $\theta$.

\emph{The saturation threshold.}
The constant profile $f=1/2$ proves
$\mathcal D_{\theta,H}=1$ when $H\leq d_0$.
Conversely, attainment gives $A_f(t)\ge HL_f(t)$ for
$0<t\le t_f^H(1)$. If $H>d_0$, then $A_f(t)\le ct$ implies
\[
 1\ge m_f(\beta^nt)-m_f(t)
 \ge n\frac{\lambda-c/H}{c\beta}\longrightarrow\infty,
\]
a contradiction. This proves \eqref{eq:tree-exact-saturation}.

\emph{Random-image endpoint bound.}
For $H>0$ and $g(t)=f(t/H)$, we have
$L_g(t)=HL_f(t/H)$ and $A_g(t)=HA_f(t/H)$.
Thus the middle branch gives
\begin{equation}\label{eq:tree-random-source-upper}
 \mathcal D_{\theta,H}\leq\min\{1,d_\theta/H\}.
\end{equation}
Apply \eqref{eq:tree-endpoint-two-sided} for the upper estimate.

For the lower estimate, fix $H>d_0$ and use
$g_\theta(t)=f_\theta(Ht)$ on the random-image horizon
$[0,2/(Ha)]$. Then
$A_{g_\theta}^H(u)=A_{f_\theta}^1(u)/H$.
Since $-q\ell/L^2\le f_\theta(t)-f_\theta(\beta t)\le0$,
\begin{align*}
 \frac{d}{dt}\{A_{g_\theta}(t)-HL_{g_\theta}(t)\}
 &\leq c-H\lambda+\frac{Hc\beta q\ell}{L^2}<0
\end{align*}
almost everywhere for small $\theta$, since $H\lambda>c$.
The coarse minimum is therefore at $u=\theta$, and
\[
 \mathcal D_{\theta,H}
 \geq\frac1H J_\theta(f_\theta)
 \geq\frac{d_0}{H}+\frac{C_-}{HL^2}.
\]
\end{proof}

\begin{remark}
The quadratic recurrence acts on cumulative digit frequencies;
the clipped logarithmic profile gives the matching endpoint order.
\end{remark}

\subsection{Local finiteness of the analytic branches}
\label{sec:tree-tame-regularity}

We apply analytic cell decomposition in $\mathbb R_{\mathrm{an},\exp}$
\cite{vandenDriesMiller1994} to the finite entropy constraints.
This structure allows real parameters, logarithms and real quantification.
For general attainable dimension curves see~\cite{BanajiRutar2022}.

\begin{proposition}[Locally finite analytic decomposition]
\label{prop:ifs-local-analyticity}
Fix $\beta\in(0,1)$ and $M\ge100$. For every $\varepsilon>0$, the
restriction of $\theta\mapsto d_\theta$ to $[\varepsilon,1]$ is
definable in $\mathbb R_{\mathrm{an},\exp}$. It is real analytic on
each open interval of a finite partition of $[\varepsilon,1]$.

The same statements hold for
$\theta\mapsto\mathcal D_{\theta,H}$ for every fixed $H\in(0,1)$.
Consequently, for either curve the set of nonanalytic points in $(0,1)$
is locally finite, and its only possible accumulation point in $[0,1]$
is $0$.
\end{proposition}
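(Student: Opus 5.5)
The plan is to express $d_\theta$ on $[\varepsilon,1]$ as a first-order formula over $\mathbb R_{\mathrm{an},\exp}$ and then apply analytic cell decomposition \cite{vandenDriesMiller1994}. The starting point is Proposition~\ref{prop:tree-finite-exact}. Since $\theta\ge\varepsilon$, the normalized starting time obeys $\tau\ge\varepsilon a/b$. The number of orbit points $\beta^jv$ with $v\in\{\tau,1\}$ is therefore bounded by a constant $J_\varepsilon$, uniformly in $\theta\in[\varepsilon,1]$. This uniform bound is the key finiteness input.

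Next I would build the formula. I partition the parameter space $\{(\theta,\tau):\varepsilon a/b\le\tau\le1\}$ into finitely many semialgebraic ordering regions, according to which integers $j\le J_\varepsilon$ satisfy $\beta\tau\le\beta^jv\le1$ and how these points interleave. On each region the cell endpoints $b_i$ are linear in $\tau$. The frequencies $p_i\in[0,1]$ are finitely many variables. On the nodes, $L_f(b)$ is polynomial in $(\tau,p)$ and $A_f(b)=\sum_i l_i h(p_i)$. The function $h$ restricted to $[0,1]$ is definable in $\mathbb R_{\mathrm{an},\exp}$: on $(0,1)$ it is built from $\log$, and at the endpoints it is defined by the continuous extension, which is a definable case split. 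The value $d_\theta$ is the maximum over the region and over $(\tau,p)$ with $L_f(\tau)=\theta L_f(1)$ of the minimum over the finitely many nodes of $A_f(b)/L_f(b)$, where $L_f(b)\ge ab>0$. Maxima and minima over definable sets are definable by quantification, so $\theta\mapsto d_\theta$ on $[\varepsilon,1]$ is definable. The maxima are attained because the feasible sets are compact.

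By o-minimal cell decomposition in $\mathbb R_{\mathrm{an},\exp}$ (the analytic version of \cite{vandenDriesMiller1994}), every definable function of one variable is real analytic off a finite set. This yields the finite partition of $[\varepsilon,1]$. For $\mathcal D_{\theta,H}$ with $H$ fixed, I repeat the argument using Proposition~\ref{prop:tree-random-finite}. For $s$ ranging in $(0,1)$, and with $\theta\ge\varepsilon$ fixed, the bound $\tau_0\ge\theta(1-s)a/b$ degenerates as $s\to1$. I would handle this in one of two ways. The first option is to use the definable characterization $\mathcal D_{\theta,H}=\sup\{s:\text{feasible}\}$, restricted to $s\le1-\varepsilon'$ and combined with $\mathcal D=1$ cases. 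The second option is to work directly with \eqref{eq:tree-random-value}: after normalizing by Lemma~\ref{lem:tree-time-normalization}, apply the averaging of Lemma~\ref{lem:tree-orbit-averaging} on the grid generated by $\theta$ and $1$, whose size is again uniformly bounded for $\theta\ge\varepsilon$. That grid is independent of $s$, so the minimum over $u$ of the piecewise-affine payoff reduces to finitely many nodes. The second route avoids $s$ altogether, and I would try it first.

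Local finiteness of the nonanalytic points follows by letting $\varepsilon\downarrow0$ along a sequence. Any accumulation point in $(0,1]$ lies in some $[\varepsilon,1]$, where there are only finitely many nonanalytic points. The endpoint $1$ is included because the partition covers $[\varepsilon,1]$.

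I expect the main obstacle to be verifying that the averaging reduction is exact uniformly, including the degenerate boundary cases. These are collapsed cells, coincident nodes, and $p_i\in\{0,1\}$, where $h$ is not analytic. Definability still holds because $h$ is definable on the closed interval, and cell decomposition does not require analyticity of the defining data. Only the conclusion uses it.
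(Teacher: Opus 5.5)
Your argument for $d_\theta$ matches the paper's: the bound $\tau\ge\varepsilon a/b$ from Proposition~\ref{prop:tree-finite-exact} makes the delay-orbit grid uniformly finite, the nodal constraints are definable on finitely many ordering regions, and analytic cell decomposition finishes. The gap is in the random-image curve.

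Your preferred second route does not work. The payoff in \eqref{eq:tree-random-value} is minimized over all $u\in[0,1]$, so the coarse branch constrains $A_f^H(u)-u$ all the way down to $u=0$. Lemma~\ref{lem:tree-orbit-averaging} preserves $L_f$ at a node $v$ only when $\beta v$ is also a node. Exact averaging over a window reaching $u=0$ therefore needs every $\beta^j\tau_1$ and every $\beta^j$, which is an infinite grid. Suppose instead you average on the finite grid generated by $\tau_1$ (the time where $u=\theta$) and $1$. Then in general $L_{\bar f}(\beta\tau_1)\neq L_f(\beta\tau_1)$, and likewise at $\beta$, so the coarse constraint at these nodes is no longer controlled and the reduction is not exact. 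Proposition~\ref{prop:tree-random-finite} obtains a finite grid only because the coarse constraint holds automatically for $u\le\theta(1-s)$. The cutoff $\tau_0$ is thus tied to the test exponent $s$, and there is no $s$-free finite reduction.

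Your first route is the right one, but it needs a step you have not supplied: a uniform bound $\sup_{\varepsilon\le\theta\le1}\mathcal D_{\theta,H}<S<1$. Without it, $\mathcal D_{\theta,H}$ might tend to $1$ at some interior $\theta_1$. The sets $\{(\theta,s):\mathcal D_{\theta,H}\ge s,\ s\le1-\varepsilon'\}$ would then need unboundedly many grid points as $\varepsilon'\downarrow0$. Your argument would then give neither definability near $\theta_1$ nor exclusion of nonanalytic points accumulating there.

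The paper supplies this step with the exact saturation criterion \eqref{eq:tree-exact-saturation}: $\mathcal D_{\theta,H}=1$ if and only if $H\le d_0$, independently of $\theta$. So either the curve is identically $1$, or it lies below $1$ at every $\theta$. In the second case, continuity from Lemma~\ref{lem:tree-profile-stability} on the compact set $[\varepsilon,1]$ gives $S$, hence $\tau_0\ge\varepsilon(1-S)a/b$ and a finite grid. You can also see the $\theta$-independence directly: at $s=1$ both branches of the payoff condition reduce to $A_f^H(u)\ge u$ on $[0,1]$.
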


\begin{proof}
Fix $0<\varepsilon<1$. The bound $\tau\ge\varepsilon a/b$ in
Proposition~\ref{prop:tree-finite-exact} restricts the grid points
$\beta^j,\beta^j\tau$ to a fixed finite set of exponents.
On each ordering region the cell lengths are affine, and the entropy
contributions are definable continuous perspectives
\[
 \Psi(l,z)=l h(z/l)\quad(l>0,\ 0\le z\le l),
 \qquad \Psi(0,0)=0,
\]
with $h(0)=h(1)=0$. Thus all nodal constraints are definable,
including coincident endpoints. Projection in $(\tau,z)$ and finite
union give the definable set
\[
 \mathcal F=\{(\theta,s):\varepsilon\le\theta\le1,
                         \ 0\le s\le1,\ d_\theta\ge s\}.
\]
By attainment its upper fibre endpoints form the graph:
\[
 (\theta,s)\in\mathcal F,
 \qquad
 \text{there is no }t\in(s,1]\text{ with }(\theta,t)\in\mathcal F.
\]
Analytic cell decomposition gives the claimed finite partition.

For $H\le d_0$, \eqref{eq:tree-exact-saturation} gives the constant
curve $\mathcal D_{\theta,H}=1$. For $H>d_0$, continuity and strict
saturation give
\[
 \sup_{\varepsilon\le\theta\le1}
                  \mathcal D_{\theta,H}<S<1
\]
for some $S$. For $0<s\le S$, Proposition~\ref{prop:tree-random-finite}
gives
\[
 \tau_0\ge\theta(1-s)a/b
          \ge\varepsilon(1-S)a/b>0.
\]
The grid again uses finitely many powers. Projection in
$(\tau_0,\tau_1,z)$ and upper fibre endpoints give the graph as above,
using positivity and attainment. Analytic cell decomposition on each
$[\varepsilon,1]$ proves both assertions.
\end{proof}

\subsection{Existence of a nonanalytic transition}

\begin{proposition}[Existence of a nonanalytic transition]
\label{prop:ifs-genuine-transition}
Let $q=\sigma(-c\beta D)$ be the lower-frequency box optimizer.
Then
\begin{equation}\label{eq:ifs-continued-limit}
 \lim_{\theta\downarrow0}\widehat d_\theta=s_*,
 \qquad 0<s_*<d_0,
\end{equation}
where $s_*$ is the unique positive root of
\begin{equation}\label{eq:ifs-limit-pressure}
 \log(1+e^{-c\beta s})+c\beta s q-\lambda s=0.
\end{equation}
Consequently there is a unique $\theta_{\mathrm{cross}}\in(0,1)$ with
$\widehat d_{\theta_{\mathrm{cross}}}=d_0$.
There exists a parameter $\theta_{\mathrm{tr}}$ satisfying
\begin{equation}\label{eq:ifs-transition-bracket}
 \theta_{\mathrm{cross}}<\theta_{\mathrm{tr}}\le\theta_{\mathrm{loc}}
\end{equation}
at which $d_\theta$ is nonanalytic. One such parameter is the first
departure from the continued branch:
\begin{equation}\label{eq:ifs-first-departure}
 \theta_{\mathrm{tr}}
 =\inf\{a\in(0,1):d_\theta=\widehat d_\theta
                          \text{ for all }a\le\theta<1\}.
\end{equation}
\end{proposition}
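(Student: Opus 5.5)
The plan has three steps: compute the limit of $\widehat d_\theta$ as $\theta\downarrow0$, compare it with the Hausdorff endpoint $d_0$, and then locate a departure of $d_\theta$ from the analytic continuation.

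\emph{Step 1: the limit \eqref{eq:ifs-continued-limit}.} As $\theta\downarrow0$ we have $\delta=1-\theta\uparrow1$, and the stationarity equation \eqref{eq:ifs-scalar-stationarity} becomes $D-k(w)-(1-w)k'(w)=0$ in the limit. Strict concavity of $k$ (Lemma~\ref{lem:ifs-scalar-pressure}) together with $k(1)=D$ gives, for $w<1$,
\[
D-k(w)>(1-w)k'(1-).
\]
Hence the limiting stationarity equation holds only as $w\to1$, and I expect $w_\theta\to1$. Then $\widehat d_\theta=k'(w_\theta)\to k'(1-)$.

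To identify $k'(1-)$, put $\epsilon=1-w$ and $D-k\approx s\epsilon$. The exponent $-c\beta(D-k)/(1-w)$ tends to $-c\beta s$. Also $c((1-\beta)D-k)\to-c\beta D$, so $z\to q$. Expanding $\mathcal C(k,w)=0$ to first order in $\epsilon$ gives
\[
-\lambda s+\beta c q s+\log(1+e^{-c\beta s})=0.
\]
Here the $\beta c q$ term comes from differentiating $\beta\log(1+e^{c((1-\beta)D-k)})$ in $k$ at $k=D$. This is exactly \eqref{eq:ifs-limit-pressure}.

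The left side of \eqref{eq:ifs-limit-pressure} equals $c$ at $s=0$ and is strictly decreasing, since its derivative is at most $c\beta-\lambda<0$. So it has a unique positive root $s_*$. To show $s_*<d_0$, evaluate at $s=d_0=c/\lambda$. Because $\log(1+e^{-x})<c-x/2$ for $x>0$, the left side is below
\[
c-\tfrac{c\beta d_0}{2}+c\beta d_0q-c=c\beta d_0(q-1/2).
\]
This is negative because $q=\sigma(-c\beta D)<1/2$. The existence and uniqueness of $\theta_{\mathrm{cross}}$ then follow from the intermediate value theorem applied to the strictly increasing continuous function $\widehat d_\theta$, whose limits are $s_*<d_0$ at $0$ and $D>d_0$ at $1$.

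\emph{Step 2: a nonanalytic point.} By Proposition~\ref{prop:ifs-entropy-localization}, $d_\theta=\widehat d_\theta$ on $[\theta_{\mathrm{loc}},1)$. Therefore the set in \eqref{eq:ifs-first-departure} is nonempty and $\theta_{\mathrm{tr}}\le\theta_{\mathrm{loc}}$.

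For $\theta\le\theta_{\mathrm{cross}}$ we have $\widehat d_\theta\le d_0<d_\theta$ by Proposition~\ref{prop:tree-basic-regimes}, so $d\ne\widehat d$ there. Continuity at $\theta_{\mathrm{cross}}$ makes this a strict inequality on a neighbourhood, which gives $\theta_{\mathrm{tr}}>\theta_{\mathrm{cross}}$.

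Suppose $d_\theta$ were analytic at $\theta_{\mathrm{tr}}$. Since $d=\widehat d$ on $[\theta_{\mathrm{tr}},1)$ and both functions are analytic near $\theta_{\mathrm{tr}}$ ($\widehat d$ is analytic on $(0,1)$), the identity theorem forces $d=\widehat d$ on a full neighbourhood of $\theta_{\mathrm{tr}}$. This contradicts the infimum definition. By continuity, $d=\widehat d$ also holds at $\theta_{\mathrm{tr}}$ itself. The argument needs only analyticity on a two-sided neighbourhood, which is exactly what is being negated, so no further input is required.

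\emph{Main obstacle.} The delicate part is Step 1: justifying $w_\theta\to1$ and the first-order expansion. The perspective term in $\mathcal C$ is singular at $w=1$, so I will not expand naively. Instead I will characterize $k'(1-)$ via \eqref{eq:ifs-k-prime}:
\[
k'(w)=\frac{h(v)}{\lambda+c\beta(v-z)}.
\]
With $\xi:=\lim (D-k)/(1-w)$, this gives the fixed-point relation
\[
\xi=\frac{h(\sigma(-c\beta\xi))}{\lambda+c\beta(\sigma(-c\beta\xi)-q)}.
\]
This relation is equivalent to \eqref{eq:ifs-limit-pressure} by the identity $h(\sigma(x))=\log(1+e^{x})-x\sigma(x)$.
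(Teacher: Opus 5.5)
The \emph{structure} of your proof is the paper's: strict concavity forces $w_\theta\uparrow1$, $k'(1-)$ is identified from \eqref{eq:ifs-k-prime} and $h(\sigma(y))=\log(1+e^y)-y\sigma(y)$, uniqueness comes from the derivative bound $c\beta-\lambda<0$, and the bracket and nonanalyticity follow from continuity and the identity theorem as you describe.

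\textbf{Gap: the proof of $s_*<d_0$ fails.} The inequality $\log(1+e^{-x})<c-x/2$ is false for every $x\neq0$. The function $x\mapsto\log(1+e^{-x})$ is strictly convex with tangent line $c-x/2$ at the origin; in fact
\[
 \log(1+e^{-x})=c-\frac x2+\log\cosh\frac x2 .
\]
Put $x=c\beta d_0$. The left side of \eqref{eq:ifs-limit-pressure} at $s=d_0$ then equals $\log\cosh(x/2)-x(1/2-q)$. The condition $q<1/2$ alone does not make this negative: it is positive whenever $1/2-q<x^{-1}\log\cosh(x/2)$.

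The missing input is quantitative: you need $D>d_0$ (Proposition~\ref{prop:tree-basic-regimes}, which you already use for the crossing). It gives $q=\sigma(-c\beta D)<\sigma(-x)$. Since $\sigma(-t)$ decreases in $t$,
\[
 \log(1+e^{-x})-c=-\int_0^x\sigma(-t)\,dt\le-x\sigma(-x).
\]
Hence the left side at $d_0$ is at most $x\{q-\sigma(-x)\}<0$. This is how the paper argues.

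\textbf{A smaller slip in the step $w_\theta\to1$.} You cite $D-k(w)>(1-w)k'(1-)$. That is the wrong side of the secant bound and does not rule out solutions of the limiting stationarity equation. What you need is $D-k(w)<(1-w)k'(w)$, i.e.\ $k(w)+(1-w)k'(w)>D$ for $w<1$.

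With this, the $\delta=1$ residual $F_1(w)=D-k(w)-(1-w)k'(w)$ is negative and continuous on $[0,1)$, so it is bounded away from zero on each $[0,1-\epsilon]$. The residuals $F_\delta(w)=D-\delta k(w)-(1-\delta w)k'(w)$ satisfy $F_\delta-F_1=(1-\delta)(k-wk')$. This difference is $O(1-\delta)$ uniformly, because $0\le k\le D$ and $0<k'\le c/a$. Hence $F_\delta<0$ on $[0,1-\epsilon]$ for $\delta$ near $1$, so $w_\delta>1-\epsilon$.

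With these two repairs the rest of your proof stands.
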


The proof is in Appendix~\ref{app:ifs-phases};
Section~\ref{sec:ifs-first-transition} gives the large-$M$ separation
from the continued profile collision.

\section{The first transition: relative entropy and sharp separation}
\label{sec:ifs-first-transition}

\subsection{Collision and the quantitative transition theorem}

Retain $\lambda=\log M$, $c=\log2$, and $D=d_1$, and put
\[
 \varepsilon_M=\frac{c}{\lambda},\qquad G(y)=\log(1+e^y).
\]
By Theorem~\ref{thm:tree-dimensions} and
Lemma~\ref{lem:tree-time-normalization}, the normalized source formula is
\begin{equation}\label{eq:ifs-transition-variational}
 d_\theta=
 \max_{\substack{f:[0,1]\to[0,1]\ \mathrm{measurable},\ 0<\tau<1\\
                  L_f(\tau)=\theta L_f(1)}}
       \min_{\tau\le t\le1}\frac{A_f(t)}{L_f(t)},
 \qquad 0<\theta<1.
\end{equation}
Here $A_f,L_f$ are given by
\eqref{eq:tree-entropy}--\eqref{eq:tree-scale};
$L_f'\ge\lambda-c\beta>0$ makes $\tau$ unique for each $f$.

For each $0<\theta<1$, let $\delta=1-\theta$ and let $w=w_\delta$ be
the stationary point in \eqref{eq:ifs-scalar-stationarity}. Write
$k=k(w)$, $s=\widehat d_\theta=k'(w)$, and use
$p,q,z,v$ from \eqref{eq:ifs-four-frequencies}. Put
\[
 \begin{aligned}
 L_B&=\lambda+c\beta(1-\beta)(q-p),&
 \Lambda&=\lambda+c\beta(v-z),\\
 R_*&=c\beta\{(1-\beta)(p-z)+v-q\},&
 \Gamma&=\Lambda-\delta R_*.
 \end{aligned}
\]
Thus $R_*$ is the quantity denoted by $R$ in
Theorem~\ref{thm:ifs-analytic-optimizer}, and
\begin{equation}\label{eq:ifs-transition-time}
 \tau_*(\theta)=1-\frac{\delta L_B}{\Gamma}.
\end{equation}
The denominator obeys $\Gamma\ge\lambda-c\beta>0$.
Whenever $\tau_*\ge\beta$, the four frequencies on the partition
$0,\beta\tau_*,\beta,\tau_*,1$ define a feasible profile $f_*$.
The attainment calculation in Theorem~\ref{thm:ifs-analytic-optimizer}
then gives
\[
 A_{f_*}(t)=sL_{f_*}(t)\quad(\tau_*\le t\le1),\qquad
 d_\theta\ge\widehat d_\theta.
\]
Call $\tau_*(\theta)=\beta$ a \emph{collision}.
Retain $\theta_{\mathrm{tr}}$ from \eqref{eq:ifs-first-departure}.

For $w_\beta=(1+\sqrt\beta)^{-1}$, define
\begin{align}
 B_\beta&=\frac{c\beta^2(1-\beta)}4
            [1-(2-\beta)w_\beta],&
 \mathcal H_\beta&=\frac{c^2\beta w_\beta^3(4-w_\beta)}{32},
 \label{eq:ifs-transition-BH}\\
 J_\beta&=\beta^2(1-\beta)^4+
       \frac{\beta^4(1-\beta)}{1-w_\beta},&
 \kappa_\beta&=\frac{\mathcal H_\beta J_\beta}{8}>0.
 \notag
\end{align}

\begin{theorem}[Sharp separation of departure and collision]
\label{thm:ifs-first-transition}
For every nonempty compact $K\subset(0,1)$, there is $M_K\ge100$
such that, for $\beta\in K$ and $M\ge M_K$, the continued profile has
a unique collision parameter $\theta_{\mathrm{col}}\in(0,1)$. Uniformly
for $\beta\in K$, as $M\to\infty$,
\begin{align}
 \theta_{\mathrm{col}}
   &=\beta+B_\beta\varepsilon_M^2+O_K(\varepsilon_M^4),
   \label{eq:ifs-transition-collision}\\
 \theta_{\mathrm{tr}}-\theta_{\mathrm{col}}
   &=\kappa_\beta\varepsilon_M^4+O_K(\varepsilon_M^6).
   \label{eq:ifs-transition-sharp-gap}
\end{align}
Moreover,
$d_{\theta_{\mathrm{col}}}>\widehat d_{\theta_{\mathrm{col}}}$.
In particular, $\theta_{\mathrm{tr}}>\theta_{\mathrm{col}}$ and
$\theta_{\mathrm{tr}}=\beta+B_\beta\varepsilon_M^2+O_K(\varepsilon_M^4)$.
\end{theorem}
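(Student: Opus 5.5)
The plan is an asymptotic analysis in the small parameter $\varepsilon_M=c/\lambda$, uniform for $\beta\in K$, in three stages: collision, lower bound on the departure, upper bound on the departure.

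\emph{Collision.} First rescale everything on the analytic branch. From \eqref{eq:ifs-box-pressure}, $D=\varepsilon_M\bigl(1+O(\varepsilon_M)\bigr)$, so every exponent $cD$, $c(D-k)$ is $O(\varepsilon_M)$. Hence $p,q,z,v$ are $1/2$ plus explicit linear corrections in $\varepsilon_M$.

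Next expand $\mathcal C(k,w)$ from \eqref{eq:ifs-scalar-pressure} in powers of $\varepsilon_M$. Writing $k(w)=D\kappa(w)$, the leading-order profile $\kappa_0$ solves an explicit quadratic relative-entropy equation; the term $h(1/2+x)=c-2x^2+O(x^4)$ produces it. Solving \eqref{eq:ifs-scalar-stationarity} at leading order gives $w_\delta$ and, near $\delta=1-\beta$, the value $w_\beta=(1+\sqrt\beta)^{-1}$.

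Substituting into \eqref{eq:ifs-transition-time}, we have $\tau_*(\theta)=1-\delta L_B/\Gamma$ with $L_B/\Gamma=1+O(\varepsilon_M^2)$. So $\tau_*(\theta)-\beta=(\theta-\beta)-\delta\cdot\bigl(\text{explicit } \varepsilon_M^2 \text{ term}\bigr)+O(\varepsilon_M^4)$. Parity in $\varepsilon_M$ comes from the symmetry $h(1/2+x)=h(1/2-x)$. The derivative in $\theta$ is $1+O(\varepsilon_M^2)$, so the implicit function theorem gives a unique collision with expansion \eqref{eq:ifs-transition-collision}, and $B_\beta$ can be read off.

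\emph{Lower bound on the departure.} At $\theta$ slightly above $\theta_{\mathrm{col}}$ the partition $0,\beta\tau,\beta,\tau,1$ is still valid. Since $\theta_{\mathrm{tr}}$ is the first departure, it suffices to show $d_\theta>\widehat d_\theta$ for $\theta_{\mathrm{col}}\le\theta<\theta_{\mathrm{col}}+\kappa_\beta\varepsilon_M^4-C\varepsilon_M^6$.

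For this I would build an explicit equalizing competitor. Take the continued profile and perturb the frequencies on cells adjacent to $\beta\tau$ and $\beta^2$; these are the extra orbit points that appear once $\tau$ approaches $\beta$. Choose the perturbation with $A_f=sL_f$ on the whole window, using the affine structure from Lemma~\ref{lem:tree-orbit-averaging}. A second-order expansion of the gain gives a positive quantity of order $\varepsilon_M^4$ times $(\theta_{\mathrm{col}}+\kappa_\beta\varepsilon_M^4-\theta)$. The constants $\mathcal H_\beta$ (the entropy Hessian at $w_\beta$) and $J_\beta$ (a quadratic form in the delayed-frequency coefficients) should emerge here. This also gives $d_{\theta_{\mathrm{col}}}>\widehat d_{\theta_{\mathrm{col}}}$.

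\emph{Upper bound on the departure.} For $\theta\ge\theta_{\mathrm{col}}+\kappa_\beta\varepsilon_M^4+C\varepsilon_M^6$, I would show $d_\theta\le\widehat d_\theta$ by a four-node relative-entropy certificate. This extends the weighted endpoint argument in Theorem~\ref{thm:ifs-analytic-optimizer} to profiles with $\tau$ possibly below $\beta$, where Lemma~\ref{lem:ifs-initial-time} no longer applies. The four nodes are $\tau,\beta,1$ and $\beta^{-1}\tau$ or $\beta\tau$, with weights chosen by Proposition~\ref{prop:ifs-pressure-duality}, that is, nonpositive endpoint pressures on each ordering interval.

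On the region $\tau\ge\beta$ the existing certificate works. On $\tau<\beta$, I would bound the cellwise pressure $\sum l_i\mathcal L(W_i,K_i)$ by expanding $G(y)=\log(1+e^y)$ to fourth order. The quadratic terms cancel by construction. The remaining deficit is a negative definite quadratic in $(\beta-\tau)$ plus a linear term of sign $\theta-\theta_{\mathrm{col}}-\kappa_\beta\varepsilon_M^4$. Hoeffding-type control $c-h(1/2+x)\ge2x^2$ handles cells far from $1/2$.

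\emph{Main obstacle.} The main obstacle is this upper certificate. Uniform control of the $O(\varepsilon_M^6)$ remainders across all ordering intervals, with the extra node at $\beta\tau$ or $\beta^2$, is delicate, and the constant must match $\kappa_\beta$ exactly. I would first verify the match by computing the Schur complement of the local quadratic model in $(\beta-\tau,\text{frequency shifts})$. Only then would I certify the global remainder by compactness in $K$.
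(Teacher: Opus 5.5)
Your collision step is essentially the paper's, with two repairs needed. First, the stationarity equation \eqref{eq:ifs-scalar-stationarity} vanishes identically at order $\varepsilon_M$, since there $D\approx\varepsilon_M$ and $k(w)\approx\varepsilon_M w$. So $w$ is determined only after dividing by $\varepsilon_M^3$, through $-1+2w-(1-\theta)w^2=0$. Second, uniqueness of the collision needs $\sup_{0<\theta<1}|\tau_*(\theta)-\theta|=O(\varepsilon_M^2)$. That requires the concavity bound $(D-k)/(1-w)\le D$ as $w\uparrow1$, not only the implicit function theorem near $\beta$.

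The genuine gap is in both departure bounds: neither contains the mechanism that produces $\kappa_\beta$. The two-node identity \eqref{eq:ifs-transition-low-certificate} gives $S_f=(\beta-\tau)_+\chi-\int_0^1W\KL(f\Vert g_\tau)\,d\xi$. Hence no feasible profile with initial time $\tau\ge\beta$ beats $\widehat d_\theta$, at any $\theta$. Perturbing the frequencies of the continued profile on its own partition $0,\beta\tau,\beta,\tau,1$ therefore can never give $d_\theta>\widehat d_\theta$. Also, an equalizer with $A_f=sL_f$, $s=\widehat d_\theta$, has ratio exactly $s$; you need $A_f-sL_f\equiv C_u>0$.

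The competitor must push the initial time \emph{below} $\beta$, to $\tau=\beta-t\varepsilon_M^4$. This opens the thin cell $(\beta\tau,\beta^2)$. Its equalizing frequencies come from applying the transfer map $T_s$ of Lemma~\ref{lem:ifs-equalizing-recursion} twice to a terminal frequency lowered by $O(\varepsilon_M^3)$, and the move gains $u\,\Xi\approx\mathcal H_\beta\varepsilon_M^2u$. With $\theta=\theta_{\mathrm{col}}+\zeta\varepsilon_M^4$, the physical constraint then forces a displacement $b+mu=c(\zeta+t)\varepsilon_M^3$ of the first and terminal frequencies, at cost $2(\zeta+t)^2\varepsilon_M^6/J_\beta$. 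The net deficit is
$\varepsilon_M^6[\mathcal H_\beta t-2(\zeta+t)^2/J_\beta]=\varepsilon_M^6[\mathcal H_\beta(\kappa_\beta-\zeta)-2(t+\zeta-2\kappa_\beta)^2/J_\beta]$,
and this is where $\kappa_\beta=\mathcal H_\beta J_\beta/8$ comes from. The attainable gain is thus $\mathcal H_\beta\varepsilon_M^2(\theta_{\mathrm{col}}+\kappa_\beta\varepsilon_M^4-\theta)$ in deficit units. Note that $\mathcal H_\beta$ is the leading coefficient of the four-node pressure $\Xi$, not an entropy Hessian.

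For the upper bound, choosing weights ``by Proposition~\ref{prop:ifs-pressure-duality}'' is circular: nonpositive endpoint pressures exist exactly when $d_\theta\le s$, and the proposition exhibits none. The explicit four-node weights give $S_f^\sharp=(\beta-\tau)\Xi-I_f^\sharp$ with $\Xi>0$, which excludes nothing by itself. Fixed weights also give a pressure affine in $\tau$ on the ordering interval $[\beta^2,\beta]$, so a regional certificate cannot yield your ``negative definite quadratic in $\beta-\tau$''. Your predicted structure is off as well. In the true deficit, the linear coefficient $\mathcal H_\beta-4\zeta/J_\beta$ changes sign at $2\kappa_\beta$. The constant $-2\zeta^2/J_\beta$, which is the cost of the mismatch $b$ already at $\tau=\beta$, is indispensable.

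What is missing is a primal localization argument in four steps.
(i) The two-node bound \eqref{eq:ifs-transition-global-cost}, with $m\gtrsim\varepsilon_M^{-1}$ and $\chi=O(\varepsilon_M^2)$, forces $u=O(\varepsilon_M^4)$ for any improving profile. This disposes of all other ordering intervals at once.
(ii) Positivity of every node deficit in the four-node identity gives $I_f^\sharp=O(\varepsilon_M^6)$, $0<E_f(\tau)=O(\varepsilon_M^6)$ and $\|f-g^\sharp\|_2=O(\varepsilon_M^3)$.
(iii) The tilt on the long middle cell $(\beta^2,\tau)$ is $-c(1-w_\beta)\varepsilon_M+O(\varepsilon_M^3)$. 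So the two-sided bound on $E_f(\tau)$ pins $\int_{(\beta^2,\tau)}(f-g^\sharp)=O(\varepsilon_M^5)$, and the forced displacement must be carried by $(0,\beta\tau)\cup(\beta,\tau/\beta)$.
(iv) Pinsker and weighted Cauchy--Schwarz on those cells, with variance $c^2J_\beta$, give $I_f^\sharp\ge2(\zeta+t)^2\varepsilon_M^6/J_\beta-O(\varepsilon_M^8)$.

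Step (iii) is what makes the result sharp. Without it, Cauchy--Schwarz must include the middle cell, where $\mathfrak a\ne0$. That inflates the variance and excludes improvement only above a threshold strictly larger than $\kappa_\beta$, so the two bounds would not meet. Expanding $G$ to fourth order supplies none of (i)--(iv).
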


The coefficient of $(\log M)^{-4}$ in the separation is $c^4\kappa_\beta$.
Nonanalyticity at $\theta_{\mathrm{tr}}$ follows from
Proposition~\ref{prop:ifs-genuine-transition}.

\subsection{Equalization along delay orbits}

We use the following entropy recursion; compare
\cite{BanajiKolossvary2024} for Bedford--McMullen carpets.

\begin{lemma}[Equalizing frequency chains]
\label{lem:ifs-equalizing-recursion}
For $s>0$, put
\begin{equation}\label{eq:ifs-frequency-transfer}
 T_s(x)=x+\frac{\lambda-h(x)/s}{c\beta},\qquad 0\le x\le1.
\end{equation}
The deficit $A_f-sL_f$ is constant on an interval if and only if
\begin{equation}\label{eq:ifs-frequency-recursion}
 f(\beta t)=T_s(f(t))
\end{equation}
almost everywhere there. In particular, this recursion and equality
$A_f=sL_f$ at one point imply equality throughout the interval.
The map $T_s$ is strictly convex on $(0,1)$, has its minimum at
$(1+e^{sc\beta})^{-1}$, and satisfies $T_s(x)>x$ whenever
$s>d_0=c/\lambda$.
\end{lemma}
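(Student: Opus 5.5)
The plan is to differentiate the deficit directly and then treat the three properties of $T_s$ as elementary calculus. By \eqref{eq:tree-entropy}--\eqref{eq:tree-scale}, the functions $A_f$ and $L_f$ are Lipschitz, and almost everywhere
\[
 (A_f-sL_f)'(t)=h(f(t))-s\lambda-sc\beta\{f(t)-f(\beta t)\}.
\]
This uses $L_f'(t)=\lambda+c\beta\{f(t)-f(\beta t)\}$, recorded after \eqref{eq:tree-scale}. A Lipschitz function is constant on an interval exactly when its derivative vanishes almost everywhere there. Setting the derivative to zero and dividing by $sc\beta>0$ gives
\[
 f(\beta t)=f(t)+\frac{\lambda-h(f(t))/s}{c\beta}=T_s(f(t)),
\]
which is \eqref{eq:ifs-frequency-recursion}. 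Each step is reversible, so the equivalence holds almost everywhere on the interval. The "in particular" clause follows at once: a constant deficit that vanishes at one point vanishes identically, so $A_f=sL_f$ on the whole interval.

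For the properties of $T_s$, I will use $h''(x)=-1/(x(1-x))<0$ on $(0,1)$. This gives $T_s''=-h''/(sc\beta)>0$, so $T_s$ is strictly convex. Next,
\[
 T_s'(x)=1-\frac{h'(x)}{sc\beta},\qquad h'(x)=\log\frac{1-x}{x}.
\]
The derivative $T_s'$ vanishes exactly when $\log((1-x)/x)=sc\beta$, that is, at $x=(1+e^{sc\beta})^{-1}$. By strict convexity this critical point is the unique minimum. Finally, $h\le c=\log 2$ everywhere, so
\[
 T_s(x)-x=\frac{\lambda-h(x)/s}{c\beta}\ge\frac{\lambda-c/s}{c\beta},
\]
and the right side is positive precisely when $s>c/\lambda=d_0$.

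I do not expect a real obstacle. The only delicate point is that $f(\beta t)$ enters the derivative, so the recursion relates $f$ on the interval to $f$ on its $\beta$-image. The lemma should therefore be read as a statement about almost every $t$ in the interval, with no constraint on $\beta t$ beyond this relation.
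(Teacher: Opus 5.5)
Your proposal is correct and follows essentially the same route as the paper. The paper likewise writes the a.e.\ derivative of the deficit as $sc\beta\{f(\beta t)-T_s(f(t))\}$ and reads off convexity, the location of the minimum, and $T_s(x)>x$ from $T_s'$, $T_s''=1/(sc\beta x(1-x))$ and $h\le c$.
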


\begin{proof}
Absolute continuity gives
\[
 \begin{aligned}
 (A_f-sL_f)'(t)
 &=h(f(t))-s\{\lambda+c\beta(f(t)-f(\beta t))\}\\
 &=sc\beta\{f(\beta t)-T_s(f(t))\}
 \end{aligned}
\]
almost everywhere, proving \eqref{eq:ifs-frequency-recursion}.
The remaining assertions follow from
\[
 T_s'(x)=1-\frac{\log((1-x)/x)}{sc\beta},\qquad
 T_s''(x)=\frac1{sc\beta x(1-x)}>0,
\]
and $h(x)\le c$.
\end{proof}

\subsection{Relative-entropy bounds and global exclusion}
\label{sec:ifs-transition-exclusion}

For $0\le p\le1$ and $0<q<1$, write
\[
 \KL(p\Vert q)=p\log(p/q)+(1-p)\log((1-p)/(1-q)),
\]
with the continuous values at $p=0,1$, and put
$\operatorname{logit}(r)=\log(r/(1-r))$. The entropy identity is
\begin{equation}\label{eq:ifs-transition-tangent}
 h(p)=h(q)+h'(q)(p-q)-\KL(p\Vert q).
\end{equation}
For $X\sim\operatorname{Ber}(q)$, put
\[
 \psi_q(t)=\log\mathbb E\exp(t(X-q)),\qquad
 q_t=\sigma\bigl(\log(q/(1-q))+t\bigr).
\]
Since $\psi_q''(t)=q_t(1-q_t)\le1/4$ and
$\psi_q(0)=\psi_q'(0)=0$, the centered Bernoulli exponential-moment
bound~\cite{Hoeffding1963} gives $\psi_q(t)\le t^2/8$.
The tilted likelihood ratio therefore
gives, for every $t\in\mathbb R$,
\begin{equation}\label{eq:ifs-transition-moment-bound}
 \KL(p\Vert q)
 =\KL(p\Vert q_t)+t(p-q)-\psi_q(t)
 \ge t(p-q)-\frac{t^2}{8}.
\end{equation}
\begin{lemma}[A weighted relative-entropy bound]
\label{lem:ifs-transition-weighted-cost}
Let $f:[0,1]\to[0,1]$ and $g:[0,1]\to(0,1)$ be measurable.
Let $W$ be bounded, measurable and positive almost everywhere, and let
$\mathfrak a$ be a real measurable function such that
\[
 0<\int_0^1\frac{\mathfrak a(\xi)^2}{W(\xi)}\dd\xi<\infty.
\]
Then
\begin{equation}\label{eq:ifs-transition-weighted-cost}
 \int_0^1W(\xi)\KL(f(\xi)\Vert g(\xi))\dd\xi
 \ge
 \frac{2\left(\int_0^1\mathfrak a(\xi)
                    (f(\xi)-g(\xi))\dd\xi\right)^2}
      {\int_0^1\mathfrak a(\xi)^2/W(\xi)\dd\xi}.
\end{equation}
The left-hand side is an extended integral.
\end{lemma}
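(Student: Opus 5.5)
The plan is to integrate the pointwise moment bound \eqref{eq:ifs-transition-moment-bound} against $W$, using a spatially varying tilt $t(\xi)=\mu\,\mathfrak a(\xi)/W(\xi)$ with a scalar $\mu$, and then to optimize over $\mu$. This is the Cauchy--Schwarz dual of a Pinsker-type inequality. Before that, two simple reductions. First, if the left-hand side of \eqref{eq:ifs-transition-weighted-cost} is $+\infty$, there is nothing to prove. Second, the right-hand side is well defined: by Cauchy--Schwarz, $\int|\mathfrak a||f-g|\le(\int\mathfrak a^2/W)^{1/2}(\int W)^{1/2}$ is finite, because $W$ is bounded and $|f-g|\le1$.

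Fix $\mu\in\R$. For almost every $\xi$ we have $W(\xi)>0$, and \eqref{eq:ifs-transition-moment-bound} with $p=f(\xi)$, $q=g(\xi)\in(0,1)$ and $t=\mu\mathfrak a(\xi)/W(\xi)$ gives
\[
 W(\xi)\KL(f(\xi)\Vert g(\xi))
 \ge\mu\,\mathfrak a(\xi)(f(\xi)-g(\xi))-\frac{\mu^2\mathfrak a(\xi)^2}{8W(\xi)}.
\]
The right side is integrable by the finiteness just checked. The left side is nonnegative, so its integral makes sense as an extended integral. Integrating gives
\[
 \int_0^1W\KL(f\Vert g)\ge\mu X-\frac{\mu^2}{8}Y,\qquad
 X=\int_0^1\mathfrak a(f-g),\quad Y=\int_0^1\frac{\mathfrak a^2}{W}\in(0,\infty).
\]
Choosing $\mu=4X/Y$ maximizes the quadratic in $\mu$ and gives the value $4X^2/Y-2X^2/Y=2X^2/Y$, which is exactly \eqref{eq:ifs-transition-weighted-cost}.

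The only delicate step is measurability and integrability. The composition $\KL(f\Vert g)$ is measurable, since $\KL$ is continuous on $[0,1]\times(0,1)$ with its boundary values. The pointwise inequality only needs to hold almost everywhere, on the set where $W>0$. Points where $\mathfrak a$ is large relative to $W$ cause no trouble: the quadratic term is integrable because $Y<\infty$, so the pointwise bound integrates termwise.
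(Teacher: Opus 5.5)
Your proof is correct, but it combines the two ingredients in the opposite order from the paper.

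The paper first uses the moment bound \eqref{eq:ifs-transition-moment-bound} pointwise with the $f$-dependent tilt $t=4(p-q)$. This gives Pinsker's inequality $\KL(p\Vert q)\ge2(p-q)^2$. The paper then applies weighted Cauchy--Schwarz, $\bigl(\int\mathfrak a(f-g)\bigr)^2\le\bigl(\int\mathfrak a^2/W\bigr)\bigl(\int W(f-g)^2\bigr)$.

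You instead take the tilt $t(\xi)=\mu\,\mathfrak a(\xi)/W(\xi)$, which does not depend on $f$, integrate once, and optimize over the scalar $\mu$. This is the variational (dual) form of the same Cauchy--Schwarz step; the choice $\mu=4X/Y$ corresponds to its equality case $f-g\propto\mathfrak a/W$. Your integrability check is what makes the termwise integration legitimate, including when the left side is infinite: $\mathfrak a(f-g)$ is integrable because $Y<\infty$ and $\int_0^1W<\infty$, and the left integrand is nonnegative.

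The paper's route yields, as a by-product, the $\mathfrak a$-free bound $\int W\KL(f\Vert g)\ge2\int W(f-g)^2$. That bound is used again independently in the proof of Theorem~\ref{thm:ifs-first-transition}, for instance $2c_K\|f-g^\sharp\|_2^2\le I_f^\sharp$. Your route is a one-step derivation from the exponential-moment bound, with the tilt linear in the test function rather than in the unknown profile. That keeps the argument in the same Legendre-duality form as the paper's entropy certificates.
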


\begin{proof}
Take $t=4(p-q)$ in \eqref{eq:ifs-transition-moment-bound} to obtain
$\KL(p\Vert q)\ge2(p-q)^2$. Cauchy--Schwarz gives
\[
 \begin{aligned}
 \left|\int_0^1\mathfrak a(f-g)\dd\xi\right|^2
 &\le\left(\int_0^1\frac{\mathfrak a^2}{W}\dd\xi\right)
       \left(\int_0^1W(f-g)^2\dd\xi\right)\\
 &\le\frac12\left(\int_0^1\frac{\mathfrak a^2}{W}\dd\xi\right)
       \left(\int_0^1W\KL(f\Vert g)\dd\xi\right).
 \end{aligned}
\]
The first bound also makes the numerator finite.
\end{proof}

Fix $0<\theta<1$, with the continued-branch quantities defined above,
and put
\[
 \begin{aligned}
 E_f(t)&=A_f(t)-sL_f(t),&
 R_f&=L_f(\tau)-\theta L_f(1),\\
 \eta&=\frac{s-D}{\delta}=sw-k,&
 \varphi(r)&=-\log(1-r).
 \end{aligned}
\]
For $\tau\in(0,1)$, feasibility means $R_f=0$. Define
\begin{equation*}
\begin{aligned}
 y&=\sigma\left(\frac{c(1-\beta)(D-k)}{1-w}\right),\qquad
 b=\Gamma(\tau_*-\beta),\\
 m&=\lambda+c(1-\beta)\{\beta(z-p)
                -\theta[y-\beta p-(1-\beta)z]\},\\
 \chi&=\varphi(q)-\varphi(z)+(1-w)\{\varphi(y)-\varphi(v)\}.
\end{aligned}
\end{equation*}
The bounds
$m\ge\lambda-c(1-\beta)>0$ and
$\Gamma\ge\lambda-\beta c>0$ hold throughout the parameter range.
Here $b$ denotes the physical mismatch.

\begin{proposition}[A sufficient certificate for global optimality]
\label{prop:ifs-transition-exclusion}
For any $0<\beta<1$, $M\ge100$ and $0<\theta<1$, if
\begin{equation}\label{eq:ifs-transition-exclusion-test}
 \tau_*\ge\beta,\qquad 8bm(1-w)\ge c^2\chi,
\end{equation}
then $d_\theta=s=\widehat d_\theta$. If the second inequality is
strict, no maximizing profile has initial time $\tau<\beta$.
\end{proposition}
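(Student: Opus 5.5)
The plan is to prove the two inequalities $d_\theta\ge s$ and $d_\theta\le s$ separately, and to split the upper bound according to whether the initial time satisfies $\tau\ge\beta$ or $\tau<\beta$.

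\emph{Lower bound.} This should be immediate. Under $\tau_*\ge\beta$ the four-frequency profile $f_*$ on the partition $0,\beta\tau_*,\beta,\tau_*,1$ is feasible. By the attainment computation in Theorem~\ref{thm:ifs-analytic-optimizer} it satisfies $A_{f_*}=sL_{f_*}$ on $[\tau_*,1]$, so $d_\theta\ge s=\widehat d_\theta$.

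\emph{Upper bound for $\tau\ge\beta$.} For a feasible profile (so $R_f=0$) I will consider the weighted deficit
\[
 \mathcal W(f)=wE_f(\tau)+(1-w)E_f(1)+\eta R_f .
\]
The certificate in the proof of Theorem~\ref{thm:ifs-analytic-optimizer} used only $\tau\ge\beta$ and the scalar identity $\mathcal C(k,w)=0$, not the localization $\theta\ge\theta_{\mathrm{safe}}$. It therefore gives $\mathcal W(f)\le0$ verbatim. Since the weights $w,1-w$ are positive and $R_f=0$, some endpoint has $E_f\le0$, hence $\min A_f/L_f\le s$.

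\emph{Upper bound for $\tau<\beta$.} Here I would still aim to show $\mathcal W(f)\le0$, but the cell structure changes. The relevant cells become $(0,\beta\tau)$, $(\beta\tau,\tau)$, $(\tau,\beta)$, $(\beta,1)$, and a different set of frequency coefficients arises. I would proceed in four steps.
\begin{enumerate}
\item Write $\mathcal W(f)$ as an integral of $W(\xi)h(f)+K(\xi)f$ minus $\lambda$ times the linear length terms, with $W\in\{1,1-w\}$. The optimal cellwise frequencies $\sigma(K/W)$ are exactly the reference values $p,z,q,v$, and also $y=\sigma(c(1-\beta)(D-k)/(1-w))$ on the new cell where the delayed image of the window falls.
\item Apply identity \eqref{eq:ifs-transition-tangent} cellwise. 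This gives
\[
 \mathcal W(f)=\Pi(\tau)-\int W\,\KL(f\Vert g),
\]
where $g$ is the piecewise-constant reference and $\Pi(\tau)$ is the affine pressure at $\tau$.
\item Compare $\Pi(\tau)$ with $\Pi$ on the old region. The pressure equation makes the old region contribute zero. The extra contribution is linear in $\beta-\tau$, with coefficient given by differences of $G=\log(1+e^{\cdot})$ at the reference logits. Using $G(\operatorname{logit}r)=\varphi(r)$, this coefficient is exactly $\chi$, so $\Pi(\tau)\le\chi(\beta-\tau)$ up to the sign conventions.
\item Use feasibility $R_f=0$. Evaluated at the reference profile, the constraint is violated by an amount comparable to the mismatch $b=\Gamma(\tau_*-\beta)$ plus $m(\beta-\tau)$. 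Here $m$ is the rate at which the physical residual moves with $\tau$, read off from the coefficient formula for $\mathcal R$. Hence $\int\mathfrak a\,(f-g)\ge b+m(\beta-\tau)$ for the residual weight $\mathfrak a$, which is $c$ times a step function.
\end{enumerate}

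\emph{Combining.} Lemma~\ref{lem:ifs-transition-weighted-cost}, applied with this $\mathfrak a$ and $\int\mathfrak a^2/W\lesssim c^2/(1-w)$ on the relevant support, gives
\[
 \int W\,\KL(f\Vert g)\ge\frac{2(1-w)\bigl(b+m(\beta-\tau)\bigr)^2}{c^2}.
\]
With $x=\beta-\tau>0$ we then need $\chi x\le 2(1-w)(b+mx)^2/c^2$. The right side is at least $8(1-w)bmx/c^2$ by $(b+mx)^2\ge4bmx$, which is precisely hypothesis \eqref{eq:ifs-transition-exclusion-test}. Strictness of that inequality makes $\mathcal W(f)<0$, so no maximizer has $\tau<\beta$.

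The main obstacle is step 3 together with the exact form of $\mathfrak a$ in step 4. Getting the pressure increment to equal exactly $\chi$, and the residual slope exactly $m$, with constants matching $8$ and $c^2$, will require careful bookkeeping of which cells the delay map sends into the window. If the constants do not match directly, I would first recheck the bound on $\int\mathfrak a^2/W$, since that is where $1-w$ and $c^2$ enter.
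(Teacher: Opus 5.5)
Your proposal is correct and follows the paper's proof. The paper writes $S_f=J_\tau-\int_0^1 W\KL(f\Vert g_\tau)\,d\xi$, where for $\tau<\beta$ the reference $g_\tau$ has frequencies $(p,z,y,v)$ (your step 1 lists $q$ as well, but $q$ appears only when $\tau\ge\beta$). The bookkeeping you left as the main obstacle comes out as exact identities rather than inequalities: $J_\tau=(1-\tau)\mathcal C(k,w)+(\beta-\tau)_+\chi=(\beta-\tau)_+\chi$ and $R_{g_\tau}=-mu-b$. The paper then applies Lemma~\ref{lem:ifs-transition-weighted-cost} with $0<\int\mathfrak a^2/W\le c^2/(1-w)$, which holds because $|\mathfrak a|\le c$, $W\ge 1-w$, and $\mathfrak a=-c\theta\beta\ne0$ on $(\beta,1)$, and finishes with $(b+mu)^2\ge4bmu$. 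The only cosmetic difference is the case $\tau\ge\beta$: the paper handles it through the same unified identity (where $J_\tau=0$), whereas you quote the cellwise certificate of Theorem~\ref{thm:ifs-analytic-optimizer}, which, as you correctly note, uses only $\tau\ge\beta$, stationarity, and $\mathcal C(k,w)=0$.
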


\begin{proof}
For an arbitrary measurable profile $f:[0,1]\to[0,1]$, the stationary
equation \eqref{eq:ifs-scalar-stationarity} gives
\begin{equation*}
\begin{aligned}
 S_f&:=wE_f(\tau)+(1-w)E_f(1)+\eta R_f\\
 &=wA_f(\tau)+(1-w)A_f(1)-kL_f(\tau)-(D-k)L_f(1).
\end{aligned}
\end{equation*}
Let $W=\mathbf1_{[0,\tau]}+(1-w)\mathbf1_{(\tau,1]}$.
Define $g_\tau$ by its frequencies and partition endpoints:
\[
 \begin{array}{c|c|c}
 &\text{frequencies}&\text{endpoints}\\
 \tau\ge\beta &(p,z,q,v)&(0,\beta\tau,\beta,\tau,1)\\
 \tau<\beta &(p,z,y,v)&(0,\beta\tau,\tau,\beta,1).
 \end{array}
\]
Omit zero-length cells.

On each cell, the frequency coefficient equals
$W\operatorname{logit}(g_\tau)$.
Since $G(\operatorname{logit}(r))=\varphi(r)$,
\eqref{eq:ifs-transition-tangent} gives
\begin{equation}\label{eq:ifs-transition-low-certificate}
 S_f=J_\tau-\int_0^1W\KL(f\Vert g_\tau)\dd\xi,
\end{equation}
where, writing $x_+=\max\{x,0\}$,
\[
\begin{aligned}
 J_\tau={}&-\lambda(k\tau+D-k)+\beta\tau\varphi(p)
       +(\min\{\tau,\beta\}-\beta\tau)\varphi(z)\\
 &+(\tau-\beta)_+\varphi(q)
 +(1-w)\{(\beta-\tau)_+\varphi(y)
                    +(1-\max\{\tau,\beta\})\varphi(v)\}\\
 ={}&(1-\tau)\mathcal C(k,w)+(\beta-\tau)_+\chi
       =(\beta-\tau)_+\chi.
\end{aligned}
\]
Here $\mathcal C(k,w)=0$ by \eqref{eq:ifs-scalar-pressure}.
Thus $S_f\le0$ for $\tau\ge\beta$.

For $\tau<\beta$, put $u=\beta-\tau$ and
$\bar p=\beta p+(1-\beta)z$. Direct integration and
\eqref{eq:ifs-transition-time} yield
\[
\begin{aligned}
 R_{g_\tau}
 &=\tau\{\lambda+c\beta(1-\beta)(z-p)\}\\
 &\quad-\theta\{\lambda+c(1-\beta)
                       [\tau(y-\bar p)+\beta(v-y)]\}\\
 &=m(\tau-\beta)+\delta L_B-(1-\beta)\Gamma=-mu-b.
\end{aligned}
\]
The coefficient of $f$ in the physical constraint $R_f$ is
\begin{equation*}
\begin{aligned}
 \mathfrak a(\xi)
 &=c\{\beta\mathbf1_{[0,\tau]}(\xi)
      -\mathbf1_{[0,\beta\tau]}(\xi)
      -\theta\beta+\theta\mathbf1_{[0,\beta]}(\xi)\},\\
 &\hspace{35mm}|\mathfrak a(\xi)|\le c.
\end{aligned}
\end{equation*}
If $R_f=0$, then
\[
 \int_0^1\mathfrak a(f-g_\tau)\dd\xi
       =R_f-R_{g_\tau}=b+mu,\qquad
 0<\int_0^1\frac{\mathfrak a^2}{W}\dd\xi
       \le\frac{c^2}{1-w}.
\]
Since $\mathfrak a=-c\theta\beta$ on $(\beta,1)$ and $b\ge0$,
\eqref{eq:ifs-transition-weighted-cost} and
$(b+mu)^2\ge4bmu$ give
\begin{equation}\label{eq:ifs-transition-global-cost}
 S_f\le\chi u-\frac{2(1-w)}{c^2}(b+mu)^2
 \le u\left[\chi-\frac{8bm(1-w)}{c^2}\right]\le0.
\end{equation}
Since $R_f=0$ and $S_f\le0$, every feasible pair satisfies
\[
 \min_{\tau\le t\le1}\frac{A_f(t)}{L_f(t)}
 \le\min\left\{\frac{A_f(\tau)}{L_f(\tau)},
                  \frac{A_f(1)}{L_f(1)}\right\}
 \le s.
\]
Equality is attained by $f_*$. If the second inequality in
\eqref{eq:ifs-transition-exclusion-test} is strict, $\tau<\beta$ gives
$S_f<0$, excluding a maximizer.
\end{proof}

\subsection{A four-node certificate}
\label{sec:ifs-transition-four-node}

For $\beta^2<\tau<\beta$, the times $\beta,\tau/\beta$ lie in the
covering window. Set
\[
 \alpha=\frac{w_\beta^2}{2},\qquad
 \gamma=(1-w_\beta)(w_\beta-\alpha),\qquad x=T_s(z),
\]
where $T_s$ is the transfer map from
\eqref{eq:ifs-frequency-transfer}. Differentiating
\eqref{eq:ifs-scalar-pressure} gives $T_s(v)=z$.
For $w>\alpha$ and $1-w>\gamma$, define
\begin{equation}\label{eq:ifs-transition-new-tilts}
\begin{aligned}
 \widetilde x&=\sigma(\operatorname{logit}(z)+\alpha sc),\\
 \widetilde z&=\sigma\left(
 \frac{(1-w)\operatorname{logit}(y)-sc(\beta\alpha+\gamma)}
      {1-w+\alpha}\right),\\
 \widetilde v&=\sigma\left(
 \frac{(1-w)\operatorname{logit}(v)+\gamma sc\beta}
      {1-w-\gamma}\right).
\end{aligned}
\end{equation}

\begin{proposition}[A four-node relative-entropy identity]
\label{prop:ifs-transition-four-certificate}
Suppose $\beta^2<\tau<\beta$, $0<x<1$, $w>\alpha$ and
$1-w>\gamma$. Let $g^\sharp$ have frequencies
$(p,\widetilde x,z,\widetilde z,v,\widetilde v)$ on the partition
\begin{equation*}
 0,\ \beta\tau,\ \beta^2,\ \tau,\ \beta,\ \tau/\beta,\ 1,
\end{equation*}
and let $W^\sharp$ have the corresponding values
$(1,1,1,1-w+\alpha,1-w,1-w-\gamma)$.
For every measurable $f:[0,1]\to[0,1]$,
\begin{align*}
 S_f^\sharp&:=(w-\alpha)E_f(\tau)+\alpha E_f(\beta)
       +\gamma E_f(\tau/\beta)+(1-w-\gamma)E_f(1)+\eta R_f
       \\
   &=(\beta-\tau)\Xi-I_f^\sharp,\qquad
 I_f^\sharp=\int_0^1W^\sharp\KL(f\Vert g^\sharp)\dd\xi,
\end{align*}
where
\begin{equation}\label{eq:ifs-transition-sharp-pressure}
\begin{aligned}
 \Xi={}&\chi-\beta\KL(x\Vert z)-(1-w)\KL(z\Vert y)
       +\beta\KL(x\Vert\widetilde x)\\
 &+(1-w+\alpha)\KL(z\Vert\widetilde z)
       +\frac{1-w-\gamma}{\beta}\KL(v\Vert\widetilde v).
\end{aligned}
\end{equation}
\end{proposition}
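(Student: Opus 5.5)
The identity will be proved in the same way as \eqref{eq:ifs-transition-low-certificate} in Proposition~\ref{prop:ifs-transition-exclusion}: write $S_f^\sharp$ as an integral of entropy and frequency terms against $f$, then complete each cell into a relative entropy via \eqref{eq:ifs-transition-tangent}.

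\emph{Expand $S_f^\sharp$ as one integral.} Each $E_f(t)=A_f(t)-sL_f(t)$ and $R_f$ is an integral of $h(f)$ and $f$ against indicators of $[0,t]$, $[0,\beta t]$, $[0,\tau]$, $[0,\beta\tau]$, $[0,1]$, $[0,\beta]$, plus constant terms $-s\lambda t$, $\lambda(\tau-\theta)$. Collecting coefficients gives
\[
 S_f^\sharp=-\lambda\Bigl(k\tau+D-k+s\alpha(\beta-\tau)+s\gamma\bigl(\tfrac{\tau}{\beta}-\tau\bigr)\Bigr)+\int_0^1\bigl\{W^\sharp h(f)+\mathfrak b f\bigr\}\dd\xi,
\]
where the constant term is rewritten using the weights $w-\alpha+\alpha+\gamma+1-w-\gamma=1$ and $\eta=sw-k$. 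Here $\mathfrak b$ is piecewise constant on the six cells $0<\beta\tau<\beta^2<\tau<\beta<\tau/\beta<1$. The ordering is valid because $\beta^2<\tau<\beta$, and it is exactly this ordering that places $\beta\cdot(\tau/\beta)=\tau$ and $\beta\cdot\beta=\beta^2$ on nodes. On each cell the entropy weight is the total weight of the nodes $t\ge\xi$, which gives $1,1,1,1-w+\alpha,1-w,1-w-\gamma$, that is, $W^\sharp$. I will tabulate $\mathfrak b$ cell by cell and check that $\mathfrak b=W^\sharp\operatorname{logit}(g^\sharp)$, where $g^\sharp$ has the stated frequencies. The check uses $\operatorname{logit}p=c(1-\beta)D$. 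The cells carrying $z$ and $v$ are inherited from the old certificate, and the tilts in \eqref{eq:ifs-transition-new-tilts} are precisely these coefficient identities: the added weight $\alpha$ at node $\beta$ contributes $sc\alpha$ on $(\beta\tau,\beta^2)$ and a shift on $(\tau,\beta)$, and $\gamma$ at $\tau/\beta$ contributes on $(\beta,\tau/\beta)$ and via its delay on $(\beta\tau,\tau)$.

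\emph{Complete to relative entropies.} Apply \eqref{eq:ifs-transition-tangent} cellwise with $q=g^\sharp$, so that $W^\sharp h(f)+\mathfrak b f=W^\sharp\varphi(g^\sharp)-W^\sharp\KL(f\Vert g^\sharp)$, using $h(q)+q\operatorname{logit}(q)=\varphi(q)$. This gives $S_f^\sharp=J^\sharp-I_f^\sharp$, where $J^\sharp$ is a sum of cell lengths times $W^\sharp\varphi(g^\sharp)$ plus the constant.

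\emph{Identify $J^\sharp=(\beta-\tau)\Xi$.} This step is the main obstacle. $J^\sharp$ is affine in $\tau$ on the region, since the cell lengths are affine. It therefore suffices to show that it vanishes at $\tau=\beta$, where it reduces to $(1-\tau)\mathcal C(k,w)=0$ as in the old certificate, and to compute its slope. To evaluate the slope, I will compare with $J_\tau=(\beta-\tau)\chi$ from Proposition~\ref{prop:ifs-transition-exclusion}. On each cell, the difference between $W^\sharp\varphi(g^\sharp)$ and $W\varphi(g_\tau)$ minus the linear tilt change equals a difference of KL terms. This follows from the identity $\varphi(q')-\varphi(q)-(\operatorname{logit}q'-\operatorname{logit}q)x=\KL(x\Vert q)-\KL(x\Vert q')$ applied at the base points $x$, $z$, $v$. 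The relations $T_s(v)=z$ and $x=T_s(z)$ from Lemma~\ref{lem:ifs-equalizing-recursion} are what make the linear parts cancel. This bookkeeping produces the six KL terms of $\Xi$ with coefficients $\beta$, $1-w$, $1-w+\alpha$ and $(1-w-\gamma)/\beta$; the factor $1/\beta$ arises from the cell $(\beta,\tau/\beta)$, whose length is $(\tau-\beta^2)/\beta$.

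I expect the cell-by-cell coefficient table and its matching with \eqref{eq:ifs-transition-new-tilts} to be the error-prone part. I would verify it first at a generic point $\tau$ before assembling $\Xi$.
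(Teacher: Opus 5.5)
Your plan is the paper's in substance. You expand $S_f^\sharp$ cellwise, check that the frequency coefficients are $W^\sharp\operatorname{logit}(g^\sharp)$, and conclude from \eqref{eq:ifs-transition-tangent} that $S_f^\sharp+I_f^\sharp$ is independent of $f$. The paper does the coefficient check relative to the two-node certificate: the coefficients change by $\alpha sc$, $-sc(\beta\alpha+\gamma)$ and $\gamma sc\beta$ on $(\beta\tau,\beta^2)$, $(\tau,\beta)$ and $(\tau/\beta,1)$. It then identifies the constant by evaluating at the single profile $\bar g=(p,x,z,v)$ on $0,\beta\tau,\beta^2,\beta,1$. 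Since $T_s(z)=x$ and $T_s(v)=z$, Lemma~\ref{lem:ifs-equalizing-recursion} makes $E_{\bar g}$ constant on $[\tau,1]$, so $S^\sharp_{\bar g}=S_{\bar g}$, and \eqref{eq:ifs-transition-low-certificate} evaluates $S_{\bar g}$. Your cellwise comparison with $J_\tau$ at the base points $x,z,v$ is this same evaluation written out. It yields $J^\sharp-J_\tau$ itself, so reducing to the value at $\tau=\beta$ plus a slope is unnecessary.

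However, several specific claims in your sketch are wrong, and followed literally they would stop the identity from closing.
\begin{itemize}
\item The weight $\gamma$ is moved from node $1$ to node $\tau/\beta$; the node weights there are $\gamma$ and $1-w-\gamma$. So it acts on $(\tau/\beta,1)$, with entropy weight $-\gamma$ and coefficient $+\gamma sc\beta$. Through the delay it acts on $(\tau,\beta)$, with coefficient $-\gamma sc$. These are exactly the $\gamma$ terms in $\widetilde v$ and $\widetilde z$.
\item The cell $(\beta,\tau/\beta)$ is unchanged, with frequency $v$ and weight $1-w$. The factor $1/\beta$ comes from $|(\tau/\beta,1)|=(\beta-\tau)/\beta$. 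A term proportional to $(\tau-\beta^2)/\beta$ could never produce a multiple of $\beta-\tau$.
\item Correspondingly, the constant is $-\lambda(k\tau+D-k)-s\lambda\{\alpha(\beta-\tau)+\gamma(\tau/\beta-1)\}$. With your $\gamma(\tau/\beta-\tau)$, $J^\sharp$ would not vanish at $\tau=\beta$.
\item Your three-point identity has the wrong sign. The correct form is $\varphi(q')-\varphi(q)-(\operatorname{logit}q'-\operatorname{logit}q)x=\KL(x\Vert q')-\KL(x\Vert q)$.
\end{itemize}

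With these corrections, put $u=\beta-\tau$. The non-KL part of $J^\sharp-J_\tau$ is
\[
 \alpha u\{h(z)-s[\lambda+c\beta(z-x)]\}-(\gamma u/\beta)\{h(v)-s[\lambda+c\beta(v-z)]\}.
\]
This vanishes by $x=T_s(z)$ and $z=T_s(v)$. The KL parts sum to $u(\Xi-\chi)$, which gives $J^\sharp=u\Xi$.
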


\begin{proof}
By definition,
\[
 S_f^\sharp-S_f=
 \alpha[E_f(\beta)-E_f(\tau)]
       -\gamma[E_f(1)-E_f(\tau/\beta)].
\]
On $(\beta\tau,\beta^2)$, $(\tau,\beta)$ and $(\tau/\beta,1)$,
the frequency coefficients change by
$\alpha sc$, $-sc(\beta\alpha+\gamma)$ and $\gamma sc\beta$,
respectively; the entropy coefficients become $W^\sharp$. Hence
\eqref{eq:ifs-transition-tangent} and
\eqref{eq:ifs-transition-new-tilts} make
$S_f^\sharp+I_f^\sharp$ independent of $f$.

For $\bar g=(p,x,z,v)$ on $0,\beta\tau,\beta^2,\beta,1$,
Lemma~\ref{lem:ifs-equalizing-recursion} gives $E_{\bar g}'=0$ on
$(\tau,1)$ and $S_{\bar g}^\sharp=S_{\bar g}$, since
$T_s(z)=x$ and $T_s(v)=z$. With $u=\beta-\tau$,
\[
 \begin{aligned}
 \int_0^1W\KL(\bar g\Vert g_\tau)\dd\xi
   &=u\{\beta\KL(x\Vert z)+(1-w)\KL(z\Vert y)\},\\
 I_{\bar g}^\sharp&=u\bigg\{
 \beta\KL(x\Vert\widetilde x)
 +(1-w+\alpha)\KL(z\Vert\widetilde z)\\
 &\hspace{15mm}+\frac{1-w-\gamma}{\beta}\KL(v\Vert\widetilde v)\bigg\}.
 \end{aligned}
\]
By \eqref{eq:ifs-transition-low-certificate}, therefore,
$S_f^\sharp+I_f^\sharp
 =S_{\bar g}^\sharp+I_{\bar g}^\sharp=u\Xi$ for every measurable $f$.
\end{proof}

\subsection{Proof of the sharp first-departure asymptotics}
\label{sec:ifs-transition-fourth-order-proof}

\begin{proof}[Proof of Theorem~\ref{thm:ifs-first-transition}]
Fix a nonempty compact $K\subset(0,1)$, $\beta\in K$, and retain
$s=\widehat d_\theta$. All implicit constants may depend on $K$.
Substitute $(D,k,\lambda)=(\varepsilon_M\overline D,
\varepsilon_M\overline k,c/\varepsilon_M)$ into
\eqref{eq:ifs-box-pressure} and \eqref{eq:ifs-scalar-pressure}
and divide by $c$. The identity $G(y)=c+y/2+\log\cosh(y/2)$
cancels the linear terms; the residuals are analytic in
$\varepsilon_M^2$, with value $(\overline D-1,\overline k-w)$ at zero.
Put $\theta_0=\min K/2$. The implicit-function theorem, uniformly near
$\beta\in K$, $1/2\le w\le(1+\sqrt{\theta_0})^{-1}$, gives
\begin{align}
 D&=\varepsilon_M+
       \frac{c\beta(1-\beta)}8\varepsilon_M^3
       +O_K(\varepsilon_M^5),\notag\\
 k(w)&=\varepsilon_M w+
       \frac{c\beta w(2-\beta-w)}8\varepsilon_M^3
       +O_K(\varepsilon_M^5),
       \label{eq:ifs-transition-expansion}\\
 s=k'(w)&=\varepsilon_M+
       \frac{c\beta(2-\beta-2w)}8\varepsilon_M^3
       +O_K(\varepsilon_M^5).\notag
\end{align}
The remainders are uniform with parameter derivatives.

After division by $c\beta\varepsilon_M^3/8$, the stationary residual in
\eqref{eq:ifs-scalar-stationarity} extends analytically to
$-1+2w-(1-\theta)w^2$ at $\varepsilon_M=0$.
Its derivative at $w_0(\theta)=(1+\sqrt\theta)^{-1}$ is
$2\sqrt\theta$, uniformly positive for $\theta_0\le\theta\le1$.
The implicit-function theorem and \eqref{eq:ifs-transition-time} give
\begin{equation}\label{eq:ifs-transition-time-estimates}
 \begin{gathered}
 w=w_0(\theta)+O_K(\varepsilon_M^2),\qquad
 \tau_*(\theta)=\theta+O_K(\varepsilon_M^2),\\
 \partial_\theta\tau_*(\theta)=1+O_K(\varepsilon_M^2),
 \qquad \theta_0\le\theta\le1.
 \end{gathered}
\end{equation}
Thus $\tau_*(\theta)=\beta$ has a local analytic root near $\beta$.
For every $0<\theta<1$, concavity gives
\[
 0\le\frac{D-k}{1-w}\le D=O_K(\varepsilon_M).
\]
Hence $p,q,z,v=1/2+O_K(\varepsilon_M)$ and
$\sup_{0<\theta<1}|\tau_*(\theta)-\theta|=O_K(\varepsilon_M^2)$.
Every collision lies in the positive-derivative interval of
\eqref{eq:ifs-transition-time-estimates}, proving uniqueness.
At collision the exact physical constraint reduces to
\[
 \theta_{\rm col}
 =\frac{\lambda\beta+c\beta^2(1-\beta)(z-p)}
 {\lambda+c\beta(1-\beta)[-\beta p-(1-\beta)z+v]}.
\]
Substituting \eqref{eq:ifs-transition-expansion} and the frequencies from
\eqref{eq:ifs-four-frequencies} proves
\eqref{eq:ifs-transition-collision}, with $B_\beta$ as in
\eqref{eq:ifs-transition-BH}.

On $[\theta_{\rm col},1]$,
\[
 \begin{gathered}
 \Gamma,m=\lambda+O_K(\varepsilon_M),\qquad
 \chi=O_K(\varepsilon_M^2),\qquad 1-w\ge c_K>0,\\
 \tau_*-\beta\ge\tfrac12(\theta-\theta_{\rm col}),\qquad
 bm\gtrsim_K(\theta-\theta_{\rm col})\varepsilon_M^{-2}.
 \end{gathered}
\]
Proposition~\ref{prop:ifs-transition-exclusion} therefore gives
\begin{equation}\label{eq:ifs-transition-coarse-tail}
 d_\theta=s\qquad\text{whenever}\qquad
 \theta_{\rm col}+C_K^{\rm tail}\varepsilon_M^4\le\theta<1.
\end{equation}

Fix
$Z_0>C_K^{\rm tail}+1+\max_{\beta\in K}\kappa_\beta$ and write
\begin{equation}\label{eq:ifs-transition-boundary-parameters}
 \theta=\theta_{\rm col}+\zeta\varepsilon_M^4,
 \qquad 0\le\zeta\le Z_0.
\end{equation}
Fix also a bounded $t$-interval containing the localization bound and
recovery values below. Choose all error constants on these fixed
$(\zeta,t)$ ranges before shrinking $\varepsilon_M$.
In the range \eqref{eq:ifs-transition-boundary-parameters},
$w=w_\beta+O_K(\varepsilon_M^2)$. Equations
\eqref{eq:ifs-four-frequencies} and \eqref{eq:ifs-transition-expansion}
give
\begin{align*}
 p&=\tfrac12+\tfrac c4(1-\beta)\varepsilon_M
                 +O_K(\varepsilon_M^3),&
 z&=\tfrac12+\tfrac c4(1-\beta-w_\beta)\varepsilon_M
                 +O_K(\varepsilon_M^3),\\
 v&=\tfrac12-\tfrac c4\beta\varepsilon_M
                 +O_K(\varepsilon_M^3),&
 y&=\tfrac12+\tfrac c4(1-\beta)\varepsilon_M
                 +O_K(\varepsilon_M^3).
\end{align*}
Also $x=T_s(z)=z+cw_\beta^2\varepsilon_M/8+O_K(\varepsilon_M^3)\in(0,1)$.
The node weights in Proposition~\ref{prop:ifs-transition-four-certificate}
converge to
\[
 w_\beta-\alpha,\quad \alpha,\quad \gamma,\quad
 (1-w_\beta)(1-w_\beta+\alpha),
\]
where $\alpha=w_\beta^2/2$ and
$\gamma=(1-w_\beta)(w_\beta-\alpha)$.
These limits are positive on $K$, so the node weights and $W^\sharp$
are uniformly bounded below.
By \eqref{eq:ifs-transition-new-tilts},
\[
 \widetilde x-x=O_K(\varepsilon_M^3),\qquad
 \widetilde z-z=O_K(\varepsilon_M^3),\qquad
 \widetilde v-v=O_K(\varepsilon_M^3).
\]

For $r_1,r_2=1/2+O_K(\varepsilon_M)$, Taylor's formula gives
\begin{equation}\label{eq:ifs-transition-KL-Taylor}
 \begin{aligned}
 \KL(r_1\Vert r_2)
 &=(r_1-r_2)^2\int_0^1
 \frac{1-\rho}
 {[r_2+\rho(r_1-r_2)][1-r_2-\rho(r_1-r_2)]}\,d\rho\\
 &=2(r_1-r_2)^2+
          O_K\bigl(\varepsilon_M^2(r_1-r_2)^2\bigr).
 \end{aligned}
\end{equation}
Here the reciprocal denominator is $4+O_K(\varepsilon_M^2)$.
Since
$\chi=c^2w_\beta(1-w_\beta)\varepsilon_M^2/8+O_K(\varepsilon_M^4)$
and $(1-w_\beta)^2=\beta w_\beta^2$,
\eqref{eq:ifs-transition-sharp-pressure} gives
\begin{equation}\label{eq:ifs-transition-Xi-expansion}
 \begin{aligned}
 \Xi
 &=c^2\left\{\frac{w_\beta(1-w_\beta)}8
       -\frac{\beta w_\beta^4}{32}
       -\frac{(1-w_\beta)w_\beta^2}{8}\right\}
          \varepsilon_M^2+O_K(\varepsilon_M^4)\\
 &=\mathcal H_\beta\varepsilon_M^2+O_K(\varepsilon_M^4).
 \end{aligned}
\end{equation}
The last three KL terms in \eqref{eq:ifs-transition-sharp-pressure}
are $O_K(\varepsilon_M^6)$.

Suppose $d_\theta>s$. The normalized variational formula
\eqref{eq:ifs-transition-variational} supplies a feasible measurable pair
$(f,\tau)$ with $E_f(r)>0$ for every $r\in[\tau,1]$.
The two-node certificate \eqref{eq:ifs-transition-low-certificate}
excludes $\tau\ge\beta$. Put $u=\beta-\tau>0$.
Since $b\ge0$, $m\gtrsim_K\varepsilon_M^{-1}$ and $1-w\gtrsim_K1$,
\eqref{eq:ifs-transition-global-cost} gives
\[
 0<S_f\le C_K\varepsilon_M^2u-c_K\varepsilon_M^{-2}u^2
 \quad\Longrightarrow\quad
 u=t\varepsilon_M^4,\qquad 0<t\le T_K.
\]
For small $\varepsilon_M$, this implies $\beta^2<\tau<\beta$.

The four-node identity and Pinsker inequality give
\[
 2c_K\|f-g^\sharp\|_2^2\le I_f^\sharp
 =u\Xi-S_f^\sharp<u\Xi=O_K(\varepsilon_M^6),\qquad
 c_KE_f(\tau)\le S_f^\sharp,
\]
using positivity of every node deficit and weight. Hence, uniformly
over all feasible improving profiles,
\begin{equation}\label{eq:ifs-transition-localize}
 \begin{gathered}
 0<S_f^\sharp\le C_K\varepsilon_M^6,\qquad
 I_f^\sharp\le C_K\varepsilon_M^6,\qquad
 \|f-g^\sharp\|_2\le C_K\varepsilon_M^3,\\
 0<E_f(\tau)\le C_K\varepsilon_M^6.
 \end{gathered}
\end{equation}

Put $\Delta=f-g^\sharp$ and
\[
 \mathcal I=(0,\beta\tau),\qquad
 \mathcal Z=(\beta^2,\tau),\qquad
 \mathcal V=(\beta,\tau/\beta).
\]
Put $\mathcal T=(0,1)\setminus(\mathcal I\cup\mathcal Z\cup\mathcal V)$.
Then
\[
 |\mathcal T|=O_K(\varepsilon_M^4),\qquad
 \|\Delta\|_{L^1(\mathcal T)}
 \le |\mathcal T|^{1/2}\|\Delta\|_2=O_K(\varepsilon_M^5).
\]
At $\tau=\beta$, the reference $g_\beta=(p,z,v)$ satisfies
$R_{g_\beta}=-b$, $S_{g_\beta}=0$, and $E'_{g_\beta}=0$ on $(\beta,1)$;
hence $E_{g_\beta}\equiv\eta b$ on $[\beta,1]$. Direct integration gives
\begin{equation}\label{eq:ifs-transition-baseline}
 E_{g^\sharp}(\tau)=\frac{\tau}{\beta}\eta b
   +\beta u\{h(\widetilde x)-h(z)
                    -sc\beta(\widetilde x-z)\}
       =O_K(\varepsilon_M^6).
\end{equation}
Here $\eta,b=O_K(\varepsilon_M^3)$ and the braces are
$O_K(\varepsilon_M^2)$. The entropy tangent identity gives
\begin{align*}
 E_f(\tau)-E_{g^\sharp}(\tau)
 ={}&\int_0^\tau
   \{h'(g^\sharp)-sc\beta
               +sc\mathbf1_{[0,\beta\tau]}\}\Delta\,d\xi\\
 &-\int_0^\tau\KL(f\Vert g^\sharp)\,d\xi.
\end{align*}
The coefficient equals $c(1-\beta)(s-D)=O_K(\varepsilon_M^3)$ on
$\mathcal I$, $-c(1-w_\beta)\varepsilon_M+O_K(\varepsilon_M^3)$
on $\mathcal Z$, and $O_K(\varepsilon_M)$ on $\mathcal T\cap(0,\tau)$.
By \eqref{eq:ifs-transition-localize}--\eqref{eq:ifs-transition-baseline},
\[
 \begin{aligned}
 -c(1-w_\beta)\varepsilon_M\int_{\mathcal Z}\Delta\,d\xi
 &=E_f(\tau)-E_{g^\sharp}(\tau)\\
 &\quad+O_K\bigl(\varepsilon_M^3\|\Delta\|_2
       +\varepsilon_M\|\Delta\|_{L^1(\mathcal T)}+I_f^\sharp\bigr)
 =O_K(\varepsilon_M^6).
 \end{aligned}
\]
Since $1-w_\beta\gtrsim_K1$,
\begin{equation}\label{eq:ifs-transition-middle-moment}
 \int_{\mathcal Z}\Delta\,d\xi=O_K(\varepsilon_M^5).
\end{equation}

The references differ by $O_K(\varepsilon_M)$ only on the thin cells,
so $R_{g^\sharp}=-b-mu+O_K(\varepsilon_M u)$.
Since $R_f=0$, \eqref{eq:ifs-transition-middle-moment} gives
\begin{equation*}
 \int_{\mathcal I\cup\mathcal V}\mathfrak a\Delta\,d\xi
 =b+mu+O_K(\varepsilon_M^5)
 =c(\zeta+t)\varepsilon_M^3+O_K(\varepsilon_M^5).
\end{equation*}
Here $b=c\zeta\varepsilon_M^3+O_K(\varepsilon_M^5)$ by
\eqref{eq:ifs-transition-time-estimates}. The weighted variance is
\begin{equation*}
 \begin{aligned}
 V_{\mathcal I,\mathcal V}
 &:=\int_{\mathcal I\cup\mathcal V}
                    \frac{\mathfrak a^2}{W^\sharp}\,d\xi\\
 &=c^2\left\{\beta\tau(1-\beta)^2(1-\theta)^2
       +\frac{(\tau/\beta-\beta)\theta^2\beta^2}{1-w}\right\}
 =c^2J_\beta+O_K(\varepsilon_M^2).
 \end{aligned}
\end{equation*}
Pinsker and weighted Cauchy--Schwarz on $\mathcal I\cup\mathcal V$ give
\[
 I_f^\sharp\ge
 \frac{2\bigl(\int_{\mathcal I\cup\mathcal V}
                         \mathfrak a\Delta\,d\xi\bigr)^2}
      {V_{\mathcal I,\mathcal V}}
 \ge\frac{2(\zeta+t)^2}{J_\beta}\varepsilon_M^6
                  -C_K\varepsilon_M^8.
\]
The four-node certificate now yields
\begin{equation*}
 \begin{aligned}
 0<S_f^\sharp
 &\le\varepsilon_M^6\left[
     \mathcal H_\beta t-\frac{2(\zeta+t)^2}{J_\beta}\right]
                   +C_K\varepsilon_M^8\\
 &=\varepsilon_M^6\left[
     \mathcal H_\beta(\kappa_\beta-\zeta)
     -\frac{2}{J_\beta}(t+\zeta-2\kappa_\beta)^2\right]
                   +C_K\varepsilon_M^8.
 \end{aligned}
\end{equation*}
Since $\min_K\mathcal H_\beta>0$, no improvement is possible for
$\zeta\ge\kappa_\beta+C'_K\varepsilon_M^2$ with $C'_K$ large.
The continued profile attains $s$ because $\tau_*\ge\beta$.
Together with the overlapping interval \eqref{eq:ifs-transition-coarse-tail},
this gives $d_\theta=s$ for
\[
 \theta_{\rm col}+\kappa_\beta\varepsilon_M^4
                      +C'_K\varepsilon_M^6\le\theta<1,
\]
and hence the upper bound on $\theta_{\rm tr}$.

For bounded $\zeta\ge0$ and $t>0$, let
$u=t\varepsilon_M^4$, $\tau=\beta-u$, and define
\begin{equation*}
 \begin{gathered}
 b_u=\frac{(\zeta+t)\beta^2}{(1-w_\beta)J_\beta},\qquad
 V=v-b_u\varepsilon_M^3,\\
 Z_u=T_s(V),\qquad X_u=T_s(Z_u).
 \end{gathered}
\end{equation*}
Use frequencies $(p_u,X_u,Z_u,V)$ on the intervals with endpoints
$0,\beta\tau,\beta^2,\beta,1$.
Writing $d=1-\beta$ and $\delta=1-\theta$, choose the first frequency
by the exact equation
\begin{equation*}
 p_u=\frac{\lambda(\tau-\theta)
           +c\beta u(\beta+\theta d)X_u
           +c\beta(\tau-\beta^2+\theta d^2)Z_u
           -\theta c\beta dV}
          {\delta c\beta d\tau}.
\end{equation*}
Then $L_f(\tau)=\theta L_f(1)$; the denominator is bounded below uniformly.

The scalar identities imply
\[
 T_s(v)=z,\qquad
 T_s'(v)=-\frac{\theta\eta}{(1-w)s}
                  =O_K(\varepsilon_M^2),\qquad
 T_s''=O_K(\varepsilon_M^{-1}),\qquad T_s'(z)=O_K(1).
\]
The bound on $T_s''$ holds on fixed neighborhoods of $v,z$.
Taylor's formula gives
$Z_u-z=O_K(\varepsilon_M^2\varepsilon_M^3+
\varepsilon_M^{-1}\varepsilon_M^6)$, hence
\begin{equation*}
 \begin{gathered}
 Z_u-z=O_K(\varepsilon_M^5),\qquad
 X_u-x=O_K(\varepsilon_M^5),\\
 p_u-p=-\frac{(\zeta+t)(1-\beta)^2}{J_\beta}
             \varepsilon_M^3+O_K(\varepsilon_M^5).
 \end{gathered}
\end{equation*}
Indeed, the thin and middle changes contribute $O_K(\varepsilon_M^5)$
to $R_f-R_{g_\tau}$. Writing
$p_u-p=A_u\varepsilon_M^3+O_K(\varepsilon_M^5)$, feasibility gives
\[
 \beta^2(1-\beta)^2 A_u-\beta^2(1-\beta)b_u=-(\zeta+t).
\]
Thus $A_u=-(\zeta+t)(1-\beta)^2/J_\beta$.
All four frequencies belong to $(0,1)$ for small $\varepsilon_M$.

By the transfer equations, almost everywhere on $(\tau,1)$,
\[
 E_f'(r)=
 \begin{cases}
 h(Z_u)-s[\lambda+c\beta(Z_u-X_u)]=0,&\tau<r<\beta,\\
 h(V)-s[\lambda+c\beta(V-Z_u)]=0,&\beta<r<1.
 \end{cases}
\]
Thus $E_f\equiv C_u$ on the whole window, and the two-node identity gives
\begin{equation*}
 \begin{aligned}
 C_u={}&u\chi-\beta\tau\KL(p_u\Vert p)
                  -\beta u\KL(X_u\Vert z)
                  -(\tau-\beta^2)\KL(Z_u\Vert z)\\
       &-(1-w)u\KL(Z_u\Vert y)
                  -(1-w)(1-\beta)\KL(V\Vert v).
 \end{aligned}
\end{equation*}

The first and terminal costs satisfy
\[
 \beta^2 A_u^2+(1-w_\beta)(1-\beta)b_u^2
       =\frac{(\zeta+t)^2}{J_\beta}.
\]
The thin-cell terms combine with $u\chi$ as in
\eqref{eq:ifs-transition-Xi-expansion}. Hence
\eqref{eq:ifs-transition-KL-Taylor} yields
\begin{equation}\label{eq:ifs-transition-recovery-gain}
 C_u=\varepsilon_M^6\left[
          \mathcal H_\beta t-\frac{2(\zeta+t)^2}{J_\beta}\right]
              +O_K(\varepsilon_M^8).
\end{equation}
The long, thin, and middle KL errors are, respectively,
\[
 \begin{gathered}
 O_K(\varepsilon_M^3\varepsilon_M^5+
       \varepsilon_M^2\varepsilon_M^6)=O_K(\varepsilon_M^8),\\
 O_K\bigl(\varepsilon_M^4[\varepsilon_M\varepsilon_M^3+
       \varepsilon_M^2\varepsilon_M^2]\bigr)=O_K(\varepsilon_M^8),
 \qquad O_K(\varepsilon_M^{10}).
 \end{gathered}
\]

Choose a constant $C''_K$ larger than the recovery remainder bound
divided by $\min_K\mathcal H_\beta$, and set
\[
 \zeta=\kappa_\beta-C''_K\varepsilon_M^2,
 \qquad t=2\kappa_\beta-\zeta.
\]
For small $\varepsilon_M$, these parameters are positive and
\eqref{eq:ifs-transition-recovery-gain} gives $C_u>0$. Therefore
\[
 \min_{\tau\le r\le1}\frac{A_f(r)}{L_f(r)}
       =s+\frac{C_u}{L_f(1)}>s.
\]
Thus $d_\theta>s$ at
$\theta=\theta_{\rm col}+\kappa_\beta\varepsilon_M^4
                         -C''_K\varepsilon_M^6$.
By the definition of $\theta_{\rm tr}$, this gives the lower bound and
proves \eqref{eq:ifs-transition-sharp-gap}.
Taking instead $\zeta=0$ and $t=2\kappa_\beta$ gives
$C_u=(\mathcal H_\beta^2J_\beta/8)\varepsilon_M^6
+O_K(\varepsilon_M^8)>0$, and hence
$d_{\theta_{\rm col}}>\widehat d_{\theta_{\rm col}}$.

Finally, \eqref{eq:ifs-transition-collision} gives
$\theta_{\rm tr}=\beta+B_\beta\varepsilon_M^2+O_K(\varepsilon_M^4)$.
All remainders are uniform on the fixed parameter ranges.
\end{proof}

\section{Oscillating constructions and their Assouad spectra}
\label{sec:ifs-comparison}

\subsection{Intermediate dimensions of the oscillating construction}
\label{sec:oscillating-tree}

Consider the oscillating construction of~\cite[Section~5]{BBFLX2025}.
Fix $0<\beta<1$ and $M\ge100$. Let $N_1=1<N_2<\cdots$ be integers
such that $N_{k+1}/N_k\to\infty$, and set
\[
 M_i=\begin{cases}
 M,&N_{2k-1}\le i<N_{2k},\\
 M+1,&N_{2k}\le i<N_{2k+1}.
 \end{cases}
\]
Let $E^*$ be the Cantor set with endpoint placement and cylinder lengths
\begin{equation*}
 \ell^*_\omega
 =\left(\prod_{i=1}^{n}M_i\right)^{-1}
       2^{S_{\lfloor\beta n\rfloor}(\omega)-\beta S_n(\omega)},
 \qquad |\omega|=n.
\end{equation*}
Write $\lambda_- =\log M$, $\lambda_+=\log(M+1)$, and $c=\log2$.
Denote the entropy values in
Theorems~\ref{thm:tree-dimensions} and~\ref{thm:tree-random-images}
by $d_\theta(\lambda)$ and $\mathcal D_{\theta,H}(\lambda)$.

\begin{theorem}\label{thm:oscillating-intermediate}
For every $0<\theta\le1$,
\begin{equation*}
 \underline{\dim}_\theta E^*=d_\theta(\lambda_+),
 \qquad
 \overline{\dim}_\theta E^*=d_\theta(\lambda_-).
\end{equation*}
In particular, the two dimensions are strictly different for every
$\theta>0$, and
\begin{equation}\label{eq:oscillating-endpoint-jump}
 \lim_{\theta\downarrow0}\underline{\dim}_\theta E^*
 =\frac{c}{\lambda_+}=\dim_{\mathrm H}E^*,
 \qquad
 \lim_{\theta\downarrow0}\overline{\dim}_\theta E^*
 =\frac{c}{\lambda_-}>\dim_{\mathrm H}E^*.
\end{equation}
For each fixed $0<H<1$, almost surely, simultaneously for all
$0<\theta\le1$,
\begin{equation*}
 \underline{\dim}_\theta B_H(E^*)
 =\mathcal D_{\theta,H}(\lambda_+),
 \qquad
 \overline{\dim}_\theta B_H(E^*)
 =\mathcal D_{\theta,H}(\lambda_-).
\end{equation*}
Consequently,
\begin{equation}\label{eq:oscillating-random-endpoints}
 \begin{split}
 \lim_{\theta\downarrow0}\underline{\dim}_\theta B_H(E^*)
 &=\min\{1,c/(H\lambda_+)\},\\
 \lim_{\theta\downarrow0}\overline{\dim}_\theta B_H(E^*)
 &=\min\{1,c/(H\lambda_-)\}.
 \end{split}
\end{equation}
If $H\le c/\lambda_+$, both image dimension functions equal one.
If $H>c/\lambda_+$, they are strictly different for every sufficiently
small positive $\theta$.
\end{theorem}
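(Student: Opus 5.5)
The plan is to reduce everything to Lemma~\ref{lem:oscillating-prefix-comparison}, applied with $\lambda=\lambda_+$ or $\lambda=\lambda_-$ according to which phase of the oscillation the observation scale sees. First I check the standing hypotheses of that lemma for $F=E^*$. Since $M_i\in\{M,M+1\}$, the child-to-parent ratios of $E^*$ lie in $[(2(M+1))^{-1},2/M]\subset(0,1/2)$, and the sibling gap is at least $(1-4/M)$ times the parent length, exactly as in \eqref{eq:tree-ratios}. Comparing $\ell^*_\omega$ with $\ell^\lambda_\omega$ gives
\[
 -\log\ell^*_\omega+\log\ell^{\lambda}_\omega=\sum_{i\le n}\log M_i-\lambda n .
\]
Hence $\ell^{\lambda_+}_\omega\le\ell^*_\omega\le\ell^{\lambda_-}_\omega$ for every word, with $\varepsilon_N=0$. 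This gives two unconditional, all-scale bounds:
\begin{itemize}
\item part~(1) with $\lambda_-$ gives $\udim_\theta E^*\le d_\theta(\lambda_-)$ and $\udim_\theta B_H(E^*)\le\mathcal D_{\theta,H}(\lambda_-)$ almost surely;
\item part~(2) with $\lambda_+$ gives an energy margin $e^{-N(s+\kappa)}$ for every $s<d_\theta(\lambda_+)$, respectively $s<\mathcal D_{\theta,H}(\lambda_+)$.
\end{itemize}
Through Lemma~\ref{lem:cover-gaussian}, the energy margin yields $S^s_{e^{-N},\theta}\ge e^{\kappa N}$ at every $N$ in the deterministic case, and almost surely for all large integer $N$ by Markov and Borel--Cantelli in the random case. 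The fixed-scale comparison \eqref{eq:fixed-scale-comparison} then extends this to all real $r$, so $\ldim_\theta\ge d_\theta(\lambda_+)$ (and the image analogue).

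For the two remaining inequalities I use the sparse scales. Since $N_{j+1}/N_j\to\infty$, I can choose $N=N^{(k)}\to\infty$ with $N_{2k}=o(N)$ and $TN<N_{2k+1}$, for example $N=\sqrt{N_{2k}N_{2k+1}}/T$. For every $n\le TN$ this gives
\[
 \Bigl|\sum_{i\le n}\log M_i-\lambda_+n\Bigr|\le CN_{2k}=o(N),
\]
so the two-sided comparison with $\lambda_+$ holds with $\varepsilon_N\to0$ along this sequence.
\begin{itemize}
\item Part~(1) along these scales shows that the cost tends to zero along a sequence for every $s>d_\theta(\lambda_+)$, so $\ldim_\theta E^*\le d_\theta(\lambda_+)$.
\item Symmetrically, scales with $N_{2k-1}=o(N)$ and $TN<N_{2k}$ give the comparison with $\lambda_-$. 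Part~(2), together with Borel--Cantelli along this fixed deterministic integer sequence (the series $\sum e^{-\kappa N/2}$ converges), shows that the cost is unbounded along a sequence, so $\udim_\theta\ge d_\theta(\lambda_-)$.
\end{itemize}
Simultaneity in $\theta$ follows as in Theorem~\ref{thm:tree-random-images}: intersect over rational $\theta$ and rational $s$, then use monotonicity in $\theta$ and continuity from Lemma~\ref{lem:tree-profile-stability}.

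For strict separation I argue that $\lambda\mapsto d_\theta(\lambda)$ is strictly decreasing. For a fixed profile, $L_{\lambda,f}(t)$ is strictly increasing in $\lambda$ for $t>0$ while $A_f$ is unchanged. The physical window shifts with $\lambda$, so I reparametrize by physical scale through $t^1_f(u)$. Then the maximizer for $\lambda_+$, attained by Lemma~\ref{lem:tree-profile-stability}, evaluated at $\lambda_-$ after time dilation, gives a strictly larger minimum ratio. I expect this dilation bookkeeping to be the only delicate point. If it proves awkward, the fallback for small $\theta$ is to use the endpoint limits instead.

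The limits \eqref{eq:oscillating-endpoint-jump} follow from Proposition~\ref{prop:tree-basic-regimes}, which gives $d_\theta(\lambda)\to c/\lambda$. The identification $\dim_H E^*=c/\lambda_+$ is from \cite{BBFLX2025}, or from $\dim_H\le\ldim_\theta\to c/\lambda_+$ together with a mass-distribution bound. For \eqref{eq:oscillating-random-endpoints} I apply Theorem~\ref{thm:tree-endpoint-rate} with $\lambda_\pm$: $\mathcal D_{\theta,H}(\lambda)$ equals $1$ when $H\le c/\lambda$ and tends to $c/(H\lambda)$ otherwise. If $H\le c/\lambda_+$, both image dimensions equal one. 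If $H>c/\lambda_+$, the lower limit $c/(H\lambda_+)<1$ is strictly below $\min\{1,c/(H\lambda_-)\}$, so continuity in $\theta$ gives strict difference for all small $\theta>0$.
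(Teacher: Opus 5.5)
Your reduction to Lemma~\ref{lem:oscillating-prefix-comparison} is the paper's argument. The sandwich $\ell^{\lambda_+}_\omega\le\ell^*_\omega\le\ell^{\lambda_-}_\omega$ gives the all-scale bounds. The parity subsequences of scales $\sqrt{N_jN_{j+1}}$ give the reverse bounds. The endpoint limits come from Proposition~\ref{prop:tree-basic-regimes} and Theorem~\ref{thm:tree-endpoint-rate}.

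The gap is the strict separation $d_\theta(\lambda_+)<d_\theta(\lambda_-)$ for \emph{every} $\theta\in(0,1)$. Write $t_{\lambda,f}(u)$ for the time with $L_{\lambda,f}(t_{\lambda,f}(u))=u$ and put $\Delta=\lambda_+-\lambda_-$.

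\begin{itemize}
\item Your physical-scale reparametrization does give $t_{\lambda_-,f}(u)>t_{\lambda_+,f}(u)$. Since $A_f$ is nondecreasing, this yields $A_f(t_{\lambda_-,f}(u))\ge A_f(t_{\lambda_+,f}(u))$ and hence $d_\theta(\lambda_-)\ge d_\theta(\lambda_+)$.
\item This inequality is strict only if $h(f)>0$ on a set of positive measure in $(t_{\lambda_+,f}(u),t_{\lambda_-,f}(u))$ at the minimizing $u$. You have no control over where the $\lambda_+$-maximizer takes values in $\{0,1\}$.
\item Time dilation cannot supply strictness. At fixed $\lambda$ it is a symmetry: for $g(t)=f(\kappa t)$ one has $A_g(t)=\kappa^{-1}A_f(\kappa t)$ and $L_{\lambda,g}(t)=\kappa^{-1}L_{\lambda,f}(\kappa t)$, so it only moves the physical window.
\item Your fallback handles small $\theta$, and $\theta=1$ via \eqref{eq:tree-box}, but not intermediate $\theta$.
\end{itemize}

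The missing idea is a multiplicative rather than additive comparison of the length functionals.

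\begin{itemize}
\item Since $L_{\lambda_-,f}(t)\le b_-t$ with $b_-=\lambda_-+\beta c$, we have $L_{\lambda_+,f}=L_{\lambda_-,f}+\Delta t\ge\alpha L_{\lambda_-,f}$ with $\alpha=1+\Delta/b_->1$.
\item Monotonicity of $L_{\lambda_+,f}$ then gives $t_{\lambda_-,f}(u)\ge t_{\lambda_+,f}(\alpha u)$.
\item Take $g$ to be a $\lambda_+$-maximizer for the dilated physical window $[\alpha\theta,\alpha]$. By the dilation symmetry its value is still $d_\theta(\lambda_+)$.
\item For $u\in[\theta,1]$ this gives $A_g(t_{\lambda_-,g}(u))\ge A_g(t_{\lambda_+,g}(\alpha u))\ge\alpha u\,d_\theta(\lambda_+)$. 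Hence $d_\theta(\lambda_-)\ge\alpha\,d_\theta(\lambda_+)>d_\theta(\lambda_+)$.
\end{itemize}

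This is the variational counterpart of the paper's proof. The paper observes that the coding map $E_{\beta,M}\to E_{\beta,M+1}$ is $\alpha$-H\"older with the same $\alpha$. It pushes forward covers taken at a slightly larger window parameter $\theta'>\theta$, and lets $\theta'\downarrow\theta$ by continuity.
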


By Theorem~\ref{thm:tree-endpoint-rate}, each source curve approaches
its limit in~\eqref{eq:oscillating-endpoint-jump} at order
$(\log(1/\theta))^{-2}$. The same rate holds for each unsaturated
random-image branch in~\eqref{eq:oscillating-random-endpoints}.

\begin{proof}[Proof of Theorem~\ref{thm:oscillating-intermediate}]
The ratios in the oscillating construction satisfy
\[
 [2(M+1)]^{-1}\le\ell^*_{\omega i}/\ell^*_\omega\le2/M,
 \qquad |G_\omega|\ge(1-4/M)\ell^*_\omega.
\]
Moreover, for every word,
\begin{equation*}
 \ell^{\lambda_+}_\omega\le\ell^*_\omega
 \le\ell^{\lambda_-}_\omega.
\end{equation*}
Lemma~\ref{lem:oscillating-prefix-comparison} bounds the upper
dimensions from above by the $\lambda_-$ values and the lower
dimensions from below by the $\lambda_+$ values, at all sufficiently
small scales.
For the random lower bound, Markov's inequality and Borel--Cantelli
bound the energy by $e^{-N(s+\kappa/2)}$ for all large integer $N$.
Equation~\eqref{eq:cover-energy} and the interpolation from
$e^{-N}$ to intervening scales in the proof of
Theorem~\ref{thm:tree-random-images} then give divergent covering cost
at every small real scale, including when $\theta=1$.

It remains to attain each endpoint on a sequence of scales.  Put
$Q_k=(N_kN_{k+1})^{1/2}$ and let $\lambda(k)$ be $\lambda_-$ or
$\lambda_+$ according to the parity of $k$.  For every fixed $T$,
eventually $TQ_k<N_{k+1}$.  At every depth $n\le TQ_k$,
\begin{equation*}
 \left|\log\ell^*_\omega-\log\ell^{\lambda(k)}_\omega\right|
 \le (\lambda_+-\lambda_-)N_k=o(Q_k).
\end{equation*}
Only the first $N_k$ positions contribute to the difference.
Thus Lemma~\ref{lem:oscillating-prefix-comparison} applies on each
parity subsequence with $N=Q_k$.

The $M+1$ subsequence supplies covers and the $M$ subsequence supplies
energy bounds, giving the reverse inequalities. For the image,
$\sum_k e^{-c_0Q_k}\le\sum_k e^{-c_0k}<\infty$ for every $c_0>0$,
so Markov and Borel--Cantelli apply. Intersect over rational $\theta$
and rational strict test exponents; monotonicity of dimensions and
continuity of the entropy values extend the equalities to every $\theta>0$.

For strict separation, put
$\Delta=\lambda_+-\lambda_->0$ and
$b_- =\lambda_-+\beta c$.  The coding map from $E_{\beta,M}$
to $E_{\beta,M+1}$ is $\alpha$-H\"older on its domain, where
\[
 \alpha=1+\Delta/b_->1.
\]
Indeed,
\[
 -\log\ell^{\lambda_-}_\omega\le b_-n,
 \qquad \ell^{\lambda_+}_\omega
 =e^{-\Delta n}\ell^{\lambda_-}_\omega
 \le(\ell^{\lambda_-}_\omega)^\alpha.
\]
Sibling separation gives the same bound for distances. Hence
\[
 d_\theta(\lambda_+)\le \alpha^{-1}d_\theta(\lambda_-)
 <d_\theta(\lambda_-).
\]
For $\theta<\theta'\le1$, take source covers $\{U_i\}$ with
$\rho\le\operatorname{diam}U_i\le\rho^{\theta'}$ and
$\rho=(r/C)^{1/\alpha}$. Map $U_i\cap E_{\beta,M}$ and enlarge
image diameters to $r$ as needed.
Their diameters $d_i$ satisfy, for small $r$,
\[
 \begin{gathered}
 r\le d_i\le C(\operatorname{diam}U_i)^\alpha
 \le C^{1-\theta'}r^{\theta'}\le r^\theta,\\
 \sum_i d_i^s\le C^s\sum_i(\operatorname{diam}U_i)^{\alpha s}.
 \end{gathered}
\]
Let $\theta'\downarrow\theta$; at $\theta=1$ use fixed-scale box covers.

The fixed-parameter endpoint limit of
Proposition~\ref{prop:tree-basic-regimes} gives
\eqref{eq:oscillating-endpoint-jump}.  The value of
$\dim_{\mathrm H}E^*$ and the realization by a strongly separated
two-map bi-Lipschitz IFS are proved in
\cite[Theorem~5.1]{BBFLX2025}.

Finally, for every fixed $\lambda$ the constant profile $f=1/2$
gives
\[
 \min\{1,c/(H\lambda)\}
 \le\mathcal D_{\theta,H}(\lambda)
 \le\min\{1,d_\theta(\lambda)/H\}.
\]
The upper bound is \eqref{eq:tree-random-source-upper}.
Letting $\theta\downarrow0$ gives \eqref{eq:oscillating-random-endpoints}.
The same inequalities give saturation for $H\le c/\lambda_+$;
for $H>c/\lambda_+$, the distinct limits give a gap for small $\theta$.
\end{proof}

\begin{remark}
This computes the oscillating example in~\cite[Section~1.5]{BBFLX2025}.
The probability-one event corresponds to each fixed set $E^*$ and fixed $H$.
\end{remark}

\subsection{Local counting and the Assouad spectrum}
\label{sec:tree-assouad-spectrum}

For $0<\vartheta<1$, the Assouad spectrum is defined by
\cite{FraserYu2018Spectra}
\[
 \dim_{\mathrm A}^{\vartheta}F
 =\inf\left\{s:\ \exists C>0\ \forall x\in F\ \forall 0<R<1,
 N_{R^{1/\vartheta}}(F\cap B(x,R))
 \le C(R/R^{1/\vartheta})^s\right\},
\]
where $N_r$ denotes the least number of intervals of diameter at most
$r$ needed to cover a set.

Retain $c=\log2$, $\lambda=\log M$, and define, for $t\ge0$,
\begin{align*}
 U(t)&=\log(1+2^{(1-\beta)t}),&
 V(t)&=\log(1+2^{-\beta t}),\\
 P(t)&=\beta U(t)+(1-\beta)V(t)-\lambda t,&
 F(t)&=V(t)-(\lambda-\beta c)t.
\end{align*}
Let $d_{\mathrm B}$ and $d_{\mathrm A}$ be the unique positive zeros
of $P$ and $F$, respectively. The former is the previously defined
box dimension $D(\lambda)$; the latter is the Assouad dimension.
These endpoints were computed in~\cite{BBFLX2025}; its
Proposition~5.6 gives the Assouad value for $E^*$.
Also put
\begin{equation*}
 q(t)=\frac1{1+2^{\beta t}},\qquad
 a(t)=\lambda-\beta c+\beta c q(t),\qquad
 j_0=\frac{1-\beta}{\beta}.
\end{equation*}
Thus $F'(t)=-a(t)<0$ and
$F''(t)=(\beta c)^2q(t)(1-q(t))>0$.
Both pressures are strictly decreasing from $c$ to $-\infty$, and
\[
 P(t)-F(t)=\beta\{U(t)-V(t)-ct\}<0\qquad(t>0).
\]
Thus $0<d_{\mathrm B}<d_{\mathrm A}$.

\begin{theorem}[Assouad spectrum from local covering growth]
\label{thm:ifs-assouad-spectrum}
For every $0<\vartheta<1$, the fixed Cantor set $E_{\beta,M}$ and
the oscillating set $E^*$ have the same Assouad spectrum, equal to
\begin{equation*}
 S(\vartheta)
 =\min_{d_{\mathrm B}\le t\le d_{\mathrm A}}
 \left\{t+\frac{(1-\beta)\vartheta}
                    {\beta\lambda(1-\vartheta)}F(t)\right\}.
\end{equation*}
\end{theorem}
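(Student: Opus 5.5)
The plan is to compute the spectrum from a uniform local cylinder count. Fix a point $x$ and a radius $R$, and put $r=R^{1/\vartheta}$. Write $R=e^{-N}$, so that $r=e^{-N/\vartheta}$. By the ratio and gap bounds \eqref{eq:tree-ratios}, the ball $B(x,R)$ meets $O(1)$ cylinders of length comparable to $R$. It therefore suffices to bound, uniformly over words $\omega$ with $\ell_\omega\approx R$, the number of descendants $\omega\omega'$ with $\ell_{\omega\omega'}\approx r$. Set $n=|\omega|$ and $m=|\omega'|$. The length ratio $\ell_{\omega\omega'}/\ell_\omega$ equals $M^{-m}$ times a factor $2^{\cdots}$. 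That factor depends on the digits of $\omega'$ weighted by $-\beta$. It also depends on the digits at positions between $\lfloor\beta n\rfloor$ and $\lfloor\beta(n+m)\rfloor$, weighted by $+1$. The latter digits belong to $\omega$ when $\beta(n+m)\le n$, that is, when $m\le j_0 n$, and otherwise partly to $\omega'$. The ancestor digits are fixed by the choice of $\omega$, which is adversarial in the supremum. The new digits are counted by types as in Lemma~\ref{lem:tree-frequency-types}.

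The first step handles the short-window regime, in which the local window lies entirely within the region whose delayed digits are inherited. Here the worst case is an ancestor $\omega$ whose delayed digits are all ones, so that the factor is $2^{+}$ and the lengths shrink most slowly. With free digit frequency $q$ on $\omega'$, the count exponent per unit of $\log(R/r)$ is maximized by a Legendre transform of $h(q)$ against the rate $\lambda-\beta c+\beta c q$. This gives the pressure $F$ and the local exponent $d_{\mathrm A}$. The second step handles longer windows. The delayed positions then reach into $\omega'$ itself, and the digits there enter with the combined weight $1-\beta$, exactly as in the box formula with pressure $P$. I will introduce a split parameter: the portion of the window whose delays are inherited contributes with $F$, and the self-delayed portion contributes with $P$. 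The depth $n$ must be compatible with $\ell_\omega\approx R$, so $n\approx N/\lambda$ to leading order, up to $O(\beta c)$ corrections that I must track. The inherited portion then has log-length about $(1-\beta)N/(\beta\lambda)$ relative to scale, and this is the origin of the factor $(1-\beta)\vartheta/(\beta\lambda(1-\vartheta))$. Optimizing over the split and the frequencies, and writing the answer by convex duality as a minimum over a slope $t\in[d_{\mathrm B},d_{\mathrm A}]$, should yield $S(\vartheta)$. The duality follows the pattern of the entropy identity $\max_x\{h(x)+tx\}=\log(1+e^t)$ used in Proposition~\ref{prop:tree-basic-regimes}. For the lower bound I will exhibit, for each $N$, a specific adversarial $\omega$ with the extremal delayed digits, together with the conditioned Bernoulli descendant measure from Lemma~\ref{lem:tree-typical-cylinders}. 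I will then count typical descendants at scale $r$.

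For $E^*$, only the first $N_k$ positions distinguish $\lambda_\pm$, as in the proof of Theorem~\ref{thm:oscillating-intermediate}. A local window deep in the tree sees an arbitrary local contraction constant, and the Assouad spectrum takes the supremum over locations. I therefore expect the spectrum of $E^*$ to be the maximum of the two fixed-$\lambda$ spectra, realized within long blocks where $M_i\equiv M$. The window length is bounded by a fixed multiple of its depth, so windows inside a block $[N_{2k-1},N_{2k})$ with $N_{2k}/N_{2k-1}\to\infty$ behave as in $E_{\beta,M}$. There is a subtlety here: the normalizing constant $\lambda$ in the depth relation $n\approx N/\lambda$ uses the accumulated average contraction, and this may differ. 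I will need to check that the worst case still produces the $\lambda=\log M$ formula. Windows straddling block boundaries would be handled by monotone comparison $\ell^{\lambda_+}\le\ell^*\le\ell^{\lambda_-}$ applied locally.

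The main obstacle is the second step. I must carry out the variational optimization over the adversarial ancestor digits and the descendant frequencies, with the two overlapping delay regimes, and show that it collapses exactly to the one-parameter minimum in $t$. The hardest part is tracking precisely how the ancestor depth $n$ relates to $N$ when ancestor frequencies are extreme. I would first try fixing constant frequencies on the two regimes, and then use the averaging Lemma~\ref{lem:tree-orbit-averaging} to justify that constant frequencies suffice.
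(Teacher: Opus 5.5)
Your architecture matches the paper's: local counts below $O(1)$ stopped parents, an inherited-delay regime governed by $F$, a self-delayed regime governed by $P$, the parent $1^k$ for the lower bound, and windows inside long $M$-blocks for $E^*$. But the step you defer as ``the main obstacle'' is the substance of the upper bound, and your plan for it does not close it.

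For a parent $u$ of length $k$ in $E_{\beta,M}$ with $\ell_u\approx R=e^{-N}$, the quantity $Q_u=-\log\ell_u$ can lie anywhere in $[a_0k,(\lambda+c\beta(1-\beta))k]$ up to $O(1)$, where $a_0=\lambda-\beta(1-\beta)c$. So the depth is not $N/\lambda$ up to corrections to be tracked. It varies at order one in the rate, and this changes the leading-order answer. The parent digits also enter twice: through $Q_u$, and through the inheritable ones in positions $(\lfloor\beta k\rfloor,k]$. Nothing in your outline shows that $1^k$ is the worst parent. Lemma~\ref{lem:tree-orbit-averaging} cannot supply this, because it averages continuous profiles under physical constraints at delay-orbit nodes. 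Here you need a supremum over discrete parent words, uniform in the descendant depth.

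Two smaller points. First, the factor $(1-\beta)\vartheta/(\beta\lambda(1-\vartheta))=j_0/(\nu\lambda)$, with $\nu=(1-\vartheta)/\vartheta$, comes from the \emph{number} $j_0N/\lambda$ of inherited digits, each contributing $F(t)$, not from their log-length. Second, your short-window value $d_{\mathrm A}$ is reachable only when $\nu\lambda\le j_0a(d_{\mathrm A})$. In the middle phase the extremal window uses frequency $q(t_*)$ with $\nu\lambda=j_0a(t_*)$, and it exactly exhausts the inherited digits.

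The missing idea is to bound a partition sum uniformly in the parent, rather than optimizing types parent by parent. For $t\in[d_{\mathrm B},d_{\mathrm A}]$, Lemma~\ref{lem:ifs-local-partition} gives
\[
 Z_{u,j}(t)=\sum_{|\omega|=j}(\ell_{u\omega}/\ell_u)^t
 \le C_t\exp\{\kappa(t)Q_u\},\qquad \kappa(t)=\frac{j_0F(t)}{\lambda},
\]
uniformly in $u$, $j$, and contraction sequences with $M\le M_i\le M+1$.

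Its engine is the inequality $Q_u\ge a_0k+\beta cY$, where $Y$ counts the ones of $u$ in $(\lfloor\beta k\rfloor,k]$. This is exactly the depth-versus-inheritance trade-off you wanted to track: a parent gains depth only by giving up inheritable ones, at the rate that makes $1^k$ extremal. The two regimes then go as follows.
\begin{itemize}
\item If $\lfloor\beta(k+j)\rfloor\le k$, the inherited count satisfies $m\le\min\{Y,\beta j+1,(1-\beta)k+1\}$. This gives $\log Z_{u,j}(t)\le mF(t)/\beta+O(1)\le\kappa(t)Q_u+O(1)$.
\item Otherwise, $P(t)\le0$ and $j\ge j_0k-O(1)$ give $\log Z_{u,j}(t)\le j_0k\{V(t)-\lambda t\}+ctY+O(1)\le\kappa(t)Q_u+O(1)$.
\end{itemize}
Markov's inequality bounds the number of descendants of length comparable to $r$ at depth $j$ by a constant times $(\ell_u/r)^tZ_{u,j}(t)$. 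Summing over $O(1+\log(R/r))$ depths yields $\log N_r(E\cap B(x,R))\le N\{\nu t+\kappa(t)\}+O(\log N)$ for each such $t$. The minimum over $t$ therefore comes by weak duality, with no primal optimization. The matching lower bound comes from the explicit constructions via the identity $C_t(v)-tB_t(v)=(v-j_0)P(t)+j_0F(t)$.

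Because the partition-sum bound is uniform in the contraction sequence, it also gives the upper bound for $E^*$ with $\lambda=\log M$, including windows straddling block boundaries. The global comparison $\ell^{\lambda_+}\le\ell^*\le\ell^{\lambda_-}$ is not a substitute. The same parent word sits at different scales in the two sets, so window counts do not transfer directly.
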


\begin{lemma}[Uniform local partition-sum bound]
\label{lem:ifs-local-partition}
For either construction, write $Q_u=-\log\ell_u$ and
\[
 Z_{u,j}(t)=\sum_{|\omega|=j}(\ell_{u\omega}/\ell_u)^t.
\]
For every $d_{\mathrm B}\le t\le d_{\mathrm A}$,
\begin{equation}\label{eq:assouad-uniform-partition}
 Z_{u,j}(t)\le C_t\exp\{\kappa(t)Q_u\},
 \qquad \kappa(t)=\frac{j_0F(t)}{\lambda}.
\end{equation}
The constant is independent of the parent word $u$, the depth $j$,
and the contraction sequence satisfying $M\le M_i\le M+1$.
\end{lemma}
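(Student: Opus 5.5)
The plan is to compute $Z_{u,j}(t)$ almost exactly. The digits of the extension $\omega$ are independent in the sum, so it factors into a product of one-digit sums. What remains is a deterministic optimization over the digit counts of the parent word $u$.

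\emph{Step 1: the length ratio.} Write $n=|u|$. By the length formula,
\[
 \frac{\ell_{u\omega}}{\ell_u}
 =\Bigl(\prod_{i=n+1}^{n+j}M_i\Bigr)^{-1}
 2^{\{S_{\lfloor\beta(n+j)\rfloor}-S_{\lfloor\beta n\rfloor}\}
 -\beta\{S_{n+j}-S_n\}},
\]
with the digit sums taken for the word $u\omega$. Since $M_i\ge M$, the product factor is at most $e^{-\lambda tj}$ after raising to the power $t$. This is the only place where the contraction sequence enters, so the bound will be uniform over sequences with $M\le M_i\le M+1$.

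The delayed window $(\beta n,\beta(n+j)]$ splits into two parts. The first consists of positions $\le n$, which carry fixed digits of $u$. The second consists of $k=(\lfloor\beta(n+j)\rfloor-n)_+$ positions that carry free digits of $\omega$. Each free digit counted by the delay contributes the one-digit sum $1+2^{(1-\beta)t}=e^{U(t)}$. Each of the remaining $j-k$ free digits contributes $1+2^{-\beta t}=e^{V(t)}$. The fixed digits of $u$ in the window contribute the factor $2^{t\,m_u}$, where $m_u$ counts the ones of $u$ in $(\beta n,\min\{n,\beta(n+j)\}]$. Up to an $O(t)$ rounding error absorbed into $C_t$, this gives
\[
 \log Z_{u,j}(t)\le -\lambda tj+ctm_u+kU(t)+(j-k)V(t).
\]

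\emph{Step 2: maximize over $j$.} Consider first the regime $j\le j_0n$. Here $k=0$, and $m_u$ is bounded by the number of ones of $u$ in $(\beta n,\beta(n+j)]$. If every such digit is one, the exponent is $j\{V-\lambda t+\beta ct\}=jF(t)$, which is nondecreasing in $j$ because $t\le d_{\mathrm A}$ gives $F(t)\ge0$.

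Now consider $j\ge j_0n$. Here $m_u=S_n-S_{\lfloor\beta n\rfloor}$ is frozen and $k\approx\beta j-(1-\beta)n$. The derivative of the exponent in $j$ is $\beta U+(1-\beta)V-\lambda t=P(t)\le0$, because $t\ge d_{\mathrm B}$.

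In both regimes the worst case is therefore $j=j_0n$, where
\[
 \log Z_{u,j}(t)\le j_0n\{V(t)-\lambda t\}+ct\bigl(S_n-S_{\lfloor\beta n\rfloor}\bigr)+O(1).
\]
This is exactly where the two endpoint pressures $P$ and $F$ enter, matching the interval $[d_{\mathrm B},d_{\mathrm A}]$ in the lemma.

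\emph{Step 3: compare with $Q_u$ (the main obstacle).} We have
\[
 Q_u\ge\lambda n+c\bigl(\beta S_n-S_{\lfloor\beta n\rfloor}\bigr)+O(1),
\]
and larger $Q_u$ only helps because $\kappa(t)\ge0$. The task is to show that the right side of Step 2 is at most $\kappa(t)Q_u+C_t$ for every digit pattern of $u$.

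Put $x=S_{\lfloor\beta n\rfloor}$ and $y=S_n-S_{\lfloor\beta n\rfloor}$, with $0\le x\le\beta n$ and $0\le y\le(1-\beta)n$. The required inequality is affine in $(x,y)$, so it suffices to check the four vertices of this rectangle. This reduces the claim to scalar inequalities among $F(t)$, $V(t)$, $\lambda$ and $\beta c t$, using $F(t)\ge0$ and $F(t)\le V(t)\le c$.

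The delicate vertex is the one where $u$ has ones late and zeros early. There $Q_u$ drops below $\lambda n$ and the fixed factor $2^{tm_u}$ is largest. If the crude vertex check fails there, I would instead retain the dependence on $(x,y)$ through Step 2 and not substitute the worst case $m_u=y$ early. I would then redo the maximization in $j$ jointly with the digit counts, since a short extension sees only part of $y$. Alternatively, I would exploit the factor $\exp\{c\kappa(t)(\beta S_n-S_{\beta n})\}$ coming from $Q_u$ more carefully. Rounding in the floors contributes only bounded factors depending on $t$, which gives $C_t$ independent of $u$, $j$ and the sequence $(M_i)$.
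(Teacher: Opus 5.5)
Your regime $j\ge j_0n$ is correct and matches the paper's second case. The gap is in Step~2, in the regime $j\le j_0n$.

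\textbf{The gap.} In that regime the exact exponent is $j\{V(t)-\lambda t\}+ct\,m_u(j)$, where $m_u(j)$ counts the ones of $u$ in $(\lfloor\beta n\rfloor,\lfloor\beta(n+j)\rfloor]$. Your monotonicity argument only covers parents whose digits in this window are all ones. In general, once the ones of $u$ in $(\beta n,n]$ have entered the window, the exponent decreases at rate $\lambda t-V(t)>0$. Here $V-\lambda t<P(t)\le0$, because $U>V$.

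So the claim that the worst case is $j=j_0n$, with the $u$-dependent bound $j_0n\{V(t)-\lambda t\}+ct(S_n-S_{\lfloor\beta n\rfloor})$, is false.
\begin{itemize}
\item For $u=0^n$ and $j=0$ one has $Z_{u,0}(t)=1$, while your bound is $\exp\{j_0n(V(t)-\lambda t)\}\to0$.
\item More generally, suppose the $y$ ones of $u$ in $(\beta n,n]$ come first. Then at $j\approx y/\beta$ one gets $\log Z_{u,j}(t)=yF(t)/\beta+O(1)$. This exceeds your bound by about $(j_0n-y/\beta)(\lambda t-V(t))$, which is linear in $n$.
\end{itemize}
The $u$-uniform reading $\log Z_{u,j}\le jF(t)\le j_0nF(t)$ does not rescue it either. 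For $u=1^{\lfloor\beta n\rfloor}0^{n-\lfloor\beta n\rfloor}$ one has $Q_u\approx(\lambda-\beta(1-\beta)c)n$, so $\kappa(t)Q_u$ falls below $j_0nF(t)$ by a linear amount whenever $F(t)>0$. Step~3 as set up therefore does not control short extensions.

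\textbf{The repair.} Keep the captured count $m=m_u(j)$, which satisfies $m\le\min\{y,\beta j+1,(1-\beta)n+1\}$.
\begin{itemize}
\item Since $j\ge(m-1)/\beta$ and $V-\lambda t<0$,
\[
 \log Z_{u,j}(t)\le j\{V(t)-\lambda t\}+ctm\le\frac{mF(t)}{\beta}+O(1).
\]
This is the true maximum over short extensions.
\item Using $x\le\beta n$ and $m\le y$, with $a_0=\lambda-\beta(1-\beta)c$, one has $Q_u\ge a_0n+\beta cy\ge a_0n+\beta cm$.
\item The identity
\[
 \frac{mF(t)}{\beta}=\kappa(t)\{a_0n+\beta cm\}
 +\frac{a_0F(t)}{\beta\lambda}\{m-(1-\beta)n\}
\]
then finishes, since the last term is $O(1)$.
\end{itemize}
Equivalently, run your affine vertex check on $yF(t)/\beta$ instead of on the $j=j_0n$ value; it passes. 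This is the paper's first case. The paper's second case makes the same reduction to $x=\beta n$ that your vertex check performs.

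\textbf{A smaller correction.} The binding vertex is the all-ones parent $x=\beta n$, $y=(1-\beta)n$. There the required inequality holds with equality up to $O(1)$. It is not the ones-late, zeros-early vertex: at $x=0$ one has $Q_u\ge\lambda n+c\beta(1-\beta)n>\lambda n$. Your vertex check succeeds, so the fallback you describe is unnecessary. What is needed is the corrected short-extension bound above.
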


\begin{proof}
The cylinder lengths of both sets have the form
\[
 \ell_\omega=
 \left(\prod_{i=1}^{n}M_i\right)^{-1}
 2^{S_{\lfloor\beta n\rfloor}(\omega)-\beta S_n(\omega)},
 \qquad M\le M_i\le M+1.
\]
The fixed construction has $M_i=M$. All child-to-parent
ratios lie in $[(2M+2)^{-1},2/M]$, and every sibling gap is at least
$(1-4/M)$ times its parent length. Constants below are uniform in
the parent word and in the contraction sequence within these bounds.

Fix a parent word $u$ of length $k$. Put
\[
 Q_u=-\log\ell_u,\qquad
 Y=S_k(u)-S_{\lfloor\beta k\rfloor}(u),\qquad
 a_0=\lambda-\beta(1-\beta)c.
\]
Since $S_{\lfloor\beta k\rfloor}(u)\le\beta k$,
\begin{equation*}
 Q_u\ge a_0k+\beta cY.
\end{equation*}
Fix $t\in[d_{\mathrm B},d_{\mathrm A}]$, so that
$P(t)\le0$, $F(t)\ge0$, and $V(t)-\lambda t<0$.
For $k=0$, the digit sum gives
$\log Z_{u,j}(t)\le jP(t)+O(1)\le O(1)$. Hence assume $k\ge1$.

If $\lfloor\beta(k+j)\rfloor\le k$, let $m$ count the newly included
ones of $u$. Since $M_i\ge M$,
\[
 \begin{gathered}
 m\le\min\{Y,\beta j+1,(1-\beta)k+1\},\\
 \begin{aligned}
 \log Z_{u,j}(t)
 &\le j\{V(t)-\lambda t\}+ctm
 \le\frac{mF(t)}\beta+O(1)\\
 &=\kappa(t)(a_0k+\beta cm)
   +\frac{a_0F(t)}{\beta\lambda}\{m-(1-\beta)k\}+O(1)\\
 &\le\kappa(t)Q_u+O(1).
 \end{aligned}
 \end{gathered}
\]

If $e=\lfloor\beta(k+j)\rfloor-k>0$, the digit weights are
$2^{(1-\beta)t}$ on the first $e$ descendants and $2^{-\beta t}$
on the rest. Since $j\ge j_0k-O(1)$ and $P(t)\le0$,
\[
 \begin{aligned}
 \log Z_{u,j}(t)
 &\le jP(t)+(1-\beta)k\{V(t)-U(t)\}+ctY+O(1)\\
 &\le j_0k\{V(t)-\lambda t\}+ctY+O(1)\\
 &=\kappa(t)(a_0k+\beta cY)
   +c\left(t-\frac{(1-\beta)F(t)}\lambda\right)
       \{Y-(1-\beta)k\}+O(1)\\
 &\le\kappa(t)Q_u+O(1).
 \end{aligned}
\]
The last step uses $Y\le(1-\beta)k+1$ and
$t-(1-\beta)F(t)/\lambda\ge a_0t/\lambda>0$,
since $F(t)<\beta ct$. Both cases prove
\eqref{eq:assouad-uniform-partition}.
\end{proof}

\begin{proof}[Proof of Theorem~\ref{thm:ifs-assouad-spectrum}]
\smallskip\noindent\emph{Uniform upper cover.}
Stopping first at $R$, then at $r=R^{1/\vartheta}$, gives cylinder
lengths in $[R/(2M+2),R]$ and $[r/(2M+2),r]$, respectively.
A ball $B(x,R)$ meets $O(1)$ stopped parents; descendant depths
range over $O(1+\log(R/r))$ integers. Thus
\eqref{eq:assouad-uniform-partition} gives
\[
 N_r(E\cap I_u)
 \le C(1+\log(R/r))(\ell_u/r)^t e^{\kappa(t)Q_u}.
\]
With $\nu=(1-\vartheta)/\vartheta$ and $N=-\log R$,
$Q_u=N+O(1)$ and $\log(\ell_u/r)=\nu N+O(1)$, hence
\[
 N_r(E\cap B(x,R))
 \le C(1+N)\exp\{N[\nu t+\kappa(t)]\}.
\]
Absorbing $1+N$ into a strict exponent margin and minimizing in $t$
gives the upper bound for both sets.

\smallskip\noindent\emph{Matching local lower bounds in the fixed construction.}
Let $K=j_0/(\nu\lambda)$ and let $t_*$ minimize $t+KF(t)$ on
$[d_{\mathrm B},d_{\mathrm A}]$. Since $F''>0$, this minimizer is
unique. The derivative conditions are
\[
 \nu\lambda\ge j_0a(d_{\mathrm B})\quad(t_*=d_{\mathrm B}),\qquad
 \nu\lambda=j_0a(t_*)\quad(d_{\mathrm B}<t_*<d_{\mathrm A}),
\]
and $\nu\lambda\le j_0a(d_{\mathrm A})$ when $t_*=d_{\mathrm A}$.

For $u=1^k$, $R_k=\ell_u=\exp(-\lambda k+O(1))$ and
$I_u\subset B(1,R_k)$. Take $j=vk+O(1)$ and, for $v\ge j_0$, put
\[
 p(t)=\frac1{1+2^{-(1-\beta)t}},\qquad q=q(t).
\]
For $v=j_0$, choose $j$ with $\lfloor\beta(k+j)\rfloor=k$.
Prescribe $p(t)$ and $q(t)$ on the first
$e=\lfloor\beta(k+j)\rfloor-k$ digits and the remaining $j-e$ digits,
rounding the counts. Stirling's formula gives
$\exp(kC_t(v)+o(k))$ cylinders of common relative length
$\exp(-kB_t(v)+O(1))$, where
\begin{align*}
 B_t(v)&=\lambda v-c(1-\beta)
     -c(1-\beta)\{\beta v-(1-\beta)\}p(t)
     +c\beta(1-\beta)(v+1)q(t),\\
 C_t(v)&=\{\beta v-(1-\beta)\}h(p(t))
                    +(1-\beta)(v+1)h(q(t)).
\end{align*}
These expressions satisfy
\begin{equation}\label{eq:assouad-entropy-identity}
 B_t(j_0)=j_0a(t),\qquad
 C_t(v)-tB_t(v)=(v-j_0)P(t)+j_0F(t).
\end{equation}
Also $B_t(v)$ is strictly increasing in $v$; its slope is
$\lambda+\beta(1-\beta)c(q(t)-p(t))>0$.

For $t=t_*=d_{\mathrm B}$, choose the unique $v\ge j_0$ with
$B_t(v)=\nu\lambda$. Then \eqref{eq:assouad-entropy-identity} gives
\[
 \frac{C_t(v)}{\nu\lambda}
 =d_{\mathrm B}+\frac{j_0F(d_{\mathrm B})}{\nu\lambda}.
\]
For $d_{\mathrm B}<t=t_*<d_{\mathrm A}$, take $v=j_0$; the first
block is empty and $B_t(v)=\nu\lambda$, so the same identity gives
$C_t(v)/(\nu\lambda)=t_*+KF(t_*)$.

For $t=t_*=d_{\mathrm A}$, take $v=\nu\lambda/a(t)\le j_0$.
All delayed digits are ones. Frequency $q(t)$ gives relative
logarithmic length $kva(t)+O(1)$ and count
$\exp(kv h(q(t))+o(k))$, with
\[
 \frac{vh(q(t))}{\nu\lambda}
 =\frac{h(q(t))}{a(t)}=t+\frac{F(t)}{a(t)}=d_{\mathrm A}.
\]

In every case the selected cylinders have diameter comparable to
$r_k=R_k^{1/\vartheta}$. Their common ancestors have at least this
length, so the gap bound separates them by a fixed multiple of $r_k$.
Take one point from each cylinder. Every interval of diameter $r_k$
contains only a uniformly bounded number of these points, so
\[
 \limsup_{k\to\infty}
 \frac{\log N_{r_k}(E_{\beta,M}\cap B(1,R_k))}
      {\log(R_k/r_k)}\ge S(\vartheta).
\]

\smallskip\noindent\emph{The oscillating construction.}
Put $n_k=\lfloor\sqrt{N_{2k-1}N_{2k}}\rfloor$. Then
\[
 \frac{N_{2k-1}}{n_k}\to0,\qquad \frac{n_k}{N_{2k}}\to0,\qquad
 -\log\ell^*_{1^{n_k}}=\lambda n_k+o(n_k).
\]
The $O_\vartheta(n_k)$ descendant depths remain in one $M$-block,
with the same relative lengths and counts as above. The new target
$r_k=(\ell^*_{1^{n_k}})^{1/\vartheta}$ differs by $\exp(o(n_k))$.
For minimum point separation $d_k$, the gap bound gives
\[
 \begin{gathered}
 r_k/d_k\le\exp(o(n_k)),\\
 \sup_{\operatorname{diam}J\le r_k}\#\{\text{selected points in }J\}
 \le1+r_k/d_k\le\exp(o(n_k)).
 \end{gathered}
\]
Thus the logarithmic covering number loses only $o(n_k)$,
preserving the lower exponent $S(\vartheta)$ for $E^*$.

\end{proof}

\subsection{Explicit branches and transition regularity}

Put
\begin{equation}\label{eq:ifs-assouad-transitions}
 \vartheta_{\mathrm B}
 =\frac{\beta\lambda}{\beta\lambda+(1-\beta)a(d_{\mathrm B})},
 \qquad
 \vartheta_{\mathrm A}
 =\frac{\beta\lambda}{\beta\lambda+(1-\beta)a(d_{\mathrm A})}.
\end{equation}

\begin{corollary}[Three explicit Assouad phases]
\label{cor:ifs-assouad-phases}
The transition parameters satisfy $\beta<\vartheta_{\mathrm B}<\vartheta_{\mathrm A}<1$
and
\begin{equation}\label{eq:assouad-three-phases}
 S(\vartheta)=
 \begin{cases}
 d_{\mathrm B}+\displaystyle
 \frac{(1-\beta)\vartheta F(d_{\mathrm B})}
      {\beta\lambda(1-\vartheta)},
 &0<\vartheta\le\vartheta_{\mathrm B},\\[7pt]
 \displaystyle\frac{h(q_\vartheta)}
                        {\lambda-\beta c+\beta c q_\vartheta},
 &\vartheta_{\mathrm B}\le\vartheta\le\vartheta_{\mathrm A},\\[7pt]
 d_{\mathrm A},&\vartheta_{\mathrm A}\le\vartheta<1,
 \end{cases}
\end{equation}
where, in the middle interval,
\begin{equation}\label{eq:assouad-middle-frequency}
 q_\vartheta=
 \frac{\beta\lambda(1-\vartheta)/[(1-\beta)\vartheta]
                         -\lambda+\beta c}{\beta c}.
\end{equation}
The expressions agree at their common endpoints. The spectrum is
continuously differentiable on $(0,1)$ and real
analytic on each of the three open phases. It is not twice
differentiable at either transition point, and strictly increasing
on $(0,\vartheta_{\mathrm A})$.
\end{corollary}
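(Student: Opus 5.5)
The plan is to treat $S(\vartheta)$ from Theorem~\ref{thm:ifs-assouad-spectrum} as a one-parameter family of convex minimizations. Write $K=K(\vartheta)=(1-\beta)\vartheta/(\beta\lambda(1-\vartheta))$, which is real analytic and strictly increasing from $0$ to $\infty$ on $(0,1)$. Put $\Phi_\vartheta(t)=t+KF(t)$. Since $F''>0$, $\Phi_\vartheta$ is strictly convex, and $\Phi_\vartheta'(t)=1-Ka(t)$. Because $q$ is strictly decreasing, $a$ is strictly decreasing, so $\Phi_\vartheta'$ is strictly increasing in $t$. There are three cases for the unique minimizer $t_*(\vartheta)$ on $[d_{\mathrm B},d_{\mathrm A}]$.

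\emph{Phase 1.} $t_*=d_{\mathrm B}$ if and only if $Ka(d_{\mathrm B})\le1$. Solving $K(\vartheta)=1/a(d_{\mathrm B})$ gives exactly $\vartheta=\vartheta_{\mathrm B}$ in \eqref{eq:ifs-assouad-transitions}.

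\emph{Phase 3.} $t_*=d_{\mathrm A}$ if and only if $Ka(d_{\mathrm A})\ge1$, that is, $\vartheta\ge\vartheta_{\mathrm A}$. On this phase $F(d_{\mathrm A})=0$, so $S=d_{\mathrm A}$.

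\emph{Phase 2.} Otherwise $t_*$ is the interior root of $Ka(t)=1$. Rewriting this as $q(t_*)=(1/K-\lambda+\beta c)/(\beta c)$ yields $q(t_*)=q_\vartheta$ as in \eqref{eq:assouad-middle-frequency}.

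For the middle formula I will use the Legendre identity $V(t)=\max_x\{h(x)-\beta ctx\}$, which is attained at $x=q(t)$. It gives $h(q(t))=F(t)+ta(t)$. Then, using $K=1/a(t_*)$,
\[
 S=t_*+\frac{F(t_*)}{a(t_*)}=\frac{h(q_\vartheta)}{a(t_*)}
 =\frac{h(q_\vartheta)}{\lambda-\beta c+\beta cq_\vartheta}.
\]
The ordering $\vartheta_{\mathrm B}<\vartheta_{\mathrm A}$ follows from $a(d_{\mathrm B})>a(d_{\mathrm A})$, since $d_{\mathrm B}<d_{\mathrm A}$. The bound $\vartheta_{\mathrm B}>\beta$ follows from $a(t)<\lambda$, because $q(t)<1/2<1$ for $t>0$. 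The bound $\vartheta_{\mathrm A}<1$ is immediate. Agreement of the formulas at the common endpoints is automatic, since all three are values of the same minimum.

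For regularity, in the middle phase I apply the implicit function theorem to $K(\vartheta)a(t)=1$. This needs $a'(t)=\beta c\,q'(t)\ne0$, and it makes $t_*$ real analytic in $\vartheta$ with
\[
 t_*'=-\frac{K'a}{Ka'}=-\frac{K'}{K^2a'}>0 .
\]
In the outer phases $t_*$ is constant, so $S$ is analytic in $\vartheta$ on each open phase. By the envelope theorem (the boundary constraint is inactive in the derivative because either $t_*$ is fixed or $\Phi_\vartheta'(t_*)=0$), we get $S'(\vartheta)=K'(\vartheta)F(t_*(\vartheta))$ on each phase. This expression is continuous across the transitions because $t_*$ is continuous, which gives $C^1$ on $(0,1)$. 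Moreover $S'>0$ exactly when $F(t_*)>0$, that is, $t_*<d_{\mathrm A}$, giving strict increase on $(0,\vartheta_{\mathrm A})$.

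The step needing most care is the failure of twice differentiability at the transitions. Differentiating $S'=K'F(t_*)$ gives
\[
 S''=K''F(t_*)+K'F'(t_*)\,t_*'.
\]
The first term is continuous. The second vanishes on the outer phases but has a nonzero one-sided limit from inside the middle phase, since $K'>0$, $F'=-a<0$ and $t_*'>0$ stays bounded away from zero near the endpoints. So the one-sided second derivatives differ at both $\vartheta_{\mathrm B}$ and $\vartheta_{\mathrm A}$. The point to check is that $t_*'$ really has a nonzero finite limit at each transition. This holds because $a'(d_{\mathrm B})$ and $a'(d_{\mathrm A})$ are nonzero, given $0<q<1$ there.
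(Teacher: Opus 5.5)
Your proposal is correct and follows essentially the same route as the paper: strict convexity of $t+KF(t)$ with derivative $1-Ka(t)$ locates the three phases and the transitions $\vartheta_{\mathrm B},\vartheta_{\mathrm A}$; the identity $h(q(t))=V(t)+\beta ctq(t)$ (your $F(t)+ta(t)$) gives the middle formula; the envelope derivative $S'=K'F(t_*)$ gives continuous differentiability and strict increase; and the nonzero one-sided limit of $K'F'(t_*)t_*'$, which equals the paper's $-(K')^2(F')^2/(KF'')$, shows that $S$ is not twice differentiable at the transitions. As in the paper, the step from one-sided limits of $S'$ and $S''$ to conclusions at the transition points themselves uses the mean value theorem implicitly, which is fine.
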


\begin{proof}
Put $K(\vartheta)=j_0\vartheta/[\lambda(1-\vartheta)]$. The function $t+KF(t)$ is strictly
convex and its derivative is $1-Ka(t)$. Since $a$ is strictly
decreasing, the minimum is attained at $d_{\mathrm B}$ when
$K\le1/a(d_{\mathrm B})$, at $d_{\mathrm A}$ when
$K\ge1/a(d_{\mathrm A})$, and at the unique solution of
$a(t)=1/K$ for intermediate values. These inequalities give exactly
\eqref{eq:ifs-assouad-transitions}. Since $a(t)<\lambda$, both
transitions lie strictly above $\beta$. The identity
\[
 h(q(t))=V(t)+\beta ctq(t)
\]
then gives \eqref{eq:assouad-three-phases} and
\eqref{eq:assouad-middle-frequency}.

On the middle phase the minimizing parameter $t=t(\vartheta)$
is analytic and satisfies $1+KF'(t)=0$. The envelope derivative is
\[
 S'(\vartheta)=K'(\vartheta)F(t(\vartheta)).
\]
This agrees with the derivative of the first phase at
$\vartheta_{\mathrm B}$ and vanishes at
$\vartheta_{\mathrm A}$, proving continuous differentiability.
Within the middle phase,
\[
 S''(\vartheta)=K''F(t)
       -\frac{(K')^2(F'(t))^2}{KF''(t)}.
\]
At each transition the second term has a strictly negative finite
limit, whereas it is absent in the adjacent phase. Thus the second
derivative fails to exist at both transition points. Finally,
$F(t)>0$ before the upper transition, so $S'>0$ there.
\end{proof}

\begin{remark}
At $t=d_{\mathrm A}$ the uniform partition bound is independent
of the parent. Repeating the stopping argument for arbitrary
$0<r<R$ proves $\dim_{\mathrm A}E\le d_{\mathrm A}$; the attained
plateau gives the reverse inequality. The plateau also shows that
the quasi-Assouad dimension equals $d_{\mathrm A}$.
The spectrum formula answers the corresponding question
in~\cite[Section~1.5]{BBFLX2025}.
\end{remark}

\part{Projections of prescribed measures}
\label{part:measures}
\section{Intermediate dimensions of measures under projections}
\label{sec:quasi-measures}

For sets, quasi-Assouad bounds yield projection criteria for box and
packing dimensions~\cite[Corollary~1.3]{FFS2021}, and
Angelini and Molter~\cite[Corollary~3.7]{AngeliniMolter2025}
treated intermediate dimensions.
For a prescribed measure, Angelini, Molter, and Tejada~\cite[Theorem~5.13 and
Remark~5.14]{AMT2026} proved preservation of its upper and lower
intermediate dimensions under the strict hypothesis
$\dim_A\mu<m$, and asked whether quasi-Assouad dimension could replace
Assouad dimension. Theorem~\ref{thm:quasi-measure-projection} establishes
preservation under the condition $\dim_{qA}\mu\leq m$.

\subsection{Measure dimensions and the projection criterion}

Throughout this section, $\mu$ is a nonzero finite Borel measure on
$\R^n$ with compact support $K$, and $B(x,u)$ denotes a closed ball.
The all-centre formulation of intermediate dimensions
in~\cite[Lemma~3.3]{AMT2026} is as follows. For
$0<\theta\leq1$, $\udim_\theta\mu$ is the infimum of those $s\geq0$
for which
\begin{equation*}
 \exists\,c,r_0>0\quad
 \forall\,0<r<r_0\ \forall\,x\in K\ \exists\,u\in[r,r^\theta]:
 \qquad \mu(B(x,u))\geq c u^s.
\end{equation*}
The lower dimension $\ldim_\theta\mu$ is the infimum of those $s\geq0$
for which
\begin{equation*}
 \exists\,c>0\ \exists\,r_j\downarrow0\quad
 \forall\,j\ \forall\,x\in K\ \exists\,u\in[r_j,r_j^\theta]:
 \qquad \mu(B(x,u))\geq c u^s.
\end{equation*}
Here $c$ is uniform in the scale and centre, while $u$ may depend on both.
At $\theta=1$ these are the upper and lower Minkowski dimensions of $\mu$.

Write $q=\dim_{qA}\mu$.  The defining quasi-Assouad growth property
\cite[Definition~2.1]{HHT2020} is that, for every $d>q$ and $\eta>0$,
there exist $C,R_*>0$ such that
\begin{equation}\label{eq:measure-quasi-growth}
 \frac{\mu(B(x,R))}{\mu(B(x,u))}
 \leq C(R/u)^d,
 \qquad x\in K,\quad 0<u\leq R^{1+\eta}\leq R<R_*.
\end{equation}
Thus $q$ is the infimum of the exponents valid for every $\eta>0$;
$C,R_*$ may depend on $d,\eta,\mu$, uniformly in $x,u,R$.

For $V\in G(n,m)$, let $P_V$ be the orthogonal projection onto $V$
and $P_V{}_\#\mu$ its pushforward.  Write $\gamma_{n,m}$ for the
rotation-invariant probability measure on $G(n,m)$.

\begin{theorem}[Quasi-Assouad criterion for measure projections]
\label{thm:quasi-measure-projection}
Let $m\in\{1,\ldots,n-1\}$ and suppose that $\dim_{qA}\mu\leq m$.
There is a set of full $\gamma_{n,m}$-measure in
$G(n,m)$ on which, simultaneously for all $0<\theta\leq1$,
\begin{equation*}
 \udim_\theta(P_V{}_\#\mu)=\udim_\theta\mu,
 \qquad
 \ldim_\theta(P_V{}_\#\mu)=\ldim_\theta\mu.
\end{equation*}
\end{theorem}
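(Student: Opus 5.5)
The upper bounds $\udim_\theta(P_V{}_\#\mu)\le\udim_\theta\mu$ and $\ldim_\theta(P_V{}_\#\mu)\le\ldim_\theta\mu$ hold for every $V$, since $P_V$ is $1$-Lipschitz and the all-centre formulation is monotone under Lipschitz pushforwards. Indeed, every point of $P_V(K)$ is $P_Vy$ for some $y\in K$, and $P_V{}_\#\mu(B(P_Vy,u))\ge\mu(B(y,u))$. So the work is the lower bound, which we obtain through an energy-type characterization in the spirit of \cite{BFF2021}. We use the truncated kernel $\phi_{r,\theta}^{s,m}$ on $\R^m$, namely $1$ for $|z|\le r$, $(r/|z|)^s$ for $r<|z|\le r^\theta$, and $r^{s+\theta(m-s)}|z|^{-m}$ beyond. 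The kernel method gives the key averaging estimate
\[
\int_{G(n,m)}\phi^{s,m}_{r,\theta}(P_Vz)\,d\gamma_{n,m}(V)\lesssim \phi^{s,m}_{r,\theta}(z)\quad(s<m),
\]
and a corresponding one for $s=m$ with a logarithmic loss. We reduce all quantities to exponential scales $r=2^{-k}$ and to rational $\theta,s$, and pass to all $\theta$ at the end by monotonicity and continuity in $\theta$, as in Step~4 of the proof of Theorem~\ref{thm:general-cluster-transfer}.

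\emph{Point-potential step.} Fix $s<\udim_\theta\mu$ (respectively $s<\ldim_\theta\mu$) with $s<m$; the case $\udim_\theta\mu=m$ only needs $s<m$ arbitrarily close. We want to show, for a dense set of scales, that the point potential
\[
\Phi_r(x)=\int\phi^{s,m}_{r,\theta}(x-y)\,d\mu(y)
\]
is small somewhere in the right sense. From the failure of the all-centre condition, for each relevant scale there is a centre $x_r$ with $\mu(B(x_r,u))\le \epsilon u^s$ for all $u\in[r,r^\theta]$. Dyadic decomposition of $\Phi_r(x_r)$ handles the window $[r,r^\theta]$ directly, giving a contribution $\lesssim \epsilon r^s\log(1/r)$; a slightly smaller $s$ absorbs the log. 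The tail $|y-x_r|>r^\theta$ is the hard part. There we must bound
\[
\sum_{R=2^{-j}>r^\theta} r^{s+\theta(m-s)}R^{-m}\mu(B(x_r,R)),
\]
and without control of $\mu(B(x_r,R))$ for $R$ near $r^\theta$ this can fail. This is where the enlarged window and quasi-Assouad growth enter. We apply the all-centre negation with $\theta$ replaced by $\theta'=\theta/(1+\eta)$, so the smallness $\mu(B(x_r,u))\le\epsilon u^{s}$ holds up to $u_0=r^{\theta'}=r^{\theta(1+\eta)^{-1}}$. For $R\in[r^\theta,R_*]$ with $u_0\le R^{1+\eta}$ impossible, we instead compare with $u_1=R^{1+\eta}\le u_0$ and use \eqref{eq:measure-quasi-growth} with $d>q$, $d\le m+\eta$:
\[
\mu(B(x_r,R))\le C(R/u_1)^d\mu(B(x_r,u_1))\le C\epsilon R^{-\eta d}R^{(1+\eta)s}.
\]
Summing against $R^{-m}$ gives a geometric series dominated by its smallest $R$, since $s-m+\eta(s-d)<0$. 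The total is $\lesssim\epsilon r^{s+O(\eta)}$. Continuity of the intermediate dimensions in $\theta$ (from \cite{FFK2020}-type Lipschitz bounds, or from the definition with $\eta\to0$) lets us lose only $O(\eta)$ in $s$.

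\emph{Projection averaging and conclusion.} We integrate the potential over $V$ against a restricted measure: $\mu$ restricted near $x_r$, or the mass distribution built from the bad centres. The averaging inequality then gives
\[
\int P_V{}_\#\mu\bigl(B(P_Vx_r,u)\bigr)\,d\gamma\lesssim r^{-s'}u^{s'}\Phi_r(x_r)
\]
uniformly in $u\in[r,r^\theta]$, with $s'<s$. Markov's inequality and Borel--Cantelli over $r=2^{-k}$ give, for $\gamma$-a.e.\ $V$ and all large $k$, a centre $P_Vx_r$ at which the projected ball masses are below $c_k u^{s'}$ for all $u$ in the window, with $c_k\to0$. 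For the lower dimension we use the subsequence $r_j$ from the negated definition and run Borel--Cantelli along a geometric thinning; summability comes from the margin $r^{\kappa}$. Countable intersection over rational $\theta,s,\eta$ then gives the full-measure set.

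The main obstacle is the tail estimate beyond $r^\theta$: making the quasi-Assouad exponent exactly $m$ (non-strict) suffice. This requires choosing $d=m+\eta'$ with $\eta'\ll\eta$, so that the loss $\eta d$ is compensated by the gain $(m-s)$. Care is also needed when $\udim_\theta\mu=m$, where $s<m$ leaves only a small margin.
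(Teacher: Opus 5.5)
Your architecture matches the paper's. You have the trivial direction via $P_V{}_\#\mu(B(P_Vy,u))\ge\mu(B(y,u))$, small-mass centres on the enlarged window $[r,u_0]$ with $u_0=r^{\theta/(1+\eta)}$, and a point potential with the $|z|^{-m}$ tail. You also use quasi-Assouad growth with $d$ just above $m$, projection averaging, Markov and Borel--Cantelli. Write $t$ for the smallness exponent and $s$ for the kernel exponent (your $s$ and $s'$).

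\emph{First gap: the tail estimate.} You rightly call this the crux, but you only carry it out where it is harmless. Your comparison ball $u_1=R^{1+\eta}$ stays inside the smallness window only while $R\le u_0^{1/(1+\eta)}$. Near $R=r^\theta$ it can also drop below $r$ when $\theta(1+\eta)>1$; on $[r^\theta,u_0]$ simply use the hypothesis directly. For $u_0^{1/(1+\eta)}\le R\le1$ you must compare with a ball of radius at most $u_0$. The summands $r^{s+\theta(m-s)}R^{-m}\mu(B(x_r,R))$ are then at best of order $r^{s+\theta(m-s)}R^{d-m}u_0^{t-d}$, and these \emph{increase} with $R$ because $d>m$. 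So ``dominated by its smallest $R$'' is false exactly in the range where the borderline case $\dim_{qA}\mu=m$ lives, and that range is never estimated. The paper compares every $u\ge u_0$ with the single ball $\rho=r^\theta$, which is admissible because $\rho=u_0^{1+\eta}\le u^{1+\eta}$. This gives $\mu(B(x,u))\le Cu^d\rho^{t-d}$. With $d=m+\varepsilon$ the far tail is at most $Cr^{s+\theta(m-s)}\rho^{t-d}\int_{u_0}^1u^{\varepsilon-1}\,du\le C\varepsilon^{-1}r^{s+\theta(t-s-\varepsilon)}$. The loss $\theta\varepsilon$ is paid by the gap $t-s$, fixed before $\varepsilon$. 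Comparing with $u_0$ instead also works, with exponent margin $\theta[(t-s)+\eta(m-s)-(d-m)]/(1+\eta)$. That is presumably what your closing remark about $d=m+\eta'$ intends, but it is the heart of the proof and must be written out.

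\emph{Second gap: the scale quantifiers in the Borel--Cantelli step are swapped.} The hypothesis $s<\udim_\theta\mu$ gives small-mass centres only at arbitrarily small scales, so you cannot claim one ``for all large $k$''. Instead, thin to scales $r_j\le2^{-j}$ and apply Borel--Cantelli along them. Arbitrarily small bad scales for $\nu=P_V{}_\#\mu$ then give $\udim_\theta\nu\ge s$. Conversely, $s<\ldim_\theta\mu$ gives centres at every sufficiently small scale, and $\ldim_\theta\nu\ge s$ needs exactly that for $\nu$. A subsequence of bad scales would only give $\udim_\theta\nu\ge s$. So for the lower dimension, run Borel--Cantelli over all $r_j=2^{-j}$ and then interpolate. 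For $2^{-j-1}<r\le2^{-j}$, use $z_j=P_Vx_j$, at the cost of a factor $2^s$ on $[r,2^{-j}]$.

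\emph{A minor point.} The averaging inequality you state, with the untruncated kernel on the projected side, loses a factor $\log(1/r)$ even for $s<m$. This is because the $|w|^{-m}$ tail is critical in $\R^m$. You only need the kernel truncated at $r^\theta$ on the projected side, and there the bound holds without loss.
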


This answers the question in \cite[Remark~5.16]{AMT2026} and also
includes the endpoint $\dim_{qA}\mu=m$.

\subsection{Window comparison and small-mass witnesses}

\begin{lemma}[Comparison of measure windows]
\label{lem:measure-window-comparison}
If $D_\theta$ denotes either intermediate dimension of $\mu$, then
\begin{equation}\label{eq:measure-dimension-continuity}
 D_\varphi\leq D_\theta
 \leq\frac{\theta}{\varphi}D_\varphi,
 \qquad 0<\varphi\leq\theta\leq1.
\end{equation}
In particular, $D_\theta$ is continuous on $(0,1]$ when it is finite.
Moreover,
\begin{equation}\label{eq:measure-dimension-qA-bound}
 0\leq\ldim_\theta\mu\leq\udim_\theta\mu
 \leq\udim_B\mu\leq\dim_{qA}\mu.
\end{equation}
\end{lemma}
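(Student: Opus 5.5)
The plan is to work directly with the all-centre definitions of $\udim_\theta\mu$ and $\ldim_\theta\mu$ recalled above and to prove three things in turn: monotonicity in $\theta$, the rescaling bound $D_\theta\le(\theta/\varphi)D_\varphi$, and the chain \eqref{eq:measure-dimension-qA-bound}. Both dimensions are treated at once. For the lower dimension the only change is that the scales run along a sequence $r_j\downarrow0$ instead of over all small $r$.

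\emph{Monotonicity.} Let $\varphi\le\theta$ and suppose $s$ is admissible at $\theta$. Then for each $x$ there is a witness $u\in[r,r^\theta]$. Since $r^\theta\le r^\varphi$, this witness also lies in $[r,r^\varphi]$, so $s$ is admissible at $\varphi$ with the same $c,r_0$ or the same sequence $(r_j)$. Hence $D_\varphi\le D_\theta$.

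\emph{Rescaling.} Suppose $s>D_\varphi$ is admissible at $\varphi$, and set $\rho=r^{\theta/\varphi}$. Note $\rho\le r$ and $\rho^\varphi=r^\theta$. Applying the $\varphi$-condition at scale $\rho$ gives, for each $x$, some $u\in[\rho,r^\theta]$ with $\mu(B(x,u))\ge cu^s$. If $u\ge r$, this $u$ is already a witness at $\theta$. If $u<r$, use $u$'s ball inside $B(x,r)$: monotonicity gives $\mu(B(x,r))\ge cu^s\ge c\rho^s=cr^{s\theta/\varphi}$, so $r$ is a witness for exponent $s\theta/\varphi$. Since $u^{s}\ge u^{s\theta/\varphi}$ for $u<1$, the first case also works with exponent $s\theta/\varphi$. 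So $s\theta/\varphi$ is admissible at $\theta$. For the lower dimension, use the sequence $r_j'=r_j^{\varphi/\theta}$, so that $\rho_j=r_j$. Letting $s\downarrow D_\varphi$ gives the bound. Continuity on $(0,1]$ then follows because as $\varphi\to\theta$ both sides squeeze, from the left and, after exchanging roles, from the right.

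\emph{Chain of inequalities.} The inequalities $0\le\ldim_\theta\le\udim_\theta$ are immediate from the definitions, since an all-scale witness family is in particular a witness family along a sequence. Monotonicity up to $\theta=1$ gives $\udim_\theta\mu\le\udim_1\mu=\udim_B\mu$. The main point is $\udim_B\mu\le\dim_{qA}\mu$. For this, fix $d>q$ and $\eta>0$, and apply \eqref{eq:measure-quasi-growth} with $u=r$ and $R=R_0$ a fixed radius below $R_*$. This is valid for $r\le R_0^{1+\eta}$ and yields
\[
\mu(B(x,r))\ge C^{-1}\mu(B(x,R_0))(r/R_0)^d.
\]
It remains to bound $\inf_{x\in K}\mu(B(x,R_0))$ below by a positive constant. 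Every $x\in K$ lies in the support, so $\mu(B(x,R_0/2))>0$. The map $x\mapsto\mu(B(x,R_0))$ dominates $\mu(B(y,R_0/2))$ for all $y$ near $x$, so compactness of $K$ gives a uniform positive lower bound.

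With this bound, $\mu(B(x,r))\ge c\,r^d$ for all $x\in K$ and all small $r$, which means $d$ is admissible for $\udim_B\mu$. Letting $d\downarrow q$ completes the chain. I expect the compactness step to be the only delicate point; otherwise the argument is bookkeeping with the window endpoints.
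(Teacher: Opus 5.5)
Your proposal is correct and follows essentially the paper's argument. It uses the same window inclusion for $D_\varphi\le D_\theta$. It aligns the two windows at the common upper endpoint $r^\theta$, as the paper does when it writes $\delta=r^\theta$ and uses the witness $\max\{u,\delta^{1/\theta}\}$. It also derives $\udim_B\mu\le\dim_{qA}\mu$ the same way, from the quasi-Assouad growth at a fixed radius together with a compactness lower bound on $\inf_{x\in K}\mu(B(x,R_0))$, which you justify in more detail than the paper. The only detail to tidy is to take $R_0<\min\{R_*,1\}$, as the paper does, so that the requirement $R_0^{1+\eta}\le R_0$ in \eqref{eq:measure-quasi-growth} is met.
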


\begin{proof}
Window inclusion gives $D_\varphi\leq D_\theta$. For an admissible
exponent $a$ at $\varphi$, choose its witness
$u\in[\delta^{1/\varphi},\delta]$ and set $v=\max\{u,\delta^{1/\theta}\}$.
Then $v\in[\delta^{1/\theta},\delta]$ and
\[
 \mu(B(x,v))\geq cu^a\geq
 \begin{cases}
 cu^{a\theta/\varphi},&v=u,\\
 c\delta^{a/\varphi},&v=\delta^{1/\theta},
 \end{cases}
 =cv^{a\theta/\varphi}.
\]
This holds uniformly in $x,\delta$, on the defining scales for either
dimension. Taking the infimum over $a$ proves
\eqref{eq:measure-dimension-continuity}.

For $q<\infty$, fix $d>q$, $\eta>0$, and $R<\min\{R_*,1\}$.
Put $c_R=\inf_{x\in K}\mu(B(x,R))>0$, by compactness. Then
\[
 \mu(B(x,u))\geq C^{-1}c_R(u/R)^d
 \quad(x\in K,\ 0<u\leq R^{1+\eta})
 \quad\Longrightarrow\quad\udim_B\mu\leq d.
\]
Let $d\downarrow q$; the other bounds in
\eqref{eq:measure-dimension-qA-bound} follow from the definitions.
\end{proof}

\begin{lemma}[Small-mass witnesses]
\label{lem:measure-small-mass-witnesses}
Fix $0<\varphi\leq1$ and $t\geq0$.  If $t<\udim_\varphi\mu$,
there are arbitrarily small $r>0$ and centres $x_r\in K$ such that
\begin{equation*}
 \mu(B(x_r,u))<u^t\qquad\text{for every }u\in[r,r^\varphi].
\end{equation*}
If $t<\ldim_\varphi\mu$, such a centre exists at every sufficiently
small scale $r$.  The centre may depend on $r$.
\end{lemma}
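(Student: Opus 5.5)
This is a negation of the definitions in the all-centre formulation, with one small point: the negated statement gives $\mu(B(x,u))<cu^t$ for a constant $c>0$ we may choose, and we want constant one. The plan is to absorb the constant by passing to a slightly larger exponent. In outline, we negate the definition at an exponent $t'\in(t,\udim_\varphi\mu)$ (respectively $t'\in(t,\ldim_\varphi\mu)$), and then use $u^{t'-t}\to0$ to remove the constant.

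\emph{Upper case.} Suppose $t<\udim_\varphi\mu$ and pick $t'$ with $t<t'<\udim_\varphi\mu$. Since $t'$ lies below the infimum of admissible exponents, and admissibility is monotone in the exponent (for $u\le1$, $cu^{s}\ge cu^{s'}$ when $s\le s'$), $t'$ itself is not admissible. So for every $c>0$ and $r_0>0$ there exist $0<r<r_0$ and $x_r\in K$ with
\[
 \mu(B(x_r,u))<cu^{t'}\qquad\text{for every }u\in[r,r^\varphi].
\]
Take $c=1$. Restricting to $r_0<1$ gives $u\le r^\varphi<1$, so $u^{t'}\le u^{t}$. This yields arbitrarily small $r$ with the required centre. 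Here the constant is harmless and the exponent shift is not even needed; it matters only in the lower case.

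\emph{Lower case.} Suppose $t<\ldim_\varphi\mu$ and pick $t'$ as above. We claim there is $r_1>0$ such that every $r<r_1$ admits a centre with $\mu(B(x_r,u))<u^{t}$ on $[r,r^\varphi]$. Suppose not. Then there is a sequence $r_j\downarrow0$ such that every $x\in K$ has some $u\in[r_j,r_j^\varphi]$ with $\mu(B(x,u))\ge u^{t}$. This is exactly the lower-dimension condition at exponent $t$ with $c=1$, so $\ldim_\varphi\mu\le t$, which is a contradiction.

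In this argument $t$ itself suffices and $t'$ is unnecessary. The contrapositive uses only the sequence formulation, so the only point to check is that the failure set of scales really contains a sequence tending to zero. That holds because "not at every sufficiently small scale" means arbitrarily small failing scales exist.

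The main (mild) obstacle is bookkeeping the quantifier order: $c$ must be uniform over scale and centre while $u$ may depend on both. We must make sure that negating "$\exists c\,\exists r_j\,\forall j\,\forall x\,\exists u$" produces, for $c=1$, a single failing centre at each small scale. Applying the negation with the specific constant $c=1$ handles this directly. The case $t=0$ is also covered: then $u^t=1$, and the conclusion says $\mu(B(x_r,u))<1$. This is consistent because $t<\udim_\varphi\mu$ forces the condition with exponent $0$ and constant $1$ to fail.
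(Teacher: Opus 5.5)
Your proof is correct and is essentially the paper's argument: negating the defining mass condition with the specific constant $c=1$ makes $t$ itself admissible (at all small scales in the upper case, along a sequence $r_j\downarrow0$ in the lower case), which contradicts the hypothesis on $t$. As you yourself observe, the auxiliary exponent $t'$ is unnecessary in both cases.
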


\begin{proof}
Negating the conclusion gives the defining mass lower bound with
$c=1$: at all small scales for the upper dimension, or along a sequence
for the lower dimension. In either case $t$ would be admissible.
\end{proof}

\subsection{Point potentials on an enlarged window}

For a real number $m>0$, $0\leq s\leq m$, and $0<r<1$, set
\begin{equation*}
 \phi_{r,\theta}^{s,m}(z)=
 \begin{cases}
 1,&|z|\leq r,\\
 r^s|z|^{-s},&r<|z|\leq r^\theta,\\
 r^{s+\theta(m-s)}|z|^{-m},&|z|>r^\theta.
 \end{cases}
\end{equation*}
Its point capacity for the prescribed measure is
\begin{equation*}
 C_{r,\theta}^{s,m}(\mu)
 =\left(\inf_{x\in K}\int\phi_{r,\theta}^{s,m}(x-y)
          \dd\mu(y)\right)^{-1}.
\end{equation*}
The point potential is continuous on compact $K$, so its infimum
is attained.

\begin{lemma}[Potential estimate on an enlarged window]
\label{lem:quasi-window-potential}
Assume $\dim_{qA}\mu\leq m$.  Fix $0<s<t<m$,
$0<\theta\leq1$, $\eta>0$, and $0<\varepsilon<t-s$, and put
$\varphi=\theta/(1+\eta)$.  There exist constants $A,r_0>0$
such that, if $0<r<r_0$ and $x\in K$ satisfy
\begin{equation*}
 \mu(B(x,u))\leq u^t
 \qquad\text{for every }u\in[r,r^\varphi],
\end{equation*}
then
\begin{equation*}
 \int\phi_{r,\theta}^{s,m}(x-y)\dd\mu(y)
 \leq A r^{s+\theta(t-s-\varepsilon)}.
\end{equation*}
The constants are independent of $r$ and $x$.
\end{lemma}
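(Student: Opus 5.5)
The plan is to bound the point potential by splitting the integral according to the three regimes of the kernel $\phi^{s,m}_{r,\theta}$, namely $|x-y|\le r$, $r<|x-y|\le r^\theta$ and $|x-y|>r^\theta$. Each piece will be converted into an integral of the mass function $u\mapsto\mu(B(x,u))$ by the layer-cake formula. The hypothesis $\mu(B(x,u))\le u^t$ is only available on $[r,r^\varphi]$, and $[r,r^\theta]\subset[r,r^\varphi]$. Beyond $r^\varphi$ I will propagate the bound with the quasi-Assouad growth \eqref{eq:measure-quasi-growth}. The role of the enlarged window (the exponent $\varphi=\theta/(1+\eta)$ rather than $\theta$) is exactly to make that growth bound applicable with base radius $r^\theta$.

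\emph{Inner and middle pieces.} The inner piece is at most $\mu(B(x,r))\le r^t$. Since $t\ge s+\theta(t-s)$, this is at most $r^{s+\theta(t-s)}$. For the middle piece, the layer-cake formula for the decreasing function $|z|^{-s}$ gives
\[
 r^s\int_{r<|x-y|\le r^\theta}|x-y|^{-s}\dd\mu(y)
 \le r^s\Bigl[r^{-s}\mu(B(x,r))+s\int_r^{r^\theta}u^{-s-1}\mu(B(x,u))\dd u\Bigr].
\]
Inserting $\mu(B(x,u))\le u^t$ and using $t>s$, the bracket is $\lesssim r^{t-s}+r^{\theta(t-s)}$. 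This piece is therefore $\lesssim r^{s+\theta(t-s)}$. Both pieces are thus bounded by the target $r^{s+\theta(t-s-\varepsilon)}$, even without the loss $\varepsilon$.

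\emph{Outer piece (main step).} It remains to show
\[
 \int_{|x-y|>r^\theta}|x-y|^{-m}\dd\mu(y)\lesssim r^{\theta(t-m-\varepsilon)}.
\]
Multiplying by $r^{s+\theta(m-s)}$ then gives exactly $r^{s+\theta(t-s-\varepsilon)}$. By layer cake, this integral is at most $m\int_{r^\theta}^\infty u^{-m-1}\mu(B(x,u))\dd u$, and I split the range of $u$ into three parts.
\begin{itemize}
\item[(i)] On $[r^\theta,r^\varphi]$, the hypothesis gives $\int u^{t-m-1}\dd u\lesssim r^{\theta(t-m)}$, using $t<m$.
\item[(ii)] On $[r^\varphi,R_*)$, apply \eqref{eq:measure-quasi-growth} with $d=m+\varepsilon>q$, base radius $r^\theta$ and outer radius $R=u$. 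This is legitimate because $u\ge r^\varphi$ is equivalent to $u^{1+\eta}\ge r^\theta$. It yields
\[
 \mu(B(x,u))\le C(u/r^\theta)^{m+\varepsilon}\mu(B(x,r^\theta))\le Cu^{m+\varepsilon}r^{\theta(t-m-\varepsilon)},
\]
where the last step uses the hypothesis at $u=r^\theta$. Then $\int_{r^\varphi}^{R_*}u^{\varepsilon-1}\dd u\le R_*^\varepsilon/\varepsilon$ is a constant, so this range contributes $\lesssim r^{\theta(t-m-\varepsilon)}$.
\item[(iii)] For $u\ge R_*$, the contribution is at most $\mu(\R^n)R_*^{-m}$. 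This is $\lesssim r^{\theta(t-m-\varepsilon)}$ because that exponent is negative.
\end{itemize}
Choose $r_0$ so that $r_0^\varphi<\min\{R_*,1\}$. Then $A$ depends only on $C,R_*,\varepsilon,m,s,t,\theta,\mu(\R^n)$, and in particular not on $r$ or $x$.

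I expect the only delicate point to be range (ii). The quasi-Assouad inequality only compares radii separated by the power gap $u\le R^{1+\eta}$, so one cannot start the growth bound at $r^\theta$ for radii just above $r^\theta$. The enlarged window $[r,r^\varphi]$ covers precisely the missing radii $[r^\theta,r^\varphi]$ through the hypothesis, and the exponent loss $\varepsilon$ is the price of using $d=m+\varepsilon$ rather than $d=m$. That choice is forced by the endpoint case $\dim_{qA}\mu=m$.
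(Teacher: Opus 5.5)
Your proposal is correct and follows the paper's proof essentially step for step: the layer-cake formula, the hypothesis on $[r,r^\varphi]$, quasi-Assouad growth with $d=m+\varepsilon$ and base radius $r^\theta$ on $[r^\varphi,R_*)$ (justified because $u^{1+\eta}\ge r^\theta$ there), and a trivial mass bound beyond $R_*$. One harmless slip: in your middle-piece display the boundary term should be $r^{-\theta s}\mu(B(x,r^\theta))$ rather than $r^{-s}\mu(B(x,r))$, since the inequality as written fails, for example, when all the mass near $x$ sits at distance exactly $r^\theta$; the hypothesis at $u=r^\theta$ bounds the correct term by $r^{\theta(t-s)}$, so your bracket estimate and the conclusion stand.
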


\begin{proof}
Put $M_x(u)=\mu(B(x,u))$ and
\[
 \rho=r^\theta=R_0^{1+\eta},\qquad R_0=r^\varphi,
 \qquad d=m+\varepsilon>\dim_{qA}\mu.
\]
For $R_0\leq u<R_*<1$, \eqref{eq:measure-quasi-growth} applies since
$\rho\leq u^{1+\eta}$:
\[
 M_x(u)\leq C(u/\rho)^dM_x(\rho)\leq Cu^d\rho^{t-d}.
\]
For $R_*\leq u\leq1$, use
$M_x(u)\leq\mu(K)\leq\mu(K)R_*^{-d}u^d\rho^{t-d}$.
Thus the bound holds throughout $[R_0,1]$.

Layer cake gives
\begin{equation*}
\begin{split}
 \int\phi_{r,\theta}^{s,m}(x-y)\dd\mu(y)
 ={}&s r^s\int_r^\rho M_x(u)u^{-s-1}\dd u\\
 &+m r^{s+\theta(m-s)}
             \int_\rho^\infty M_x(u)u^{-m-1}\dd u.
\end{split}
\end{equation*}
This includes atoms in $B(x,r)$; the first term vanishes at $\theta=1$.
The small-mass hypothesis gives
\begin{align*}
 s r^s\int_r^\rho M_x(u)u^{-s-1}\dd u
 &\leq C r^s\rho^{t-s}
 =C r^{s+\theta(t-s)},\\
 m r^{s+\theta(m-s)}\int_\rho^{R_0}M_x(u)u^{-m-1}\dd u
 &\leq C r^{s+\theta(m-s)}\rho^{t-m}
 =C r^{s+\theta(t-s)}.
\end{align*}
The remaining ranges satisfy
\begin{align*}
 r^{s+\theta(m-s)}
       \int_{R_0}^1 M_x(u)u^{-m-1}\dd u
 &\leq C r^{s+\theta(m-s)}\rho^{t-d}
            \int_{R_0}^1 u^{d-m-1}\dd u\\
 &\leq \frac{C}{\varepsilon}
              r^{s+\theta(t-s-\varepsilon)}.
\end{align*}
\[
 m r^{s+\theta(m-s)}\int_1^\infty M_x(u)u^{-m-1}\dd u
 \leq\mu(K)r^{s+\theta(m-s)}.
\]
Sum these bounds, using $s<t<m$ and $0<\varepsilon<t-s$.
\end{proof}

\begin{remark}
The choices $\rho=R_0^{1+\eta}$ and $d=m+\varepsilon$ permit
$\dim_{qA}\mu=m$ with the arbitrarily small loss $r^{-\theta\varepsilon}$.
\end{remark}

\subsection{Identification of the capacity zeros}

\begin{proposition}[Point-capacity exponents]
\label{prop:measure-capacity-zeros}
Suppose that $\dim_{qA}\mu\leq m$, where $m>0$ is real.
For every $0<\theta\leq1$, the functions
\begin{equation*}
\begin{split}
 \overline F_\theta^m(s)
 &=\limsup_{r\downarrow0}
     \frac{\log C_{r,\theta}^{s,m}(\mu)}{-\log r}-s,\\
 \underline F_\theta^m(s)
 &=\liminf_{r\downarrow0}
     \frac{\log C_{r,\theta}^{s,m}(\mu)}{-\log r}-s
\end{split}
\end{equation*}
are finite, continuous, and strictly decreasing on $[0,m]$.
Their unique zeros are $\udim_\theta\mu$ and
$\ldim_\theta\mu$, respectively.
\end{proposition}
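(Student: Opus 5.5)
The plan is to prove the analytic properties from a two-sided comparison of the kernels in $s$. Then each zero is identified by showing $\overline F^m_\theta>0$ below $\udim_\theta\mu$ and $\overline F^m_\theta<0$ above it, with the analogous statements for $\underline F^m_\theta$ and $\ldim_\theta\mu$. Lemma~\ref{lem:quasi-window-potential} supplies the positive side and the definition of the dimensions supplies the negative side.

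\emph{Step 1: finiteness, continuity, monotonicity.} For $0\le s_1<s_2\le m$ I compare $\phi^{s_2,m}_{r,\theta}$ with $\phi^{s_1,m}_{r,\theta}$ region by region.
\begin{itemize}
\item On $|z|\le r$ the two kernels are equal.
\item On $r<|z|\le r^\theta$ the ratio is $(r/|z|)^{s_2-s_1}\in[r^{(1-\theta)(s_2-s_1)},1]$.
\item On $|z|>r^\theta$ the ratio is exactly $r^{(1-\theta)(s_2-s_1)}$.
\end{itemize}
Hence $r^{(1-\theta)(s_2-s_1)}\phi^{s_1,m}_{r,\theta}\le\phi^{s_2,m}_{r,\theta}\le\phi^{s_1,m}_{r,\theta}$ pointwise. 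Taking infima of the potentials gives $C^{s_1,m}_{r,\theta}\le C^{s_2,m}_{r,\theta}\le r^{-(1-\theta)(s_2-s_1)}C^{s_1,m}_{r,\theta}$. Both the $\limsup$ and the $\liminf$ therefore satisfy
\[
 -(s_2-s_1)\le F(s_2)-F(s_1)\le-\theta(s_2-s_1).
\]
This gives Lipschitz continuity and strict decrease. For finiteness:
\begin{itemize}
\item Since $\phi\le1$, the potential is at most $\mu(K)$, so $\log C\ge-\log\mu(K)$.
\item Since $K$ is compact, $\phi^{s,m}_{r,\theta}(x-y)\ge c_K r^{s+\theta(m-s)}$ for $x,y\in K$, so $C\le C'r^{-s-\theta(m-s)}$.
\end{itemize}
Thus both exponents lie in a bounded interval.

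\emph{Step 2: positivity below the dimension.} Let $s<\udim_\theta\mu$. Lemma~\ref{lem:measure-window-comparison} gives continuity of $\udim$, so I choose $\eta>0$ with $s<t<\udim_\varphi\mu$, where $\varphi=\theta/(1+\eta)$. Since $t<\udim_\theta\mu\le\dim_{qA}\mu\le m$, the hypothesis $t<m$ of Lemma~\ref{lem:quasi-window-potential} holds. Lemma~\ref{lem:measure-small-mass-witnesses} gives arbitrarily small $r$ and centres $x_r$ with $\mu(B(x_r,u))<u^t$ on $[r,r^\varphi]$. Lemma~\ref{lem:quasi-window-potential} then bounds the potential at $x_r$ by $Ar^{s+\theta(t-s-\varepsilon)}$. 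Hence $C^{s,m}_{r,\theta}\ge A^{-1}r^{-s-\theta(t-s-\varepsilon)}$ along these $r$, and $\overline F^m_\theta(s)\ge\theta(t-s-\varepsilon)>0$. For $\ldim$ the same argument runs at every small $r$ and gives $\underline F^m_\theta(s)>0$.

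\emph{Step 3: negativity above the dimension.} Let $\udim_\theta\mu<s'<s\le m$. By definition, for all small $r$ and every $x\in K$ there is $u\in[r,r^\theta]$ with $\mu(B(x,u))\ge cu^{s'}$. On $B(x,u)$ the kernel is at least $r^su^{-s}$, in both inner regions. The potential at $x$ is therefore at least $cr^su^{s'-s}\ge cr^{s+\theta(s'-s)}$, using $s'<s$ and $u\le r^\theta$. So $C\le c^{-1}r^{-s-\theta(s'-s)}$ and $\overline F^m_\theta(s)\le\theta(s'-s)<0$. For $\ldim$ the bound holds along the defining sequence $r_j$, which controls the $\liminf$.

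Combined with continuity and strict decrease, Steps 2 and 3 identify the unique zeros. The endpoint case $\udim_\theta\mu=m$ (or $\ldim_\theta\mu=m$), where no $s'\in(D_\theta,m)$ exists, is the step I expect to need the most care. There I would use the box bound from the proof of Lemma~\ref{lem:measure-window-comparison}: $\mu(B(x,u))\ge c_\varepsilon u^{m+\varepsilon}$ for all $x$. Evaluating at $u=r^\theta$ gives $F(m)\le\theta\varepsilon$ for every $\varepsilon>0$, hence $F(m)\le0$. Positivity below $m$ and continuity then give $F(m)=0$. The delicate point in Step 2 is keeping $t<m$ and the enlarged window compatible, and that is exactly what the choice of $\varphi$ and Lemma~\ref{lem:quasi-window-potential} are designed to handle.
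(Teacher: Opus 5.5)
Your proposal is correct and follows the paper's proof essentially step for step: the same two-sided kernel comparison, the same enlarged window $\varphi=\theta/(1+\eta)$ combined with small-mass witnesses and Lemma~\ref{lem:quasi-window-potential} for the positive side, and the same defining-radius bound for the negative side. The only difference is at the endpoints. The paper reads off $F(m)\le0$ directly from the pointwise bound $\phi^{m,m}_{r,\theta}\ge c_Kr^m$ on $K-K$; this bound is already in your Step~1, so your detour through the box-counting bound is unnecessary. The paper also reads off $F(0)\ge0$ from $\phi\le1$, which is exactly what you need, but did not state, to place the zero at $0$ when $D_\theta=0$.
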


\begin{proof}
Fix $\theta$. For $0\leq a\leq b\leq m$, kernel comparison
\cite[Lemma~5.3]{AMT2026} gives
\[
 \phi_{r,\theta}^{b,m}\leq\phi_{r,\theta}^{a,m}
 \leq r^{-(b-a)(1-\theta)}\phi_{r,\theta}^{b,m}.
\]
Thus either exponent function $F$ satisfies
\begin{equation*}
 -(b-a)\leq F(b)-F(a)\leq-\theta(b-a).
\end{equation*}
On $K-K$, $c_Kr^m\leq\phi_{r,\theta}^{s,m}\leq1$; hence $F$ is finite,
continuous, and strictly decreasing. Pair $F$ with its corresponding
dimension $D_\theta\in[0,m]$.
For $0<s<D_\theta$, choose $\eta>0$ and
$s<t<D_\theta/(1+\eta)$, and put $\varphi=\theta/(1+\eta)$. Then
\begin{equation*}
 D_\varphi\geq\frac{\varphi}{\theta}D_\theta
 =\frac{D_\theta}{1+\eta}>t.
\end{equation*}
Lemmas~\ref{lem:measure-small-mass-witnesses}
and~\ref{lem:quasi-window-potential} give, for $0<\varepsilon<t-s$,
\begin{gather*}
 \inf_{x\in K}\int\phi_{r,\theta}^{s,m}(x-y)\dd\mu(y)
 \lesssim r^{s+\theta(t-s-\varepsilon)},\\
 F(s)\geq\theta(t-s-\varepsilon)>0,
\end{gather*}
at arbitrarily small scales for $\overline F$, and all small scales
for $\underline F$.

For $D_\theta<v<s\leq m$, the defining radius $u\in[r,r^\theta]$ gives
\[
 \int\phi_{r,\theta}^{s,m}(x-y)\dd\mu(y)
 \geq(r/u)^s\mu(B(x,u))\geq cr^su^{v-s}
 \geq cr^{s-\theta(s-v)},
\]
uniformly in $x$, at all small scales for the upper dimension and along
a sequence for the lower. Thus $F(s)\leq-\theta(s-v)<0$.
Finally $F(0)\geq0$ and $F(m)\leq0$ follow from the same kernel bounds.
Continuity identifies the zero also at $D_\theta=0,m$.
\end{proof}

\subsection{Projection bounds and simultaneous parameters}

\begin{proof}[Proof of Theorem~\ref{thm:quasi-measure-projection}]
Fix $0<\theta\leq1$ and $0<s<D_\theta$, where $D_\theta$ is either
dimension. Proposition~\ref{prop:measure-capacity-zeros} gives
$\alpha>0$ and minimizing centres $x_j\in K$ such that
\begin{equation}\label{eq:measure-small-point-potentials}
 \int\phi_{r_j,\theta}^{s,m}(x_j-y)\dd\mu(y)
 \leq r_j^{s+\alpha}.
\end{equation}
Here $r_j\leq2^{-j}$ in the upper case; in the lower case use every
sufficiently small $r_j=2^{-j}$. Compactness supplies the minimizers,
independently of the direction.

Let $\widetilde\phi_{r,\theta}^s$ equal $1$ on $|z|\leq r$,
$(r/|z|)^s$ on $r<|z|\leq r^\theta$, and $0$ elsewhere.
The standard projection-kernel estimate
\cite[Lemma~5.3]{BFF2021} gives, for $0<s<m$ and $0<r<1/2$,
\begin{equation*}
 \int_{G(n,m)}\widetilde\phi_{r,\theta}^s(P_Vz)
                  \dd\gamma_{n,m}(V)
 \leq C\phi_{r,\theta}^{s,m}(z).
\end{equation*}
Put $I_j(V)=\int\widetilde\phi_{r_j,\theta}^s(P_Vx_j-P_Vy)\dd\mu(y)$.
Tonelli, Markov, and \eqref{eq:measure-small-point-potentials} give
\[
 \int I_j\dd\gamma_{n,m}\lesssim r_j^{s+\alpha},\qquad
 \sum_j\gamma_{n,m}\{I_j>r_j^s\}\lesssim\sum_jr_j^\alpha<\infty.
\]
By Borel--Cantelli, $I_j(V)\leq r_j^s$ eventually for almost every $V$.
For such $V$, put $\nu=P_V{}_\#\mu$, $z_j=P_Vx_j\in\operatorname{supp}\nu$.
Then $(r_j/u)^s\nu(B(z_j,u))\leq I_j(V)$ gives
\begin{equation}\label{eq:measure-projected-small-balls}
 \nu(B(z_j,u))\leq u^s,
 \qquad r_j\leq u\leq r_j^\theta.
\end{equation}
For $0\leq a<s$,
\[
 \sup_{r_j\leq u\leq r_j^\theta}
 \frac{\nu(B(z_j,u))}{u^a}
 \leq r_j^{\theta(s-a)}\longrightarrow0,
\]
so $\udim_\theta\nu\geq s$ in the upper case.

For the lower dimension, if $2^{-j-1}<r\leq2^{-j}$, use $z_j$.
For $r\leq u\leq r^\theta\leq2^{-j\theta}$,
\[
 \nu(B(z_j,u))\leq
 \begin{cases}
 u^s,&u\geq2^{-j},\\
 2^{-js}\leq2^su^s,&u<2^{-j},
 \end{cases}
\]
by~\eqref{eq:measure-projected-small-balls} and monotonicity.
Thus for $0\leq a<s$,
\[
 \sup_{r\leq u\leq r^\theta}
 \frac{\nu(B(z_j,u))}{u^a}
 \leq2^sr^{\theta(s-a)}\longrightarrow0.
\]
Thus $\ldim_\theta\nu\geq s$. Let $s\uparrow D_\theta$ through a countable
sequence; every kernel estimate has $s<m$, also when $D_\theta=m$.
The case $D_\theta=0$ is immediate.

For every $V$, the reverse inequalities follow from
\[
 \nu(B(P_Vx,u))\geq\mu(B(x,u)),\qquad
 \udim_\theta\nu\leq\udim_\theta\mu\leq m,\qquad
 \ldim_\theta\nu\leq\ldim_\theta\mu.
\]
Intersect the events for both dimensions and rational $\theta$.
The window comparison \eqref{eq:measure-dimension-continuity} for
$\mu$ and $\nu$ extends equality to every $0<\theta\leq1$.
\end{proof}

\section{Further questions}\label{sec:discussion}\label{sec:ifs-phase-obstacles}

For the spectral families, two questions remain. Can estimates for
successive higher-order eigenvalue corrections extend the range of
resolved bands? Under what regularity assumptions on the family do the
almost-sure lower covering bounds hold at all real scales?

For $E_{\beta,M}$, Theorem~\ref{thm:ifs-first-transition} gives the sharp
separation of the first departure from the continued-profile collision.
An exact transition equation at fixed $M$, the optimal profile after
departure, and the differentiability at the transition remain open.
The finite pressure formulas in Corollary~\ref{cor:ifs-global-pressure-root}
reduce the remaining branch classification to identifying active
constraints and comparing regional optima. Whether the intermediate
dimension curve has infinitely many nonanalytic points is also open.

\appendix
\section{Proof of the uniform harmonic splitting estimate}
\label{app:natural-splitting}

Work on the even harmonic subspace and write
$\alpha_{2m}=a_m$, $\alpha_0=1/2$. The spectral estimates and
self-adjoint perturbation arguments \cite{Kato1995} below are uniform
in the azimuthal index. For even $\ell\geq2$,
\begin{equation*}
 \alpha_\ell\sim(\ell+1)^{-3},\qquad
 g_\ell:=\min\{\alpha_{\ell-2}-\alpha_\ell,
                         \alpha_\ell-\alpha_{\ell+2}\}
              \sim\frac{\alpha_\ell}{\ell+1}.
\end{equation*}

\subsection{Compression of the quadrupolar density}

The normalized associated Legendre recurrence
\cite[\S\S14.10, 14.30]{DLMF} gives
\begin{equation*}
 x_3Y_\ell^k=A_{\ell,k}Y_{\ell+1}^k
                  +B_{\ell,k}Y_{\ell-1}^k,
\end{equation*}
where absent terms are zero and
\[
 A_{\ell,k}^2=\frac{(\ell+1)^2-k^2}{(2\ell+1)(2\ell+3)},
 \qquad
 B_{\ell,k}^2=\frac{\ell^2-k^2}{(2\ell-1)(2\ell+1)}.
\]
Thus $h=(3x_3^2-1)/2$ preserves $k$ and couples degrees
$\ell,\ell\pm2$, with diagonal coefficient
\[
 q_{\ell,k}=\frac32(A_{\ell,k}^2+B_{\ell,k}^2)-\frac12
   =\frac{\ell(\ell+1)-3k^2}{(2\ell-1)(2\ell+3)}.
\]
This proves \eqref{nat:eq:compression}.

\subsection{Uniform isolation in azimuthal sectors}

For $k\in\mathbb Z$, the operators $T,M_h,C_\delta$ preserve
\[
 \mathcal H^{(k)}=\overline{\operatorname{span}}
 \{Y_j^k:j\geq|k|,\ j\ \text{even}\}.
\]
Fix even $\ell\geq\max\{2,|k|\}$. The simple sector eigenvalue
$\alpha_\ell$ has, by min--max and \eqref{nat:eq:relative}, an ordered
perturbation $\mu\in[(1-\delta)\alpha_\ell,(1+\delta)\alpha_\ell]$.
For sufficiently small absolute $c>0$,
\[
 \delta(\ell+1)\leq c
 \quad\Longrightarrow\quad
 \begin{cases}
 (1-\delta)\alpha_{\ell-2}-(1+\delta)\alpha_\ell\geq c'g_\ell,\\
 (1-\delta)\alpha_\ell-(1+\delta)\alpha_{\ell+2}\geq c'g_\ell.
 \end{cases}
\]
Omit an absent neighbor. These inequalities isolate one eigenvalue
in each of the $2\ell+1$ sectors $|k|\leq\ell$; all other sector spectra
are bounded by $(1+\delta)\alpha_{\ell+2}$. Thus the band has multiplicity
$2\ell+1$ and gap $c'g_\ell$. Complex conjugation identifies the
$k$ and $-k$ eigenvalues.

\subsection{A local Schur complement}

In a sector containing $Y_\ell^k$, let $P$ project onto this vector
and set $Q=I-P$. On $\operatorname{ran}Q$,
\[
 (1-\delta)QTQ\leq QC_\delta Q\leq(1+\delta)QTQ.
\]
Removing $\alpha_\ell$ from the sector spectrum and applying min--max gives
\begin{equation}\label{nat:eq:compressed-resolvent}
 \operatorname{dist}\bigl(\mu,\operatorname{spec}(QC_\delta Q)\bigr)
 \geq c_1g_\ell,
 \qquad
 \|(QC_\delta Q-\mu)^{-1}\|\leq(c_1g_\ell)^{-1},
\end{equation}
uniformly in $\ell,k,\delta$ under $\delta(\ell+1)\leq c$,
including a lowest-degree sector eigenvalue.
Finite coupling range, comparable neighboring eigenvalues, and
$\|h\|_\infty=1$ give
\begin{equation*}
 \begin{split}
 \|QVP\|^2
 &=\alpha_\ell\sum_{j\in\{\ell-2,\ell+2\}}
       \alpha_j|\langle Y_j^k,hY_\ell^k\rangle|^2\\
 &\leq C_1\alpha_\ell^2.
 \end{split}
\end{equation*}
Omit absent terms. Equation~\eqref{nat:eq:compressed-resolvent} makes the
$P$ component nonzero, so the Schur complement is
\[
 \mu=\alpha_\ell+\delta\alpha_\ell q_{\ell,k}
 -\delta^2\langle QVY_\ell^k,
                (QC_\delta Q-\mu)^{-1}QVY_\ell^k\rangle.
\]
Consequently,
\[
 |\mu-\alpha_\ell(1+\delta q_{\ell,k})|
 \leq\delta^2\|QVP\|^2\|(QC_\delta Q-\mu)^{-1}\|
 \lesssim\delta^2\alpha_\ell(\ell+1).
\]
Set $\ell=2m$.

\section{ReLU eigenvalues, gaps, and harmonic derivation}\label{sec:kernels}
\label{sec:spectrum}\label{app:spectrum}

Fix $d\geq2$ and uniform probability measure $\sigma$ on $S^{d-1}$.
The two-layer network
\[
 f_W(x)=\sqrt{2/W}\sum_{i=1}^Wv_i(w_i\cdot x)_+,
 \qquad v_i\sim N(0,1),\quad w_i\sim N(0,I_d),
\]
has mutually independent Gaussian parameters. Weighting the output and
hidden parameter-gradient contributions by $1$ and $\eps\in[0,1]$ gives
the limiting tangent kernel \cite{Jacot2018,BM2019}
\begin{equation}\label{eq:weighted-kernel}
 K_\eps(x,y)=\kappa_{\rm RF}(x\cdot y)
       +\eps(x\cdot y)\kappa_0(x\cdot y),
\end{equation}
where
\[
 \kappa_0(u)=\frac{\pi-\arccos u}{\pi},\qquad
 \kappa_{\rm RF}(u)=
 \frac{\sqrt{1-u^2}+u(\pi-\arccos u)}{\pi}.
\]
The cases $\eps=0$ and $\eps=1$ are the random-feature and ordinary
tangent kernels, respectively; write $K_{\rm RF}=K_0$.
Let $T_\eps$ be the integral operator of $K_\eps$ on $L^2(\sigma)$.

The degree-$\ell$ harmonic dimension is, for $d\geq3$ and $\ell\geq1$,
\[
 N(d,\ell)=\frac{(2\ell+d-2)(\ell+d-3)!}{\ell!(d-2)!},
\]
while $N(2,\ell)=2$ for $\ell\geq1$ and $N(d,0)=1$. In particular,
$L_m:=N(d,2m)\sim m^{d-2}$ for $m\geq1$, giving $b=d-1$.

\begin{proposition}[Exact values and uniform gaps]\label{prop:eigenvalues}
\label{prop:uniform-spectrum}
Put
\[
 C_d=\frac{\Gamma(d/2)^2}{8\pi^2},\qquad
 R_m=\frac{\Gamma(m-1/2)}{\Gamma(m+(d+1)/2)},\qquad
 h_d(m)=4m^2+2(d-2)m+1.
\]
The eigenvalue on the degree-$2m$ harmonic space is
\begin{equation*}
 a_m(\eps)=C_dR_m^2[d+\eps h_d(m)],\qquad m\geq1.
\end{equation*}
All odd harmonic spaces of degree at least three lie in the nullspace.
The degree-zero and degree-one eigenvalues are
\[
 \lambda_0(\eps)=2(d+\eps)
 \left(\frac{\Gamma(d/2)}{2\sqrt\pi\Gamma((d+1)/2)}\right)^2,
 \qquad \lambda_1(\eps)=\frac{1+\eps}{2d}.
\]
The even eigenvalues are strictly decreasing, and their gaps satisfy
\begin{equation}\label{eq:exact-gap}
 a_m(\eps)-a_{m+1}(\eps)
 =C_dR_m^2\frac{d(4m+d)}{(2m+d+1)^2}
       [(d+2)+\eps(4m^2+2dm+1)].
\end{equation}
Consequently, with constants depending only on $d$, uniformly in
$m\geq1$ and $\eps\in[0,1]$,
\begin{align}
 a_m(\eps)&\sim m^{-d-2}+\eps m^{-d},\label{eq:values}\\
 a_m(\eps)-a_{m+1}(\eps)&\sim
       m^{-d-3}+\eps m^{-d-1}.\label{eq:gaps}
\end{align}
These identities imply \eqref{nat:eq:even-spectrum}--\eqref{nat:eq:even-estimates}
when $d=3$ and $\eps=1$. They also verify
\eqref{eq:res-values}--\eqref{eq:res-gaps} with $q=d+2$ and $p=d$.
\end{proposition}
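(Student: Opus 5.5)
The plan is to compute everything through the Funk--Hecke formula. For a zonal kernel $\kappa(x\cdot y)$ on $S^{d-1}$, the eigenvalue on the degree-$\ell$ harmonic space is
\[
 \lambda_\ell=\frac{\omega_{d-2}}{\omega_{d-1}}\int_{-1}^1\kappa(u)\,\frac{P_\ell^{(d)}(u)}{P_\ell^{(d)}(1)}\,(1-u^2)^{(d-3)/2}\,du,
\]
where $P_\ell^{(d)}$ is the Gegenbauer/Legendre polynomial and $\omega_{k}$ is the surface area of $S^{k}$. I split the kernel \eqref{eq:weighted-kernel} by parity using $\pi-\arccos u=\pi/2+\arcsin u$. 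This gives
\[
 \kappa_{\rm RF}(u)=\frac u2+\frac1\pi\bigl(\sqrt{1-u^2}+u\arcsin u\bigr),\qquad
 u\kappa_0(u)=\frac u2+\frac1\pi u\arcsin u .
\]
The functions $\sqrt{1-u^2}$ and $u\arcsin u$ are even with only even powers in their Taylor series, so the only odd part of $K_\eps$ is $(1+\eps)u/2$. This yields the nullspace claim for odd $\ell\ge3$ at once. It also gives $\lambda_1(\eps)=(1+\eps)/(2d)$, because the Funk--Hecke coefficient of $u$ on degree one is $1/d$. The value $\lambda_0(\eps)$ is the plain average $\int\kappa\,d\sigma_1$ of the even part, which is a pair of Beta integrals.

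For even $\ell=2m\ge2$ I use the Rodrigues formula and integrate by parts $\ell$ times:
\[
 \lambda_\ell=c_{d,\ell}\int_{-1}^1\kappa^{(\ell)}(u)\,(1-u^2)^{\ell+(d-3)/2}\,du,
\]
with an explicit Gamma-ratio constant $c_{d,\ell}$. The key simplifications are
\[
 \bigl(\sqrt{1-u^2}+u\arcsin u\bigr)''=(1-u^2)^{-1/2},\qquad
 (u\arcsin u)''=2(1-u^2)^{-1/2}+u(1-u^2)^{-3/2}.
\]
For the first function, the remaining $\ell-2$ derivatives are undone by integrating by parts back onto the weight, or alternatively by expanding $(1-u^2)^{-1/2}$ binomially and using the monomial Funk--Hecke formula; either way the result is a single Beta integral. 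The second function is handled by the same method. Both contributions then collapse to $C_dR_m^2$ times a polynomial in $m$: the random-feature part gives the constant $d$ and the $\eps$-part gives $h_d(m)$.

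As a check on normalizations, I will verify that $d=3$, $\eps=1$ reproduces \eqref{nat:eq:even-spectrum}, where $h_3(m)+3=4m^2+2m+4$. Getting the normalizations right is the main obstacle I anticipate: the Gegenbauer normalization, the surface-area ratio and the Beta constants must combine exactly into $C_d=\Gamma(d/2)^2/(8\pi^2)$. I would fix the overall constant by comparing with the known case $d=3$ and with $\lambda_0$.

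The gap identity \eqref{eq:exact-gap} is pure algebra. From
\[
 \frac{R_{m+1}}{R_m}=\frac{m-1/2}{m+(d+1)/2},
\]
I get
\[
 a_m-a_{m+1}=C_dR_m^2\Bigl[(d+\eps h_d(m))-\frac{(2m-1)^2}{(2m+d+1)^2}\bigl(d+\eps h_d(m+1)\bigr)\Bigr].
\]
Then I expand the bracket and factor out $d(4m+d)/(2m+d+1)^2$, separately for the $\eps^0$ and $\eps^1$ coefficients. The bracket is positive for all $m\ge1$ and $\eps\in[0,1]$, which gives strict decrease.

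Finally, Stirling's formula gives $R_m^2\sim m^{-d-2}$ uniformly in $m\ge1$, since $R_m>0$ for every $m\ge1$. Combined with $d+\eps h_d(m)\sim 1+\eps m^2$, this yields \eqref{eq:values}. Combined with $(4m+d)/(2m+d+1)^2\sim m^{-1}$ and $(d+2)+\eps(4m^2+2dm+1)\sim1+\eps m^2$, it yields \eqref{eq:gaps}. All constants depend only on $d$, which verifies \eqref{eq:res-values}--\eqref{eq:res-gaps} with $q=d+2$ and $p=d$.
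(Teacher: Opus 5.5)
\paragraph{Verdict.} Your route is viable and differs from the paper's: you apply Funk--Hecke directly to the profile of $K_\eps$, then use Rodrigues and integrate by parts $\ell$ times. The parity argument, $\lambda_1$, $\lambda_0$, the gap algebra and the Stirling step are fine. The gap is in the step that carries the content of the proposition. You assert the even coefficients rather than compute them, and what you do write down would not produce the claimed formula.

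\paragraph{Where the computation fails.}
\begin{enumerate}
\item The second derivative is wrong. In fact
\[
(u\arcsin u)''=2(1-u^2)^{-1/2}+u^2(1-u^2)^{-3/2}=(1-u^2)^{-1/2}+(1-u^2)^{-3/2}.
\]
Your term $u(1-u^2)^{-3/2}$ is odd, which is impossible for the second derivative of an even function. After $\ell-2$ further derivatives it integrates to zero against the even weight. Your version therefore makes the $\eps$-part exactly twice the random-feature part, with no $m^2$ growth, contradicting \eqref{eq:values}. The $(1-u^2)^{-3/2}$ term is precisely what produces $h_d(m)$.
\item The resulting integrals
\[
 J_\nu=\int_{-1}^1\frac{d^{2m-2}}{du^{2m-2}}\bigl[(1-u^2)^{-\nu}\bigr]\,(1-u^2)^{2m+(d-3)/2}\,du,
 \qquad \nu\in\{\tfrac12,\tfrac32\},
\]
are not single Beta integrals. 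Expanding $(1-u^2)^{-\nu}$ binomially gives ${}_2F_1\bigl(m-\tfrac12,\,m-1+\nu;\,2m+\tfrac d2;\,1\bigr)$, which must be summed by Gauss's theorem. Done this way, $J_{3/2}/J_{1/2}=(2m-1)(2m+d-1)/d$. Hence $h_d(m)=d+(2m-1)(2m+d-1)$, and the factor $\Gamma(2m+(d-1)/2)$ from Rodrigues cancels to leave exactly $C_dR_m^2$.
\item You cannot fix the overall constant by matching $d=3$ and $\lambda_0$. Both $C_d$ and the $m$-dependence must come out of the computation for every $d$.
\end{enumerate}

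\paragraph{How the paper avoids this.} It factors the kernel through ReLU features. With $\phi_x(u)=(u\cdot x)_+$ one has $K_{\rm RF}(x,y)=2d\int\phi_x\phi_y\,d\sigma$. The identity $\nabla_S\phi_x\cdot\nabla_S\phi_y=(x\cdot y)\mathbf{1}_{\{u\cdot x>0,\,u\cdot y>0\}}-\phi_x\phi_y$ then gives
\[
K_\eps=2(d+\eps)\int\phi_x\phi_y\,d\sigma+2\eps\int\nabla_S\phi_x\cdot\nabla_S\phi_y\,d\sigma.
\]
Funk--Hecke yields $\lambda_\ell(\eps)=2[d+\eps\{1+\ell(\ell+d-2)\}]b_\ell^2$, where $b_\ell$ is the coefficient of $t_+$ alone. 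Because $(t_+)''$ is a point mass, Rodrigues and two integrations by parts reduce $b_{2m}$ to the Taylor coefficient $F^{(2m-2)}(0)$ of $F(t)=(1-t^2)^{2m+a-1}$. Thus $h_d(m)=1+2m(2m+d-2)$ appears as one plus a Laplace eigenvalue, and $C_d=2\bigl(\Gamma(d/2)/(4\pi)\bigr)^2$ comes out automatically, with no summation theorem. Your direct route needs neither the feature representation nor the gradient identity, but pays for that with the hypergeometric evaluations above. Separately, the identity $h_d(m)=(2m-1)(2m+d-1)+d$ makes the factorization in \eqref{eq:exact-gap} immediate.
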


\subsection{Derivation of the ReLU spectral formula}\label{app:relu-derivation}

Let $(Y_{\ell,j})_{1\leq j\leq N(d,\ell)}$, $\ell\geq0$, be a real
orthonormal harmonic basis in $L^2(\sigma)$, with Laplacian eigenvalues
$\ell(\ell+d-2)$. Put
\[
 a=\frac{d-1}{2},\qquad
 c_d=\frac{\Gamma(d/2)}{\sqrt\pi\,\Gamma(a)},\qquad
 w(t)=(1-t^2)^{a-1}.
\]
Let $P_\ell$ be the zonal polynomial normalized by $P_\ell(1)=1$.
For $d=2$, $P_\ell(t)=\cos(\ell\arccos t)$; in general, these are
normalized Jacobi polynomials with both parameters $a-1$.
The Funk--Hecke identity gives
\begin{equation}\label{eq:funk}
 \int_{S^{d-1}}(u\cdot x)_+Y_{\ell,j}(u)\dd\sigma(u)
 =b_\ell Y_{\ell,j}(x),
 \qquad
 b_\ell=c_d\int_0^1 tP_\ell(t)w(t)\dd t.
\end{equation}
See \cite[Appendix C]{BM2019} for Funk--Hecke and
\cite[Section 18.5(ii)]{DLMF} for Rodrigues' formula.

\begin{lemma}\label{lem:relu-coeff}
The coefficients in \eqref{eq:funk} satisfy
\begin{align*}
 b_0&=\frac{\Gamma(d/2)}{2\sqrt\pi\,\Gamma((d+1)/2)},
 &b_1&=\frac1{2d},\\
 b_{2m}&=(-1)^{m-1}\frac{\Gamma(d/2)}{4\pi}
 \frac{\Gamma(m-1/2)}{\Gamma(m+(d+1)/2)},
 &&m\geq1,
\end{align*}
and $b_\ell=0$ for odd $\ell\geq3$.
\end{lemma}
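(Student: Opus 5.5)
The plan is to evaluate $b_\ell=c_d\int_0^1 tP_\ell(t)w(t)\dd t$ directly. We treat three cases: the two low degrees by elementary integrals, odd $\ell\geq3$ by parity and orthogonality, and even $\ell=2m$ by Rodrigues' formula followed by repeated integration by parts on the half-interval $[0,1]$. Throughout we use that $c_dw(t)\dd t$ on $[-1,1]$ is the law of $u\cdot x$ for $u\sim\sigma$ and fixed unit $x$. This is exactly the normalization making \eqref{eq:funk} hold, so $c_d\int_{-1}^1w=1$ and $c_d\int_{-1}^1t^2w=1/d$.

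\emph{Low degrees.} For $\ell=0$, $P_0=1$ and
\[
 c_d\int_0^1t(1-t^2)^{a-1}\dd t=\frac{c_d}{2a}=\frac{c_d}{d-1}.
\]
Using $\Gamma((d+1)/2)=\tfrac{d-1}2\Gamma((d-1)/2)$ gives the stated $b_0$. For $\ell=1$, $P_1(t)=t$, and since $t^2w$ is even,
\[
 b_1=\tfrac12c_d\int_{-1}^1t^2w\dd t=\tfrac1{2d}.
\]

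\emph{Odd $\ell\geq3$.} Here $tP_\ell(t)$ is even, so $b_\ell=\tfrac12c_d\int_{-1}^1tP_\ell w\dd t$. This vanishes because $P_\ell$ is $w$-orthogonal to all polynomials of degree less than $\ell$, in particular to $t$.

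\emph{Even $\ell=2m$, $m\geq1$.} Rodrigues' formula for Gegenbauer/Jacobi polynomials with parameters $a-1$, normalized by $P_\ell(1)=1$, gives
\[
 P_\ell(t)w(t)=\kappa_\ell\,\frac{\dd^\ell}{\dd t^\ell}(1-t^2)^{N},
 \qquad
 N=\ell+a-1,
 \qquad
 \kappa_\ell=\frac{(-1)^\ell\Gamma(a)}{2^\ell\Gamma(\ell+a)}.
\]
Write $g=(1-t^2)^N$. Integrating by parts twice,
\[
 \int_0^1tg^{(\ell)}\dd t=\bigl[tg^{(\ell-1)}\bigr]_0^1-\bigl[g^{(\ell-2)}\bigr]_0^1=g^{(\ell-2)}(0).
\]
The boundary terms at $1$ vanish because $g^{(j)}(1)=0$ for $j\leq\ell-1<N$. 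This holds even for $d=2$, where $a=1/2$ and $w$ is singular but integrable, since then $N=\ell-1/2>\ell-1$. At $0$ the factor $t$ kills the first bracket.

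Next, $g^{(2m-2)}(0)=(2m-2)!$ times the coefficient of $t^{2m-2}$ in $(1-t^2)^N$, which is
\[
 (-1)^{m-1}\frac{\Gamma(N+1)}{(m-1)!\,\Gamma(N-m+2)}.
\]
Here $N-m+2=m+a+1=m+(d+1)/2$ and $N+1=2m+a$. Combining with $\kappa_{2m}$, the $\Gamma(2m+a)$ factors cancel. Then $(2m-2)!/(2^{2m}(m-1)!)$ reduces by Legendre duplication to a multiple of $\Gamma(m-1/2)/\sqrt\pi$. Finally $c_d\Gamma(a)=\Gamma(d/2)/\sqrt\pi$, and collecting constants should give $(-1)^{m-1}\frac{\Gamma(d/2)}{4\pi}\frac{\Gamma(m-1/2)}{\Gamma(m+(d+1)/2)}$.

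The only real obstacle is this Gamma-function bookkeeping: getting the Rodrigues constant for the $P_\ell(1)=1$ normalization right and applying duplication correctly. I would check the result against $d=2$ (Chebyshev, $P_\ell=\cos(\ell\arccos t)$) and against $m=1$, where $P_2$ is explicit, to fix signs and constants.
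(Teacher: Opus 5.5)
Your proposal is correct and follows essentially the same route as the paper: parity and $w$-orthogonality for odd $\ell\ge3$, direct integrals for $\ell=0,1$, and for $\ell=2m$ the $P_\ell(1)=1$ Rodrigues formula, two integrations by parts reducing $\int_0^1 tF^{(2m)}$ to $F^{(2m-2)}(0)$, a binomial coefficient, and Legendre duplication. The bookkeeping you left hedged does close: combining the pieces gives $b_{2m}=(-1)^{m-1}\Gamma(d/2)\Gamma(2m-1)/\bigl(\sqrt\pi\,2^{2m}\Gamma(m)\Gamma(m+(d+1)/2)\bigr)$, and substituting $\Gamma(2m-1)=2^{2m-2}\Gamma(m-1/2)\Gamma(m)/\sqrt\pi$ yields exactly the stated formula.
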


\begin{proof}
Parity, orthogonality, and $t_+=(t+|t|)/2$ give $b_\ell=0$ for odd $\ell\geq3$.
Since $P_0=1$ and $P_1(t)=t$,
\[
 b_0=c_d\int_0^1t(1-t^2)^{a-1}\dd t=\frac{c_d}{2a},
 \qquad b_1=\frac12\int_{S^{d-1}}u_1^2\dd\sigma(u)=\frac1{2d}.
\]

For even degree, the normalized Rodrigues formula is
\[
 w(t)P_\ell(t)=\frac{(-1)^\ell}{2^\ell(a)_\ell}
 \frac{\mathrm d^\ell}{\mathrm dt^\ell}
 (1-t^2)^{\ell+a-1},
 \qquad (a)_\ell=\frac{\Gamma(a+\ell)}{\Gamma(a)}.
\]
For $\ell=2m$ and $F(t)=(1-t^2)^{2m+a-1}$, the boundary derivatives
$F^{(2m-1)}(1)=F^{(2m-2)}(1)=0$ since $a>0$.
Integration by parts and Taylor expansion give
\begin{align*}
 \int_0^1tF^{(2m)}(t)\dd t
 &=F^{(2m-2)}(0)\\
 &=(-1)^{m-1}
 \frac{\Gamma(2m-1)\Gamma(2m+a)}{\Gamma(m)\Gamma(m+a+1)}.
\end{align*}
Substitution in \eqref{eq:funk} gives
\[
 b_{2m}=(-1)^{m-1}
 \frac{\Gamma(d/2)\Gamma(2m-1)}
 {\sqrt\pi\,2^{2m}\Gamma(m)\Gamma(m+(d+1)/2)}.
\]
Use $\Gamma(2m-1)=2^{2m-2}\Gamma(m-1/2)\Gamma(m)/\sqrt\pi$.
\end{proof}

\subsection{The weighted kernel eigenvalues and gaps}

For every $\ell\geq0$, we prove
\begin{equation}\label{eq:all-spectrum}
 \lambda_\ell(\eps)
 =2\left[d+\eps\{1+\ell(\ell+d-2)\}\right]b_\ell^2.
\end{equation}
\begin{proof}[Proof of Proposition~\ref{prop:eigenvalues}]
Put $\phi_x(u)=(u\cdot x)_+$. For $g=Ru\sim N(0,I_d)$,
$u\sim\sigma$ is independent of $R$ and $\E R^2=d$, so
\[
 K_{\rm RF}(x,y)=2d\int\phi_x(u)\phi_y(u)\dd\sigma(u).
\]
Almost everywhere,
\[
 \nabla_S\phi_x(u)=x\boldsymbol1_{\{u\cdot x>0\}}-u\phi_x(u).
\]
Hence
\[
 \nabla_S\phi_x\cdot\nabla_S\phi_y
 =(x\cdot y)\boldsymbol1_{\{u\cdot x>0,\ u\cdot y>0\}}
       -\phi_x\phi_y.
\]
Thus
\begin{equation*}
 K_\eps(x,y)=2(d+\eps)\int\phi_x\phi_y\dd\sigma
       +2\eps\int\nabla_S\phi_x\cdot\nabla_S\phi_y\dd\sigma.
\end{equation*}
Since $\phi_x$ is Lipschitz and belongs to $H^1(S^{d-1})$, Funk--Hecke and
\[
 \int\nabla_SY_{\ell,j}\cdot\nabla_SY_{k,i}\dd\sigma
 =\ell(\ell+d-2)\boldsymbol1_{\{\ell=k,\ j=i\}}
\]
give \eqref{eq:all-spectrum}. Apply Lemma~\ref{lem:relu-coeff}.

The gamma recurrence gives
\[
 \frac{R_{m+1}^2}{R_m^2}
 =\frac{(2m-1)^2}{(2m+d+1)^2}.
\]
This gives the positive gap \eqref{eq:exact-gap}.
Since $R_m^2\sim m^{-d-2}$ \cite[equation~5.11.12]{DLMF}
and $h_d(m)\sim m^2$, the exact formulas give
\eqref{eq:values}--\eqref{eq:gaps}, uniformly in $\eps\in[0,1]$ with
constants depending only on $d$; see also \cite{BM2019,BB2021}.
\end{proof}

\section{Pressure duality and auxiliary branch calculations}
\label{app:ifs-phases}

\subsection{Localization and departure of the first analytic branch}

\begin{proof}[Proof of Proposition~\ref{prop:ifs-entropy-localization}]
Normalize to $[0,1]$ and suppose $A_f(1)\ge d_0L_f(1)$,
as holds for every maximizer since $d_\theta>d_0$.
Set $z=f-1/2$ and
\[
 u(t)=\beta-\mathbf1_{[0,\beta]}(t),\qquad
 v(t)=2\beta\mathbf1_{[0,\beta]}(t)
       -\mathbf1_{[0,\beta^2]}(t)-\beta^2.
\]
Integrals and inner products in this proof are taken over $[0,1]$.
Since $h(1/2+x)\le c-2x^2$ by $h''(y)=-1/[y(1-y)]\le-4$,
\[
 2\|z\|_2^2+d_0c\langle u,z\rangle\le0.
\]
Writing $q_0=c^2/(4\lambda)$ and completing the square gives
\begin{equation}\label{eq:ifs-entropy-ball}
 \|z+q_0u\|_2^2\le q_0^2B.
\end{equation}
Moreover,
\[
 L_f(1)=\lambda+c\langle u,z\rangle,
 \qquad L_f(\beta)-\beta L_f(1)=c\langle v,z\rangle.
\]
Direct integration yields
\[
 \|u\|_2^2=B,\qquad
 \langle v,u\rangle=-\beta B,\qquad
 \|v\|_2^2=\beta^2(1-\beta^2).
\]
For $\xi=\vartheta-\beta$, it follows that
\[
 \langle v-\xi u,u\rangle=-\vartheta B,
 \qquad
 \|v-\xi u\|_2^2=B(\vartheta^2+\beta).
\]
Maximizing a linear functional over the ball
\eqref{eq:ifs-entropy-ball} therefore gives
\begin{align*}
 L_f(\beta)-\vartheta L_f(1)
 &=-\lambda(\vartheta-\beta)
        +c\langle v-(\vartheta-\beta)u,z\rangle\\
 &\le-\lambda(\vartheta-\beta)
       +cq_0B\{\vartheta+\sqrt{\vartheta^2+\beta}\}.
\end{align*}
The right side vanishes at $\theta_{\mathrm{ent}}$, the solution of
\begin{equation*}
 \vartheta-\beta
    =\epsilon\{\vartheta+\sqrt{\vartheta^2+\beta}\}.
\end{equation*}
Since $\epsilon/(1-\beta)=c^3\beta/(4\lambda^2)<1/4$, the left
side minus the right has derivative at least $1-2\epsilon>0$,
is negative at $\beta$ and positive at $1$.

In fact,
\begin{equation}\label{eq:ifs-strict-entropy-localization}
 \frac{L_f(\beta)}{L_f(1)}<\theta_{\mathrm{ent}}.
\end{equation}
Since $h(1/2+x)<c-2x^2$ for $x\ne0$, $z\ne0$ in $L^2$ makes
\eqref{eq:ifs-entropy-ball} strict. If $z=0$, the ratio is $\beta$.
Thus every maximizer has $\tau>\beta$ for
$\theta\ge\theta_{\mathrm{ent}}$.

The first-branch certificate then gives
\begin{equation*}
 d_\theta\le\widehat d_\theta,
 \qquad \theta\ge\theta_{\mathrm{ent}}.
\end{equation*}
Continue the four-frequency candidate from $\theta=1$, with continuous
frequencies and positive denominator. Since $\widehat d_\theta\ge
d_\theta>d_0$, a first collision $\tau_*=\beta$ at
$\theta\ge\theta_{\mathrm{ent}}$ would give
\[
 A_f(1)=\widehat d_\theta L_f(1)>d_0L_f(1),\qquad
 L_f(\beta)/L_f(1)=\theta\ge\theta_{\mathrm{ent}},
\]
contradicting \eqref{eq:ifs-strict-entropy-localization}.
Hence $\tau_*>\beta$ and the candidate attains the upper bound.
The first-branch equality and derivative arguments give uniqueness
and regularity. Combining the two validity intervals gives
$\theta_{\mathrm{loc}}$.
\end{proof}

\begin{proof}[Proof of Proposition~\ref{prop:ifs-genuine-transition}]
By concavity, $s_*=k'(1-)$ exists and
\[
 \frac{D-k(w)}{1-w}\longrightarrow s_*.
\]
In the formula
\[
 k'(w)=\frac{h(v_w)}{\lambda+c\beta(v_w-z_w)},
 \quad
 z_w=\sigma(c((1-\beta)D-k(w))),\quad
 v_w=\sigma\!\left(-\frac{c\beta(D-k(w))}{1-w}\right),
\]
let $w\uparrow1$. Then $z_w\to q$, $v_w\to\sigma(-c\beta s_*)$;
the logistic entropy identity yields \eqref{eq:ifs-limit-pressure}.
The pressure equals $c$ at zero and has derivative
$c\beta\{q-\sigma(-c\beta s)\}-\lambda\le c\beta-\lambda<0$.
At $s=d_0$, put $x=c\beta d_0$; since $D>d_0$, $q<\sigma(-x)$, so
\[
 \log((1+e^{-x})/2)+xq
 =-\int_0^x\sigma(-t)\,dt+xq<0.
\]
Thus $0<s_*<d_0$. Strict concavity gives
$k(w)+(1-w)k'(w)>D$ for $w<1$; the stationary equation therefore
forces $w_\theta\uparrow1$ as $\theta\downarrow0$, proving
\eqref{eq:ifs-continued-limit}.

The continuation increases from $s_*<d_0$ to $D>d_0$ and hence
crosses $d_0$ uniquely, at $k'(w_{\mathrm{cross}})=d_0$ and
\[
 \theta_{\mathrm{cross}}
 =1-\frac{D-d_0}{k(w_{\mathrm{cross}})-w_{\mathrm{cross}}d_0}.
\]
Continuity and $d_{\theta_{\mathrm{cross}}}>d_0$ separate the curves
near $\theta_{\mathrm{cross}}$, while they agree for
$\theta\ge\theta_{\mathrm{loc}}$. This gives
\eqref{eq:ifs-transition-bracket} and equality on
$[\theta_{\mathrm{tr}},1)$.

If $d_\theta$ were analytic near $\theta_{\mathrm{tr}}$, the identity
theorem would extend $d_\theta=\widehat d_\theta$ from its right
neighborhood to its left, contradicting the definition of first departure.
\end{proof}

\subsection{Separation, attained multipliers, and boundary faces}

\begin{proof}[Proof of Proposition~\ref{prop:ifs-pressure-duality}]
For fixed $(w,\eta)$, the Lagrangian is
\[
 \sum_jw_jg_j^s(x)+\eta \mathcal R(x)
 =-\lambda\sum_j\rho_jt_j
            +\sum_i l_i\{W_i h(z_i/l_i)+K_i z_i/l_i\}.
\]
Zero-length terms vanish. Maximizing cellwise, then over $\tau\in I$,
gives the largest endpoint pressure, proving the dual upper bound.

For a nonempty physical slice, the set
\[
 \mathcal C=\{(\mathcal R(x),v):x\in\mathcal K_I,\quad
                           v_j\le g_j^s(x)\ \text{for every }j\}
\]
is convex by concavity and closed by compactness and continuity.
Strictly separate $(0,q,\ldots,q)$ from $\mathcal C$, where
$q>\mathcal V_I(s)$:
\[
 \sup_{x\in\mathcal K_I}
       \left(\eta \mathcal R(x)+\sum_jw_jg_j^s(x)\right)
                         <q\sum_jw_j.
\]
Downward rays force $w_j\ge0$, and feasibility excludes $w=0$.
Normalize $\sum_jw_j=1$ and let $q\downarrow\mathcal V_I(s)$.

For an empty slice, the compact interval $\mathcal R(\mathcal K_I)$
avoids zero; taking $\eta\to\pm\infty$ with opposite sign sends the
Lagrangian uniformly to $-\infty$.

In the two-sided case, $|g_j^s|\le C_s:=c+sb$ and the extrema
of $\mathcal R$ give
\[
 \max_{x\in\mathcal K_I}
       \left(\sum_jw_jg_j^s(x)+\eta \mathcal R(x)\right)
                         \ge-C_s+\delta|\eta|.
\]
The value at $\eta=0$ is at most $C_s$, so minimize the continuous
objective on $\Delta_J\times[-2C_s/\delta,2C_s/\delta]$.
For $\mathcal R\equiv0$, only $\Delta_J$ is needed.
\end{proof}

\begin{proof}[Proof of Proposition~\ref{prop:ifs-boundary-pressure}]
For fixed $\tau$,
\[
 \min_{0\le z_i\le l_i(\tau)}\sigma\mathcal R(\tau,z)
 =h_0(\tau)+\sum_i\min\{0,e_i\}l_i(\tau)=m(\tau)\ge0.
\]
Equality requires $m(\tau)=0$ and the stated endpoint choices for
$e_i\ne0$. Since $m\ge0$ is affine, its nonempty zero set is $I$ or
an endpoint. Forced frequencies have zero entropy; maximizing over the
free cells gives the affine pressure $\mathcal P^0_{I,s}$, maximized
on $\partial I_0$. Separation on this compact convex face gives the
dual formula without $\eta$; continuity on $\Delta_J$ gives attainment.
\end{proof}

\begin{proof}[Proof of Corollary~\ref{cor:ifs-global-pressure-root}]
Every constrained window time obeys
\[
 \theta a^2/b\le a\tau\le L_f(t_j)\le b.
\]
Since $g_j^{s_2}-g_j^{s_1}=-(s_2-s_1)L_f(t_j)$, taking the
nodal minimum and the regional maximum gives
\eqref{eq:ifs-pressure-monotonicity}. The profile $f=1/2$, $\tau=\theta$
gives $\mathcal V_\theta(0)\ge c\theta>0$, while $A_f(t)\le ct$
and $L_f(t)\ge at$ give $\mathcal V_\theta(s)<0$ for $s>c/a$.
The unique zero is $d_\theta$ by \eqref{eq:tree-finite-source}.
For $s\ge d_\theta$, all nonempty regional values are nonpositive;
the attained dual formulas supply their endpoint certificates.
\end{proof}

\medskip
\noindent\textbf{Disclosure on AI assistance:}
The authors acknowledge the use of OpenAI tools (GPT 6 Astra) in the
preparation of this manuscript, including language editing, LaTeX
formatting, and the organization of the presentation. The authors have
reviewed the complete manuscript and take full responsibility for all
mathematical statements and proofs.

\end{document}